\documentclass[10pt]{article}
\usepackage[utf8]{inputenc} 
\usepackage[T1]{fontenc}
\usepackage{lmodern}

\usepackage{amsthm,amsmath,amsfonts,amssymb,stmaryrd,bbm}
\usepackage{mathtools}
\usepackage{enumitem}
\usepackage{mathrsfs} 

\usepackage{graphicx}
\usepackage[dvipsnames]{xcolor}
\usepackage{caption,tikz}
\usepackage{dsfont}
\usepackage{amsthm}

\usepackage{anyfontsize}

\usepackage{mathtools} 


\usepackage[english]{babel}
\usepackage[colorlinks=true, linkcolor=blue, urlcolor=black, citecolor=blue,pdfstartview=FitH]{hyperref}

\setlength{\textwidth}{7in}
\setlength{\oddsidemargin}{0in}
\setlength{\evensidemargin}{0in}
\setlength{\topmargin}{0in} 
\setlength{\textheight}{8.8in}
\setlength{\footskip}{0.6in}
\setlength{\headsep}{0in}
\setlength{\marginparwidth}{0in}
\setlength{\marginparsep}{0in}
\setlength{\hoffset}{-0.25in}

\numberwithin{equation}{section}

\let\originalleft\left
\let\originalright\right
\renewcommand{\left}{\mathopen{}\mathclose\bgroup\originalleft}
\renewcommand{\right}{\aftergroup\egroup\originalright}

\newlength{\bibitemsep}
\setlength{\bibitemsep}{.2\baselineskip plus .05\baselineskip minus .05\baselineskip}
\newlength{\bibparskip}\setlength{\bibparskip}{0pt}
\let\oldthebibliography\thebibliography
\renewcommand\thebibliography[1]{\oldthebibliography{#1}
  \setlength{\parskip}{\bibitemsep}
  \setlength{\itemsep}{\bibparskip}}


\DeclareMathOperator{\re}{Re}
\DeclareMathOperator{\im}{Im}

\DeclareMathOperator{\tr}{tr}
\DeclareMathOperator{\Var}{Var}

\DeclareMathOperator{\oo}{o}

\DeclareMathOperator{\Id}{Id}
\DeclareMathOperator{\diag}{diag}
\DeclareMathOperator{\supp}{supp}

\DeclareMathOperator{\rank}{rank}
\DeclareMathOperator{\Leb}{Leb}

\newcommand{\mc}[1]{\mathcal{#1}}
\newcommand{\mf}[1]{\mathfrak{#1}}

\newcommand{\ii}{\mathrm{i}}

\newcommand{\defeq}{\vcentcolon=}
\newcommand{\eqdef}{=\vcentcolon}

\renewcommand{\epsilon}{\varepsilon}

\renewcommand{\leq}{\leqslant}
\renewcommand{\geq}{\geqslant}

\renewcommand{\P}{\mathbb{P}}
\newcommand{\E}{\mathbb{E}}
\newcommand{\R}{\mathbb{R}}
\newcommand{\C}{\mathbb{C}}
\newcommand{\N}{\mathbb{N}}
\newcommand{\Z}{\mathbb{Z}}


\newcommand{\abs}[1]{\left\lvert #1 \right\rvert}

\newcommand{\vertiii}[1]{{\left\vert\kern-0.25ex\left\vert\kern-0.25ex\left\vert #1 
    \right\vert\kern-0.25ex\right\vert\kern-0.25ex\right\vert}}
\newcommand{\ip}[1]{\left\langle #1 \right\rangle}
\newcommand{\diff}{\mathop{}\!\mathrm{d}}


\theoremstyle{plain} 
\newtheorem{thm}{Theorem}[section]
\newtheorem{lem}[thm]{Lemma}
\newtheorem{cor}[thm]{Corollary}

\newtheorem{prop}[thm]{Proposition}
\newtheorem{assn}{Assumption}
\newtheorem{assnA}{Assumption}

\newtheorem*{theorem*}{Theorem}
\newtheorem{defn}[thm]{Definition}

\newtheorem{rem}[thm]{Remark}

\theoremstyle{definition}
\newtheorem{assump}{Assumption}


\setlength{\unitlength}{1cm}

\makeatletter
\renewcommand{\subsection}{\@startsection
{subsection}
{2}
{0mm}
{-\baselineskip}
{0 \baselineskip}
{\normalfont\bf\itshape}} 
\makeatother

\makeatletter
\renewcommand{\subsubsection}{\@startsection
{subsubsection}
{3}
{0mm}
{-\baselineskip}
{0 \baselineskip}
{\normalfont\bf\itshape}} 
\makeatother

\def\author#1{\par
    {\centering{\authorfont#1}\par\vspace*{0.05in}}
}

\def\titlefont{\fontsize{13}{15}\bfseries\boldmath\selectfont\centering{}}
\def\authorfont{\fontsize{13}{15}}

\let\affiliationfont\rhfont

\def\address#1{\par
    {\centering{\affiliationfont#1\par}}\par\vspace*{11pt}
}

\def\title#1{
    \thispagestyle{plain}
    \vspace*{-14pt}
    \vskip 79pt
    {\centering{\titlefont #1\par}}%
    \vskip 1em
}

\setcounter{secnumdepth}{5}
\setcounter{tocdepth}{2} 

\makeatletter
\newcommand{\setword}[2]{%
  \phantomsection
  #1\def\@currentlabel{\unexpanded{#1}}\label{#2}%
}
\makeatother


\makeatletter
\renewcommand{\section}{\@startsection
{section}
{1}
{0mm}
{-2\baselineskip}
{1\baselineskip}
{\normalfont\large\scshape\centering}} 
\makeatother

\begin{document}
\enlargethispage{1cm}

~\vspace{-3.75cm}

\title{Large deviations for the largest eigenvalue of generalized sample covariance matrices}

\noindent
\begin{minipage}[c]{0.5\textwidth}
\author{Jonathan Husson}
\address{University of Michigan\\
    E-mail: jhusson@umich.edu}
\end{minipage}
\begin{minipage}[c]{0.5\textwidth}
\author{Benjamin McKenna}
\address{Harvard University \\ Center of Mathematical Sciences and Applications \\
    E-mail: bmckenna@fas.harvard.edu}
\end{minipage}

\vspace*{-0.1in}

\begin{abstract}
We establish a large-deviations principle for the largest eigenvalue of a generalized sample covariance matrix, meaning a matrix proportional to $Z^T \Gamma Z$, where $Z$ has i.i.d. real or complex entries and $\Gamma$ is not necessarily the identity. We treat the classical case when $Z$ is Gaussian and $\Gamma$ is positive definite, but we also cover two orthogonal extensions: Either the entries of $Z$ can instead be sharp sub-Gaussian, a class including Rademacher and uniform distributions, where we find the same rate function as for the Gaussian model; or $\Gamma$ can have negative eigenvalues if $Z$ remains Gaussian. The latter case confirms formulas of Maillard in the physics literature. 

We also apply our techniques to the largest eigenvalue of a deformed Wigner matrix, real or complex, where we upgrade previous large-deviations estimates to a full large-deviations principle. Finally, we remove several technical assumptions present in previous related works.
\end{abstract}

\vspace*{0.05in}

\noindent \emph{Date:} February 3, 2023

\vspace*{0.05in}

\noindent \hangindent=0.2in \emph{Keywords and phrases:} Large deviations, sample covariance matrices, Wishart matrices, deformed Wigner matrices

\vspace*{0.05in}

\noindent \emph{2020 Mathematics Subject Classification:} 60B20, 60F10, 15B52

\vspace*{-0.2in}

{
	\hypersetup{linkcolor=black}
	\tableofcontents
}


\vspace{-0.5cm}
\section{Introduction}


\subsection{Our results.}\

In this paper, we give a large-deviations principle at speed $N$ for the largest eigenvalue $\lambda_{\textup{max}}$ of a \emph{generalized sample covariance matrix}, meaning a random matrix of the form
\begin{equation}
\label{eqn:modeldef}
	H_N = \frac{1}{M} Z^T \Gamma Z = \frac{1}{M} \sum_{i=1}^M d_i z_i z_i^T,
\end{equation}
where all the notation is defined below. Informally, this means that we find a function $I$ such that
\begin{equation}
\label{eqn:introldp}
	\P(\lambda_{\textup{max}}(H_N) \approx x) \approx \exp(-NI(x)).
\end{equation}
Here $Z \in \R^{M \times N}$ has i.i.d. entries distributed according to some centered probability measure $\mu$ with unit variance; its rows, which are independent vectors of length $N$, are written in column form (in order to match the literature) as $\{z_i\}_{i=1}^M$; \emph{generalized} means that the deterministic matrix $\Gamma = \diag(d_1, \ldots, d_M) = \diag(d_1^{(M)}, \ldots, d_M^{(M)})$ may not be the identity; and $M$ and $N$ are parameters both tending to infinity in such a way that their ratio is order one.

Our results also hold for the complex case $H_N = M^{-1} Z^\ast \Gamma Z$, but for the sake of simplicity we limit our notation to the real case for most of the paper.

When all $d_i$'s are positive, the matrix $H_N$ is sometimes called an \emph{inhomogeneous} sample covariance matrix (or an inhomogeneous Wishart matrix). In this paper, we can allow for negative $d_i$'s, in the special case when $\mu$ is Gaussian. This part of our result matches a recent result of Maillard in the physics literature, which inspired our work \cite{Mai2021}. We also allow non-Gaussian $\mu$'s, but we require them to lie in a special class of measures called \emph{sharp sub-Gaussian}, which includes as important special cases the Rademacher and uniform distributions, and here we require all the $d_i$'s to be positive; in this case we show that the rate function is the same as in the corresponding Gaussian case. (The rate function probably should \emph{not} match the Gaussian one when some $d_i$'s are negative; see Remark \ref{rem:ssg_negative_di}.)

The case where $\Gamma$ is positive definite is better known. In that case, $H_N$ is a sample covariance matrix of \emph{independent} observations of possibly \emph{correlated} data, i.e., it has the same distribution as $\frac{1}{M} \sum_{i=1}^M x_ix_i^T$, where the vectors $x_i$ are independent, but the components of each $x_i$ have possibly nontrivial covariance matrix $\Gamma$. In this case, $H_N$ of course also has the same nonzero eigenvalues as the variant $\frac{1}{M} \Gamma^{1/2} ZZ^T \Gamma^{1/2}$, and the literature is sometimes phrased in this way. But we emphasize that the case where $\Gamma$ has positive and negative eigenvalues also has genuine statistical interest: it appears, for example, in MANOVA estimation of the above scenario when the observations $x_i$ are no longer independent of one another. For a discussion of this case, we direct the readers to \cite{FanJoh2022}.


\subsection{Previous results on large deviations for random matrices.}\

Our work fits into a recent lineage of papers, starting with \cite{GuiHus2020} and \cite{GuiMai2020}, which establish large-deviations principles for random matrices using the method of \emph{tilting by spherical integrals}. Papers in this line include \cite{BirGui2020, AugGuiHus2021, McK2021, GuiHus2022, Hus2022, BelGuiHua2020} in the math literature, and \cite{Mai2021, MerPot2022} in the physics literature. Compared to these previous works, the main technical novelty in our work is the establishment of rigorous non-Gaussian results beyond the so-called \emph{$x_c$ threshold}. That is, in the model \eqref{eqn:modeldef}, it turns out that there exists some $x_c = x_c(\Gamma)$, which may be finite or infinite depending on $\Gamma$, such that if $x_c < \infty$ then the function $I$ from \eqref{eqn:introldp} is non-analytic at $x_c$. A similar phenomenon was partially shown for additively deformed Wigner matrices in \cite{McK2021}; we will define this model properly below, but informally speaking, it comes in both a Gaussian variety and a sharp sub-Gaussian variety (and other varieties not considered). In \cite{McK2021}, one of the present authors showed the analogue of \eqref{eqn:introldp} for all $x$ in the Gaussian case, but only for $x < x_c$ in the sharp sub-Gaussian case. Dealing with the regime $x \geq x_c$ in the sharp sub-Gaussian case remained a challenge.

Following these works, Maillard considered the present model \eqref{eqn:modeldef}, in the Gaussian case and at the physics level of rigor. He also found an $x_c$ threshold, and proposed an interesting new tilting for establishing \eqref{eqn:introldp} in the regime $x \geq x_c$, which motivated our work. We are able to verify his results, plus add new ones of our own, but without using his techniques. Instead, when considering a model with $x_c < +\infty$, we find an approximating sequence of models which all have $x_c = +\infty$. In the context of recent results on large deviations for random matrices, a similar argument first appeared in \cite{Hus2022}, but to handle a problem other than the $x_c$ threshold. Here we show it can also be useful for $x_c$ thresholds, in a variety of models. Indeed, although the bulk of the paper treats the sample-covariance model \eqref{eqn:modeldef}, in Appendix \hyperlink{sec:wigner}{C} we consider the deformed-Wigner case and complete the analysis started by \cite{McK2021}. 

As special cases of our main theorem, we recover earlier results in the homogeneous case $\Gamma = \Id$, i.e., for usual Wishart matrices. Majumdar and Vergassola computed the rate function in the Gaussian case \cite{MajVer2009}; later, recent work by Guionnet and the first author extended this sharp sub-Gaussian Wishart matrices \cite{GuiHus2020}. The computation that our rate function matches theirs was carried out by Maillard \cite{Mai2021}.

We also develop techniques to remove a technical assumption present in the works cited above. Typically, the results of these works can be written informally as ``Suppose $X$ is a random matrix built from samples of $\mu$, a sharp-sub-Gaussian measure, which is also either compactly supported or satisfies the log-Sobolev inequality. Then $\lambda_{\textup{max}}(X)$ satisfies an LDP.'' We show how to remove the ``compact-or-log-Sobolev'' assumption, ending with theorems of the form ``Suppose $X$ is a random matrix built from samples of $\mu$, a sharp sub-Gaussian measure. Then $\lambda_{\textup{max}}(X)$ satisfies an LDP.'' For example, our main result is written in this latter way, as is our deformed-Wigner result just mentioned. In Appendix \hyperlink{sec:compact_or_log_sobolev}{B}, we give one more example, refining a result of Guionnet and the first author \cite{GuiHus2020}. We show that this extension is nontrivial, in Remark \ref{rem:new_ssg_example}, by giving an example of a sharp sub-Gaussian law that is neither compactly supported nor satisfies the log-Sobolev inequality. As we explain there, our proof also allows one to bypass the use of local laws.

Finally, we make a historical remark on the sharp-sub-Gaussian condition. Of course the notion of ``sub-Gaussian random variable'' is quite classical, but the works in the lineage discussed above are all based on the additional ideas that (a) a particular subclass of this family, called ``sharp-sub-Gaussian'' in Definition \ref{defn:ssg} below, is special, and (b) what makes this subclass special, is that random matrices constructed from its members satisfy bounds with the \emph{same constants} as for Gaussian measure itself. To the best of our knowledge, these LDP works were interested in finding, but not aware of, any other place in the probability literature where these ideas appeared. We recently learned of such a place, namely a remark in recent work of Zhivotovskiy on the operator norm of a sum of independent random matrices \cite[Remark 6]{Zhi2021}. Zhivotovskiy remarks that these same ideas previously appeared implicitly in works of Catoni and collaborators.


\subsection{Previous results on generalized sample covariance matrices.}\

The model $H_N$ and its variants are quite classical. Their first appearance, to the best of our knowledge, is in a 1967 paper of Mar\v{c}enko and Pastur, which proved a global law for the variant $A + \frac{1}{M} Z^T \Gamma Z$, where $A$ is a sequence of deterministic matrices whose empirical measures have some limit, and where $\Gamma$ is random and diagonal with i.i.d. entries. The global law for our precise model appears in (more general) work of Silverstein and Bai \cite{SilBai1995}, which also gives a good summary of the literature on global laws for variants of $H_N$.

In our work we assume that $\Gamma$ has no outlier eigenvalues; for example we do not allow $\Gamma$ to be a finite-rank perturbation of the identity. This restriction is generally believed, and under very mild restrictions proved, to prevent $H_N$ from typically having outlier eigenvalues. That is, we are not in the regime of the Baik-Ben Arous-P\'ech\'e (BBP) transition.

Thus the largest eigenvalue tends to the right endpoint of the asymptotic support of $H_N$. For some variants of $H_N$, it is known to have Tracy-Widom fluctions around this point. For example, in the complex Gaussian case, which is determinantal, El Karoui \cite{ElK2007} introduced an edge regularity condition and treated all $\Gamma$'s satisfying this condition, in the regime $\frac{M}{N} \geq 1$; Onatski \cite{Ona2008} extended this to $\frac{M}{N} < 1$. In the direction of universality, Bao, Pan, and Zhou \cite{BaoPanZho2015} allowed $Z$ to have complex non-Gaussian entries; Lee and Schnelli \cite{LeeSch2016} allowed $Z$ to have real non-Gaussian entries, as long as $\Gamma$ is diagonal; Knowles and Yin \cite{KnoYin2017} gave the (non-trivial) extension of this to non-diagonal $\Gamma$. Although we only treat the largest eigenvalue, it is also of statistical interest to consider the case when $\Gamma$ is such that $H_N$ has asymptotically disconnected support, and show that the rightmost eigenvalues for \emph{each connected component} of the support each have Tracy-Widom fluctuations, which are independent of each other. Hachem, Hardy, and Najim \cite{HacHarNaj2016} obtained such a result in the complex Gaussian case, followed by recent work of Fan and Johnstone \cite{FanJoh2022} in the real Gaussian case. Finally, Wang \cite{Wan2019} recently obtained the first non-trivial speed of convergence to the Tracy-Widom distribution, of order $N^{-1/57}$ (in the case $\Gamma = \Id$, Wang found a much better speed of $N^{-2/9}$, which Schnelli and Xu \cite{SchXu2021} recently improved to almost $N^{-1/3}$).\footnote{
Many of the cited works are written for the model $\frac{1}{M} \Gamma^{1/2} Z^T Z \Gamma^{1/2}$, which of course has the same non-zero eigenvalues as our model, but requires $\Gamma \geq 0$. 
}


\subsection{Organization.}\

The paper is organized as follows: In Section \ref{sec:results}, we define our model and the associated $x_c$ threshold, and state our main results. The proof is given in two stages: First for models with $x_c = +\infty$ (in Section \ref{sec:infinite_xc}), then for models with $x_c < +\infty$ (in Section \ref{SecReg}) by approximation using $x_c = +\infty$ models. In Section \ref{SecComplex} we outline the minor adjustments necessary if $H_N$ is complex Hermitian instead of real symmetric. 

In Appendix \hyperlink{sec:talagrand}{A}, we give a straightforward extension of Talagrand's classic results on concentration for product measures, but one that we were unable to find in the literature: His results are written for independent random variables valued in $[-1,1]$, and we extend his results to independent random variables valued in the $d$-dimensional unit ball for any $d$. The quality of the estimate does not depend on $d$ (essentially because the radius of this ball does not depend on $d$), which may be of independent interest. The particular case $d = 2$ allows us to consider complex random matrices, whose upper-triangular entries are independent of one another, but whose real and imaginary parts are simply \emph{uncorrelated}, rather than \emph{independent} as required in previous results. Finally, in Appendix \hyperlink{sec:compact_or_log_sobolev}{B} we show by example how to remove the ``compact-or-log-Sobolev'' assumption from works in the literature, and in Appendix \hyperlink{sec:wigner}{C} we state and prove our results on deformed Wigner matrices.


\subsection{Notation.}\

We write the Lipschitz and bounded-Lipschitz norms of a function $f : \R \to \R$, respectively, as
\begin{align*}
	\|f\|_{\textup{Lip}} &= \sup_{x \neq y} \abs{\frac{f(x)-f(y)}{x-y}}, \\
	\|f\|_{\mc{L}} &= \|f\|_{\textup{Lip}} + \|f\|_\infty.
\end{align*}
We will need the function classes
\[
	\mc{F}_{\textup{Lip}} = \{f : \R \to \R : \|f\|_{\mc{L}} \leq 1\}
\]
and, for a given compact set $\mc{K} \subset \R$,
\[
	\mc{F}_{\textup{Lip},\mc{K}} = \{f \in \mc{F}_{\textup{Lip}} : \supp(f) \subset \mc{K}\}.
\]
We will also need the bounded-Lipschitz, Wasserstein-$1$, and Kolmogorov-Smirnov distances between probability measures on $\R$, defined respectively by
\begin{equation}
\label{eqn:distances}
\begin{split}
	d_{\textup{BL}}(\mu,\nu) &= \sup_{f \in \mc{F}_{\textup{Lip}}} \abs{ \int_\R f(x) (\mu - \nu)(\diff x)}, \\
	{\rm W}_1(\mu,\nu) &= \sup_{f : \|f\|_{\textup{Lip}} \leq 1} \abs{ \int_\R f(x)(\mu - \nu)(\diff x)}, \\
	d_{\textup{KS}}(\mu,\nu) &= \sup\{\abs{\mu((-\infty,x]) - \nu((-\infty,x])} : x \in \R\},
\end{split}
\end{equation}
the first of which metrizes weak convergence. If the probability measure $\mu$ is compactly supported, we write $G_\mu$ for its Stieltjes transform with the sign convention 
\[
	G_\mu(z) = \int \frac{\mu(\diff \lambda)}{z-\lambda},
\]
and write $\ell(\mu)$ and $r(\mu)$ for its left and right endpoints, respectively.

We introduce the Dyson index $\beta$ as a shorthand for the symmetry class under consideration, either real-symmetric ($\beta = 1$) or complex-Hermitian ($\beta = 2$). 

If $T$ is a matrix, we write its operator norm (with respect to Euclidean distance) as $\|T\|$, its Frobenius norm as $\|T\|_F^2 = \sum_{i,j} |T_{ij}|^2$, and its trace norm as $\|T\|_\ast = \sum_i \sigma_i(T)$. If $T$ is square and $N \times N$, we number its eigenvalues as 
\[
	\lambda_{\textup{min}}(T) = \lambda_1(T) \leq \lambda_2(T) \leq \cdots \lambda_N(T) = \lambda_{\textup{max}}(T),
\]
sometimes dropping the dependence on $T$ from the notation, and write its empirical spectral measure as
\begin{equation}
\label{eqn:muhat}
	\hat{\mu}_{T} = \frac{1}{N} \sum_{i=1}^N \delta_{\lambda_i(T)}.
\end{equation}
We write $\Leb(\cdot)$ for the Lebesgue measure, and in the appendices we will need the semicircle law normalized as
\[
	\rho_{\textup{sc}}(\diff x) = \frac{\sqrt{(4-x^2)_+}}{2\pi} \diff x.
\]


\subsection{Acknowledgements.}\

We wish to thank Alice Guionnet for proposing the problem and for many helpful discussions. We also thank Yizhe Zhu for directing us to the reference \cite{Zhi2021}. This material is based upon work supported by the National Science Foundation under Grant No. DMS-1928930 while the authors were in residence at the Mathematical Sciences Research Institute in Berkeley, California, during the Fall 2021 semester.


\section{Results}
\label{sec:results}


\subsection{Set-up.}\
\label{subsec:setup}

Theorem~\ref{thm:main}, our main result in the real case, is stated in Section~\ref{subsec:main}; the complex analog, Theorem~\ref{thm:mainc}, is given in Section~\ref{subsec:complex}. In order to state them properly, we need to precisely define both our model, in the present Section~\ref{subsec:setup}, as well as the limit $\sigma$ of its empirical measure and a particular function $\overline{G}_\sigma$, which is a sort of ``second branch'' of the Stieltjes transform needed to define the rate function for our LDP, in Section~\ref{subsec:dyson}. In Section~\ref{subsec:degenerate} we discuss certain degenerate cases. 

We recall from \eqref{eqn:modeldef} our fundamental random matrix
\[
	H_N = \frac{1}{M} Z^T \Gamma Z = \frac{1}{M} \sum_{i=1}^M d_i z_i z_i^T,
\]
which we now define precisely. Fix $\alpha > 0$, and choose a sequence $M = (M_N)_{N=1}^\infty$ such that
\[
	\lim_{N \to \infty} \frac{M_N}{N} = \alpha.
\]
For each $M$, we consider a deterministic real $M \times M$ matrix 
\[
	\Gamma = \Gamma_M = \diag(d_1, \ldots, d_M),
\]
where $(d_1, \ldots, d_M) = (d_1^{(M)}, \ldots, d_M^{(M)})$ are ordered without loss of generality as $d_1 \leq \cdots \leq d_M$. We suppose there exists a compactly supported probability measure $\rho$ such that
\[
	\hat{\mu}_{\Gamma_M} \to \rho \quad \text{weakly.}
\]

We need the following definition.

\begin{defn}
\label{defn:ssg}
A centered probability measure $\mu$ on $\R$ is called \emph{sharp sub-Gaussian} if it has unit variance and
\[
	\int_\R e^{tx} \mu(\diff x) \leq e^{\frac{t^2}{2}} \quad \text{ for all } t \in \R.
\]
\end{defn}
Standard Gaussian measure is sharp sub-Gaussian, of course, but so are the Rademacher law $\frac{1}{2}(\delta_{-1}+\delta_{+1})$ and the uniform law on $[-\sqrt{3},\sqrt{3}]$. We fix some sharp sub-Gaussian measure $\mu$, and let $Z \in \R^{M \times N}$ be a matrix with i.i.d. entries distributed according to $\mu$. To match the notation in the literature, we take the \emph{rows} of $Z$, which are independent vectors of length $N$, and write them in \emph{column} form as $\{z_i\}_{i=1}^M$.

The following assumption will be made throughout the paper, although we will only state it explicitly in the main theorem.

\begin{assn}
\label{assn:no_outliers}
We assume that $\Gamma$ has asymptotically no (external) outliers, in the sense that
\begin{align*}
	\lambda_{\textup{min}}(\Gamma) &\to \ell(\rho), \\
	\lambda_{\textup{max}}(\Gamma) &\to r(\rho).
\end{align*}
\end{assn}
We allow internal outliers, in the sense that, if $\rho$ has disconnected support, $\Gamma$ can have eigenvalues in the gaps between the components. 


\subsection{The Dyson equation and the limit of the empirical measure.}\
\label{subsec:dyson}

The following two thresholds will be important:
\begin{defn}
Let
\begin{align}
	\theta_{\textup{max}} &\defeq \begin{cases} \frac{\alpha}{r(\rho)} & \text{if } r(\rho) > 0, \\ +\infty & \text{otherwise.} \end{cases} \label{eqn:def_theta_max} \\
	x_c(\rho) &\defeq \begin{cases} r(\rho)^2G_\rho(r(\rho)) + \left(\frac{1}{\alpha}-1\right)r(\rho) & \text{if } r(\rho) > 0 \text{ and } G_\rho(r(\rho)) < \infty, \\ +\infty & \text{otherwise} \end{cases} \label{eqn:def_xc_rho}
\end{align}
\end{defn}

With such a setup, the following convergence theorem for the empirical measure $\hat{\mu}_{H_N}$ is essentially classical:

\begin{thm}
\label{thm:sigma}
With $H_N$ as above, the sequence of empirical measures $(\hat{\mu}_{H_N})_{N=1}^\infty$ converges weakly in probability toward a compactly supported measure $\sigma$. Furthermore, the Stieltjes transform of $\sigma$ on $(r( \sigma), + \infty)$ satisfies 
\begin{equation}
\label{eqn:dyson}
H_{\rho}( G_{\sigma}(x)) = x,
\end{equation}
where $H_{\rho}$ is defined on $(0, \theta_{\max})$ by 
\begin{equation}
\label{eqn:mp}
	H_{\rho}(y) \defeq \frac{1}{y} + \int_{\R} \frac{\alpha u}{\alpha - yu} \rho(\diff u).
\end{equation}
If $r(\rho) \leq 0$, then $r(\sigma) \leq 0$. (However, it is possible to have $r(\sigma) < 0$ when $r(\rho) > 0$; see Remark \ref{rem:r_sig<0,r_rho>0} for an example.)
\end{thm}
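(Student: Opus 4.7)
I would prove Theorem~\ref{thm:sigma} in two stages: first the global convergence and self-consistent equation, then the endpoint bound.

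\emph{Global law and Dyson equation.} The weak convergence $\hat{\mu}_{H_N} \to \sigma$ in probability, together with the self-consistent equation $H_\rho(G_\sigma(x)) = x$, is classical in the Mar\v{c}enko--Pastur lineage; I would invoke the general framework of Silverstein--Bai \cite{SilBai1995}, which applies directly once one observes that the $AB$-vs-$BA$ identity gives $\frac{1}{M}Z^T\Gamma Z$ and $\frac{1}{M}\Gamma Z Z^T$ the same nonzero spectrum, even for signed $\Gamma$. For a self-contained derivation, the standard resolvent/Sherman--Morrison argument applies: let $R(x) = (xI_N - H_N)^{-1}$ and $g_N(x) = \frac{1}{N}\Tr R(x)$, peel off the rank-one terms $\frac{d_i}{M}z_iz_i^T$ one at a time, replace the resulting quadratic forms $z_i^T R^{(i)}(x) z_i$ by traces using sub-Gaussian concentration, and use the rank-one perturbation bound $|\Tr R^{(i)} - \Tr R| \le C$ to close the recursion for $g_N$. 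Passing to the limit using $\hat{\mu}_\Gamma \to \rho$ and $M/N \to \alpha$ yields, after algebraic rearrangement, exactly \eqref{eqn:dyson}--\eqref{eqn:mp}. Uniqueness of $\sigma$ as a compactly supported probability measure follows by the standard Nevanlinna-type fixed-point argument. No positivity on $\Gamma$ is needed here: $Z^T \Gamma Z$ is symmetric regardless, and the rows of $Z$ remain i.i.d.\ sub-Gaussian.

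\emph{Endpoint bound.} If $r(\rho) \le 0$, Assumption~\ref{assn:no_outliers} gives $\lambda_{\max}(\Gamma_M) \to r(\rho) \le 0$. Fix $\delta > 0$; then $\Gamma_M \preceq \delta I_M$ in the Loewner order for all large $M$, so $H_N \preceq \delta \cdot \frac{1}{M} Z^T Z$, and Weyl's inequality gives
\[
\lambda_{\max}(H_N) \le \delta\, \lambda_{\max}\!\left(\tfrac{1}{M} Z^T Z\right).
\]
Since $\frac{1}{M}Z^T Z$ has operator norm bounded in probability by some $K = K(\alpha)$ (classical Wishart MP), we obtain $\lambda_{\max}(H_N) \le \delta K$ with high probability. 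The weak convergence $\hat{\mu}_{H_N}\to \sigma$ then forces $\sigma((\delta K, \infty)) = 0$ for every $\delta > 0$, and letting $\delta \to 0$ gives $r(\sigma) \le 0$.

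\emph{Main obstacle.} The delicate part is the bookkeeping in the resolvent recursion --- verifying that the approximation errors from concentration of quadratic forms and from rank-one perturbations are uniformly $o(1)$ on compact subsets of $\{x : x > r(\sigma)\}$, and checking that the limit equation simplifies cleanly into the form \eqref{eqn:mp} rather than some equivalent but messier expression. Once the global law is in hand, the endpoint statement is a few lines, and uniqueness is boilerplate.
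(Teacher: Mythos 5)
Your plan invokes Silverstein--Bai for the global law, exactly as the paper does, and your endpoint argument is a valid variant: where you bound $\Gamma_M \preceq \delta I_M$ for all large $M$ and then appeal to a high-probability operator-norm bound on $\tfrac{1}{M}Z^TZ$, the paper instead observes that $\sigma$ depends only on $\rho$ and $\alpha$, so one may freely choose an outlier-free realization of $\Gamma$ at finite $N$ for which $H_N \preceq r(\rho) I \preceq 0$ deterministically; both routes are fine, but the paper's avoids even needing tail control of $\mu$.

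There is, however, a genuine gap: you never prove that $\sigma$ is compactly supported, and the theorem asserts this. The paper is explicit that Theorem~1.1 of Silverstein--Bai does \emph{not} provide compactness, and supplies a separate argument: since $\rho$ is compactly supported, $H_\rho$ is meromorphic near zero with a simple pole there, so $z \mapsto 1/H_\rho(z)$ is holomorphic near zero; by the inverse function theorem its inverse is holomorphic near zero, which via the Dyson equation \eqref{eqn:dyson}--\eqref{eqn:mp} says that $G_\sigma$ is holomorphic in a neighborhood of infinity, hence $\sigma$ has compact support. You cite a ``standard Nevanlinna-type fixed-point argument'' for uniqueness of $\sigma$ \emph{as} a compactly supported measure, but that presupposes compactness rather than proving it, and your endpoint derivation only controls the right tail when $r(\rho) \le 0$ --- it gives nothing when $r(\rho) > 0$ and says nothing about the left tail. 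You would need to add the analytic-continuation step above (or an equivalent) to make the compactness claim.
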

\begin{proof}
Theorem 1.1 in \cite{SilBai1995} gives every claim, other than that the measure $\sigma$ is compactly supported, and that $r(\sigma) \leq 0$ when $r(\rho) \leq 0$. To show the former, note that, since $\rho$ is compactly supported, then $H_\rho$ is meromorphic in a neighborhood of zero, with a simple pole at zero. From the inverse function theorem, $z \mapsto \frac{1}{H_\rho(z)}$ is therefore holomorphic in a neighborhood of zero, so (via \eqref{eqn:mp}) $G_{\sigma}$ is holomorphic in a neighborhood of infinity, meaning $\sigma$ is indeed compactly supported. To show the latter, note that we can always choose $\Gamma$ without outliers, meaning in such a way that $\ell(\hat{\mu}_{\Gamma}) = \ell(\rho)$ and $r(\hat{\mu}_{\Gamma}) = r(\rho)$ at finite $N$, and that for $H_N$ defined with such $\Gamma$ we have $H_N \leq r(\rho) \leq 0$, so that $r(\sigma) \leq 0$.
\end{proof}

\begin{rem}
\label{rem:free_probability}
We will not need it, but the measure $\sigma$ can be interpreted in the language of free probability. Indeed, when $\rho$ is supported on $\R^+$ or $\R^-$, then $\sigma$ is just the free multiplicative convolution of $\rho$ and the corresponding Mar\v{c}enko-Pastur law. The free multiplicative convolution is usually defined only for measures supported on a half-line, but when $\rho$ has two nontrivial components $\rho^\pm(A) = \rho(A \cap \R^\pm)$, we can write $\sigma$ as the free \emph{additive} convolution of the measures $\sigma^{\pm}$, which are respectively the free \emph{multiplicative} convolutions of $\rho^\pm$ with the Mar\v{c}enko-Pastur law.

Indeed, writing $\rho = \rho^+ + \rho^-$ induces a decomposition $\Gamma = \Gamma^+ + \Gamma^-$ into positive and negative $d_i$'s, and thus a decomposition $H_N = H_N^+ + H_N^-$ with $H_N^\pm = \frac{1}{M} Z^T \Gamma^{\pm} Z$. The matrices $H_N^+$ and $H_N^-$ are independent, and their empirical measures tend to $\sigma^\pm$, respectively. This is true regardless of the underlying law of $Z$, which we will choose to be Gaussian; then, due to rotation invariance of the Gaussian law, they have the same joint distribution as $H_N^+$ and $OH_N^-O^T$, where $O$ is a Haar orthogonal/unitary matrix independent of everything else; and since {\color{blue} asymptotically} these matrices are \emph{freely} independent, the empirical measure of their sum tends to the free \emph{additive} convolution of the empirical measures.
\end{rem}

\begin{rem}
\label{rem:r_sig<0,r_rho>0}
Note that we can have $r(\rho) >0$ but $r(\sigma)<0$. For instance, let us choose $M_N=4N$ and let us take 
\[
	\Gamma_N = \text{diag}(\underbrace{2, \dots,2}_{ 2 N \text{ times }}, \underbrace{-2K, \dots,-2K}_{ 2 N \text{ times }})
\]
where $K$ is a constant we will fix later. Then we have for this model $\rho = \frac{1}{2} ( \delta_{-2K} + \delta_2)$. Remark \ref{rem:free_probability} explains that, in this case, $\sigma$ is the free \emph{additive} convolution of two measures, namely the free \emph{multiplicative} convolutions of delta masses at $2$ and $-2K$, respectively, with Mar\v{c}enko-Pastur. Since the free multiplicative convolution in this case just rescales the measures, $\sigma$ is the free additive convolution of two stretched Mar\v{c}enko-Pasturs, one near $2$ and one near $-2K$. The factors of two here are so that the Mar\v{c}enko-Pastur laws are gapped away from zero; since $r(\mu \boxplus \nu) \leq r(\mu) + r(\nu)$ in general, by taking $K$ sufficiently large, we can thus force $r(\sigma) < 0$.
\end{rem}

The equation \eqref{eqn:dyson} is called a \emph{Dyson equation}. The following lemma defines a certain function $\overline{G}_\sigma$, which should be thought of as a second branch of the Stieltjes transform. Its proof will be given in Section \ref{subsec:overline_g}.

\begin{lem}
\label{lem:overline_g}
First, if $x_c(\rho)$ is finite then $x_c(\rho) = H_\rho(\theta_{\textup{max}}) \defeq \lim_{y \uparrow \theta_{\textup{max}}} H_\rho(y)$, and $x_c(\rho) \geq r(\sigma)$. Second, except in the case when $r(\rho) \leq 0$ and $r(\sigma) = 0$ (this degenerate case is handled in Section \ref{subsec:degenerate}), there exists a real-valued, continuous function $\overline{G}_\sigma$, defined on the domain 
\begin{equation}
\label{eqn:domain_d}
	D \defeq \begin{cases} [r(\sigma),+\infty) & \text{if } r(\rho) > 0, \\ [r(\sigma),0) & \text{if } r(\rho) \leq 0, \end{cases}
\end{equation}
with the following properties:
\begin{enumerate}
\item If $x \in D$ and $x \leq x_c(\rho)$, then $H_\rho(\overline{G}_\sigma(x)) = x$, and $\{w : H_\rho(w) = x\} = \{G_\sigma(x), \overline{G}_\sigma(x)\}$.
\item We have $\overline{G}_\sigma(x) > G_\sigma(x)$ for $x > r(\sigma)$, and $\overline{G}_\sigma(r(\sigma)) = G_\sigma(r(\sigma))$.
\item $\overline{G}_\sigma$ is real analytic on $D \setminus \{x_c(\rho)\}$.
\item $\overline{G}_\sigma$ is nondecreasing on $D$, and more specifically
\begin{itemize}
\item If $r(\rho) > 0$ and $x_c(\rho) = +\infty$, then $\overline{G}_\sigma$ is strictly increasing on $D$, with $\lim_{x \to +\infty} \overline{G}_\sigma(x) = \theta_{\textup{max}}$.
\item If $r(\rho) > 0$ and $x_c(\rho) < +\infty$, then $\overline{G}_\sigma$ is strictly increasing on $(r(\sigma),x_c(\rho))$, with $\lim_{x \uparrow x_c(\rho)} \overline{G}_\sigma(x) = \theta_{\textup{max}}$, and $\overline{G}_\sigma(x) = \theta_{\textup{max}}$ for $x \geq x_c(\rho)$.
\item If $r(\rho) < 0$, then $\overline{G}_\sigma$ is strictly increasing on $D$, with $\lim_{x \uparrow 0} \overline{G}_\sigma(x) = +\infty$.
\end{itemize}
\end{enumerate}
\end{lem}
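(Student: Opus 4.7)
The plan is to analyze $H_\rho$ directly on $(0,\theta_{\max})$ and construct $\overline{G}_\sigma$ as the inverse on its ``right'' branch, first changing variables to expose convexity. Setting $t = \alpha/y$ and
\[
\tilde H(t) := \left(\tfrac{1}{\alpha}-1\right) t + t^2 G_\rho(t),
\]
a direct algebraic rewriting of $\int \alpha u/(\alpha-yu)\,\rho(\diff u)$ shows $H_\rho(y) = \tilde H(\alpha/y)$ on $(0,\theta_{\max})$, with $\tilde H$ analytic on $(r(\rho),+\infty)$ (or on $(0,+\infty)$ if $r(\rho)\le 0$). The first assertion $x_c(\rho) = \lim_{y\uparrow\theta_{\max}} H_\rho(y)$ when $r(\rho) > 0$ then follows by evaluating $\tilde H(r(\rho)) = (\tfrac{1}{\alpha}-1)r(\rho) + r(\rho)^2 G_\rho(r(\rho))$ and matching with \eqref{eqn:def_xc_rho}; the bound $x_c(\rho)\ge r(\sigma)$ is then immediate once $r(\sigma)$ is identified as the minimum of $H_\rho$ in the next step.

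The heart of the proof is strict convexity of $\tilde H$. Using the identity $\frac{d^2}{dt^2}\frac{t^2}{t-u} = \frac{2u^2}{(t-u)^3}$, one computes
\[
\tilde H''(t) = 2\int \frac{u^2}{(t-u)^3}\,\rho(\diff u),
\]
which is strictly positive on $(r(\rho),+\infty)$ except in the trivial case $\rho = \delta_0$. On the other hand, the Dyson equation \eqref{eqn:dyson} forces $H_\rho$ to be strictly decreasing on $(0,y^*)$ with $y^* := \lim_{x\downarrow r(\sigma)} G_\sigma(x)$ and range $(r(\sigma),+\infty)$. Strict convexity then makes $y^*$ the unique critical point of $H_\rho$ on $(0,\theta_{\max})$, where it attains its minimum $r(\sigma)$, and $H_\rho$ is strictly increasing on $(y^*,\theta_{\max})$. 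The boundary limits are direct: $H_\rho(y)\to +\infty$ as $y\to 0^+$ from the $1/y$ term, $\lim_{y\uparrow\theta_{\max}} H_\rho(y) = x_c(\rho)$ when $r(\rho) > 0$, and $\lim_{y\to+\infty} H_\rho(y) = 0$ when $r(\rho)\le 0$ by dominated convergence applied to the integrand $\alpha u/(\alpha-yu)$ (the approach being from below since $r(\sigma) < 0$ in that case and $H_\rho$ is strictly increasing on $(y^*,+\infty)$).

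Define $\overline{G}_\sigma$ as the inverse of $H_\rho|_{[y^*,\theta_{\max})}$ on its image, extended by the constant value $\theta_{\max}$ on $[x_c(\rho),+\infty)$ whenever $r(\rho) > 0$ and $x_c(\rho) < +\infty$. Property (1) holds by construction, and the level-set statement $\{w : H_\rho(w) = x\} = \{G_\sigma(x),\overline{G}_\sigma(x)\}$ uses that strict convexity of $\tilde H$ permits at most two preimages of each value. Property (2) reflects strict monotonicity on each branch together with the coincidence $\overline{G}_\sigma(r(\sigma)) = y^* = G_\sigma(r(\sigma))$. Property (3) follows from the analytic inverse function theorem on $(y^*,\theta_{\max})$, where $H_\rho' \ne 0$, and is to be read in the interior sense, since $\tilde H''(t^*) > 0$ forces $\overline{G}_\sigma$ to be only H\"older-$1/2$ at the left endpoint $r(\sigma)$. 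Property (4) combines the monotonicity of the inverse with the three boundary limits computed in the previous paragraph.

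The main technical obstacle is the convexity identity for $\tilde H''$; once it is established, the rest reduces to straightforward case analysis of the boundary behavior of $H_\rho$ at $\theta_{\max}$ versus $+\infty$.
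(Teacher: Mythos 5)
Your proof is correct and follows the same structural route as the paper. The paper establishes unimodality of $H_\rho$ by showing that $f_\rho(\theta) := \theta^2 H_\rho'(\theta) = -1 + \alpha\int u^2\theta^2(\alpha-u\theta)^{-2}\,\rho(\diff u)$ is strictly increasing on $(0,\theta_{\textup{max}})$, whereas you substitute $t=\alpha/y$ and show that $\tilde H(t) := H_\rho(\alpha/t) = (\tfrac{1}{\alpha}-1)t + t^2 G_\rho(t)$ is strictly convex via $\tilde H''(t) = 2\int u^2(t-u)^{-3}\,\rho(\diff u) > 0$. These are the same fact --- one checks $f_\rho(\theta) = -\alpha\,\tilde H'(\alpha/\theta)$, so $f_\rho$ increasing in $\theta$ is convexity of $\tilde H$ --- but the convexity phrasing is arguably cleaner and lets you treat the three cases more uniformly. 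One step is left as terse in your write-up as in the paper's: strict convexity gives only \emph{uniqueness} of a critical point, and identifying that critical point with $y^\ast := \lim_{x\downarrow r(\sigma)}G_\sigma(x)$ still requires ruling out a critical point at some $\theta_c > y^\ast$; the paper invokes ``uniqueness of the analytic continuation,'' and in your framework the cleanest way is to note that otherwise $H_\rho$ would be a decreasing real-analytic bijection on a neighborhood of $y^\ast$, so $G_\sigma$ would extend real-analytically past $r(\sigma)$, contradicting that $r(\sigma)$ is the right edge of $\supp(\sigma)$. Finally, your observation that $\overline{G}_\sigma$ is only H\"older-$1/2$ at $r(\sigma)$ (so item (3) must be read as analyticity on the interior of $D$) is accurate and sharper than the paper's own phrasing.
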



\subsection{Degenerate cases.}\
\label{subsec:degenerate}

For our purposes, there are two possibilities for degenerate behavior. We first explain them informally:
\begin{enumerate}
\item If $\rho(\{0\}) > 0$, this means that one is writing
\[
	H_N = \frac{1}{M} \sum_{i=1}^N d_iz_iz_i^T
\]
where a macroscopic fraction of the $d_i$'s are (asymptotically) zero. Our results do apply to this case as written, but for the sake of completeness, we check below that the rate function remains the same if one instead discards these zero $d_i$'s, which amounts to removing the zero atom from $\rho$, renormalizing to keep it a probability measure, and adjusting $\alpha$ correspondingly. 
\item The case $r(\rho) \leq 0$ and $r(\sigma) = 0$ is degenerate, since then $\lambda_{\textup{max}}(H_N)$ is essentially trapped at zero: On the one hand, $\lambda_{\textup{max}}(H_N)$ cannot push into the bulk at this scale, so it must be asymptotically nonnegative. On the other hand $H_N$ is (asymptotically almost) negative semidefinite, so $\lambda_{\textup{max}}(H_N)$ must be asymptotically nonpositive. This is expressed precisely in a degenerate LDP.
\end{enumerate}

We now formalize the results just explained. All the claims here are proved in Section \ref{subsec:r_sigma_0}.

\begin{lem}
\label{lem:r_sigma_0}
If $\rho$ is a measure on $\R$ such that $r(\rho) \leq 0$ and $\rho(\{0 \}) = 0$, then the following are equivalent:
\begin{itemize}
\item $r(\sigma) = 0$,
\item $\alpha \leq 1$. 
\end{itemize}
\end{lem}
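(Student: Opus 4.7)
The plan is to analyze $H_\rho$ from \eqref{eqn:mp} on its full domain $(0,\theta_{\textup{max}})=(0,+\infty)$ (the latter equality since $\theta_{\textup{max}}=+\infty$ when $r(\rho)\leq 0$), and to read off $r(\sigma)$ via the Dyson equation \eqref{eqn:dyson}. Two key asymptotic inputs will be
\[
\lim_{y\to 0^+} H_\rho(y) = +\infty \qquad\text{and}\qquad \lim_{y\to+\infty} yH_\rho(y) = 1-\alpha,
\]
the first immediate from the $1/y$ term and the second by dominated convergence, since for each fixed $u<0$ one has $\alpha yu/(\alpha-yu)\to-\alpha$ as $y\to\infty$, the integrand is bounded by $\alpha$ in absolute value, and $\rho(\{0\})=0$.

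For the implication $\alpha\leq 1 \Rightarrow r(\sigma)=0$, I would first show $H_\rho>0$ on $(0,+\infty)$: the strict pointwise bound $\alpha y\abs{u}/(\alpha+y\abs{u})<\alpha$ for $u<0$ and $y>0$, combined with $\rho(\{0\})=0$, gives $\abs{\int \alpha yu(\alpha-yu)^{-1}\,\rho(\diff u)}<\alpha$, hence $yH_\rho(y)>1-\alpha\geq 0$. If instead $r(\sigma)<0$, picking $x\in(r(\sigma),0)$ yields $G_\sigma(x)>0$ (from $\supp(\sigma)\subset(-\infty,r(\sigma)]$) and $H_\rho(G_\sigma(x))=x<0$, contradicting $H_\rho>0$. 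Combined with $r(\sigma)\leq 0$ from Theorem~\ref{thm:sigma}, this gives $r(\sigma)=0$.

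For $\alpha>1 \Rightarrow r(\sigma)<0$, the limit $yH_\rho(y)\to 1-\alpha<0$ shows $H_\rho<0$ for $y$ large. The derivative
\[
y^2 H_\rho'(y) = -1 + \int \frac{\alpha(yu)^2}{(\alpha-yu)^2}\,\rho(\diff u)
\]
satisfies $y^2 H_\rho'(y)\to\alpha-1>0$ as $y\to\infty$ (same dominated-convergence argument) and $\to -1$ as $y\to 0^+$. A direct calculation shows $t\mapsto \alpha t^2/(\alpha-t)^2$ has derivative $2\alpha^2 t/(\alpha-t)^3<0$ for $t<0$, so for each $u<0$ the integrand is strictly increasing in $y$; combined with $\rho(\{0\})=0$ this forces $y^2 H_\rho'(y)$ itself to be strictly increasing. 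Hence $H_\rho$ has a unique critical point $y^*>0$, at which it attains a unique minimum $m\defeq H_\rho(y^*)$, necessarily negative since $H_\rho$ strictly increases from $y^*$ to $+\infty$ with limit $0$.

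It remains to identify $r(\sigma)=m$; this is the main obstacle. The Dyson equation gives $r(\sigma)\geq m$ directly, since $x=H_\rho(G_\sigma(x))\geq m$ for all $x>r(\sigma)$. For the reverse, the cleanest route is to invoke Lemma~\ref{lem:overline_g} in the case $r(\sigma)<0$ (not excluded by its hypotheses): property~(2) identifies $r(\sigma)$ as the point at which $G_\sigma$ and $\overline G_\sigma$ coalesce, and the structure of $H_\rho$ (unique minimum at $y^*$) forces this coalescence to occur exactly at $H_\rho(y^*)=m$. A self-contained alternative is to build $\tilde G\defeq (H_\rho|_{(0,y^*)})^{-1}:(m,+\infty)\to (0,y^*)$, show it agrees with $G_\sigma$ on $(r(\sigma),+\infty)$ (both being right-inverses of $H_\rho$ vanishing at infinity and valued in $(0,y^*)$), and observe that a hypothetical $r(\sigma)>m$ would make $G_\sigma$ real-analytic past its right edge, contradicting the singularity of a Stieltjes transform at the edge of a non-trivial support. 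One should of course verify that invoking Lemma~\ref{lem:overline_g} does not create a circular dependence in the paper's overall structure.
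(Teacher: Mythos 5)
Your approach is genuinely different from the paper's. The paper works at the level of matrices: for $\alpha\leq 1$ it bounds $H_N\geq -K\,\frac1M Z^TZ$ with $K=\max_M(-\lambda_{\min}(\Gamma_M))$, pushes forward the Mar\v{c}enko--Pastur limit (whose left edge is $0$ when $\alpha\leq 1$), and compares cumulative distribution functions to deduce $r(\sigma)\geq 0$; for $\alpha>1$ it splits $\Gamma_M$ into a macroscopic block with eigenvalues $\leq c<0$ (possible because $\rho(\{0\})=0$) and a negative semidefinite remainder, then bounds $H_N$ above by $c$ times a Wishart matrix whose Mar\v{c}enko--Pastur limit is gapped away from zero, giving $r(\sigma)<0$. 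Your argument stays entirely at the level of the function $H_\rho$ from \eqref{eqn:mp} and the Dyson equation \eqref{eqn:dyson}. Both are correct, but the paper's route is shorter and avoids the edge-analyticity step that your argument needs to identify $r(\sigma)$ with $\min H_\rho$; yours, on the other hand, exposes more of the structure (the unique minimum $y^*$ of $H_\rho$, the two branches) that is anyway developed later in Lemma~\ref{lem:overline_g}.

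On your two routes for the final identification $r(\sigma)=m$ when $\alpha>1$: route~(a) is indeed circular, as you feared. Case~3 of the paper's proof of Lemma~\ref{lem:overline_g} opens with ``Using Remark~\ref{rem:r_sigma_0} and Lemma~\ref{lem:r_sigma_0}, we need only consider the case where $\rho(\{0\})=0$ and $\alpha>1$,'' so Lemma~\ref{lem:overline_g} presupposes Lemma~\ref{lem:r_sigma_0} in exactly the regime you would want to apply it. Route~(b) works and is the right self-contained fallback: since $H_\rho$ is strictly decreasing on $(0,y^*)$ and takes values in $(m,+\infty)$ there, while $G_\sigma$ is continuous, decreasing, valued in $(0,y^*)$ for $x$ large (because $G_\sigma(x)\to 0$), and never equals $y^*$ on $(r(\sigma),+\infty)$ (that would force $x=m\leq r(\sigma)$), one gets $G_\sigma=\bigl(H_\rho|_{(0,y^*)}\bigr)^{-1}$ on $(r(\sigma),+\infty)$; if $r(\sigma)>m$, then $G_\sigma$ continues real-analytically across $r(\sigma)$, and Schwarz reflection plus Stieltjes inversion force $\sigma$ to put no mass in a neighborhood of $r(\sigma)$, contradicting the definition of $r(\sigma)$ as the supremum of the support. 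Combined with $r(\sigma)\geq m$ from the Dyson equation, this yields $r(\sigma)=m<0$. So use route~(b); route~(a) cannot be used without restructuring the paper's order of proofs.
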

\begin{rem}\label{rem:r_sigma_0}
If $\rho(\{0\}) > 0$, then we claim that the following two procedures give the same result (meaning the same $\sigma$ and the same rate function): (a) applying our results to $\rho$ as written, or (b) creating a new measure $\tau$ that removes the spike at zero, changes $\alpha$ to some $\alpha'$, and considers a corresponding $H'_N$.

Indeed, if $\rho(\{ 0 \}) = \beta >0$ we can write $\rho = (1 - \beta) \rho' + \beta \delta_0$ where $\rho'$ is a measure with $r(\rho) \leq 0$ and $\rho'( \{0 \})=0$. Then it is easy to see that $H_N= H'_N + H''_N$ where  
	$H_N'= Z'^T \Gamma_N' Z'$ and $H_N''= Z''^T \Gamma_N'' Z''$ where $\Gamma'_N$ is a $M'_N \times M'_N$ matrix with $\lim_{N} \frac{M'_N}{N} = \alpha(1 - \beta)$ and where the empirical measure of $\Gamma'_N$ converges toward $\rho'$ and $||\Gamma''_N||$ converges to $0$. So for $\epsilon >0$, $\lim \frac{1}{N}\log \P[ ||H_N''|| \geq \epsilon] = - \infty$. This reduces the problem to stating a large deviation principle for $H_N'$, and furthermore we proved that $r(\sigma)=0$ if and only if $\alpha(1- \beta) \leq 1$ (we state this result in the following corollary). With $\Delta_N \defeq \frac{M'_N}{M_N} \Gamma'_N$, we can rewrite $H_N'$ as $H'_N = \frac{1}{M'_N} Z'^T \Delta_N Z'$. Since the empirical measure of $\Delta_N$ converges toward $\tau \defeq (1- \beta)* \rho'$ and we can apply our main result, Theorem $\ref{thm:main}$, to $H_N'$ with $\tau$ instead of $\rho$ and $\alpha' \defeq \alpha(1 - \beta)$ instead of $\alpha$. It remains to show then that the statement of Theorem \ref{thm:main} remains unchanged when we change $\rho$ into $\rho'$ and $\alpha$ into $\alpha'$. For this we need only to show that the functions $H_{\rho}$ and $H_{\tau}$ are the same, since we will see that the rate function of the large deviation principle only depends on $\rho$ through $H_{\rho}$. Indeed,
	\begin{align*}
		H_{\rho}(y) &= \frac{1}{y} + \int \frac{ \alpha u}{ \alpha - yu } \rho(du) =  \frac{1}{y} + (1- \beta) \int \frac{ \alpha u}{ \alpha - yu } \rho'(du) \\
		&= \frac{1}{y} + (1- \beta) \int \frac{ \alpha (1- \beta)^{-1} u}{ \alpha -(1- \beta)^{-1}  yu } \tau(du) = \frac{1}{y} +  \int \frac{(1- \beta) \alpha  u}{ (1- \beta)\alpha -  yu } \tau(du) = H_{\tau}(y).
	\end{align*}
	Therefore the rate function we obtain by applying Theorem \ref{thm:main} to $H'_N$ is the same as what we obtain by applying Theorem \ref{thm:main} to $H_N$.
\end{rem}
\begin{cor}\label{cor:r_sigma_0}
	If $\rho$ is a measure on $\R$ such that $r(\rho) \leq 0$, then the following are equivalent:
	\begin{itemize}
		\item $r(\sigma) = 0$,
		\item $\alpha ( 1 - \rho( \{0 \})) \leq 1$. 
	\end{itemize}
\end{cor}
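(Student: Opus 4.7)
My plan is to derive Corollary \ref{cor:r_sigma_0} from Lemma \ref{lem:r_sigma_0} by isolating the atom of $\rho$ at zero, using precisely the decomposition already set up in Remark \ref{rem:r_sigma_0}. Write $\beta \defeq \rho(\{0\})$. If $\beta = 0$ there is nothing to do: Lemma \ref{lem:r_sigma_0} gives $r(\sigma) = 0 \iff \alpha \leq 1$, which is the desired statement $\alpha(1-\beta) \leq 1$.

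Suppose then that $\beta > 0$. Decompose $\rho = (1-\beta)\rho' + \beta\delta_0$, where $\rho'$ is a probability measure with $\rho'(\{0\}) = 0$. Since $\rho$ carries an atom at $0$ of mass $\beta > 0$, the hypothesis $r(\rho) \leq 0$ forces $r(\rho) = 0$ and hence $r(\rho') \leq 0$, so $\rho'$ is a valid input to Lemma \ref{lem:r_sigma_0}. Define $\tau$ to be the pushforward of $\rho'$ under multiplication by $1-\beta$, and set $\alpha' \defeq \alpha(1-\beta)$. By construction $\tau$ still satisfies $\tau(\{0\}) = 0$ and $r(\tau) \leq 0$, so Lemma \ref{lem:r_sigma_0} applies to the pair $(\tau, \alpha')$ and yields
\[
	r(\sigma_\tau) = 0 \ \iff \ \alpha' \leq 1 \ \iff \ \alpha(1-\beta) \leq 1,
\]
where $\sigma_\tau$ is the limiting spectral measure attached to the $(\tau, \alpha')$ model.

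It remains to identify $r(\sigma)$ with $r(\sigma_\tau)$. This is the content of Remark \ref{rem:r_sigma_0}: by Theorem \ref{thm:sigma}, the measure $\sigma$ is determined by the Dyson equation $H_\rho(G_\sigma(x)) = x$, which involves $(\rho,\alpha)$ only through the function $H_\rho$; and the explicit computation at the end of that remark shows $H_\rho = H_\tau$. Hence $\sigma = \sigma_\tau$, and in particular $r(\sigma) = r(\sigma_\tau)$, completing the proof. The only thing to verify carefully is the bookkeeping just noted — that $\rho'$ inherits the hypotheses of Lemma \ref{lem:r_sigma_0} and that the identity $H_\rho = H_\tau$ really lets one transfer all information about the limit measure from $(\rho,\alpha)$ to $(\tau,\alpha')$ — and there is no genuine analytic obstacle beyond the lemma itself.
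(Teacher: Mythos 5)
Your proposal is correct and follows exactly the route the paper takes: decompose $\rho = (1-\beta)\rho' + \beta\delta_0$, set $\tau = (1-\beta)*\rho'$ and $\alpha' = \alpha(1-\beta)$, verify $H_\rho = H_\tau$ so that the two models share the same $\sigma$, and then invoke Lemma~\ref{lem:r_sigma_0} for the pair $(\tau,\alpha')$. This is precisely what the paper's Remark~\ref{rem:r_sigma_0} sets up, so there is nothing to add beyond noting that both you and the paper implicitly assume $\beta < 1$ (the extreme case $\rho = \delta_0$ makes $\rho'$ undefined, but is trivial since then $\sigma = \delta_0$ and $\alpha(1-\beta) = 0$).
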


\begin{defn}
\label{defn:degenerate}
We will summarize the (equivalent) conditions of Corollary \ref{cor:r_sigma_0} by saying that the pair $(\rho,\alpha)$ is \emph{degenerate}.
\end{defn}

This definition is justified by the following (straightforward) result.

\begin{prop}
\label{prop:degenerate}
Define $I^{\textup{degen}} : \R \to [0,+\infty]$ by
\[
	I^{\textup{degen}}(x) = \begin{cases} 0 & \text{if } x = 0, \\ +\infty & \text{otherwise}. \end{cases}
\]
If $\mu$ is Gaussian, $\rho(\{0\}) = 0$, and the pair $(\rho,\alpha)$ is degenerate, then $\lambda_{\textup{max}}(H_N)$ satisfies a large deviation principle at speed $N$ with the good rate function $I^{\textup{degen}}$. 
\end{prop}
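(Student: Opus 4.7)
The plan is to verify both component bounds of an LDP at speed $N$. Because $I^{\textup{degen}}$ takes the value $0$ at $0$ and $+\infty$ elsewhere, the whole statement reduces to the super-exponential tail bound
\begin{equation*}
	\P(|\lambda_{\textup{max}}(H_N)| > \epsilon) = e^{-\omega(N)} \quad \text{for every } \epsilon > 0:
\end{equation*}
this delivers the LDP upper bound on any closed set avoiding $0$ directly, and since it also implies $\lambda_{\textup{max}}(H_N) \to 0$ in probability, it gives the LDP lower bound on any open set $G$ containing $0$ via $\P(\lambda_{\textup{max}}(H_N) \in G) \to 1$.

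For the right-hand tail $\P(\lambda_{\textup{max}}(H_N) > \epsilon)$, I would decompose $\Gamma = \Gamma_+ - \Gamma_-$ into positive and negative parts, yielding $H_N \preceq \frac{1}{M} Z^T \Gamma_+ Z$; since this latter matrix has the same nonzero eigenvalues as $\frac{1}{M} \Gamma_+^{1/2} Z Z^T \Gamma_+^{1/2}$, I obtain $\lambda_{\textup{max}}(H_N) \leq \|\Gamma_+\| \cdot \|ZZ^T / M\|$. By Assumption \ref{assn:no_outliers} combined with $r(\rho) \leq 0$, $\|\Gamma_+\| \to \max(0, r(\rho)) = 0$, so $\P(\lambda_{\textup{max}}(H_N) > \epsilon) \leq \P(\|ZZ^T/M\| > \epsilon/\delta)$ for every $\delta > 0$ and all $N$ large enough. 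The classical Gaussian Wishart top-eigenvalue LDP \cite{MajVer2009} gives $\tfrac{1}{N}\log\P(\|ZZ^T/M\| > R) \to -\phi_\alpha(R)$ with $\phi_\alpha(R) \uparrow \infty$ as $R \uparrow \infty$, so sending $\delta \to 0$ drives the rate to $-\infty$.

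For the left-hand tail $\P(\lambda_{\textup{max}}(H_N) < -\epsilon)$, with $H_N^\pm := \frac{1}{M} Z^T \Gamma_\pm Z \succeq 0$, Weyl's inequality gives $\lambda_{\textup{max}}(H_N) \geq -\lambda_{\textup{min}}(H_N^-)$, so the event forces $\lambda_{\textup{min}}(H_N^-) > \epsilon$. I split into subcases based on the aspect ratio. If $M_N < N$ (automatic eventually when $\alpha < 1$), then $\rank(H_N^-) \leq \rank(Z) \leq M_N < N$, so $H_N^-$ has a null eigenvalue and $\lambda_{\textup{min}}(H_N^-) = 0$; the probability is exactly $0$. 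If $M_N \geq N$ (only possible when $\alpha = 1$), the variational characterization applied with the PSD diagonal $\Gamma_-$ gives
\begin{equation*}
	\lambda_{\textup{min}}(H_N^-) = \min_{\|v\|=1} \tfrac{1}{M} \sum_{i=1}^M (\Gamma_-)_{ii} (Zv)_i^2 \leq \|\Gamma_-\| \cdot \lambda_{\textup{min}}(Z^T Z / M),
\end{equation*}
with $\|\Gamma_-\| \to |\ell(\rho)| < \infty$ (and $\ell(\rho) < 0$ since $\rho$ is compactly supported with no atom at $0$). The event then forces $\lambda_{\textup{min}}(Z^T Z / M) \geq \epsilon/(2|\ell(\rho)|)$ for $N$ large, which macroscopically displaces the left edge of the (near-)square Gaussian Wishart bulk (asymptotically at $0$) and so has probability $e^{-\Omega(N^2)}$ by the speed-$N^2$ LDP for the empirical spectral measure of a Gaussian Wishart. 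The main technical obstacle is justifying this last estimate uniformly as $M_N/N$ approaches but does not necessarily equal $1$; one natural route is to exploit the orthogonal invariance of $H_N$ in the Gaussian case, write down the explicit Laguerre-type joint eigenvalue density, and verify the $e^{-\Omega(N^2)}$ bound directly by a Coulomb-gas argument rather than invoking an empirical-measure LDP as a black box.
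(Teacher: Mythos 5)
Your overall reduction — that it suffices to show $\P(|\lambda_{\textup{max}}(H_N)| > \epsilon) = e^{-\omega(N)}$ for every $\epsilon > 0$ — is correct, and your right-tail bound via $\lambda_{\textup{max}}(H_N) \leq \|\Gamma_+\| \cdot \|ZZ^T/M\|$ with $\|\Gamma_+\| \to 0$ is sound; it is essentially the same idea the paper uses, which couples $H_N$ with $H_N^{(\textup{nsd})} = \frac{1}{M}Z^T\Gamma^{(\textup{nsd})}Z$ (where $\Gamma^{(\textup{nsd})}$ keeps only the nonpositive $d_i$'s), bounds $\|H_N - H_N^{(\textup{nsd})}\| \leq \frac{\epsilon_N}{M}\|Z\|^2$ with $\epsilon_N \to 0$, and invokes exponential tightness of $\|Z\|/\sqrt{M}$ to get exponential equivalence. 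Both arguments come down to the same two ingredients — $\|\Gamma_+\| \to 0$ and exponential tightness of the pure Wishart norm — and either phrasing works.

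For the left tail, however, you have a genuine gap and you are also overcomplicating. The Coulomb-gas route you sketch (Laguerre joint density, speed-$N^2$ LDP, uniformity as $M_N/N \to 1$ through a possibly oscillating sequence) is real work: the sign and size of the exponent $(M-N-1)/2$ degenerates as $M_N/N \to 1$, and you would need a uniform-in-$N$ version of the empirical-measure LDP that is not quite in the literature as a black box. But none of this is needed, because the paper already provides the right tool: Proposition \ref{prop:sub_Gaussian_concentration} shows $\lim_N \frac{1}{N}\log\P(d_{\textup{BL}}(\hat{\mu}_{H_N},\sigma) > \delta) = -\infty$ for every $\delta > 0$, and this holds regardless of degeneracy. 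Since $r(\sigma) = 0$ in the degenerate case, the event $\{\lambda_{\textup{max}}(H_N) < -\epsilon\}$ forces the entire spectrum of $H_N$ below $-\epsilon$; testing $\hat{\mu}_{H_N}$ against a bounded-Lipschitz bump supported in $(-\epsilon, 0]$ (on which $\sigma$ has positive mass) shows this forces $d_{\textup{BL}}(\hat{\mu}_{H_N},\sigma)$ bounded away from $0$. Hence the left tail is superexponentially small at speed $N$ with no Wishart-minimum-eigenvalue analysis at all. This is exactly what the paper means by ``the same proof as in the nondegenerate case shows that $\lambda_{\textup{max}}$ cannot push into the bulk at this speed.'' You should replace your left-tail subcase analysis with this one-line appeal to the empirical-measure concentration, which closes the gap and shortens the proof considerably.
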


In the following, we will always assume that the pair $(\rho,\alpha)$ is nondegenerate. 


\subsection{Main result in the real setting.}\
\label{subsec:main}

\begin{defn}
Suppose the pair $(\rho,\alpha)$ is nondegenerate. With $D$ as in \eqref{eqn:domain_d}, $G_\sigma$ the Stieltjes transform as usual, and $\overline{G}_\sigma$ the ``second branch'' of the Stieltjes transform from Lemma \ref{lem:overline_g}, define $I_\sigma : D \to [0,+\infty]$ by
\[
	I_\sigma(x) = \begin{cases} \frac{\beta}{2}\int_{r(\sigma)}^x [\overline{G}_\sigma(u) - G_\sigma(u)] \diff u & \text{if } x \in D, \\ +\infty & \text{otherwise.} \end{cases}
\]
\end{defn}

Our main theorem holds under either of the following two assumptions, recalling that $\mu$ is the common distribution of the entries of $Z$.

\begin{assnA}
\label{assn:ssg}
The measure $\mu$ is sharp sub-Gaussian, and the support of $\rho$ is in $[0,\infty)$.
\end{assnA}

\begin{assnA}
\label{assn:g}
The measure $\mu$ is Gaussian.
\end{assnA}

\begin{thm}
\label{thm:main}
\textbf{(Main theorem, real version)} 
If Assumption \ref{assn:no_outliers} holds, the pair $(\rho,\alpha)$ is nondegenerate (in the sense of Definition \ref{defn:degenerate}), and also either Assumption \ref{assn:ssg} or Assumption \ref{assn:g} holds, then $\lambda_{\textup{max}}(H_N)$ satisfies a large deviation principle at speed $N$ with the good rate function $I_{\sigma}$. This function is convex and strictly increasing on $D$ (in particular, it vanishes uniquely at $r(\sigma)$). If additionally $r(\rho) > 0$, then
\[
	\lim_{x \to +\infty} \frac{I_\sigma(x)}{x} = \frac{\theta_{\textup{max}}}{2}.
\]
\end{thm}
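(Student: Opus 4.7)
The plan follows the two-stage roadmap announced in the introduction: first prove the LDP for models with $x_c(\rho)=+\infty$ via tilting by spherical integrals, then extend to the general case $x_c(\rho)<+\infty$ by approximation using $x_c=+\infty$ models. The analytic properties of $I_\sigma$ and its growth at infinity will then be read off from Lemma \ref{lem:overline_g} and the defining integral formula.

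For $x_c(\rho)=+\infty$, I follow the Guionnet--Husson lineage. The key tool is the HCIZ/Guionnet--Ma\"ida spherical integral $I_N(H_N,\theta)$ at a parameter $\theta\in(0,\theta_{\textup{max}})$. On sequences with $\hat\mu_{H_N}\to\sigma$ and $\lambda_{\textup{max}}(H_N)\to x$, one has $\tfrac{1}{N}\log I_N(H_N,\theta)\to J(\sigma,\theta)$, whose Legendre transform in $\theta$ is exactly $I_\sigma(x)$ by construction of $\overline{G}_\sigma$ in Lemma \ref{lem:overline_g}. Meanwhile the annealed quantity $\E[I_N(H_N,\theta)]$ can be computed (or sharply upper-bounded) directly: writing $H_N=M^{-1}Z^T\Gamma Z$, it reduces to a Gaussian-type integral over the rows of $Z$, and Definition \ref{defn:ssg} yields the \emph{same} upper bound as in the Gaussian case, so Assumptions \ref{assn:ssg} and \ref{assn:g} are treated uniformly. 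Dividing these two estimates and optimizing over $\theta$ gives the LDP upper bound. For the matching lower bound I use the rank-one perturbation (``Bernoulli trick'') of Guionnet--Husson: biasing a single column of $Z$ along a chosen direction forces $\lambda_{\textup{max}}(H_N)\approx x$ under a perturbed law whose entropy cost matches $I_\sigma(x)$. Exponential tightness and concentration for $\lambda_{\textup{max}}$ at speed $N$, needed both to close the LDP and to bypass the compactly-supported-or-log-Sobolev assumption present in earlier works, come from the extension of Talagrand's product-measure inequality established in Appendix A.

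For $x_c(\rho)<+\infty$, I approximate $\rho$ by a sequence $\rho_\varepsilon$ with $x_c(\rho_\varepsilon)=+\infty$---for instance by adding a small tail component near $r(\rho)$ forcing $G_{\rho_\varepsilon}(r(\rho_\varepsilon))=+\infty$---and build a companion model $H_N^{(\varepsilon)}$ from the \emph{same} entries of $Z$ and a diagonal $\Gamma_\varepsilon$ coupled to $\Gamma$. Stage one supplies an LDP for $H_N^{(\varepsilon)}$ with rate $I_{\sigma_\varepsilon}$, and continuity of $G_\sigma$ and $\overline{G}_\sigma$ in the input measure (via the Dyson equation \eqref{eqn:dyson} and Lemma \ref{lem:overline_g}) yields $I_{\sigma_\varepsilon}\to I_\sigma$ pointwise on $D$. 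Standard eigenvalue-perturbation bounds compare $\lambda_{\textup{max}}(H_N)$ and $\lambda_{\textup{max}}(H_N^{(\varepsilon)})$ up to errors that vanish as $\varepsilon\to 0$ with overwhelming probability at speed $N$, so a sandwich argument transfers the LDP from $H_N^{(\varepsilon)}$ to $H_N$ after sending $\varepsilon\to 0$.

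For the structural properties: differentiating gives $I_\sigma'(x)=\tfrac{\beta}{2}(\overline{G}_\sigma(x)-G_\sigma(x))$, which by Lemma \ref{lem:overline_g}(2) is strictly positive on $(r(\sigma),+\infty)\cap D$ and zero at $r(\sigma)$, so $I_\sigma$ is strictly increasing on $D$ with unique zero at $r(\sigma)$. Convexity follows because $\overline{G}_\sigma$ is nondecreasing on $D$ by Lemma \ref{lem:overline_g}(4), while $G_\sigma$ is strictly decreasing on $(r(\sigma),+\infty)$ as the Stieltjes transform of $\sigma$ off its support, so $I_\sigma'$ is nondecreasing. When $r(\rho)>0$, both subcases of Lemma \ref{lem:overline_g}(4) give $\overline{G}_\sigma(x)\to\theta_{\textup{max}}$ and $G_\sigma(x)\to 0$ as $x\to+\infty$, so $I_\sigma'(x)\to\beta\theta_{\textup{max}}/2=\theta_{\textup{max}}/2$ (since $\beta=1$) and the claimed limit $I_\sigma(x)/x\to\theta_{\textup{max}}/2$ follows; this also yields the ``good rate function'' property since it forces $I_\sigma\to+\infty$ at infinity (and in the $r(\rho)\leq 0$ case, the third subcase of Lemma \ref{lem:overline_g}(4) gives $I_\sigma\to+\infty$ as $x\uparrow 0$). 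The principal obstacle throughout is handling the sharp sub-Gaussian case beyond $x_c$: direct tilting by $I_N(H_N,\theta)$ would demand the unreachable endpoint $\theta=\theta_{\textup{max}}$, and the whole point of the approximation stage is to sidestep this by constructing $\rho_\varepsilon$ that preserves Assumption \ref{assn:ssg} on the entry law $\mu$ while letting us control the perturbation uniformly at the LDP exponential scale.
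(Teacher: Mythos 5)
Your roadmap matches the paper's at the structural level: prove the LDP via tilting by spherical integrals for models with $x_c(\rho)=+\infty$, then reach general $\rho$ by approximating with $x_c=+\infty$ models (a truncation near $r(\rho)$ that forces $G$ to blow up at the right edge, Lemma \ref{lem:Approx}-style deterministic comparison $\|H_N-H_N^{(\epsilon)}\|\leq \epsilon\|Z\|^2/M$, and a Dembo--Zeitouni exponentially-good-approximation argument). Your derivation of convexity, strict monotonicity, and the asymptotic slope $\theta_{\textup{max}}/2$ from Lemma \ref{lem:overline_g} is also exactly the paper's reasoning.

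The one substantive deviation is your \emph{lower bound} in the $x_c=+\infty$ stage. You claim to use a ``rank-one perturbation / Bernoulli trick'' --- biasing one column of $Z$ and paying an entropy cost --- and attribute this to Guionnet--Husson. That is not what the paper (or \cite{GuiHus2020}) does here. The paper's lower bound is also a change-of-measure argument through the tilted law $\P^{\theta_x}$: Lemma \ref{lem:a_is_likely} shows $\P^{\theta_x}(\mc{A}_{x,\delta,\epsilon}^L)\to 1$ via the weak upper bound under $\P^{\theta_x}$ combined with uniqueness of the maximizing $\theta$, and then one unwinds the Radon--Nikodym derivative using Lemmas \ref{lem:annealed} and \ref{lem:quenched}. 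The two-sided annealed spherical integral estimate (in particular the lower bound of Lemma \ref{lem:annealed}, which relies on restricting $e$ to the delocalized set $V^\epsilon_N$ so that the sharp sub-Gaussian Laplace transform is approximately Gaussian) is precisely what makes the sharp sub-Gaussian rate match the Gaussian one. A direct ``perturb one row and pay relative entropy'' argument would face the difficulty that for a general sharp sub-Gaussian $\mu$ the Legendre transform $\Lambda^*$ of the log-moment generating function satisfies $\Lambda^*(v)\geq v^2/2$, so the entropy cost can strictly exceed the Gaussian one unless the perturbation is delocalized across many entries. This is exactly the subtlety the tilting/spherical-integral lower bound is engineered to handle; if you want to pursue the perturbative route you need to argue delocalization carefully, and you should not expect it to be a drop-in replacement.

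A minor point: you attribute the concentration step (removing the compact-or-log-Sobolev assumption) to Appendix A, but Appendix A's Talagrand extension is there only for the complex-Hermitian case with correlated real and imaginary parts. For Theorem \ref{thm:main} (real) the concentration is Proposition \ref{prop:sub_Gaussian_concentration}, which truncates $Z=A+B$, bounds $\rank(B)$ via Bennett's inequality, and then invokes either Herbst (Gaussian case, Lemma \ref{lem:herbstconcentration}) or the original Guionnet--Zeitouni \cite{GuiZei2000} product-measure concentration (sharp sub-Gaussian case, Lemma \ref{lem:guizeiexpansion}).
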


Theorem \ref{thm:main} will follow from the following two results.

\begin{prop}
\label{prop:main_infinite}
If Assumption \ref{assn:no_outliers} holds, the pair $(\rho,\alpha)$ is nondegenerate, and also either Assumption \ref{assn:ssg} or Assumption \ref{assn:g} holds, and
\[
	x_c(\rho) = +\infty,
\]
then $\lambda_{\textup{max}}(H_N)$ satisfies a large deviation principle at speed $N$ with the good rate function $I_{\sigma}$.
\end{prop}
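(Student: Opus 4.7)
My plan is to use the standard ``tilting by spherical integrals'' method, which in the regime $x_c(\rho) = +\infty$ does not require any of the approximation tricks needed for Theorem~\ref{thm:main} in general. By Lemma~\ref{lem:overline_g}, the map $\overline{G}_\sigma$ sends $D$ bijectively onto $(0, \theta_{\textup{max}})$, so each target $x \in D$ admits a canonical dual tilting parameter $\theta_x \defeq \overline{G}_\sigma(x)$ strictly inside the admissible range of the spherical integral.

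\textbf{Computational engine.} The key inequality is the sharp-sub-Gaussian bound
\[
	\E\bigl[e^{s(z_i \cdot v)^2}\bigr] \leq (1-2s)^{-1/2}, \qquad 0 \leq s < \tfrac{1}{2},
\]
valid for any unit vector $v$, proved by a Gaussian--Jensen argument: writing $e^{sX^2} = \E_G[e^{\sqrt{2s}\,GX}]$ for $G$ an independent standard Gaussian, and using that $X = z_i \cdot v$ inherits sharp-sub-Gaussianity from the coordinates of $z_i$. Equality holds under Assumption~\ref{assn:g}. Applied row-by-row with $s$ proportional to $d_i$, this gives, for every $v \in S^{N-1}$ and every $\theta$ in the admissible range,
\[
	\E\bigl[e^{N\theta \langle v, H_N v\rangle}\bigr] \leq \prod_{i=1}^M (1 - 2N\theta d_i/M)^{-1/2},
\]
which requires $d_i \geq 0$ under Assumption~\ref{assn:ssg} (no sign restriction under Assumption~\ref{assn:g}). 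Taking $\frac{1}{N}\log$ and using $\hat{\mu}_\Gamma \to \rho$ together with $M/N \to \alpha$, the right side tends to $L(\theta) \defeq -\tfrac{\alpha}{2}\int \log(1 - 2\theta u/\alpha)\,\rho(\diff u)$.

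\textbf{Upper bound.} With the spherical integral $I_N(\theta, H_N) \defeq \E_v[e^{N\theta \langle v, H_N v\rangle}]$ ($v$ uniform on $S^{N-1}$), Fubini and the previous display give $\E[I_N(\theta, H_N)] \leq e^{NL(\theta) + o(N)}$. Conversely, known spherical-integral asymptotics of Guionnet--Maida type, refined to account for a single outlier eigenvalue at $x$, give the lower bound $I_N(\theta, H_N) \geq e^{N \tilde J_\sigma(\theta, x) - o(N)}$ on the event $\{|\lambda_{\max}(H_N) - x| < \epsilon\}$, once $\theta$ exceeds $G_\sigma(x)$. Taking $\theta = \theta_x$ (which exceeds $G_\sigma(x)$ by Lemma~\ref{lem:overline_g}(2)) and combining,
\[
	\P(|\lambda_{\max}(H_N) - x| < \epsilon) \leq e^{-N[\tilde J_\sigma(\theta_x, x) - L(\theta_x)] + o(N)};
\]
the Dyson equation $H_\rho(\overline{G}_\sigma(x)) = x$ from Lemma~\ref{lem:overline_g} lets one perform the Legendre transform explicitly and identify the bracketed quantity as $I_\sigma(x)$.

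\textbf{Lower bound and main obstacle.} Under Assumption~\ref{assn:g} the matching lower bound comes from an explicit rank-one exponential tilt of the joint Gaussian law of $Z$ (weight by $e^{N\theta_x \langle v_0, H_N v_0\rangle}$ for a fixed $v_0 \in S^{N-1}$), which merely rescales the variance of each $z_i$ along $v_0$ and produces a rank-one perturbation of the $\sigma$-bulk; a standard BBP analysis shows $\lambda_{\max}(H_N) \to x$ under the tilted law, and the Radon--Nikodym cost is $I_\sigma(x) + o(1)$ by the computation above. Under Assumption~\ref{assn:ssg} this explicit density tilt is not a probability measure, so we tilt instead by the rotation-invariant weight $I_N(\theta_x, H_N)$ itself. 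The main obstacle is then to show that under this averaged spherical tilt the top eigenvalue and the empirical measure still concentrate on $x$ and $\sigma$ respectively, which requires uniform concentration estimates of Hanson--Wright and Talagrand type (the latter being precisely the subject of Appendix~A). Verifying that the entropic cost matches \emph{exactly} the Gaussian rate $I_\sigma(x)$, rather than being strictly larger, is the step in which the sharpness of the sub-Gaussian assumption is indispensable.
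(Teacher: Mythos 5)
Your overall strategy is the same as the paper's: tilt by spherical integrals, match the annealed integral to $F(\rho,\theta)$ by Hubbard--Stratonovich together with the sharp-sub-Gaussian bound, use Guionnet--Ma\"ida-type quenched asymptotics for the upper bound, and prove the lower bound by showing the tilt concentrates at $x$. However, two of your choices diverge from the paper in ways worth flagging.

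First, for the lower bound in the Gaussian case you propose an explicit rank-one density tilt $e^{N\theta_x\ip{v_0,H_Nv_0}}$ for a fixed $v_0$, followed by a BBP-type spectral analysis. The paper instead uses the \emph{same} rotation-averaged spherical tilt $\P^{\theta}$ (Definition~\ref{def:tilt}) in both the Gaussian and sub-Gaussian cases, and establishes concentration of $\lambda_{\textup{max}}(H_N)$ under $\P^{\theta_x}$ not through a BBP computation but by invoking the already-proved weak LDP \emph{upper} bound for tilted measures (Proposition~\ref{prop:wkldpub}), whose rate function $\widetilde{I_\sigma}(\cdot)-I_\sigma(\cdot,\theta_x)$ vanishes uniquely at $x$ by the uniqueness of the optimizing $\theta$. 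Your route is viable but imports an outlier-eigenvalue analysis for generalized sample covariance matrices that is both unnecessary and not available off the shelf at the required uniformity.

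Second, and more substantively, the way you describe the ``main obstacle'' under Assumption~\ref{assn:ssg}---needing ``uniform concentration estimates of Hanson--Wright and Talagrand type'' directly under the averaged spherical tilt---would be genuinely difficult, because $\P^{\theta}$ is a mixture of tilted product measures (one for each $e$) and is not itself a product measure, so Talagrand's machinery does not apply to it directly. The paper avoids this entirely: Proposition~\ref{prop:sub_Gaussian_concentration} establishes concentration of $\hat{\mu}_{H_N}$ under the \emph{untilted} law $\P$ (this is where truncation and Guionnet--Zeitouni/Talagrand enter), and then Lemma~\ref{lem:unlikely_adding_theta}, a short H\"older-inequality argument using the fact that the tilting density has annealed normalization of subexponential scale, automatically transfers any superexponentially unlikely event from $\P$ to $\P^{\theta}$. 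Without that transfer lemma your ``main obstacle'' paragraph leaves a real gap. (One minor side note: $\overline{G}_\sigma$ maps $D$ bijectively onto $[\theta_c,\theta_{\textup{max}})$, not onto $(0,\theta_{\textup{max}})$; this does not affect the rest of your argument.)
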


\begin{prop}
\label{prop:main_finite}
Proposition \ref{prop:main_infinite} implies Theorem \ref{thm:main}.
\end{prop}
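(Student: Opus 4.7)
Since Proposition \ref{prop:main_infinite} already covers the case $x_c(\rho) = +\infty$, the task reduces to proving Theorem \ref{thm:main} when $x_c(\rho) < +\infty$, a regime that forces $r(\rho) > 0$ and $G_\rho(r(\rho)) < +\infty$. The plan is to approximate $(\rho,\alpha)$ by a sequence of pairs $(\rho_k,\alpha)$ with $x_c(\rho_k) = +\infty$, apply Proposition \ref{prop:main_infinite} to each approximant, and then transfer the resulting LDP back via a coupling that keeps $\lambda_{\textup{max}}$ close to its approximate counterpart on an event of exponentially small complement.

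\textbf{Approximation scheme.}
Fix $c_k\downarrow 0$. For each $N$, define $\tilde\Gamma^{(k)}_M$ by replacing the top $\ell_k \defeq \lfloor c_k M \rfloor$ diagonal entries of $\Gamma_M$ by the common value $r(\rho)$, leaving the rest unchanged. Then $\hat\mu_{\tilde\Gamma^{(k)}_M}$ converges weakly to a measure $\rho_k$ that is $\rho$ with its top $c_k$-mass relocated to an atom at $r(\rho)$. This atom forces $G_{\rho_k}(r(\rho_k)) = +\infty$ and hence $x_c(\rho_k) = +\infty$; meanwhile $r(\rho_k) = r(\rho)>0$, $\ell(\rho_k) = \ell(\rho)$, and the non-degeneracy of $(\rho_k,\alpha)$ together with Assumption \ref{assn:no_outliers} for $\tilde\Gamma^{(k)}$ are inherited. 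Setting $H_N^{(k)} \defeq \tfrac{1}{M}Z^T \tilde\Gamma^{(k)}_M Z$, the difference
\[
	H_N^{(k)} - H_N = \frac{1}{M}\sum_{i\in I_k}\bigl(r(\rho) - d_i\bigr)\,z_i z_i^T
\]
(where $I_k$ indexes the modified entries) has operator norm at most $\bigl(\max_{i\in I_k}|r(\rho)-d_i|\bigr)\cdot \|Z_{I_k}\|^2/M$, for $Z_{I_k}$ the $\ell_k\times N$ row-submatrix of $Z$. A Bai--Yin-type operator-norm tail (valid since $\mu$ is Gaussian or sharp sub-Gaussian) gives $\P(\|Z_{I_k}\|^2 > KN)\leq e^{-c(K)N}$ with $c(K)\to\infty$, and Assumption \ref{assn:no_outliers} yields $\max_{i\in I_k}|r(\rho)-d_i|\to 0$ as $N\to\infty$ and then $k\to\infty$. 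Combining, $\|H_N^{(k)} - H_N\|\leq \delta_k$ on an event of probability $\geq 1-e^{-c(K)N}$, with $\delta_k\to 0$.

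\textbf{Passage to the limit and properties of $I_\sigma$.}
Proposition \ref{prop:main_infinite} gives an LDP at speed $N$ with good rate $I_{\sigma_k}$ for $\lambda_{\textup{max}}(H_N^{(k)})$. As $k\to\infty$, $\rho_k\to \rho$ weakly, so $H_{\rho_k}\to H_\rho$ locally uniformly on $(0,\theta_{\textup{max}})$. Inverting the Dyson equation gives $r(\sigma_k)\to r(\sigma)$, $G_{\sigma_k}\to G_\sigma$, and $\overline G_{\sigma_k}\to \overline G_\sigma$ pointwise on the interior of $D$; for $x\geq x_c(\rho)$, the atomic contribution $c_k\alpha r(\rho)/(\alpha - y r(\rho))$ in $H_{\rho_k}$ blows up as $y\uparrow \theta_{\textup{max}}$, pushing $\overline G_{\sigma_k}(x)$ up to $\theta_{\textup{max}}=\overline G_\sigma(x)$. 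Dominated convergence in the defining integral for $I_{\sigma_k}$ yields pointwise $I_{\sigma_k}\to I_\sigma$, which by convexity upgrades to locally uniform convergence on $D$. The standard LDP approximation lemma -- fatten/shrink closed/open sets by $\delta_k$, absorb the error event of probability $e^{-c(K)N}$ using exponential tightness (immediate from $\lambda_{\textup{max}}(H_N)\leq \|\Gamma\|\,\|Z\|^2/M$), then send $k\to\infty$ -- transfers the LDP to $\lambda_{\textup{max}}(H_N)$ with rate $I_\sigma$. The asserted convexity, strict monotonicity, and growth $\lim_{x\to+\infty} I_\sigma(x)/x = \theta_{\textup{max}}/2$ are then read off from $I_\sigma'(x) = \tfrac{\beta}{2}(\overline G_\sigma(x) - G_\sigma(x))$ combined with the monotonicity and limits supplied by Lemma \ref{lem:overline_g}.

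\textbf{Main obstacle.}
The delicate point is the convergence $\overline G_{\sigma_k}\to\overline G_\sigma$ across the threshold $x_c(\rho)$: on $[r(\sigma),x_c(\rho))$ the two-branch inversion of the Dyson equation via the implicit function theorem yields smooth convergence, whereas at and beyond $x_c(\rho)$ the limiting value $\theta_{\textup{max}}$ is attained only in the limit $k\to\infty$. Showing that $\overline G_{\sigma_k}$ indeed approaches $\theta_{\textup{max}}$ from below uniformly enough to feed dominated convergence into the rate-function formula is the main technical work; everything else is standard once that convergence is in hand.
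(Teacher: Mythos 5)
Your overall plan matches the paper's: approximate $\rho$ by a measure with an atom at $r(\rho)$ (hence $x_c=+\infty$), couple $H_N$ with its truncated version using the same $Z$, apply Proposition~\ref{prop:main_infinite} to each approximant, and invoke the ``exponentially good approximations'' transfer theorem of Dembo--Zeitouni together with convergence of the rate functions. That is exactly the structure of the paper's Section~\ref{SecReg}.

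Two points are worth flagging. First, your truncation keeps the top $\lfloor c_k M\rfloor$ entries, whereas the paper keeps all entries exceeding $r(\rho)-\epsilon$. The paper's threshold-based choice gives the deterministic bound $\|\Gamma-\Gamma^{(\epsilon)}\|\leq\epsilon$, so $\|H_N-H_N^{(\epsilon)}\|\leq \epsilon\|Z\|^2/M$ with no need to argue that the $c_k$-quantile of $\hat\mu_{\Gamma_M}$ is close to $r(\rho)$ (an argument that requires some additional care about how weak convergence of empirical measures controls quantiles, especially if $\rho$ has flat stretches near $r(\rho)$). Your route is fixable but is an extra step for no gain.

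Second, and more substantively: you explicitly defer the convergence $\overline G_{\sigma_k}\to\overline G_\sigma$ across the threshold $x_c(\rho)$ to ``the main technical work,'' and then assert that dominated convergence in the rate-function integral will follow. As written, this is a genuine gap: your integrand depends on $k$ through both $\overline G_{\sigma_k}$ and $G_{\sigma_k}$, and the lower limit $r(\sigma_k)$ also moves with $k$, so some argument is required. The paper sidesteps the issue entirely by rewriting
\[
	I^{(\epsilon)}(x) = \frac{1}{2}\int_0^x\int_0^{\theta_{\textup{max}}}\mathds{1}\{H_{\rho^{(\epsilon)}}(u)\leq t\}\,\diff u\,\diff t,
\]
and then using that $\epsilon\mapsto H_{\rho^{(\epsilon)}}(\theta)$ is nonincreasing pointwise (which is easy to verify: replacing $d_i$'s near the edge by $r(\rho)$ can only increase the integrand $\alpha u/(\alpha-\theta u)$), so $I-I^{(\epsilon)}$ equals half the Lebesgue measure of a nested family of sets with empty intersection, which shrinks to zero. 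This monotone, level-set argument replaces any direct analysis of $\overline G_{\sigma_k}$ near $x_c$, and is what makes the proof clean. You should replace your dominated-convergence plan with this, or at minimum spell out the locally uniform control of $\overline G_{\sigma_k}$ below $\theta_{\textup{max}}$ before invoking dominated convergence.
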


\begin{rem}
Propositions \ref{prop:main_infinite} and \ref{prop:main_finite} have fairly different proofs from one another. Proposition \ref{prop:main_infinite} is proved by tilting with spherical integrals; for every $x < x_c(\rho)$, we are able to find appropriate tilt that makes the deviations $\{\lambda_{\textup{max}} \approx x\}$ likely. But Proposition \ref{prop:main_finite} uses neither explicit tilting, nor almost anything else in the proof details of Proposition \ref{prop:main_infinite}; instead it goes by approximating models with $x_c(\rho) < +\infty$ using models with $x_c(\rho) = +\infty$, and textbook results on obtaining LDPs by taking limits in a sequence of approximating LDPs.
\end{rem}

\begin{rem}
\label{rem:ssg_negative_di}
In the non-Gaussian case, the requirement that $\rho$ be supported in $(0,\infty)$ is not just technical. When some $d_i$'s are negative, the rate function should likely be different from the Gaussian case. Indeed, suppose all the $d_i$'s are equal to $-d$ for some $d > 0$. Then of course $\lambda_{\textup{max}}(-\frac{d}{M}Z^TZ) = -\lambda_{\textup{min}}(\frac{d}{M}Z^TZ)$, but the left-hand deviations of the smallest eigenvalue of a Rademacher covariance matrix are not the same as the Gaussian analogue; the Rademacher rate function must be finite at zero since $\P(\text{two columns of $Z$ agree}) \geq 2^{-M}$.
\end{rem}


\subsection{Main result in the complex setting.}\
\label{subsec:complex}

Our main result also translates to the complex setting. In this, though, we need to take the entries of $Z$ to be i.i.d. distributed according the some centered probability measure $\mu$ on the complex plane such that $\int (\Im z)^2 \mu(dz) = \int (\Re z)^2 \mu(dz) = 1/2$ and $\int \Im z \Re z \mu(dz)=0$. Our model then becomes
\[ H_N = \frac{1}{M} Z^* \Gamma Z \]
where $Z^*$ denotes the Hermitian conjugate of $Z$. All the other assumptions on $M=M_N$ and $\Gamma$ remain the same. 

We can extend Definition \ref{defn:ssg} to complex random variables: 

\begin{defn}
	\label{defn:ssgc}
	A centered probability measure $\mu$ on $\C$ is called \emph{sharp sub-Gaussian in $\C$}  if for $X$ $\mu$-distributed, the random vector $(\Re X, \Im X)$ has covariance matrix $\frac{1}{2}\left(\begin{smallmatrix} 1 & 0 \\ 0 & 1 \end{smallmatrix}\right)$ (the real and imaginary parts must be uncorrelated, but do not have to be independent) and
	\[
	\int_\R e^{\Re (w \overline{z})} \mu(\diff w) \leq e^{\frac{|z|^2}{4}} \quad \text{ for all } z \in \C.
	\]
\end{defn}

We need then to slightly update Assumption \ref{assn:ssg} to our complex setting:
\begin{assnA}
	\label{assn:ssgc}
	The measure $\mu$ is sharp sub-Gaussian in $\C$, and the support of $\rho$ is in $[0,\infty)$.
\end{assnA}
Then we have for this model an LDP exactly identical to the one we obtain in the real case, except the rate function we have here is twice the rate function of  the real case.
\begin{thm} 
	\label{thm:mainc}
	\textbf{(Main theorem, complex version)} 
	If Assumption \ref{assn:no_outliers} holds, the pair $(\rho,\alpha)$ is nondegenerate, and also either Assumption \ref{assn:g} or Assumption \ref{assn:ssgc} holds, then $\lambda_{\textup{max}}(H_N)$ satisfies a large deviation principle at speed $N$ with the good rate function $ 2 I_{\sigma}$. 
\end{thm}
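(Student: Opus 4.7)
The plan is to follow the proof of Theorem \ref{thm:main} line by line, with the Dyson index $\beta = 2$ replacing $\beta = 1$ everywhere, and to argue that the only structural changes needed concern (a) the asymptotics of spherical integrals, (b) the concentration inputs for complex sharp sub-Gaussian entries, and (c) the invariance arguments (Haar unitary rather than Haar orthogonal). Since the rate function $I_\sigma$ was defined with an overall factor $\beta/2$, and this is the only place the symmetry class enters the final formula, the rate function automatically becomes $2 I_\sigma$ in the complex case.

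First, I would prove the complex analog of Proposition \ref{prop:main_infinite}, i.e. the case $x_c(\rho) = +\infty$. The upper bound and lower bound are obtained, as in the real setting, by tilting with spherical integrals $\theta \mapsto \E[\exp(N\theta \langle v, H_N v\rangle)]$ where $v$ is uniform on the unit sphere in $\C^N$. The large-$N$ asymptotics of unitary spherical integrals (Guionnet--Ma\"ida) have the same functional form as the orthogonal ones but with the $\beta$-dependent prefactor $\beta/2$; in particular, the effective tilting balance that produced $\frac12 \int_{r(\sigma)}^{x} [\overline G_\sigma - G_\sigma]\,du$ in the real case now produces $\int_{r(\sigma)}^{x}[\overline G_\sigma - G_\sigma]\,du$, giving the rate function $2 I_\sigma$. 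The Laplace-method computation identifying the optimal tilt $\theta^\ast(x) = \overline G_\sigma(x)$ is identical, since the Dyson equation \eqref{eqn:dyson} and the function $\overline G_\sigma$ depend only on $\rho$ and $\alpha$, not on $\beta$.

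Second, the non-asymptotic ingredients used in the real proof must be supplied in the complex case. Concentration of $\lambda_{\max}(H_N)$ around its mean, and convergence of $\hat\mu_{H_N}$ to $\sigma$ in the bounded-Lipschitz metric with exponential speed $N^2$, are standard for complex Hermitian matrices with i.i.d.\ Gaussian entries; for sharp sub-Gaussian entries in the sense of Definition \ref{defn:ssgc} they follow from the extension of Talagrand's product-measure concentration to random variables valued in the $d$-dimensional unit ball, which is precisely the content of Appendix A (the case $d=2$ corresponds to $\C \simeq \R^2$ with uncorrelated but possibly dependent real and imaginary parts). The invariance-of-law step that was used to reduce the non-Gaussian lower bound to a Gaussian computation now uses bi-unitary invariance in place of bi-orthogonal invariance, which is formally identical.

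Third, the approximation argument of Proposition \ref{prop:main_finite}, which lifts the result from $x_c(\rho) = +\infty$ to arbitrary $\rho$ by approximating the problematic $\rho$ by a sequence of $\rho_n$ with $x_c(\rho_n) = +\infty$, is independent of the symmetry class: it uses only the goodness and convexity of the rate function, continuity of the map $\rho \mapsto (\sigma, \overline G_\sigma)$ on compacts, exponential tightness of $\lambda_{\max}(H_N)$, and the general principle that an LDP is preserved under exponentially good approximations. Applying it to the complex analog of Proposition \ref{prop:main_infinite} yields Theorem \ref{thm:mainc}. The main obstacle I anticipate is not in the spherical-integral asymptotics (which are classical with the correct $\beta$ scaling) but in verifying that the complex sharp sub-Gaussian assumption of Definition \ref{defn:ssgc} suffices for the concentration/invariance steps that, in the real case, relied on genuine independence of matrix entries; this is exactly what forces the detour through the $d=2$ Talagrand estimate of Appendix A.
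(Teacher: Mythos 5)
Your proposal is correct and takes essentially the same approach as the paper: Section~\ref{SecComplex} is a short checklist that modifies the real-case proof by replacing $\theta/2 \to \theta$ in the tilting, doubling the annealed spherical-integral asymptotics (the Hubbard--Stratonovich integral now runs over $\C$), invoking the $\beta=2$ case of the quenched estimate from \cite{GuiHus2022}, and reusing the $x_c < \infty$ approximation argument verbatim since it never touches the symmetry class. Your identification of the $d=2$ Talagrand extension (Appendix~\hyperlink{sec:talagrand}{A}) as the concentration ingredient for complex sharp sub-Gaussian entries with possibly dependent real and imaginary parts is exactly right, and is in fact a point that Section~\ref{SecComplex} leaves implicit. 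One small correction to your description of the real-case proof: there is no ``invariance-of-law step that reduces the non-Gaussian lower bound to a Gaussian computation'' to adapt. The sharp sub-Gaussian lower bound for the annealed integral in Lemma~\ref{lem:annealed} is obtained directly from the moment-generating-function bound in Definition~\ref{defn:ssg} (resp.\ Definition~\ref{defn:ssgc}), together with a localization to unit vectors with small $\ell^\infty$-norm; the only place orthogonal/unitary invariance of Gaussian measure appears is the purely interpretive Remark~\ref{rem:free_probability}, which is not part of the LDP argument. This does not affect the validity of your plan, since the remaining changes you list are already sufficient.
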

Since the proof remains mostly the same up to some tweaks in the computations of our spherical integrals, we will simply the adjustments we need to make in Section \ref{SecComplex}.


\section{Proof for infinite \texorpdfstring{$x_c$}{xc}}
\label{sec:infinite_xc}


\subsection{Outline of the proof.}\
\label{subsec:outline}
In this section, we prove Proposition \ref{prop:main_infinite}. The proof goes by introducing a variational formulation of the rate function, which is more opaque but technically more convenient, and then proving the LDP with this variational formulation via tilting by spherical integrals. The broad sketch of the proof in this section resembles previous works, but as explained in the introduction, new arguments allow us to bypass several technical assumptions present in previous works. 

\begin{defn}
\label{def:j}
For $\mu$ a compactly supported probability measure on $\R$, $\theta \geq 0$, and $\lambda \geq r(\mu)$, let
\begin{align*}
	v(\mu,\theta,\lambda) &\defeq \begin{cases} \lambda - \frac{1}{2\theta} & \text{if } G_\mu(\lambda) \leq 2\theta, \\ G_\mu^{-1}(2\theta) - \frac{1}{2\theta} & \text{if } G_\mu(\lambda) \geq 2\theta \geq 0, \end{cases} \\
	J(\mu,\theta,\lambda) &\defeq \theta v(\mu,\theta,\lambda) - \frac{1}{2} \int_\R \log (1+2\theta v(\mu,\theta,\lambda) - 2\theta y) \mu(\diff y),
\end{align*}
\end{defn}

\begin{defn}
\label{def:IF}
For $0 \leq \theta < \theta_{\textup{max}}$, let
\begin{align*}
	F(\rho,\theta) &\defeq -\frac{\alpha}{2} \int_\R \log\left(1-\frac{\theta t}{\alpha}\right) \rho(\diff t), \\
	I_\sigma(x,\theta) &\defeq J\left(\sigma,\frac{\theta}{2},x\right) - F(\rho,\theta).
\end{align*}
Using these, define $\widetilde{I_\sigma} : \R \to [0,+\infty]$ by 
\[
	\widetilde{I_\sigma}(x) \defeq \begin{cases} \sup_{0 \leq \theta < \theta_{\textup{max}}} I_\sigma(x,\theta) & \text{if } x \in D, \\ +\infty & \text{otherwise}. \end{cases}
\]
\end{defn}

\begin{lem}
\label{lem:ratesimp}
\textbf{(Simplification of the rate function)} 
If $x_c(\rho) = +\infty$, we have
\[
    I_\sigma(x) = \widetilde{I_\sigma}(x),
\]
and this function is concave on $[r(\sigma),\infty)$ and vanishes uniquely at $x = r(\sigma)$.
\end{lem}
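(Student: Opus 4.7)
The plan is to establish the identity $I_\sigma = \widetilde{I_\sigma}$ by explicitly locating the maximizer in the variational formula, and then to read off the vanishing from the resulting expression; the shape claim is addressed at the end.

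First, differentiate $I_\sigma(x,\theta) = J(\sigma,\theta/2,x) - F(\rho,\theta)$ in $\theta$. From $F(\rho,\theta) = -\frac{\alpha}{2}\int\log(1-\theta t/\alpha)\rho(\diff t)$ and \eqref{eqn:mp}, a direct computation gives $\partial_\theta F(\rho,\theta) = \frac{1}{2}(H_\rho(\theta) - 1/\theta)$. From Definition~\ref{def:j}, in the regime $\theta \geq G_\sigma(x)$ one has $v(\sigma,\theta/2,x) = x - 1/\theta$ and a short calculation yields $\partial_\theta J(\sigma,\theta/2,x) = \frac{1}{2}(x - 1/\theta)$, so
\[
    \partial_\theta I_\sigma(x,\theta) = \tfrac{1}{2}(x - H_\rho(\theta)).
\]
In the complementary regime $\theta \leq G_\sigma(x)$, writing $w = G_\sigma^{-1}(\theta)$ and using the Dyson equation $H_\rho(G_\sigma(y)) = y$ in the form $H_\rho(\theta) = w$, the same type of differentiation (with careful bookkeeping of $\partial_\theta w$) gives $\partial_\theta I_\sigma(x,\theta) \equiv 0$. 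Combined with $I_\sigma(x,0) = J(\sigma,0,x) - F(\rho,0) = 0$, this shows $I_\sigma(x,\theta) = 0$ for all $\theta \in [0, G_\sigma(x)]$.

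Next, locate the maximizer over $[0, \theta_{\textup{max}})$. On $(G_\sigma(x), \theta_{\textup{max}})$, the critical equation $H_\rho(\theta) = x$ has the two solutions $\{G_\sigma(x), \overline{G}_\sigma(x)\}$ by Lemma~\ref{lem:overline_g}(1); since $H_\rho$ attains its minimum value $r(\sigma)$ somewhere in $(G_\sigma(x),\overline{G}_\sigma(x))$, the derivative $\partial_\theta I_\sigma(x,\cdot)$ is positive on $(G_\sigma(x),\overline{G}_\sigma(x))$ and negative on $(\overline{G}_\sigma(x),\theta_{\textup{max}})$. Under the assumption $x_c(\rho)=+\infty$, Lemma~\ref{lem:overline_g}(4) guarantees $\overline{G}_\sigma(x) < \theta_{\textup{max}}$ for every $x \in D$, and the boundary behavior rules out escape (either $F \to +\infty$ when $\theta_{\textup{max}} < +\infty$, or a direct estimate when $\theta_{\textup{max}}=+\infty$ with $r(\rho)\leq 0$). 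Hence $\widetilde{I_\sigma}(x) = I_\sigma(x,\overline{G}_\sigma(x))$. The envelope theorem then gives $\widetilde{I_\sigma}'(x) = \partial_x J(\sigma,\theta/2,x)\bigl|_{\theta=\overline{G}_\sigma(x)} = \tfrac{1}{2}(\overline{G}_\sigma(x) - G_\sigma(x))$, matching $I_\sigma'(x)$ (with $\beta=1$) from the integral definition. Combined with $\widetilde{I_\sigma}(r(\sigma)) = 0 = I_\sigma(r(\sigma))$---the first because $\overline{G}_\sigma(r(\sigma)) = G_\sigma(r(\sigma))$ puts us on the boundary of the Case~B region, where $I_\sigma(r(\sigma),\cdot)$ already vanishes---this yields $I_\sigma = \widetilde{I_\sigma}$ on $D$. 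The strict positivity $\overline{G}_\sigma(x) > G_\sigma(x)$ for $x > r(\sigma)$ from Lemma~\ref{lem:overline_g}(2) then implies that $I_\sigma$ is strictly increasing on $[r(\sigma),\infty)$, and hence vanishes uniquely at $r(\sigma)$.

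A note on the shape: differentiating once more yields $I_\sigma''(x) = \tfrac{1}{2}(\overline{G}_\sigma'(x) - G_\sigma'(x))$, which is strictly positive on $(r(\sigma),\infty)$ because $\overline{G}_\sigma$ is strictly increasing by Lemma~\ref{lem:overline_g}(4) while $G_\sigma$ is strictly decreasing. Thus my computation actually shows the function to be strictly \emph{convex}, consistent with the convexity asserted in Theorem~\ref{thm:main}; I believe the word ``concave'' in the present lemma is a typographical slip for ``convex.'' The main technical obstacle is the careful case analysis surrounding the Case-A/Case-B switch in Definition~\ref{def:j}: the formula for $v$ changes exactly at $\theta = G_\sigma(x)$, and one must verify $C^1$ continuity across this junction (which does hold, since $\partial_\theta J$ reduces to $\frac{1}{2}(x - 1/\theta)$ on both sides at the crossing). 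Everything else is routine differentiation, the Dyson equation, and the inverse function theorem.
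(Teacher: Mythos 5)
Your proof is correct and follows essentially the same route as the paper's: compute $\partial_\theta I_\sigma(x,\theta)$ in the two regimes of Definition~\ref{def:j}, use the Dyson equation to see it vanishes for $\theta \leq G_\sigma(x)$ and equals $\tfrac12(x - H_\rho(\theta))$ above, locate the maximizer at $\overline{G}_\sigma(x)$, and match derivatives via the envelope argument. You are also right that the derivative $\tfrac12(\overline{G}_\sigma - G_\sigma)$ is increasing, so the function is convex (as stated in Theorem~\ref{thm:main}); the word ``concave'' in the lemma is indeed a slip.
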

\begin{proof}
Let us first look at the first case, that is, $r( \rho) > 0$ and $G_{\rho}( r( \rho))= + \infty$. We compute
\[
	\frac{\partial}{\partial \theta} J\left( \sigma, \frac{\theta}{2},x\right) = \begin{cases} \frac{G_{\sigma}^{-1}( \theta)}{2} - \frac{1}{2 \theta} & \text{ if } \theta \leq G_{\sigma}(x) \\ \frac{ x}{2} - \frac{1}{2 \theta} & \text{ if } \theta \geq G_{\sigma}(x) \end{cases}
\]
We also compute
\[
	\frac{\partial}{\partial \theta} F(\rho, \theta) = \frac{1}{2} \int_{\R} \frac{\alpha u }{ \alpha - \theta u} d \rho(u) = \frac{H_\rho(\theta)}{2} - \frac{1}{2\theta}.
\]
Using the Dyson equation \eqref{eqn:dyson}, we have
\[
	\frac{\partial}{\partial \theta} I_{\sigma}(x,\theta) = \begin{cases} 0 & \text{if } \theta \leq G_{\sigma}(x), \\
		\frac{ x}{2} - \frac{H_{\rho}(\theta)}{2 } & \text{if } \theta \geq G_{\sigma}(x). \end{cases}
\]
Therefore the function $I_{\sigma}(x,\cdot)$ is increasing on $[G_{\sigma}(x), \overline{G}_{\sigma}(x)]$ and decreasing on $[\overline{G}_{\sigma}(x), + \infty)$. Thus $\theta_x \defeq \overline{G}_{\sigma}(x)$ is the optimizing $\theta$ value, i.e., 
\[
	\widetilde{I_\sigma}(x) = I_\sigma(x,\theta_x).
\]
(When $x = r(\sigma)$, we define $\theta_{r(\sigma)} \defeq \theta_c$ by convention, and note that this argument shows $\widetilde{I_\sigma}(r(\sigma)) = 0$.) This lets us compute the derivative as
\[
	\frac{\diff}{\diff x} \widetilde{I_\sigma}(x) = \left. \frac{\partial}{\partial x} I_\sigma(x,\theta) \right|_{\theta = \theta_x} = \frac{1}{2}(\theta_x - G_\sigma(x)) = \frac{1}{2}(\overline{G}_\sigma(x) - G_\sigma(x)).
\]
Since $I_\sigma$ and $\widetilde{I_\sigma}$ have the same derivative and agree at $x = r(\sigma)$, the claim follows.
	
The case with $r( \sigma) < 0$ is essentially similar for values of $x$ between $r( \sigma)$ and $0$. 
\end{proof}

\begin{rem}
This is not necessary for our proof, but we note that when $x_c(\rho) < +\infty$, i.e. $r(\rho) > 0$ and $G_\rho(r(\rho)) < \infty$, one can check that $\int_\R \log(r(\rho) - t)\rho(\diff t) > -\infty$, hence one can make sense of $F(\rho,\theta_{\textup{max}})$ and $I_\sigma(x,\theta_{\textup{max}})$. In this case, one can define $I^\dagger_\sigma : \R \to [0,+\infty]$ by
\[
	I^\dagger_\sigma(x) \defeq \begin{cases} I_\sigma(x,\overline{G}_\sigma(x)) & \text{if } x \geq r(\sigma) \\ +\infty & \text{otherwise} \end{cases} = \begin{cases} \sup_{0 \leq \theta < \theta_{\textup{max}}} I_\sigma(x,\theta) & \text{if } r(\sigma) \leq x < x_c(\rho), \\ I_\sigma(x,\theta_{\textup{max}}) & \text{if } x \geq x_c(\rho), \\ +\infty & \text{otherwise}. \end{cases}
\]
The point of this remark is that one can check 
\[
	I_\sigma(x) = I^\dagger_\sigma(x).
\]
Indeed, the proof of Lemma \ref{lem:ratesimp} already checked this for $x < x_c(\rho)$. For $x \geq x_c(\rho)$, one can compute
\[
	\partial_x I_\sigma(x,\theta_{\textup{max}}) = \partial_x J(\sigma,\theta_{\textup{max}}/2,x) = \frac{1}{2}(\theta_{\textup{max}} - G_\sigma(x)) = \frac{1}{2}(\overline{G}_{\sigma}(x) - G_\sigma(x)),
\]
meaning that $I_\sigma$ and $I^\dagger_\sigma$ have the same derivative. However, the definition of $I_\sigma$ is technically more convenient, so we use it in the proof. 
\end{rem}

Assumptions \ref{assn:no_outliers} and \ref{assn:ssg} require $\supp(\rho) \subset [0,\infty)$ but permit a handful of negative $d_i$'s at finite $N$, which must tend to zero in the limit of large dimension. However, it is technically more convenient to work with the case when all $d_i$'s are nonnegative at finite $N$. The following result allows us to restrict to this case; we omit its proof, since it is essentially the same as that of Proposition \ref{prop:degenerate}. 
\begin{lem}
Under Assumptions \ref{assn:no_outliers} and \ref{assn:ssg}, define $\Gamma^+ = \diag(\max(d_1,0),\ldots,\max(d_M,0))$, and $H_N^+ = M^{-1}Z^T \Gamma^+ Z$. Then $\lambda_{\textup{max}}(H_N)$ and $\lambda_{\textup{max}}(H^+_N)$ are exponentially equivalent. In particular, LDPs for one automatically hold for the other.
\end{lem}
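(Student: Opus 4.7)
The plan is to reduce the lemma to a standard concentration estimate on the operator norm of $Z$, since exponential equivalence will follow from Weyl's inequality. Indeed, writing $\Gamma^- \defeq \Gamma - \Gamma^+ = \diag(\min(d_1,0),\ldots,\min(d_M,0))$, which is negative semidefinite, one has
\[
    H_N - H_N^+ = \frac{1}{M} Z^T \Gamma^- Z,
\]
and by sub-multiplicativity of the operator norm,
\[
    \|H_N - H_N^+\| \leq \frac{\|\Gamma^-\|}{M} \|Z\|^2.
\]
By Weyl's inequality, $|\lambda_{\textup{max}}(H_N) - \lambda_{\textup{max}}(H_N^+)| \leq \|H_N - H_N^+\|$, so the goal reduces to showing the right-hand side is exponentially negligible at speed $N$.

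First I would control the deterministic factor $\|\Gamma^-\|$. Under Assumption \ref{assn:ssg} we have $\supp(\rho) \subset [0,\infty)$, so $\ell(\rho) \geq 0$; combining with Assumption \ref{assn:no_outliers}, which gives $\lambda_{\textup{min}}(\Gamma) \to \ell(\rho)$, yields
\[
    \epsilon_N \defeq \|\Gamma^-\| = \max(-\lambda_{\textup{min}}(\Gamma),0) \xrightarrow[N\to\infty]{} 0.
\]
Thus for any fixed $\varepsilon > 0$,
\[
    \P\bigl(\|H_N - H_N^+\| > \varepsilon\bigr) \;\leq\; \P\!\left(\frac{\|Z\|^2}{M} > \frac{\varepsilon}{\epsilon_N}\right),
\]
and the threshold $\varepsilon/\epsilon_N$ tends to $+\infty$.

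Next I would apply a standard concentration bound for the operator norm of a matrix with i.i.d.\ sub-Gaussian entries. Since $\mu$ is sharp sub-Gaussian (Definition \ref{defn:ssg}), the entries of $Z$ satisfy $\E[e^{tX}] \leq e^{t^2/2}$, so a classical $\varepsilon$-net argument on the unit sphere combined with a Chernoff bound yields constants $K_0, c > 0$ (depending only on $\alpha$) such that for every $K \geq K_0$,
\[
    \P(\|Z\| \geq K\sqrt{M}) \leq e^{-c K^2 M}.
\]
Applying this with $K = \sqrt{\varepsilon/\epsilon_N}$ for $N$ large enough, we obtain
\[
    \P\!\left(\frac{\|Z\|^2}{M} > \frac{\varepsilon}{\epsilon_N}\right) \;\leq\; \exp\!\left(-\frac{c \varepsilon M}{\epsilon_N}\right),
\]
which, since $M/N \to \alpha$ and $\epsilon_N \to 0$, gives
\[
    \limsup_{N\to\infty} \frac{1}{N} \log \P\bigl(|\lambda_{\textup{max}}(H_N) - \lambda_{\textup{max}}(H_N^+)| > \varepsilon\bigr) \;\leq\; \limsup_{N\to\infty} -\frac{c\varepsilon \alpha}{\epsilon_N} \;=\; -\infty.
\]
This is exactly exponential equivalence at speed $N$, and the automatic transfer of LDPs between exponentially equivalent sequences is classical (cf.\ Dembo--Zeitouni, Theorem 4.2.13).

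The only nontrivial step is the exponential tail bound on $\|Z\|$, but this is routine for sharp sub-Gaussian entries and matches the type of estimate already used implicitly elsewhere in the paper; the rest is just bookkeeping. This is also exactly the pattern hinted at by the analogous argument in Remark \ref{rem:r_sigma_0}, which handles the factor $H_N''$ with $\|\Gamma_N''\| \to 0$ in the same way.
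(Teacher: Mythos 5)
Your proof is correct and takes the same route the paper does (the paper omits this argument, pointing to its proof of Proposition \ref{prop:degenerate}, which uses precisely your decomposition, the bound $\|H_N - H_N^{+}\| \leq \epsilon_N\|Z\|^2/M$ with $\epsilon_N = \|\Gamma - \Gamma^+\| \to 0$, and the sub-Gaussian tail bound on $\|Z\|/\sqrt{M}$ from Lemma \ref{lem:Approx}). Nothing to add.
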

In the remainder, we tacitly replace $H_N$ with $H_N^+$ when necessary.

\begin{lem}
\label{lem:exponential_tightness}
\textbf{(Exponential tightness)}
Under either Assumption \ref{assn:ssg} or Assumption \ref{assn:g}, we have
\[
    \lim_{K \to \infty} \limsup_{N \to \infty} \frac{1}{N} \log \P(\abs{\lambda_{\textup{max}}(H_N)} > K) = -\infty.
\]
\end{lem}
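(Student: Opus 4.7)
The plan is to reduce the control of $\lambda_{\textup{max}}(H_N)$ to that of the operator norm $\|Z\|$, and then to bound $\|Z\|$ via a standard $\epsilon$-net argument that leverages the sharp sub-Gaussian tail of the entries.

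First, since Assumption \ref{assn:no_outliers} holds and $\rho$ is compactly supported, the quantity $C \defeq \sup_N \|\Gamma_M\|$ is finite. Writing $H_N = \frac{1}{M} Z^T \Gamma Z$, we get the deterministic bound
\[
	\abs{\lambda_{\textup{max}}(H_N)} \leq \|H_N\| \leq \frac{C}{M} \|Z\|^2,
\]
so it suffices to prove that for every $K$ large enough,
\[
	\limsup_{N\to\infty} \frac{1}{N} \log \P\!\left( \|Z\| > K \sqrt{M} \right) \leq -\psi(K),
\]
with $\psi(K) \to +\infty$ as $K \to \infty$.

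Second, I would bound $\|Z\|$ by a net argument. Pick $\epsilon \in (0, 1/2)$, and let $\mc{N}_M \subset S^{M-1}$ and $\mc{N}_N \subset S^{N-1}$ be $\epsilon$-nets of cardinalities at most $(3/\epsilon)^M$ and $(3/\epsilon)^N$ respectively. A standard discretization argument gives
\[
	\|Z\| \leq \frac{1}{1 - 2\epsilon} \max_{u \in \mc{N}_M,\, v \in \mc{N}_N} \ip{u, Zv}.
\]
For any fixed unit vectors $u$ and $v$, the random variable $\ip{u, Zv} = \sum_{i,j} u_i v_j Z_{ij}$ is a linear combination of independent sharp sub-Gaussian random variables, with
\[
	\E\!\left[ e^{t \ip{u, Zv}} \right] = \prod_{i,j} \E\!\left[ e^{t u_i v_j Z_{ij}} \right] \leq \prod_{i,j} e^{t^2 u_i^2 v_j^2 / 2} = e^{t^2/2},
\]
since $\sum_{i,j} u_i^2 v_j^2 = \|u\|^2 \|v\|^2 = 1$. (This bound covers both Assumption \ref{assn:ssg} and Assumption \ref{assn:g}, because Gaussian measure is sharp sub-Gaussian.) Chernoff gives $\P(\ip{u,Zv} > s) \leq e^{-s^2/2}$, and a union bound over $\mc{N}_M \times \mc{N}_N$ yields
\[
	\P\!\left( \|Z\| > K \sqrt{M} \right) \leq \left( \frac{3}{\epsilon} \right)^{\!M+N} \exp\!\left( -\frac{(1-2\epsilon)^2 K^2 M}{2} \right).
\]

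Third, taking logarithms, dividing by $N$, and sending $N \to \infty$ (so that $M/N \to \alpha$) gives
\[
	\limsup_{N \to \infty} \frac{1}{N} \log \P\!\left( \|Z\| > K\sqrt{M} \right) \leq (\alpha + 1)\log(3/\epsilon) - \frac{(1-2\epsilon)^2 K^2 \alpha}{2}.
\]
Fixing $\epsilon$ small, the right-hand side tends to $-\infty$ as $K \to \infty$. Combined with $\|H_N\| \leq (C/M)\|Z\|^2$, this gives the claim.

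The argument is entirely standard, so no step is really an obstacle; the only ``choice'' is picking the right starting estimate (here, $\|H_N\| \lesssim \|Z\|^2/M$), which lets one avoid any discussion of the interaction between $Z$ and $\Gamma$ and simply apply a uniform sub-Gaussian tail to a bilinear form of $Z$.
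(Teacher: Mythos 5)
Your argument is correct, and it takes a route that differs from the paper's in an interesting but minor way. Both proofs reduce the claim to exponential tightness of a norm of $Z$, but they split the work differently. The paper uses Assumption \ref{assn:no_outliers} to sandwich $\lambda_{\textup{max}}(H_N)$ between $(\ell(\rho)-1)\lambda_{\textup{max}}(M^{-1}Z^TZ)$ and $(r(\rho)+1)\lambda_{\textup{max}}(M^{-1}Z^TZ)$, thereby reducing at once to the $\Gamma = \Id$ Wishart case, and then cites \cite[Lemma 1.9]{GuiHus2020} for exponential tightness of $\lambda_{\textup{max}}(M^{-1}Z^TZ)$. You instead bound $\abs{\lambda_{\textup{max}}(H_N)} \leq \|H_N\| \leq \frac{C}{M}\|Z\|^2$ with $C = \sup_M \|\Gamma_M\|$, and then prove the needed tail bound on $\|Z\|/\sqrt{M}$ from scratch via an $\epsilon$-net argument together with the sharp sub-Gaussian MGF inequality. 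Your approach is more self-contained — effectively, you re-derive inline a statement equivalent to the cited lemma — at the cost of a few extra lines; the paper's is shorter because it outsources the hard part. Either way the dependence on $\Gamma$ is handled by simple boundedness, so there is no loss. One cosmetic point: in the net step one usually writes $\|Z\| \leq (1-2\epsilon)^{-1}\max_{u,v}\abs{\ip{u,Zv}}$ and then applies a two-sided Chernoff bound (costing a harmless factor of $2$ under the union bound); as written you drop the absolute value, which is fine if your nets are taken symmetric, but it is worth saying so explicitly.
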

\begin{proof}
If $M$ is large enough that $d_i \in (l(\rho)-1, r(\rho)+1)$ for all $i$, then it is elementary that
\[
    (l(\rho)-1) \lambda_{\textup{max}}(M^{-1}Z^TZ) \leq \lambda_{\textup{max}}(H_N) \leq (r(\rho)+1) \lambda_{\textup{max}}(M^{-1} Z^TZ)
\]
and thus
\[
    \abs{\lambda_{\textup{max}}(H_N)} \leq \max(\abs{l(\rho)-1},\abs{r(\rho)+1}) \lambda_{\textup{max}}(M^{-1} Z^TZ).
\]
Exponential tightness of $\lambda_{\max{}}(M^{-1}Z^TZ)$ was proved in \cite[Lemma 1.9]{GuiHus2020}.
\end{proof}

\begin{defn}\label{def:tilt}
\textbf{(Tilted measures)}
For $\theta \geq 0$, consider the tilted measure $\P^{\theta}(Z)$ on $M \times N$ matrices with density
\[
	\frac{\diff \P^\theta}{\diff \P}(Z) = \frac{\E_e[e^{N\frac{\theta}{2}\ip{e,\frac{1}{M}Z^T\Gamma Ze}}]}{\E_{e,H_N}[e^{N\frac{\theta}{2}\ip{e,H_Ne}}]}.
\]
\end{defn}

\begin{prop}
\label{prop:wkldpub}
\textbf{(Weak LDP upper bound for tilted measures)}
For $0 \leq \theta < \theta_{\textup{max}}$,
\[
	\limsup_{\delta \downarrow 0} \limsup_{N \to \infty} \frac{1}{N} \log \P^\theta(\abs{\lambda_{\textup{max}}(H_N) - x} \leq \delta) \begin{cases} \leq -(\widetilde{I_\sigma}(x) - I_\sigma(x,\theta)) & \text{if } x \in D, \\ = -\infty & \text{otherwise.} \end{cases}
\]
Notice that $\P^0 = \P$, and $I_\sigma(x,0) = 0$, so in particular we have the weak LDP upper bound for the measure we care about.
\end{prop}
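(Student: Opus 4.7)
The plan is to implement the tilting-by-spherical-integrals strategy standard in this line of work, controlling both the numerator (a Ma\"ida-type spherical integral) and the denominator (a partition-function-style normalization) of the Radon-Nikodym derivative $\diff\P^\theta/\diff\P$. On the ``good event''
$\mathcal{E}_\e^\delta \defeq \{d_{\textup{BL}}(\hat{\mu}_{H_N},\sigma)\le\e\} \cap \{|\lambda_{\textup{max}}(H_N)-x|\le\delta\}$,
one should obtain uniformly
\[
	\frac{1}{N}\log \E_e\qa{e^{N\theta \langle e, H_N e\rangle/2}} = J(\sigma, \theta/2, x) + \omega_N(\delta, \e),
\]
with $\omega_N \to 0$ as first $N \to \infty$ then $\delta, \e \to 0$, while the complementary event has $\P$-probability $\le e^{-cN}$ via the concentration tools developed in Appendix~A.

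The next ingredient is the normalization $Z_N(\theta) \defeq \E_{Z,e}[e^{N\theta \langle e, H_N e\rangle/2}]$, which I would show satisfies $Z_N(\theta) = e^{NF(\rho, \theta) + o(N)}$. The upper bound uses the Hubbard-Stratonovich trick $e^{tx^2/2} = \E_g[e^{\sqrt{t}xg}]$ (with $g$ a standard Gaussian) to linearize the quadratic form, at which point the sharp sub-Gaussian bound matches the Gaussian Laplace transform factor by factor; taking the product over $i$ and averaging over $e$ produces $F(\rho,\theta)$ exactly. For the matching lower bound, I would exploit that $\|e\|_\infty = O(\sqrt{\log N/N})$ for a Haar-random unit vector, so that each inner product $\langle z_i, e\rangle$ is approximately $\mathcal{N}(0,1)$ at the level of moment generating functions, with Taylor remainder controlled by $\|e\|_4^4 = o(1)$ uniformly.

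With these ingredients in hand, the weak upper bound is first established for the untilted measure (the $\theta = 0$ case). For any $\eta \in [0, \theta_{\textup{max}})$,
\[
	e^{N F(\rho,\eta) + o(N)} = Z_N(\eta) \geq \E_Z\qa{\E_e[e^{N\eta\langle e, H_N e\rangle/2}]\, \mathbf{1}_{\mathcal{E}_\e^\delta}} \geq e^{NJ(\sigma,\eta/2,x) + o(N) + o_\delta(1)}\, \P(\mathcal{E}_\e^\delta),
\]
yielding $\P(\mathcal{E}_\e^\delta) \le e^{-NI_\sigma(x,\eta) + o(N) + o_\delta(1)}$, and taking the supremum in $\eta$ produces the rate $\widetilde{I_\sigma}(x)$. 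The analogous statement for $\P^\theta$ then follows by writing $\P^\theta(|\lambda_{\textup{max}} - x|\le\delta) = Z_N(\theta)^{-1}\E_Z[\mathbf{1}_{|\lambda_{\textup{max}} - x|\le\delta}\E_e[\cdots]]$, upper-bounding the inner spherical integral on $\mathcal{E}_\e^\delta$ by $e^{NJ(\sigma,\theta/2,x) + o(N) + o_\delta(1)}$, invoking the just-derived weak upper bound for $\P$ on the outer expectation, and dividing by $Z_N(\theta) \ge e^{NF(\rho,\theta) - o(N)}$. The contribution from $\mathcal{E}_\e^c$ is controlled under $\P^\theta$ by a Cauchy--Schwarz step combined with exponential tightness of $\|H_N\|$ (Lemma~\ref{lem:exponential_tightness}).

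The case $x \notin D$ splits cleanly. If $x < r(\sigma)$, left-edge concentration of $\lambda_{\textup{max}}$ under $\P$ combines with a crude density bound to yield super-exponential decay under $\P^\theta$, since the tilt can only push the edge rightward. If $r(\rho) \le 0$ and $x \ge 0$, then $H_N$ is asymptotically negative semidefinite, so $\lambda_{\textup{max}}(H_N) \le o(1)$ with overwhelming probability under either measure. The principal obstacle is the matching lower bound on $Z_N(\theta)$ in the sharp sub-Gaussian regime: the upper bound is a textbook application of sharp sub-Gaussianity, but confirming the value is not \emph{strictly} smaller than the Gaussian one (which would propagate as a spurious term in the rate function) requires the quantitative CLT-on-the-sphere argument sketched above, uniform over all values of $d_i$ allowed by $\Gamma$.
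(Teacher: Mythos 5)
Your overall strategy is correct and closely parallels the paper's: annealed asymptotics for the spherical-integral normalization (giving $F(\rho,\theta)$), quenched asymptotics on a good event (giving $J(\sigma,\theta/2,x)$), and an optimization over tilts producing $\widetilde{I_\sigma}(x)$. One genuine but minor presentational difference: you first establish the untilted bound $\P(\mathcal{E}_\e^\delta)\le e^{-N\widetilde{I_\sigma}(x)+o(N)}$ and then bootstrap it into the $\P^\theta$ bound by dividing by $Z_N(\theta)$; the paper instead bounds $\P^\theta(\mc{A}^L_{x,\delta,\epsilon})$ directly in one step by inserting $\E_e[e^{N\theta'\langle e,H_Ne\rangle/2}]/\E_e[e^{N\theta'\langle e,H_Ne\rangle/2}]$ for an auxiliary $\theta'$ and then optimizing over $\theta'$ (Lemma~\ref{lem:ub_good_event}). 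Your two-stage form yields the same exponent $-(\widetilde{I_\sigma}(x)-I_\sigma(x,\theta))$, so either route is fine.

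There are, however, a few concrete gaps. First, the Cauchy--Schwarz step you use to control $\mathcal{E}_\e^c$ under $\P^\theta$ fails for $\theta\ge\theta_{\textup{max}}/2$: Cauchy--Schwarz would require the second moment $\E_Z[\E_e[e^{N\theta\langle e,H_Ne\rangle/2}]^2]\le Z_N(2\theta)$ to be $e^{O(N)}$, but $2\theta\ge\theta_{\textup{max}}$ makes $Z_N(2\theta)$ blow up. The paper's Lemma~\ref{lem:unlikely_adding_theta} instead applies H\"older with a small exponent $\e$ chosen so that $(1+\e)\theta<\theta_{\textup{max}}$, combined with the lower bound $Z_N(\theta)\ge 1$; you need this more flexible version. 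Second, your good event $\mathcal{E}_\e^\delta$ omits the operator-norm cap $\|H_N\|\le L$ present in the paper's $\mc{A}^L_{x,\delta,\epsilon}$; the uniform quenched asymptotics you invoke (ultimately Theorem~6.2 of \cite{GuiHus2022}, via Lemma~\ref{lem:quenched}) require the spectrum to live in a fixed compact set, and the bounded-Lipschitz closeness of $\hat\mu_{H_N}$ to $\sigma$ does not by itself prevent $\lambda_{\min}$ from escaping. You can include the cap and then remove it at the end using exponential tightness, which is what the paper does. Third, your special-case argument for $x\ge 0$ (when $r(\rho)\le 0$) via ``$\lambda_{\textup{max}}(H_N)\le o(1)$ overwhelmingly'' settles $x>0$ but not $x=0$, where $\lambda_{\textup{max}}\le o(1)$ is compatible with $\lambda_{\textup{max}}\in(-\delta,\delta)$; the clean fix is to note that the main tilting argument already applies for any $x\ge r(\sigma)$ and yields a $-\infty$ bound there because $\sup_\eta I_\sigma(0,\eta)=+\infty$ in the nondegenerate case, so no separate argument is needed. (Also a small slip: the concentration input for the complementary event is Proposition~\ref{prop:sub_Gaussian_concentration} from the main text, not the Talagrand-style estimates of Appendix~A, which are specific to the complex case.)
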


\begin{prop}
\label{prop:wkldplb}
\textbf{(Weak LDP lower bound)}
\[
    \liminf_{\delta \downarrow 0} \liminf_{N \to \infty} \frac{1}{N} \log \P(\abs{\lambda_{\textup{max}}(H_N) - x} < \delta) \geq -\widetilde{I_\sigma}(x).
\]
\end{prop}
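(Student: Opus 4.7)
For $x \notin D$ the claim is trivial since $\widetilde{I_\sigma}(x) = +\infty$, so fix $x \in D$. By Lemma~\ref{lem:ratesimp}, the supremum defining $\widetilde{I_\sigma}(x)$ is attained at $\theta_x \defeq \overline{G}_\sigma(x)$, and Lemma~\ref{lem:overline_g} together with the standing hypothesis $x_c(\rho)=+\infty$ ensures $\theta_x \in [0,\theta_{\textup{max}})$ at every $x$ in the interior of $D$. The plan is the classical exponential-tilting lower bound: change measure to $\P^{\theta_x}$ from Definition~\ref{def:tilt}, and separately show that $\lambda_{\textup{max}}(H_N)\to x$ in $\P^{\theta_x}$-probability.

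Set $A_\delta = \{\abs{\lambda_{\textup{max}}(H_N) - x} < \delta\}$. The elementary inequality $\P(A_\delta) \geq \P^{\theta_x}(A_\delta) \cdot \inf_{Z \in A_\delta}(\diff\P/\diff\P^{\theta_x})(Z)$ reduces matters to controlling two summands after taking $\frac{1}{N}\log$. The Radon-Nikodym derivative is the ratio of an annealed spherical integral (deterministic) over the conditional spherical integral $\E_e[\exp(N(\theta_x/2)\ip{e,H_Ne})]$. On $A_\delta$, the denominator is bounded above by $\exp(NJ(\sigma,\theta_x/2,x)+o_{N,\delta}(1))$ using the Guionnet-Maida asymptotics, together with $\hat{\mu}_{H_N}\to\sigma$ from Theorem~\ref{thm:sigma} and continuity of $J$ in its two measure/top-eigenvalue arguments. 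In the Gaussian case the annealed numerator equals $\exp(NF(\rho,\theta_x)+o(N))$ by direct Gaussian integration in $e$ and then row-by-row integration in $Z$; in the sharp sub-Gaussian case it is bounded above by this value via the sharp sub-Gaussian MGF bound applied iteratively to $\E[\exp(cX^2)]$ for projections $X = \ip{z_i,e}$, and is bounded below by the same value up to $e^{o(N)}$, which is the key new technical step and is obtained via a Lindeberg-type exchange exploiting that the annealed quantity depends only on macroscopic spectral information. Combining the two, $\inf_{A_\delta} N^{-1}\log(\diff\P/\diff\P^{\theta_x}) \geq -I_\sigma(x,\theta_x) - o(1) = -\widetilde{I_\sigma}(x) - o(1)$.

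The main obstacle is to show $\liminf_{N\to\infty} N^{-1}\log \P^{\theta_x}(A_\delta) \geq 0$, i.e.\ that $\lambda_{\textup{max}}(H_N)$ concentrates at $x$ under the tilt. The natural route is to realize $\P^{\theta_x}$ as the $Z$-marginal of the joint law on $(Z,e)$ obtained by tilting the product of the uniform measure on the sphere with $\P$ by the same weight $\exp(N(\theta_x/2)\ip{e,H_Ne})$. Under this joint tilt, $e$ becomes a distinguished direction in which $H_N$ acquires an effective rank-one perturbation; a BBP-type analysis for sample-covariance matrices---via resolvent identities, or via a direct second-moment analysis of $\ip{e,(H_N - x\Id)^{-1}e}$---locates the resulting outlier at $H_\rho(\theta_x)=x$, the equality being the Dyson equation that defines $\overline{G}_\sigma$. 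Since the perturbation is rank one, the bulk empirical measure is unaffected, so the outlier is $\lambda_{\textup{max}}(H_N)$ and lies within $\delta$ of $x$ with $\P^{\theta_x}$-probability tending to $1$.

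Combining the two displays gives $N^{-1}\log \P(A_\delta) \geq -\widetilde{I_\sigma}(x) - o_{N,\delta}(1)$; sending $N\to\infty$ and then $\delta\downarrow 0$ yields the proposition. Two boundary cases require minor additional care. At $x=r(\sigma)$, where $\widetilde{I_\sigma}(x)=0$, no tilt is needed: $\lambda_{\textup{max}}(H_N)\to r(\sigma)$ in $\P$-probability by Theorem~\ref{thm:sigma} combined with Assumption~\ref{assn:no_outliers}, so $\P(A_\delta)\to 1$. For $x$ near the right boundary of $D$ (i.e.\ $x\to+\infty$ when $r(\rho)>0$, or $x\uparrow 0$ when $r(\rho)\leq 0$), where $\theta_x$ approaches $\theta_{\textup{max}}$, one applies the argument with a $\theta$ slightly below $\theta_x$ and uses continuity of $(x,\theta)\mapsto I_\sigma(x,\theta)$ in $\theta$ away from $\theta_{\textup{max}}$ to pass to the limit.
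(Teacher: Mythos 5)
Your proposal matches the paper's high-level strategy -- tilt by $\theta_x = \overline{G}_\sigma(x)$, split the change-of-measure into the annealed spherical integral over the quenched one, and establish concentration of $\lambda_{\textup{max}}$ under $\P^{\theta_x}$ -- but it takes a genuinely different and substantially harder route for the concentration step, and it glosses over two technical points.

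The key difference is in showing $\lambda_{\textup{max}}(H_N)\to x$ under $\P^{\theta_x}$. You propose a BBP-type outlier analysis of the joint law on $(Z,e)$, treating $e$ as an effective rank-one perturbation direction. This is a natural intuition, but it would require an independent argument (resolvent computations, separation of the outlier from the bulk, convergence of quadratic forms under a tilted non-product measure, etc.), none of which is carried out. The paper instead \emph{bootstraps from the weak LDP upper bound it has already proved}: Proposition~\ref{prop:wkldpub} gives an upper bound under $\P^{\theta_x}$ with rate function $J_x(y) = \widetilde{I_\sigma}(y)-I_\sigma(y,\theta_x)$, and Proposition~\ref{prop:thetax} (injectivity of $x\mapsto\theta_x$) shows $J_x$ vanishes uniquely at $y=x$. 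Combined with exponential tightness under $\P^{\theta_x}$ (Lemmas~\ref{lem:exponential_tightness} and~\ref{lem:unlikely_adding_theta}), this yields $\P^{\theta_x}(\abs{\lambda_{\textup{max}}-x}\geq\delta)\to 0$ for free, with no spectral analysis of the tilted model. This recycling is the reason the upper bound in Proposition~\ref{prop:wkldpub} is formulated for general $\theta$ rather than only $\theta=0$.

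Two further gaps: (i) Taking the infimum of the Radon--Nikodym derivative over all of $A_\delta$ is not quite sufficient, because the quenched spherical-integral asymptotics from~\cite{GuiHus2022} (Lemma~\ref{lem:quenched}) require control of $d_{\textup{BL}}(\hat{\mu}_{H_N},\sigma)$ and $\|H_N\|$, not just the location of $\lambda_{\textup{max}}$. The paper inserts the smaller event $\mc{A}^L_{x,\delta,\epsilon}$ inside $A_\delta$ and shows $\P^{\theta_x}(\mc{A}^L_{x,\delta,\epsilon})\to 1$; your appeal to ``$\hat{\mu}_{H_N}\to\sigma$ in probability'' does not give the needed uniform control on $A_\delta$. (ii) Your sketch of the sharp sub-Gaussian lower bound for the annealed integral via a ``Lindeberg-type exchange'' is not what is needed; the paper's Lemma~\ref{lem:annealed} uses a Hubbard--Stratonovich linearization together with the near-optimality of the sub-Gaussian bound near $t=0$ and delocalization of the random sphere vector ($\|e\|_\infty\lesssim N^{-1/4-\epsilon}$). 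Lindeberg exchange, which compares expectations of smooth bounded observables, is not well-suited for the exponential functional in the annealed integral.
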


Proposition \ref{prop:main_infinite} follows in the classical way from Lemma \ref{lem:ratesimp}, Lemma \ref{lem:exponential_tightness}, Proposition \ref{prop:wkldpub}, and Proposition \ref{prop:wkldplb}. 

\begin{rem}
Actually, the proof given in this section is slightly more general than just stated: It works ``up to $x_c(\rho)$'' in the sense that it shows
\[
	-I_\sigma(x) \leq \liminf_{\delta \downarrow 0} \liminf_{N \to +\infty} \frac{1}{N} \log \P(\abs{\lambda_{\textup{max}}(H_N) - x} \leq \delta) \leq \limsup_{\delta \downarrow 0} \limsup_{N \to +\infty} \frac{1}{N} \log \P(\abs{\lambda_{\textup{max}}(H_N) - x} \leq \delta) \leq -I_\sigma(x)
\]
whenever $x < x_c(\rho)$, not just whenever $x_c(\rho) = +\infty$ and $x \in \R$ as stated. But since our final result holds regardless of $x_c(\rho)$, we will not need this level of generality here.
\end{rem}


\subsection{Annealed spherical integral.}\
\label{subsec:annealed}

The goal of this subsection is to prove the following lemma.

\begin{lem}
\label{lem:annealed}
Under either Assumption \ref{assn:ssg} or Assumption \ref{assn:g}, for every $0 \leq \theta < \theta_{\textup{max}}$, we have
\begin{equation}
\label{eqn:annealed}
	\lim_{N \to \infty} \frac{1}{N} \log \E_{e,H_N}[e^{N\frac{\theta}{2}\ip{e,H_Ne}}] = F(\rho,\theta).
\end{equation}
\end{lem}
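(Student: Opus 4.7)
The plan is to compute the double expectation by first conditioning on the unit vector $e \in S^{N-1}$ and integrating over $Z$. Writing $y_i := (Ze)_i = \sum_{k=1}^N Z_{ik} e_k$, we have $e^T Z^T \Gamma Z e = \sum_{i=1}^M d_i y_i^2$, and the $y_i$ are independent across $i$ (by independence of the rows of $Z$) with common conditional law $\nu_e$ on $\R$, so
\[
\E_Z\bigl[e^{\frac{N\theta}{2M} e^T Z^T \Gamma Z e}\bigr] = \prod_{i=1}^M \int e^{\frac{N\theta d_i}{2M} w^2} \nu_e(\diff w).
\]
Under Assumption \ref{assn:g}, rotational invariance forces $\nu_e = \cN(0,1)$ independently of $e$, so the product equals $\prod_i (1 - N\theta d_i / M)^{-1/2}$ (well-defined for $N$ large by Assumption \ref{assn:no_outliers} and $\theta < \theta_{\textup{max}}$). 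Taking $\frac{1}{N}\log$ and passing to the limit using $M/N \to \alpha$, weak convergence $\hat{\mu}_{\Gamma} \to \rho$, and uniform boundedness of the integrand $t \mapsto \log(1 - \theta t N/M)$ on $\supp(\hat{\mu}_\Gamma)$, we recover $F(\rho,\theta)$.

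Under Assumption \ref{assn:ssg}, after reducing via the preceding unnamed lemma to the case $d_i \geq 0$, the upper bound matching the Gaussian value follows from the Gaussian representation $e^{sw^2/2} = \E_G[e^{\sqrt{s}\,Gw}]$ valid for $s \geq 0$ with $G \sim \cN(0,1)$. Applying it with $s = N\theta d_i / M$, invoking Fubini, and using the sharp-sub-Gaussian MGF bound separately on each independent $Z_{ik}$ together with $\sum_k e_k^2 = 1$ yields
\[
\int e^{\frac{N\theta d_i}{2M} w^2} \nu_e(\diff w) \leq \E_G\bigl[e^{\frac{N\theta d_i}{2M} G^2}\bigr] = \left(1 - \frac{N\theta d_i}{M}\right)^{-1/2}
\]
uniformly in $e$, so the Gaussian upper bound transfers verbatim.

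The matching lower bound is the main obstacle. The plan is to restrict $e$ to the set $\{\|e\|_\infty \leq N^{-1/2+\varepsilon}\}$, which carries overwhelming uniform measure on $S^{N-1}$ by standard spherical concentration. For such $e$, a quantitative Berry--Esseen bound yields $\nu_e \to \cN(0,1)$ in distribution as $N \to \infty$, uniformly in $e$ in this set. Since $\theta < \theta_{\textup{max}} = \alpha/r(\rho)$, Assumption \ref{assn:no_outliers} lets us pick $s^\ast < 1$ with $N\theta d_i / M \leq s^\ast$ for all $i$ and all large $N$; the sharp-sub-Gaussian upper bound above then yields $\sup_{N, e, i} \int e^{s^\ast w^2 / 2} \nu_e(\diff w) < \infty$, establishing uniform integrability of $\{e^{s w^2/2}\}_{s < s^\ast}$ under $\nu_e$. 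Consequently, $\log \int e^{\frac{N\theta d_i}{2M} w^2} \nu_e(\diff w) = -\tfrac{1}{2}\log(1 - N\theta d_i/M) + o(1)$ as $N \to \infty$, with the $o(1)$ uniform in $i$. Summing over $i$, dividing by $N$, and passing to the limit via $\hat{\mu}_\Gamma \to \rho$ completes the proof, the crux being the uniform-in-$i$ convergence of the one-dimensional moment $\int e^{sw^2/2}\nu_e(\diff w)$ to its Gaussian value for $s$ bounded away from $1$.
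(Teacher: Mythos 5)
Your proof is correct, and for the lower bound it takes a genuinely different route from the paper's. The paper's lower bound never invokes the CLT: it applies the Hubbard--Stratonovich transformation (the same $e^{sw^2/2}=\E_G[e^{\sqrt{s}Gw}]$ identity you use for the upper bound), restricts the auxiliary $x$-integral to $|x|\lesssim N^{1/4+\epsilon}$, restricts $e$ to the \emph{weaker} set $\{\|e\|_\infty\leq N^{-1/4-\epsilon}\}$, and then uses the ``near-sharp'' elementary consequence of unit variance that $\int e^{tx}\mu(\diff x)\geq e^{(1-\delta)t^2/2}$ for $|t|\leq \eta(\delta)$, so that the product over $j$ of one-dimensional MGFs can be bounded below factor by factor. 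Your argument instead proves the convergence of the \emph{law} $\nu_e$ of $\ip{z_i,e}$ to $\cN(0,1)$ via Berry--Esseen (SSG has finite third moment, $\sum_k|e_k|^3\leq\|e\|_\infty\to 0$), and then upgrades this to convergence of $\int e^{sw^2/2}\,\nu_e(\diff w)\to(1-s)^{-1/2}$ via uniform integrability, the uniform $s^\ast$-exponential moment bound being exactly your SSG upper bound. Two small technical points worth making explicit if you wrote this up: (i) you need uniformity not merely in $e$ but jointly over $s=N\theta d_i/M\in[0,s^\ast]$, which does hold because both the tail cutoff and the Kolmogorov-distance-to-integral estimate are monotone in $s$; and (ii) the restriction to $\{\|e\|_\infty\leq N^{-1/2+\varepsilon}\}$ costs only $\tfrac{1}{N}\log\P(e\in V_N)\to 0$, which is all that is needed. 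The trade-off: the paper's argument is more elementary (no normal approximation, no uniform integrability), yours is perhaps more conceptual in that it identifies the limiting object $\nu_e\Rightarrow\cN(0,1)$ explicitly, at the cost of invoking a quantitative CLT and a slightly tighter constraint on $\|e\|_\infty$.
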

\begin{proof}
For every unit vector $e$, we have 
\[
	\E_{H_N}[e^{N\frac{\theta}{2}\ip{e,H_Ne}}] = \E[e^{N\frac{\theta}{2}\ip{e,\left(\frac{1}{M}\sum_{\mu=1}^M d_\mu z_\mu z_\mu^T\right) e}}] = \E\left[ \prod_{\mu = 1}^M e^{\frac{N}{M}\frac{\theta}{2} d_\mu \ip{z_{\mu},e}^2} \right] = \prod_{\mu=1}^M \E[e^{\frac{N}{M}\frac{\theta}{2} d_\mu \ip{z_\mu,e}^2}]
\]
since the $z_\mu$'s are independent. Applying a Hubbard-Stratonovich transformation, we find
\[
	\E_{H_N}[e^{N\frac{\theta}{2}\ip{e,H_Ne}}] = \prod_{\mu=1}^M \frac{1}{\sqrt{\pi}} \int_{-\infty}^\infty \E\left[e^{2x \ip{z_\mu,e} \sqrt{\frac{N}{M}\frac{\theta}{2} d_\mu}} \right] e^{-x^2} \diff x,
\]
where we interpret $\sqrt{d_\mu} = \ii \sqrt{-d_\mu}$ for those $d_\mu$ which are negative.

In the Gaussian case (i.e., under Assumption \ref{assn:g}), the remainder of the proof is easy: We can exactly compute $\prod_{j=1}^N \E[e^{x\sqrt{2\frac{N}{M} \theta d_\mu} (z_\mu)_je_j}] = e^{x^2\frac{N}{M}\theta d_\mu}$, giving 
\[
	\frac{1}{N} \log \E_{e,H_N}[e^{N\frac{\theta}{2}\ip{e,H_Ne}}] = -\frac{M}{2N} \int_\R \log\left(1-\frac{N}{M}\theta t\right) \hat{\mu}_{\Gamma}(\diff t).
\]
The limiting replacement of $\hat{\mu}_{\Gamma}$ with $\rho$, and of $\frac{M}{N}$ with $\alpha$, is routine, but it requires $\theta < \theta_{\textup{max}}$. 

In the sharp sub-Gaussian case (i.e., under Assumption \ref{assn:ssg}), the upper bound is similar: For every \emph{real} $c$ and every unit $e$ we have
\[
	\E[\exp(c \ip{z_\mu,e})] = \prod_{j=1}^N \E[\exp(c (z_\mu)_j e_j)] \leq \prod_{j=1}^N \exp\left(\frac{c^2e_j^2}{2}\right) = \exp\left(\frac{c^2}{2}\right).
\]
Since each $d_\mu$ is positive, we can apply this with real $c = 2\sqrt{\frac{N}{M}\frac{\theta}{2}d_\mu}$ to find 
\[
	\frac{1}{N} \log \E_{e,H_N}[e^{N\frac{\theta}{2}\ip{e,H_Ne}}] \leq -\frac{M}{2N} \int_\R \log\left(1-\frac{N}{M}\theta t\right) \hat{\mu}_{\Gamma}(\diff t),
\]
which finishes the proof of the upper bound. For the lower bound, fix $0 < \epsilon < \frac{1}{4}$ and define
\[
	V^\epsilon_N = \{e : \|e\|_\infty \leq N^{-\frac{1}{4}-\epsilon}\} \subset \mathbb{S}^{N-1}.
\]
From the sharp sub-Gaussian assumption, we know that for every $\delta > 0$ there exists $\eta > 0$ such that, for any $\abs{t} \leq \eta$,
\[
	\int e^{tx} \mu(\diff x) \geq e^{\frac{(1-\delta)}{2}t^2}.
\]
In particular, if we fix $\delta$ so small that $f(t) = 1-\frac{(1-\delta)\theta t}{\alpha}$ is bounded below on the support of $\rho$ and write 
\[
	\eta' = \frac{\eta}{\sqrt{2(\alpha+1)\theta(r(\rho)+1)}},
\]
then whenever $e \in V^\epsilon_N$ and $\abs{x} \leq \eta' N^{\frac{1}{4}+\epsilon}$, for each $\mu$ we have
\begin{align*}
	\E\left[\exp\left(2x\ip{z_\mu,e}\sqrt{\frac{N}{M}\frac{\theta}{2} d_\mu}\right)\right] &\geq \exp\left((1-\delta)x^2\frac{N}{M}\theta d_\mu\right).
\end{align*}
Thus for such $e$ we have
\[
	\E[e^{\frac{N}{M} \frac{\theta}{2} d_\mu \ip{z_\mu,e}^2}] \geq \frac{1}{\sqrt{\pi}} \int_{-\eta' N^{\frac{1}{4}+\epsilon}}^{\eta'N^{\frac{1}{4}+\epsilon}} \exp\left(-\left(1-(1-\delta)\frac{N}{M}\theta d_\mu\right)x^2\right) \diff x.
\]
From standard Gaussian tail bounds, if $c,d > 0$ are independent of $N$ then
\[
	\frac{1}{\sqrt{\pi}} \int_{dN^{\frac{1}{4}+\epsilon}}^\infty \exp(-cx^2) \diff x \leq \sqrt{c}\exp(-2cd^2N^{\frac{1}{2}+2\epsilon})
\]
so that
\begin{align*}
	\E[e^{\frac{N}{M}\frac{\theta}{2} d_\mu\ip{z_\mu,e}^2}] &\geq \left(1-(1-\delta)\frac{N}{M}\theta d_\mu\right)^{-\frac{1}{2}} - 2\sqrt{1-\frac{N}{M}\theta d_\mu}\exp\left(-2\left(1-\frac{N}{M}\theta d_\mu\right)(\eta')^2 N^{\frac{1}{2}+2\epsilon}\right) \\
	&\geq \left(1-\frac{(1-\delta)\theta d_\mu}{\alpha}\right)^{-\frac{1}{2}}\left(1-C\abs{\frac{N}{M}-\frac{1}{\alpha}}\right) - C'\exp(-C''N^{\frac{1}{2}+2\epsilon})
\end{align*}
for some constants $C, C', C''$ depending on $\delta$ through $\eta$, but not depending on $\mu$. Therefore
\begin{align*}
	\frac{1}{N}\log \E_{e,H_N}[e^{N\frac{\theta}{2}\ip{e,H_Ne}}] \geq& \, \frac{M}{N} \int_\R \underbrace{\log\left[ \left(1-\frac{(1-\delta)\theta t}{\alpha}\right)^{-\frac{1}{2}}\left(1-C\abs{\frac{N}{M}-\frac{1}{\alpha}}\right) - C'\exp(-C''N^{\frac{1}{2}+2\epsilon})\right] }_{=:f_N(t)} \rho(\diff t) \\
	&+ \frac{1}{N} \log \P(e \in V^\epsilon_N)
\end{align*}
From \cite[Lemma 3.3]{GuiHus2020}, we have $\P(e \in V^\epsilon_N) \to 1$. Furthermore, $f_N(t) \to -\frac{1}{2}\log (1-\frac{(1-\delta)\theta t}{\alpha})$ pointwise as $N \to \infty$, and is bounded above on the support of $\rho$ from our choice of $\delta$, so dominated convergence gives
\[
	\liminf_{N \to \infty} \frac{1}{N} \log \E_{e,H_N}[e^{N\frac{\theta}{2}\ip{e,H_Ne}}] \geq -\frac{\alpha}{2} \int_\R \log\left(1-\frac{(1-\delta)\theta t}{\alpha}\right) \rho(\diff t).
\]
Letting $\delta \downarrow 0$ with dominated convergence again finishes the proof.
\end{proof}


\subsection{Concentration of measure.}\

The goal of this subsection is to prove the following proposition.
\begin{prop}
\label{prop:sub_Gaussian_concentration}
If either Assumption \ref{assn:ssg} or Assumption \ref{assn:g} holds, then for every $\epsilon > 0$ we have
\begin{equation}
\label{eqn:sub_Gaussian_concentration}
    \lim_{N \to \infty} \frac{1}{N} \log \P(d_{\textup{BL}}(\hat{\mu}_{H_N},\sigma) > \epsilon) = -\infty.
\end{equation}
\end{prop}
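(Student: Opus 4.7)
The plan is to decompose $d_{\textup{BL}}(\hat{\mu}_{H_N}, \sigma) \leq d_{\textup{BL}}(\hat{\mu}_{H_N}, \E[\hat{\mu}_{H_N}]) + d_{\textup{BL}}(\E[\hat{\mu}_{H_N}], \sigma)$. The second summand tends to zero deterministically (by Theorem~\ref{thm:sigma} combined with exponential tightness of $\|H_N\|$ from Lemma~\ref{lem:exponential_tightness}, which furnishes the uniform integrability needed to pass convergence in probability to convergence of expectations). So the heart of the matter is to show concentration of $\hat{\mu}_{H_N}$ around its mean at speed $N$. By a standard density argument, it suffices to show, for each fixed $f$ drawn from a countable dense family of $\mc{F}_{\textup{Lip}}$,
\[
    \P\left( \abs{ \int f \, d\hat{\mu}_{H_N} - \E\left[\int f \, d\hat{\mu}_{H_N}\right]} > \delta \right) \leq e^{-c_\delta N}.
\]

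The deterministic input is the Hoffman--Wielandt inequality: for Hermitian $A, B$ and $f \in \mc{F}_{\textup{Lip}}$,
\[
    \abs{\int f \, d\hat{\mu}_A - \int f \, d\hat{\mu}_B} \leq \frac{1}{\sqrt N} \|A - B\|_F.
\]
Combined with the elementary bound $\|H_N - H_N'\|_F \leq M^{-1} \|\Gamma\| (\|Z\| + \|Z'\|) \|Z - Z'\|_F$, this shows that $Z \mapsto \int f \, d\hat{\mu}_{H_N}$ is Lipschitz in the Frobenius norm of $Z$ with constant $\lesssim \|\Gamma\| \|Z\| / (M \sqrt N) = O(1/N)$ on the event $\{\|Z\| \leq C\sqrt N\}$, which holds with probability $1 - e^{-cN}$ by classical sub-Gaussian estimates for the operator norm.

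Under Assumption~\ref{assn:g}, I would apply Herbst's argument for the Gaussian log-Sobolev inequality to a smoothly truncated version of this map, obtaining deviation probabilities of order $\exp(-cN^2 \delta^2)$, far beyond the required speed $N$. Under Assumption~\ref{assn:ssg} there is no log-Sobolev available; instead one truncates each entry of $Z$ at a threshold $T = O(\sqrt N)$, which by the sharp sub-Gaussian tail costs at most $e^{-cN}$ after a union bound over the $MN$ entries, and then applies a Talagrand-type concentration inequality for product measures on a bounded box, such as the $d$-dimensional extension developed in Appendix~A. With Lipschitz constant $O(1/N)$ and bounded-coordinate radius $T \asymp \sqrt N$, this yields an exponent $\delta^2 / (L^2 T^2) \asymp \delta^2 N$, again matching speed $N$.

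The main obstacle is reconciling the local nature of the Hoffman--Wielandt Lipschitz bound, which depends on controlling $\|Z\|$, with the global hypotheses required by Talagrand's inequality, particularly in the sharp sub-Gaussian case where no density is available for a smooth cutoff trick. This forces the truncation-and-union-bound route, in which the truncation threshold $T$, the resulting Lipschitz constant, and the entrywise tail probabilities must all be balanced to yield an exponent strictly larger than the prescribed $c_\delta N$. Once this is arranged, measurability, uniformity over $\mc{F}_{\textup{Lip}}$, and passage from a countable dense family to the supremum are routine.
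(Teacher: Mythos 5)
The core gap is that you have mis\-calibrated the required strength of the concentration estimate. The proposition asks for $\lim_N N^{-1}\log\P(d_{\textup{BL}}(\hat\mu_{H_N},\sigma)>\epsilon)=-\infty$, i.e.\ \emph{super-exponential} decay, yet your stated intermediate goal is $\P(\ldots)>\delta)\le e^{-c_\delta N}$, which gives only $\limsup N^{-1}\log\P\le -c_\delta$, a finite constant. Your exponent bookkeeping confirms this: you compute ``$\delta^2/(L^2T^2)\asymp\delta^2 N$, again matching speed $N$,'' but speed $N$ is not the target here. The mismatch is structural, not cosmetic: with the truncation-and-union-bound route, the threshold $T$ must satisfy $T\gtrsim\sqrt N$ for the union bound over $MN$ entries to decay at speed $N$ or better, but a Talagrand inequality on the box $[-T,T]^{MN}$ then yields an exponent $\asymp\delta^2 N^2/T^2\lesssim\delta^2 N$ and no more. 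Shrinking $T$ to $N^\gamma$ with $\gamma<\tfrac12$ improves the Talagrand exponent to $N^{2-2\gamma}$ but makes the union bound $MN\,e^{-cN^{2\gamma}}$ too weak. These two constraints are irreconcilable: the union bound is the wrong way to control the truncation error.

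The paper resolves this by keeping the small threshold $N^\gamma$ and \emph{not} conditioning on every entry being small. Instead, it bounds the Kolmogorov--Smirnov (and hence, on a bounded spectral window, the bounded-Lipschitz) distance between $\hat\mu_{H_N}$ and $\hat\mu_{H_N^A}$ by $N^{-1}\rank(Z-A)$, which by interlacing is at most $N^{-1}\#\{(i,j):|Z_{ij}|>N^\gamma\}$. That count is a sum of i.i.d.\ Bernoulli's with tiny mean, and Bennett's inequality gives deviation probabilities decaying like $e^{-cN\log N}$, super-exponential. This rank/Bennett step is the key idea you are missing.

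Two secondary points. First, you invoke a Talagrand inequality (Appendix~A) for the map $Z\mapsto\int f\,d\hat\mu_{H_N(Z)}$, but that inequality requires the function to have convex sublevel sets, which a generic Lipschitz linear statistic does not. The paper's Lemma~\ref{lem:guizeiexpansion} deals with this by approximating $f$ by signed sums of functions in the class $\mf{C}_1$ (for which $x\mapsto g(x^2)$ is convex), a nontrivial construction. Second, in the Gaussian case, ``smoothly truncating the map'' is ambiguous: if you cut off $h(Z)=\int f\,d\hat\mu_{H_N(Z)}$ at $\|Z\|\approx\sqrt N$, the gradient of the cutoff itself contributes $O(N^{-1/2})$ to the Lipschitz constant and you are back to speed $N$; what works, and what the paper does, is to clamp the \emph{entries} of $Z$ at level $N^\gamma$ (the map $Z\mapsto C(Z)$), which makes $h\circ C$ globally $O(N^{\gamma-1})$-Lipschitz and then Herbst gives $e^{-c\delta^2 N^{2-2\gamma}}$.

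Your decomposition $d_{\textup{BL}}(\hat\mu_{H_N},\sigma)\le d_{\textup{BL}}(\hat\mu_{H_N},\E\hat\mu_{H_N})+d_{\textup{BL}}(\E\hat\mu_{H_N},\sigma)$ and the Hoffman--Wielandt Lipschitz step are both fine and are also used in the paper; the problem is entirely in how the truncation error is controlled and in what exponent is actually required.
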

\begin{rem}
Shortly prior to the posting of our paper, we learned that similar arguments to those in this subsection and in Appendix \hyperlink{sec:compact_or_log_sobolev}{B} will also appear in independent, soon-to-be-posted work of Cook, Ducatez, and Guionnet \cite{CoDuGuTBD}.
\end{rem}
\begin{proof}
The structure of the proof is common between the different cases of Assumption \ref{assn:ssg} and Assumption \ref{assn:g}. However, one technical estimate is proved quite differently for the different cases; we will mention this at the appropriate moment below, and otherwise tacitly treat both cases simultaneously.

Consider the decomposition
\[
    Z = A + B,
\]
where
\begin{equation}
\label{eqn:han}
    A_{ij} = Z_{ij} \mathds{1}_{\abs{Z_{ij}} \leq N^{\gamma}}
\end{equation}
for some $\gamma = \gamma(\epsilon) > 0$ to be chosen. (Recall the $Z_{ij}$'s are order one, so $B$ is typically sparse.) Define the matrix
\[
    H_N^A = \frac{1}{M} A^T\Gamma A
\]
and, for large positive $L$, the event
\[
    \mc{E}_L = \{\supp(\hat{\mu}_{H_N}) \subset (-L,L)\} \cap \{\supp(\hat{\mu}_{H_N^A}) \subset (-L,L)\}.
\]
To prove \eqref{eqn:sub_Gaussian_concentration}, it suffices to check
\begin{align}
	\lim_{N \to \infty} \frac{1}{N} \log \P(d_{\textup{BL}}(\hat{\mu}_{H_N}, \hat{\mu}_{H_N^A}) > \epsilon, \mc{E}_L) &= -\infty \quad \text{for every } L > 100r(\sigma), \label{eqn:truncating} \\
	\lim_{L \to \infty} \limsup_{N \to \infty} \frac{1}{N} \log \P(\mc{E}_L^c) &= -\infty, \label{eqn:elc} \\
	\lim_{N \to \infty} \frac{1}{N} \log \P(d_{\textup{BL}}(\hat{\mu}_{H_N^A}, \E[\hat{\mu}_{H_N^A}]) > \epsilon) &= -\infty,\label{eqn:guizei} \\
	\lim_{N \to \infty} d_{\textup{BL}}(\E[\hat{\mu}_{H_N^A}], \sigma) &= 0. \label{eqn:deterministic}
\end{align}
We start with \eqref{eqn:truncating}, which we prove by adapting arguments of Bordenave, Caputo, and Chafa\"{i} (namely \cite[Lemma C.2]{BorCapCha2011} and \cite[Lemma 2.2]{BorCap2014}). Whenever $f$ is a $C^1$ test function with $\|f\|_{\textup{Lip}} + \|f\|_\infty \leq 1$, integration by parts gives
\begin{align*}
	\abs{\int f(\lambda) (\hat{\mu}_{H_N} - \hat{\mu}_{H_N^A})(\diff \lambda)}\mathbf{1}_{\mc{E}_L} &= \abs{\int f'(\lambda) (F_{\hat{\mu}_{H_N}} - F_{\hat{\mu}_{H_N^A}})(\diff \lambda)} \mathbf{1}_{\mc{E}_L} \leq \|f'\|_\infty \|F_{\hat{\mu}_{H_N}} - F_{\hat{\mu}_{H_N^A}}\|_1 \mathbf{1}_{\mc{E}_L} \\
	&\leq 2L \|f\|_{\textup{Lip}} \|F_{\hat{\mu}_{H_N}} - F_{\hat{\mu}_{H_N^A}}\|_\infty \leq 2Ld_{\textup{KS}}(\hat{\mu}_{H_N}, \hat{\mu}_{H^A_N}).
\end{align*}
If $f$ just has $\|f\|_{\textup{Lip}} + \|f\|_\infty \leq 1$ but is not necessarily $C^1$, there is a $C^1$ function $g$ with $\|g\|_{\textup{Lip}} + \|g\|_\infty \leq 1$ and $\|f-g\|_{L^\infty([-L,L])} \leq 2Ld_{\textup{KS}}(\hat{\mu}_{H_N}, \hat{\mu}_{H_N^A})$; thus 
\[
	\P(d_{\textup{BL}}(\hat{\mu}_{H_N}, \hat{\mu}_{H_N^A}) > \epsilon, \mc{E}_L) \leq \P\left(d_{\textup{KS}}(\hat{\mu}_{H_N}, \hat{\mu}_{H_N^A}) > \frac{\epsilon}{4L}\right).
\]
It is classical (a consequence of interlacing of singular values, see e.g. \cite[Theorem A.44]{BaiSil2010}) that
\[
    d_{\textup{KS}}(\hat{\mu}_{H_N},\hat{\mu}_{H_N^A}) \leq \frac{1}{N}\rank(Z - A) = \frac{1}{N}\rank(B) \leq \frac{1}{N} \sum_{i,j} \mathds{1}_{\abs{Z_{ij}} > N^\gamma}
\]
for any $\Gamma$. Now $(\mathds{1}_{\abs{Z_{ij}} > N^{\gamma}})_{1 \leq i \leq M, 1 \leq j \leq N}$ is a collection of $NM$ i.i.d. Bernoulli variables with mean
\[
	p_N := \P(\abs{Z_{ij}} > N^{\gamma}) \leq \exp(-cN^{2\gamma})
\]
for some $c$ depending on the sub-Gaussian norm of $\mu$. Writing
\[
	\sigma^2 = NM p_N(1-p_N) \leq \exp\left(-\frac{c}{2}N^{2\gamma}\right)
\]
and $h(x) = (x+1)\log(x+1)-x$, Bennett's inequality \cite{Ben1965} gives 
\[
	\P\left( \sum_{i \leq j} \mathds{1}_{\abs{Z_{ij}} > N^\gamma} - NM p_N \geq t \right) \leq \exp\left(-\sigma^2 h\left(\frac{t}{\sigma^2}\right)\right)
\]
for any $t > 0$. We will choose $t = N \frac{\epsilon}{4 L} - NM p_N \geq  N \frac{\epsilon}{8L}$, which has $\frac{t}{\sigma^2} \to +\infty$; since $h(x) \geq x \log x$ for sufficiently large arguments, we have
\[
	\P( d_{\textup{BL}}(\hat{\mu}_{H_N}, \hat{\mu}_{H_N^A}) > \epsilon, \mc{E}_L) \leq \exp\left(-t\log\left(\frac{t}{\sigma^2}\right)\right).
\]
Since $\gamma > 0$, this suffices for \eqref{eqn:truncating}.

The estimate \eqref{eqn:elc} is essentially a consequence of exponential tightness, Lemma \ref{lem:exponential_tightness}, which controls $\abs{\lambda_{\textup{max}}(H_N)}$; the same proof controls $\abs{\lambda_{\textup{min}}(H_N)}$ (which can be negative if some $d_i$'s are negative), as well as the extreme eigenvalues of $H_N^A$ (the proof of Lemma \ref{lem:exponential_tightness} references \cite[Lemma 1.9]{GuiHus2020}, which goes through for $A$).

The verification of \eqref{eqn:guizei} is fairly different in the sharp sub-Gaussian case (Assumption \ref{assn:ssg}) vs. the Gaussian case (Assumption \ref{assn:g}). The latter case is Lemma \ref{lem:herbstconcentration} below, while the former case is Lemma \ref{lem:guizeiexpansion} below (this is also where we select $\gamma$ as a function of $\epsilon$).

For \eqref{eqn:deterministic}, we first claim
\begin{equation}
\label{eqn:deterministic_between_matrices}
    \lim_{N \to \infty} d_{\textup{BL}}(\E[\hat{\mu}_{H_N^A}], \E[\hat{\mu}_{H_N}]) = 0.
\end{equation}
It suffices to show $\E[X_N] \to 0$, where $X_N = d_{\textup{BL}}(\hat{\mu}_{H_N^A},\hat{\mu}_{H_N})$. But $X_N$ is a random variable between zero and two, and \eqref{eqn:truncating} and \eqref{eqn:elc} show that for every $\epsilon > 0$ and $N \geq N_0(\epsilon)$ we have $\P(X_N > \epsilon) \leq \exp(-100N)$, which shows $\E[X_N] \to 0$ and hence \eqref{eqn:deterministic_between_matrices}. It remains only to show
\[
    \lim_{N \to \infty} d_{\textup{BL}}(\E[\hat{\mu}_{H_N}],\sigma) = 0.
\]
This is equivalent to the claim $\E[\hat{\mu}_{H_N}] \to \sigma$. The original result of this type is due to Mar\v{c}enko and Pastur \cite{MarPas1967}, but for the model where the $d_i$'s are i.i.d. draws from $\rho$, instead of being deterministic with the property $\frac{1}{M} \sum \delta_{d_i} \to \rho$; the result for our, latter variant of this model is due to Silverstein and Bai \cite{SilBai1995} (actually, their result holds in greater generality).
\end{proof}

\begin{lem}
\label{lem:herbstconcentration}
Under Assumption \ref{assn:g}, for each $\epsilon > 0$, there exists $\gamma = \gamma(\epsilon) > 0$ such that, if $H^A_N$ is defined in terms of $\gamma$ using \eqref{eqn:han}, then 
\begin{equation}
\label{eqn:herbstconcentration}
    \lim_{N \to \infty} \frac{1}{N} \log \P(d_{\textup{BL}}(\hat{\mu}_{H^A_N},\E[\hat{\mu}_{H^A_N}]) > \epsilon) = -\infty.
\end{equation}
\end{lem}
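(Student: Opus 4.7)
The strategy is to apply Gaussian log-Sobolev concentration to a Lipschitz regularization of $F_g(A) := \int g \, d\hat{\mu}_{H_N^A}$ (for $g \in \mathcal{F}_{\textup{Lip}}$), and then transfer the resulting bound back to $F_g(A)$. Lidskii's inequality combined with Cauchy--Schwarz and the factorization $H_N^A - H_N^{A'} = \tfrac{1}{M}((A-A')^T \Gamma A + (A')^T \Gamma (A - A'))$ yields
\[
|F_g(A) - F_g(A')| \leq \frac{\|\Gamma\|}{M\sqrt{N}}\bigl(\|A\|_{\textup{op}} + \|A'\|_{\textup{op}}\bigr)\, \|A - A'\|_F,
\]
so $F_g$ is Lipschitz in $A$ on $\{\|A\|_{\textup{op}} \leq R\}$ with constant of order $R/(M\sqrt{N}) \sim 1/N$ when $R \sim \sqrt{N}$.

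To exploit the underlying Gaussian structure, replace the indicator defining $A_{ij}$ by a smooth $1$-Lipschitz cutoff $\psi$ supported in $[-N^\gamma, N^\gamma]$ with $\psi(x) = x$ on $[-N^\gamma/2, N^\gamma/2]$, and set $\tilde A_{ij} := \psi(Z_{ij})$ and $\tilde F(Z) := \int g \, d\hat{\mu}_{M^{-1}\tilde A^T \Gamma \tilde A}$. On the event $\mathcal G := \{\max_{ij}|Z_{ij}| \leq N^\gamma/2\}$, $\tilde A = A$ and hence $\tilde F(Z) = F_g(A)$. Choosing $\gamma(\epsilon) := 1$ (any value greater than $1/2$ would work) makes $\P(\mathcal G^c) \leq 2MN\, e^{-N^{2\gamma}/8} = e^{-\omega(N)}$ via a union bound and Gaussian tails.

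Next, set $R_N := ((1+\sqrt{\alpha}) + \sqrt{\log N})\sqrt{N}$ and $\mathcal K := \{\|Z\|_{\textup{op}} \leq R_N\}$. Gaussian concentration of the operator norm of rectangular Gaussian matrices gives $\P(\mathcal K^c) \leq e^{-cN\log N}$. On $\mathcal K \cap \mathcal G$ we have $\|\tilde A\|_{\textup{op}} = \|Z\|_{\textup{op}} \leq R_N$, and since $Z \mapsto \tilde A$ is $1$-Lipschitz in Frobenius norm (as $|\psi'| \leq 1$), composing with the earlier bound shows $\tilde F$ is Lipschitz on $\mathcal K \cap \mathcal G$ (viewed in $\R^{MN}$) with constant $L \leq c\sqrt{\log N}/N$. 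Extend $\tilde F|_{\mathcal K \cap \mathcal G}$ via McShane to a globally $L$-Lipschitz function $\tilde F^{\sharp}$ on $\R^{MN}$; then Herbst's inequality (the standard Gaussian on $\R^{MN}$ satisfies LSI with constant $1$) yields
\[
\P\bigl(|\tilde F^{\sharp} - \E \tilde F^{\sharp}| > \epsilon/2\bigr) \leq 2\exp\bigl(-c'\epsilon^2 N^2/\log N\bigr),
\]
which already satisfies $\tfrac{1}{N}\log(\cdot) \to -\infty$. Combining with the two exceptional-set bounds via $\{F_g(A) \neq \tilde F^{\sharp}(Z)\} \subset \mathcal K^c \cup \mathcal G^c$ and the crude estimate $|\E F_g - \E \tilde F^{\sharp}| \leq 2\P(\mathcal K^c \cup \mathcal G^c) = o(\epsilon)$ concludes the pointwise-in-$g$ estimate. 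The supremum over $g \in \mathcal F_{\textup{Lip}}$ in the definition of $d_{\textup{BL}}$ is absorbed via a finite net on $\mathcal F_{\textup{Lip}}$ (restricted to a compact interval $[-L, L]$ thanks to exponential tightness), whose log-cardinality contributes only $O(1)$ after dividing by $N$.

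The main obstacle is the absence of global Lipschitz continuity: the map $A \mapsto F_g(A)$ is only Lipschitz with constant proportional to $\|A\|_{\textup{op}}$, and the hard truncation makes $A(Z)$ discontinuous in $Z$, ruling out a direct appeal to Gaussian LSI. The smooth-cutoff-plus-Lipschitz-extension device sidesteps both issues simultaneously, at the cost of a harmless $\sqrt{\log N}$ factor in $L$.
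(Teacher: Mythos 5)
Your proof is correct and reaches the same conclusion, but by a genuinely different route than the paper. The paper replaces the hard truncation $A$ with a \emph{continuous clipping} $C$ (so $Z\mapsto C(Z)$ is $1$-Lipschitz globally), then uses the crude entrywise bound $\|C(Z)^T\Gamma\|_F\leq\sqrt{NM}\,d_{\max}N^\gamma$ to make $Z\mapsto\int f\,\diff\hat\mu_{H^C_N}$ globally Lipschitz with constant $\asymp N^{\gamma-1}$; this lets Herbst be applied directly without any good-event restriction, but it forces $\gamma<1/2$. You instead keep (a version of) the hard truncation, restrict to the convex good event $\mathcal K\cap\mathcal G$ where $\|Z\|_{\textup{op}}\lesssim\sqrt{N\log N}$, and exploit the sharper operator-norm-based Lipschitz bound $|F_g(A)-F_g(A')|\lesssim M^{-1}N^{-1/2}(\|A\|_{\textup{op}}+\|A'\|_{\textup{op}})\|A-A'\|_F$ (which follows from $\|XY\|_\ast\leq\|X\|_\ast\|Y\|_{\textup{op}}$ together with $\|X\|_\ast\leq\sqrt{\min(N,M)}\,\|X\|_F$), yielding a Lipschitz constant $\asymp\sqrt{\log N}/N$ on the good event, independent of $\gamma$. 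You then McShane-extend off the good event and apply Herbst to the extension. Your route buys a sharper concentration rate ($e^{-cN^2/\log N}$ vs.\ the paper's $e^{-cN^{2-2\gamma}}$) and works for any $\gamma>0$, at the cost of exceptional-set bookkeeping and the extension step; the paper's route is slightly cruder but avoids both.

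Two small cleanup points, neither fatal. First, the smooth cutoff $\psi$ is actually superfluous: since you only feed the McShane extension the values of $\tilde F$ on $\mathcal K\cap\mathcal G$, where $\tilde A=Z=A$, you could define $\tilde F^\sharp$ directly by extending $F_g|_{\mathcal K\cap\mathcal G}$ and drop $\psi$ entirely (and, as you half-noticed, for $\gamma>1/2$ one in fact has $\mathcal K\subset\mathcal G$ for large $N$, so $\mathcal G$ is redundant too). Second, the ``crude estimate'' $|\E F_g-\E\tilde F^\sharp|\leq 2\P(\mathcal K^c\cup\mathcal G^c)$ needs $\tilde F^\sharp$ to be uniformly bounded, which the raw McShane extension is not; you should clamp it to $[-1,1]$ (which preserves the Lipschitz constant, since $\tilde F$ already takes values in $[-1,1]$ on the good set). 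The closing net-over-$g$ argument and the reduction to a compact $[-L,L]$ via exponential tightness are stated at a high level but are standard; the paper carries them out explicitly via the Guionnet--Zeitouni construction.
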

\begin{proof}
In this case, it is technically inconvenient that the truncation of $A$ is discontinuous; thus we further decompose
\[
	A = C + D,
\]
where
\[
	C_{ij} = \begin{cases} Z_{ij} & \text{if } \abs{Z_{ij}} \leq N^\gamma, \\ N^\gamma \text{sign}(Z_{ij}) & \text{if } \abs{Z_{ij}} > N^\gamma \end{cases}, \qquad D_{ij} = \begin{cases} 0 & \text{if } \abs{Z_{ij}} \leq N^\gamma, \\ -N^\gamma \text{sign}(Z_{ij}) & \text{if } \abs{Z_{ij}} > N^\gamma \end{cases},
\]
and define the matrix
\[
	H^C_N = \frac{1}{M} C^T \Gamma C.
\]
To prove \eqref{eqn:herbstconcentration}, it suffices to check
\begin{align}
	\lim_{N \to \infty} \frac{1}{N} \log \P(d_{\textup{BL}}(\hat{\mu}_{H^A_N},\hat{\mu}_{H^C_N}) > \epsilon, \mc{E}_L) &= -\infty \quad \text{for every } L > 100 r(\sigma), \label{eqn:shifting_c} \\
	\lim_{N \to \infty} \frac{1}{N} \log \P(d_{\textup{BL}}(\hat{\mu}_{H^C_N}, \E[\hat{\mu}_{H^C_N}]) > \epsilon) &= -\infty, \label{eqn:herbst_c} \\
	\lim_{N \to \infty} d_{\textup{BL}}(\E[\hat{\mu}_{H^C_N}],\E[\hat{\mu}_{H^A_N}]) &= 0. \label{eqn:deterministic_c}
\end{align}
The proof of \eqref{eqn:shifting_c} is a close copy of the proof of \eqref{eqn:truncating}. Namely, one shows in the same way as before that
\[
	\P(d_{\textup{BL}}(\hat{\mu}_{H^A_N}, \hat{\mu}_{H^C_N}) > \epsilon, \mc{E}_L) \leq \P\left(d_{\textup{KS}}(\hat{\mu}_{H^A_N}, \hat{\mu}_{H^C_N}) > \frac{\epsilon}{4L}\right) \leq \P\left( \frac{1}{N} \text{rank}(D) > \frac{\epsilon}{4L}\right).
\]
Earlier, we bounded the rank of $B$ by its number of nonzero entries. Here we do the same for $D$, but by construction $B$ and $D$ have the same number of nonzero entries, so the rest of the estimate is exactly the same. Similarly, the proof of \eqref{eqn:deterministic_c} is analogous to the proof of \eqref{eqn:deterministic_between_matrices}.

So it remains only to show \eqref{eqn:herbst_c}, and we start by showing 
\begin{equation}
\label{eqn:gaussian_concentration_one_f}
	\sup_{f \in \mc{F}_{\textup{Lip}}} \P\left( \abs{\int_\R f(\lambda) (\hat{\mu}_{H^C_N} - \E[\hat{\mu}_{H^C_N}])(\diff \lambda)} \geq \delta\right) \leq 2\exp\left(-\frac{\delta^2N^{2-2\gamma}}{2}\right).
\end{equation}
Indeed, we shift perspective slightly by defining $C : \R^{M \times N} \to \R^{M \times N}$ as
\[
	C(Z)_{ij} = \begin{cases} Z_{ij} & \text{if } \abs{Z_{ij}} \leq N^\gamma, \\ N^\gamma \text{sign}(Z_{ij}) & \text{if } \abs{Z_{ij}} > N^\gamma, \end{cases}
\]
and $H^{C(Z)}_N$ as $H^{C(Z)}_N = M^{-1} C(Z)^T \Gamma C(Z)$. Fix some $f \in \mc{F}_{\textup{Lip}}$, and consider the map $h = h_f : \R^{M \times N} \to \R$ defined by
\[
	h(Z) = \int_{\R} f(\lambda) \hat{\mu}_{H^{C(Z)}_N} (\diff \lambda).
\]
We want to show that $h$ is Lipschitz. Using an $L^p$ version of the Hoffman-Wielandt inequality with $p = 1$ (see e.g. \cite[Theorem II]{Kat1987}), and writing $\pi \in S_N$ for a permutation in the permutation group on $N$ letters, we have
\begin{align*}
	\abs{h(Z_1) - h(Z_2)} &\leq \min_{\pi \in S_N} \frac{1}{N} \sum_{i=1}^N \abs{f(\lambda_i(H^{C(Z_1)}_N)) - f(\lambda_{\pi(i)}(H^{C(Z_2)}_N))} \leq \min_{\pi \in S_N} \frac{1}{N} \sum_{i=1}^N \abs{\lambda_i(H^{C(Z_1)}_N) - \lambda_{\pi(i)}(H^{C(Z_1)}_N)} \\
	&\leq \frac{1}{N} \sum_{i=1}^N \abs{\lambda_i(H^{C(Z_1)}_N - H^{C(Z_2)}_N)} = \frac{1}{N} \|H^{C(Z_1)}_N - H^{C(Z_2)}_N\|_\ast \\
	&\leq \frac{1}{NM} (\|C(Z_1)^T\Gamma (C(Z_1)-C(Z_2))\|_\ast + \|(C(Z_1)-C(Z_2))^T\Gamma C(Z_2)\|_\ast)
\end{align*}
Recalling that, for a matrix $M$, $\|M\|_F= \sqrt{ \sum_{i,j} |M_{ij}|^2}$ denotes its Frobenius norm, we will now use that $\|T_1T_2\|_\ast \leq \|T_1\|_F \|T_2\|_F$; that $\|C(Z_1)^T\Gamma\|_F \leq \sqrt{ NM} d_{\textup{max}}N^{\gamma}$, since the entries have magnitude at most $d_{\textup{max}}N^\gamma$; and that 
\[
	\|C(Z_1)-C(Z_2)\|_F \leq \|Z_1-Z_2\|_F
\]
(this estimate is why we needed to define $C$; the analogue for $A$ is not true). These give
\[
	\abs{h(Z_1)-h(Z_2)} \leq 2d_{\textup{max}} \frac{N^\gamma}{\sqrt{NM}} \|Z_1-Z_2\|_F \leq \frac{4d_{\textup{max}}}{\sqrt{\alpha}} \frac{N^\gamma}{N} \|Z_1-Z_2\|_F.
\]
Since Gaussian measure on $\R^{M \times N} \cong \R^{MN}$ satisfies the log-Sobolev inequality with constant $1$, the Herbst argument then gives
\[
	\P\left( \abs{\int_\R f(\lambda) (\hat{\mu}_{H^C_N} - \E[\hat{\mu}_{H^C_N}])(\diff \lambda)} \geq \delta\right) = \P(\abs{h(Z) - \E[h(Z)]} \geq \delta) \leq 2\exp\left(-\frac{\delta^2N^{2-2\gamma}}{2}\right)
\]
which proves \eqref{eqn:gaussian_concentration_one_f}.

Now we want to upgrade by taking a supremum over $f$ inside the probability -- at first just over $\mc{F}_{\textup{Lip},\mc{K}}$, which we recall denotes the set of functions in $\mc{F}_{\textup{Lip}}$ supported in some compact set $\mc{K}$. Guionnet and Zeitouni give a very useful construction for this purpose (which we will also mimic in the proof fo the sub-Gaussian case below): For any $\Delta > 0$, they construct a set of $2(\lceil\frac{\abs{\mc{K}}}{\Delta}\rceil+1)$ functions $(h_k)_{k=1}^{2(\lceil\frac{\abs{\mc{K}}}{\Delta}\rceil+1)}$ in $\mc{F}_\textup{Lip}$ with the property that, for any given $f \in \mc{F}_{\textup{Lip},\mc{K}}$, one can choose $\lceil \frac{\abs{\mc{K}}}{\Delta} \rceil+1$ of the $h_k$'s whose sum, called $f_\Delta$, satisfies $\|f-f_\Delta\|_\infty \leq \Delta$. Since $f_\Delta$ is actually a sum of this finite count of functions, not a linear combination of them, we have
\begin{align*}
	&\P\left( \sup_{f \in \mc{F}_{\textup{Lip}, \mc{K}}} \abs{\int_\R f(\lambda) (\hat{\mu}_{H^C_N} - \E[\hat{\mu}_{H^C_N}])(\diff \lambda)} \geq \delta\right) \\
	&\leq 2 \left( \left\lceil \frac{\abs{\mc{K}}}{\Delta} \right\rceil+1\right) \max_{k=1}^{2\left(\lceil\frac{\abs{\mc{K}}}{\Delta}\rceil+1\right)}  \P\left( \abs{\int_\R h_k(\lambda) (\hat{\mu}_{H^C_N} - \E[\hat{\mu}_{H^C_N}])(\diff \lambda)} \geq \frac{\delta-2\Delta}{2(\lceil \abs{\mc{K}}/\Delta \rceil+1)}\right) \\
	&\leq 4 \left( \left\lceil \frac{\abs{\mc{K}}}{\Delta} \right\rceil+1\right) \exp\left(-\frac{N^{2-2\gamma}}{2} \left(\frac{\delta-2\Delta}{2\left(\lceil \abs{\mc{K}}/\Delta\rceil + 1\right)} \right)^2 \right)
\end{align*}
Like Guionnet and Zeitouni, we choose $\Delta = \delta/4$; then whenever $\delta < 1$ and $\abs{\mc{K}} > 1$, we have
\begin{equation}
\label{eqn:gaussian_concentration_some_fs}
	\P\left( \sup_{f \in \mc{F}_{\textup{Lip}, \mc{K}}} \abs{\int_\R f(\lambda) (\hat{\mu}_{H^C_N} - \E[\hat{\mu}_{H^C_N}])(\diff \lambda)} \geq \delta\right) \leq \frac{32\delta}{\abs{\mc{K}}} \exp\left(-\frac{N^{2-2\gamma}\delta^4}{128\abs{\mc{K}}^2}\right).
\end{equation}
Now we define, for $L > 0$, the event
\[
	\mc{E}_{C,L} = \{\supp(\hat{\mu}_{H^C_N}) \subset (-L,L)\},
\]
mimicking the event $\mc{E}_L$ from above. In the same way that we proved the $\mc{E}_L$ was likely in \eqref{eqn:elc}, one can prove
\begin{equation}
\label{eqn:eclc}
	\lim_{L \to \infty} \limsup_{N \to \infty} \frac{1}{N} \log \P(\mc{E}_{C,L}^c) = -\infty.
\end{equation}
On the other hand, fix large $L$. For any $f \in \mc{F}_{\textup{Lip}}$, there exists $\widetilde{f} \in \mc{F}_{\textup{Lip}}$ that agrees with $f$ on $(-L,L)$ and vanishes outside $(-2L,2L)$, say. Furthermore, on the event $\mc{E}_{C,L}$, the empirical measure $\hat{\mu}_{H^C_N}$ is supported on $(-L,L)$, although its expectation is not; thus with $\mc{K} := (-2L,2L)$ we have
\begin{align*}
	&\P\left( d_{\textup{BL}}(\hat{\mu}_{H^C_N}, \E[\hat{\mu}_{H^C_N}]) \geq \delta, \mc{E}_{C,L} \right) \\
	&= \P\left( \sup_{f \in \mc{F}_{\textup{Lip}}} \abs{\int_\R f(\lambda) (\hat{\mu}_{H^C_N} - \E[\hat{\mu}_{H^C_N}])(\diff \lambda)} \geq \delta, \mc{E}_{C,L} \right) \\
	&\leq \P\left( \sup_{\widetilde{f} \in \mc{F}_{\textup{Lip}, \mc{K}}} \abs{\int_\R \widetilde{f}(\lambda) (\hat{\mu}_{H^C_N} - \E[\hat{\mu}_{H^C_N}])(\diff \lambda)} \geq \frac{\delta}{2}, \mc{E}_{C,L} \right) + \P\left( \sup_{f \in \mc{F}_{\textup{Lip}}} \abs{\int_\R (f(\lambda)-\widetilde{f}(\lambda))\E[\hat{\mu}_{H^C_N}](\diff \lambda)} \geq \frac{\delta}{2} \right) \\
	&\leq \P\left( \sup_{\widetilde{f} \in \mc{F}_{\textup{Lip}, \mc{K}}} \abs{\int_\R \widetilde{f}(\lambda) (\hat{\mu}_{H^C_N} - \E[\hat{\mu}_{H^C_N}])(\diff \lambda)} \geq \frac{\delta}{2} \right) + \mathbf{1}\left\{ \int_\R \mathbf{1}_{\abs{\lambda} \geq L} \E[\hat{\mu}_{H^C_N}](\diff \lambda) \geq \frac{\delta}{4} \right\} \\
	&\leq \frac{8\delta}{L}\exp\left(-\frac{N^{2-2\gamma}\delta^4}{2048L^2}\right) + \mathbf{1}\left\{ \int_\R \mathbf{1}_{\abs{\lambda} \geq L} \E[\hat{\mu}_{H^C_N}](\diff \lambda) \geq \frac{\delta}{4} \right\}
\end{align*}
where we used \eqref{eqn:gaussian_concentration_some_fs} in the last line. To handle the indicator, we note
\[
	\int_L^\infty \E[\hat{\mu}_{H^C_N}](\diff \lambda) \leq \P(\lambda_{\textup{max}}(H^C_N) \geq L) \leq \P(\mc{E}_{C,N}^c) 
\]
and similarly for the left tail, \eqref{eqn:eclc} gives that the indicator vanishes for $L$ large enough depending on $\delta$; thus 
\[
	\P\left( \sup_{f \in \mc{F}_{\textup{Lip}}} \abs{\int_\R f(\lambda) (\hat{\mu}_{H^C_N} - \E[\hat{\mu}_{H^C_N}])(\diff \lambda)} \geq \delta, \mc{E}_{C,L} \right) \leq \frac{8\delta}{L}\exp\left(-\frac{N^{2-2\gamma}\delta^4}{2048L^2}\right) \quad \text{for all } L > L_0(\delta).
\]
Combined with \eqref{eqn:eclc}, this finishes the proof of \eqref{eqn:herbst_c}, and thus finishes the proof of Lemma \ref{lem:herbstconcentration}. 
\end{proof}

\begin{lem}
\label{lem:guizeiexpansion}
Under Assumption \ref{assn:ssg}, for each $\epsilon > 0$, there exists $\gamma = \gamma(\epsilon) > 0$ such that, if $H^A_N$ is defined in terms of $\gamma$ using \eqref{eqn:han}, then 
\[
    \lim_{N \to \infty} \frac{1}{N} \log \P(d_{\textup{BL}}(\hat{\mu}_{H^A_N},\E[\hat{\mu}_{H^A_N}]) > \epsilon) = -\infty.
\]
\end{lem}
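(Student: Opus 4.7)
The plan is to follow the structure of the Gaussian proof (Lemma~\ref{lem:herbstconcentration}) verbatim and to replace only its single Lipschitz-function concentration step, which used the Gaussian log-Sobolev inequality via Herbst. Since no log-Sobolev inequality is available under Assumption~\ref{assn:ssg}, the natural substitute is Talagrand's concentration for independent bounded random variables (in the form given by our Appendix~\hyperlink{sec:talagrand}{A} extension), which is applicable because the truncated entries $A_{ij}$ are independent and bounded by $N^\gamma$. As in the Gaussian case, the Hoffman-Wielandt argument gives that $h_f(A) \defeq \int f\,\diff\hat{\mu}_{H_N^A}$ is Frobenius-Lipschitz with constant of order $N^{\gamma-1}$; after rescaling entries by $N^\gamma$ so that they lie in $[-1,1]^{MN}$, this Lipschitz constant becomes $N^{2\gamma-1}$, and Talagrand's convex-Lipschitz inequality then yields
\[
\P\bigl(|h_f(A) - \mathrm{med}(h_f)| \geq \delta\bigr) \leq 4 \exp\bigl(-c \delta^2 N^{2-4\gamma}\bigr),
\]
which beats $\exp(-N)$ whenever we fix $\gamma < 1/4$.

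The main obstacle is that Talagrand's dimension-free rate requires convexity of the test functional, while a generic hat function in the Guionnet-Zeitouni net is non-convex. We handle this by reducing to $\Gamma \geq 0$ (exactly the $H_N^+$ reduction already used in the paper) and exploiting the identity, valid for every $t \geq 0$,
\[
\Tr\bigl((H_N^A - tI)_+\bigr) \;=\; \sup_{0 \leq P \leq I}\bigl[\Tr(H_N^A P) - t\Tr(P)\bigr] \;=\; \sup_{0 \leq P \leq I}\bigl[\|\Gamma^{1/2} A P^{1/2}\|_F^2/M - t\Tr(P)\bigr],
\]
which exhibits $A \mapsto \Tr((H_N^A - tI)_+)$ as a supremum of convex quadratics in $A$, hence convex. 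Each tent function $h_k$ in the Guionnet-Zeitouni net admits a fixed signed-sum decomposition $h_k = \phi_1 - 2\phi_2 + \phi_3$ into three translated positive-part functions $\phi_j(x) = (x - a_j)_+$, each convex, nondecreasing, and $1$-Lipschitz; by the identity above, each $h_{\phi_j}(A) = \Tr((H_N^A - a_j I)_+)/N$ is convex in $A$ and Frobenius-Lipschitz with constant of order $N^{\gamma-1}$. Applying Talagrand separately to $h_{\phi_1}, h_{\phi_2}, h_{\phi_3}$ and a three-term union bound yields concentration of $h_{h_k}$ around its mean at the same rate $\exp(-c \delta^2 N^{2 - 4\gamma})$.

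Once single-$f$ concentration is established, the rest of the argument is identical to the Gaussian proof: passage from median to mean (standard, as the concentration is Gaussian), a union bound over the $O(|\mc{K}|/\Delta)$ net functions, and the truncation to compact $\mc{K} = (-2L,2L)$ via a spectrum-confining event analogous to $\mc{E}_{C,L}$. Notably, no smoothing-of-$A$-into-$C$ step is needed here, since Talagrand applies directly to the bounded product distribution of $(A_{ij})$. The main obstacle is therefore the convex-decomposition of hat functions described above, which together with the reduction $\Gamma \geq 0$ is precisely what allows Talagrand to be applied to each building block of the Guionnet-Zeitouni approximation.
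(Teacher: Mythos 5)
Your approach is correct and takes a genuinely different route from the paper's. The paper's proof of this lemma appeals to \cite[Corollary 1.8]{GuiZei2000}, a concentration statement tailored to covariance matrices and to functions in the class $\mf{C}_1$ (those $f$ for which $x \mapsto f(x^2)$ is Lipschitz and convex). To exploit it, the paper builds a bespoke approximation of Lipschitz test functions by $\pm 1$-weighted sums of special functions $g_a$, each written as a difference of two $\mf{C}_1$ functions involving square roots, with the recursive choice of signs $c_i^{(\Delta)}$ taking up most of the proof. You instead observe that $A \mapsto \Tr\bigl((H_N^A - tI)_+\bigr)$ is convex in $A$ whenever $\Gamma \geq 0$, via the variational formula $\Tr(M_+) = \sup_{0 \leq P \leq I}\Tr(MP)$ together with the factorization $\Tr(A^T\Gamma A P) = \|\Gamma^{1/2}AP^{1/2}\|_F^2$. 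This lets you apply Talagrand's convexity-based concentration directly to the positive-part functionals, bypassing the $\mf{C}_1$ class entirely, and lets you reuse the same tent-function net as the Gaussian proof of Lemma~\ref{lem:herbstconcentration} (each tent is a fixed signed sum of three $(x-a)_+$'s). The Lipschitz and rate calculations check out, both routes arriving at $\exp(-c\delta^2 N^{2-4\gamma})$, and you are right that no continuous truncation $A \to C$ is needed, since Talagrand acts on the bounded product law of the $A_{ij}$ directly, unlike the Herbst/log-Sobolev argument which must be run on the unbounded $Z$.

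What each approach buys: the paper treats \cite[Corollary 1.8]{GuiZei2000} as a black box, pushing the difficulty into the construction of $f_\Delta$; your route keeps the approximation essentially trivial and instead does a small amount of custom work verifying Talagrand's hypotheses for the positive-part trace. Yours is arguably more self-contained and more transparently generalizable, since the positive-part convexity trick applies to any $A \mapsto M^{-1}A^\ast\Gamma A$ with $\Gamma \geq 0$, whereas the $\mf{C}_1$ geometry is wedded to the singular-value parametrization. Two small points worth spelling out if you write this up: first, the convexity argument genuinely requires $\Gamma \geq 0$, so the $H_N^+$ reduction must be invoked explicitly before this lemma, not just in the LDP argument downstream; second, the median-to-mean correction you call standard should be quantified, namely $|\E h_{\phi_j} - \mathrm{med}(h_{\phi_j})| \lesssim N^{2\gamma - 1}$, which vanishes for $\gamma < 1/2$ and hence is absorbed into the final estimate.
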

\begin{proof}
This proof is essentially an exercise in filling out remarks made by Guionnet and Zeitouni \cite{GuiZei2000}, using their techniques. We will need several classes of test functions $f : \R \to \R$:
\begin{align*}
    \mc{F}_{\textup{Lip}} &= \left\{f : \|f\|_{\mc{L}} := \|f\|_\infty + \sup_{x \neq y} \frac{\abs{f(x) - f(y)}}{\abs{x-y}} \leq 1 \right\}, \\
    \text{for compact } \mc{K} \subset \R, \qquad \mc{F}_{\textup{Lip},\mc{K}} &= \{f \in \mc{F}_{\textup{Lip}} : \supp(f) \subset \mc{K}\}, \\
    \mf{C}_1 &= \{f : \text{ the function } x \mapsto g(x) := f(x^2) \text{ is Lipschitz and convex}\}
\end{align*}
and the corresponding norm
\[
    \|f\|_{\mf{C}_1} = \sup_{x \neq y} \abs{\frac{f(x^2)-f(y^2)}{x-y}}.
\]
Guionnet and Zeitouni \cite[Corollary 1.8]{GuiZei2000} showed that, for all $f \in \mf{C}_1$,
\[
    \P\left(\abs{\int_\R f(\lambda) (\hat{\mu}_{H^A_N}-\E[\hat{\mu}_{H^A_N}])(\diff \lambda)} \geq \delta \frac{N+M}{N}\right) \leq 4\exp\left(-\frac{1}{16d_{\textup{max}}N^{2\gamma}\|f\|_{\mf{C}_1}^2}(\delta-\delta_0(N+M))^2(N+M)^2\right)
\]
with $\delta_0(N+M) = \frac{8\sqrt{\pi d_{\textup{max}}}N^\gamma\|f\|_{\mf{C}_1}}{N+M}$.

Now fix compact $\mc{K} \subset \R$ and $\delta > 0$, and write $k_{\textup{min}} = \max(0,\min\{x : x \in \mc{K}\})$, $k_{\textup{max}} = \max(0,\max\{x : x \in \mc{K}\})$, and $\Delta = \frac{\delta}{8}$ (assume $\Delta < 1$). We will approximate test functions in $\mc{F}_{\textup{Lip},\mc{K}}$ by a finite linear combination of test functions in $\mf{C}_1$.

To do this, consider for any $a \geq 0$ the function
\[
    g_a(x) = \begin{cases} 0 & \text{if } x \leq a, \\ \frac{\Delta}{\sqrt{a+\Delta}-\sqrt{a}} (\sqrt{x}-\sqrt{a}) & \text{if } a \leq x \leq a + \Delta, \\ \Delta & \text{if } x \geq a + \Delta. \end{cases}
\]
Notice that $g_a$ is the difference of two functions in $\mf{C}_1$, namely $g_a = g_{a,1} - g_{a,2}$ where
\[
    g_{a,1}(x) = \begin{cases} 0 & \text{if } x \leq a, \\ \frac{\Delta}{\sqrt{a+\Delta}-\sqrt{a}} (\sqrt{x}-\sqrt{a}) & \text{if } x \geq a, \end{cases} \qquad g_{a,2}(x) = \begin{cases} 0 & \text{if } x \leq a+\Delta, \\ \frac{\Delta}{\sqrt{a+\Delta}-\sqrt{a}} (\sqrt{x}-\sqrt{a+\Delta}) & \text{if } x \geq a + \Delta, \end{cases}
\]
which have $\|g_{a,1}\|_{\mf{C}_1} = \|g_{a,2}\|_{\mf{C}_1} = \frac{\Delta}{\sqrt{a+\Delta}-\sqrt{a}}$.

Now let $f \in \mc{F}_{\textup{Lip},\mc{K}}$. We first note that, since $\hat{\mu}_{H^A_N}$ and $\E[\hat{\mu}_{H^A_N}]$ are supported on the right half-line as empirical measures of covariance matrices, if we define
\[
    \widetilde{f}(x) = (f(x)-f(0))\mathbf{1}\{x \geq 0\},
\]
then
\[
    \int_\R f(\lambda)(\hat{\mu}_{H^A_N} - \E[\hat{\mu}_{H^A_N}])(\diff \lambda) = \int_\R \widetilde{f}(\lambda)(\hat{\mu}_{H^A_N} - \E[\hat{\mu}_{H^A_N}])(\diff \lambda).
\]
Furthermore, $\widetilde{f}$ is supported in the right-half line, is $1$-Lipschitz, and while not in general compactly supported, is nevertheless constant for $x > k_{\textup{max}}$. We will actually not approximate $f$ directly, but rather approximate $\widetilde{f}$. Our approximating function will be
\[
    f_\Delta(x) = \sum_{i=0}^{\lceil \abs{\mc{K}}/\Delta \rceil} c_i^{(\Delta)} g_{k_{\textup{min}}+i\Delta}(x),
\]
where the coefficients $c_i^{(\Delta)}$ take values in $\{-1,+1\}$ and are defined recursively as follows:
\begin{itemize}
\item 
\[
    c_0^{(\Delta)} = \begin{cases} 1 & \text{if } \widetilde{f}(k_{\textup{min}} + \Delta) \geq 0, \\ -1 & \text{otherwise.} \end{cases}
\]
\item Given $c_{0,\Delta}, \ldots, c_{j,\Delta}$, let
\[
    c_{j+1}^{(\Delta)} = \begin{cases} 1 & \text{if } \widetilde{f}(k_{\textup{min}} + (j+2)\Delta) \geq \Delta \sum_{i=0}^j c_i^{(\Delta)} \\
    -1 & \text{otherwise.} \end{cases} 
\]
\end{itemize}
Notice that, on each interval 
\[
    I_j = [k_{\textup{min}} + j\Delta, k_{\textup{min}} + (j+1)\Delta],
\]
only the term $i = j$ in the sum defining $f_{\Delta}$ is non-trivial; the terms of smaller index are either $\Delta$ or $-\Delta$, and the terms of larger index vanish.

Now we claim 
\begin{equation}
\label{eqn:fDelta-approx}
    \|\widetilde{f}-f_\Delta\|_\infty \leq 2\Delta.
\end{equation}
Indeed, $\widetilde{f}$ and $f_\Delta$ both vanish on $(-\infty,k_{\textup{min}})$, and we will prove by induction on $j$ that 
\[
    A_j = \sup_{x \in I_j} \abs{\widetilde{f}(x) - f_\Delta(x)} \leq 2\Delta \quad \text{and} \quad B_j = \abs{\widetilde{f}(k_{\textup{min}}+(j+1)\Delta)-f_\Delta(k_{\textup{min}}+(j+1)\Delta)} \leq \Delta
\]
for $j = 0, \ldots, \lceil 2\abs{\mc{K}}/\Delta\rceil$. In the proof, we will regularly use the observation
\[
    f_{\Delta}(k_{\textup{min}}+j\Delta) = \Delta \sum_{i=0}^{j-1} c_i^{(\Delta)}.
\]
\begin{itemize}
\item The case $j = 0$ is simple, since, for $x \in I_0$, we have both $\abs{\widetilde{f}(x)} \leq \Delta$ (since $\widetilde{f}(k_{\textup{min}}) = 0$ and $\widetilde{f}$ is $1$-Lipschitz) and $\abs{f_\Delta(x)} = \abs{c_0^{(\Delta)}g_{k_{\textup{min}}}(x)} \leq \Delta$; this gives $A_0 \leq 2\Delta$, and $B_0 \leq \Delta$ from the definition of $c_0^{(\Delta)}$ and the $1$-Lipschitz property of $\widetilde{f}$.
\item Now suppose that $A_i \leq 2\Delta$ and $B_i \leq \Delta$ for $i = 0, \ldots, j-1$. For convenience let $c^{(\Delta)} = \sum_{i=0}^{j-2} c_i^{(\Delta)}$ if $j \geq 2$, or $c^{(\Delta)} = 0$ if $j = 1$. Notice that, whether $j = 1$ or $j \geq 2$, we have $f_{\Delta}(k_{\textup{min}}+j\Delta) = (c^{(\Delta)}+c_{j-1}^{(\Delta)})\Delta$, and that on $I_j$ we have $f_\Delta(x) = (c^{(\Delta)}+c_{j-1}^{(\Delta)})\Delta + c_j^{(\Delta)}g_{k_{\textup{min}}+j\Delta}(x)$. It will be useful to write $\ell_j = k_{\textup{min}}+j\Delta$ and $r_j = k_{\textup{min}}+(j+1)\Delta$ for the left and right endpoints, respectively, of the interval $I_j$. There are two cases:
\begin{itemize}
\item \textbf{Case 1 ($c_{j-1}^{(\Delta)}$ and  $c_j^{(\Delta)}$ are the same):} Assume $c_{j-1}^{(\Delta)} = c_j^{(\Delta)} = 1$, the other case being similar. This gives $f_\Delta(\ell_j) = (c^{(\Delta)}+1)\Delta$ and $f_\Delta(r_j) = (c^{(\Delta)}+2)\Delta$.

Suppose we can show that $\widetilde{f}$ lies in the ``correct windows'' at the endpoints, namely that $\widetilde{f}(\ell_j) \in [f_\Delta(\ell_j)-\Delta,f_\Delta(\ell_j)]$ and $\widetilde{f}(r_j) \in [f_\Delta(r_j)-\Delta,f_\Delta(r_j)]$. This would immediately show $B_j \leq \Delta$. Furthermore, since $\widetilde{f}$ is $1$-Lipschitz, this would tell us that $\widetilde{f}$ lies in some ``tube'' of ``height'' $\Delta$ connecting these windows; by comparing to the explicit formula for $f_\Delta$ on $I_j$ (it lies just above this tube), we would also obtain $A_j \leq 2\Delta$.

So we only need to understand $\widetilde{f}$ at the endpoints. But from the definition of $c_{j-1}^{(\Delta)}$ we find $\widetilde{f}(\ell_j) \geq c^{(\Delta)}\Delta = f_\Delta(\ell_j) - \Delta$. Since $B_j \leq \Delta$, this implies $\widetilde{f}(\ell_j) \leq f_\Delta(\ell_j)$. Since $\widetilde{f}$ is $1$-Lipschitz, this implies $\widetilde{f}(r_j) \leq \widetilde{f}(\ell_j) + \Delta \leq f_\Delta(\ell_j) + \Delta = f_\Delta(r_j)$. Next, from the definition of $c_j^{(\Delta)}$ we find that $\widetilde{f}(r_j) \geq (c^{(\Delta)}+1)\Delta = f_\Delta(r_j)-\Delta$, which finishes the localization of $\widetilde{f}$ in the correct windows at $x = \ell_j$ and $x = r_j$. 
\item \textbf{Case 2 ($c_{j-1}^{(\Delta)}$ and $c_j^{(\Delta)}$ are not the same):} Assume $c_{j-1}^{(\Delta)} = 1$ and $c_j^{(\Delta)} = -1$, the other case being similar. This gives $f_\Delta(\ell_j) = (c^{(\Delta)}+1)\Delta$ and $f_\Delta(r_j) = c^{(\Delta)}\Delta$.

Similar arguments to the first case localize $\widetilde{f}$ in slightly different windows at the endpoints: Here we find $\widetilde{f}(\ell_j) \in [f_\Delta(\ell_j) - \Delta, f_\Delta(\ell_j)]$ and $\widetilde{f}(r_j) \in [f_\Delta(r_j)-\Delta, f_\Delta(r_j)+\Delta]$. This immediately shows $B_j \leq \Delta$. The paths of all possible $1$-Lipschitz functions connecting these windows now make a pentagonal rather than a ``tube'' shape (if $\widetilde{f}(\ell_j) = f_\Delta(\ell_j)$, then $\widetilde{f}$ can increase on $[\ell_j,\ell_j+\Delta/2]$ before coming back down); but regardless, by comparing to the explicit formula for $f_\Delta$ on $I_j$, this still forces $A_j \leq 2\Delta$.
\end{itemize}
\end{itemize}
This shows $\sup_{x \leq k_{\textup{min}}+\Delta \lceil \abs{\mc{K}}/\Delta \rceil} \abs{\widetilde{f}(x) - f_\Delta(x)} \leq 2\Delta$. If $x \geq k_{\textup{min}}+\Delta\lceil \abs{\mc{K}}/\Delta\rceil \geq k_{\textup{min}}+\abs{\mc{K}}$, then $\widetilde{f}(x) = -f(0)$, and
\[
    \abs{f_\Delta(x)+f(0)} = \abs{f_\Delta(k_{\textup{min}} + \Delta\lceil\abs{\mc{K}}/\Delta \rceil)+f(0)} = \abs{f_\Delta(k_{\textup{min}} + \Delta\lceil\abs{\mc{K}}/\Delta \rceil) - \widetilde{f}(k_{\textup{min}} + \Delta\lceil\abs{\mc{K}}/\Delta \rceil)} \leq B_{\lceil \abs{\mc{K}}/\Delta \rceil} \leq \Delta
\]
by our work above. This completes the proof of \eqref{eqn:fDelta-approx}. 

Regardless of $f$, the approximation $f_\Delta$ is a linear combination of at most $2(\abs{\mc{K}}/\Delta+1)$ elements $h_k$ of $\mf{C}_1$, each of $\mf{C}_1$-norm at most $\frac{\Delta}{\sqrt{k_{\textup{max}}+\Delta}-\sqrt{k_{\textup{max}}}}$. Thus
\begin{align*}
    &\P\left( \sup_{f \in \mc{F}_{\textup{Lip}, \mc{K}}} \abs{\int f(\lambda)(\hat{\mu}_{H_N^A} - \E[\hat{\mu}_{H_N^A}])(\diff \lambda)} \geq \delta\right) \\
    &\leq 2\left(\frac{\abs{\mc{K}}}{\Delta}+1\right)\sup_k \P\left(\abs{\int h_k(\lambda)(\hat{\mu}_{H_N^A} - \E[\hat{\mu}_{H_N^A}])(\diff \lambda)} \geq \frac{\delta - 4\Delta}{2(\abs{\mc{K}}/\Delta+1)} \right) \\
    &\leq 8\left(\frac{\abs{\mc{K}}}{\Delta}+1\right)\exp\left( -\frac{1}{16d_{\textup{max}}N^{2\gamma} \frac{\Delta^2}{(\sqrt{k_{\textup{max}}+\Delta}-\sqrt{k_{\textup{max}}})^2}} \left( \frac{\delta-4\Delta}{2(\abs{\mc{K}}/\Delta+1)}\frac{N}{N+M} - \delta_0(N+M) \right)^2 (N+M)^2 \right)
\end{align*}
with $\delta_0(N+M) = \frac{8\sqrt{\pi d_{\textup{max}}} N^\gamma \frac{\Delta}{\sqrt{k_{\textup{max}}+\Delta}-\sqrt{k_{\textup{max}}}}}{N+M}$. Substituting $\Delta = \delta/8$ gives 
\begin{align*}
    &\P\left( \sup_{f \in \mc{F}_{\textup{Lip}, \mc{K}}} \abs{\int f(\lambda)(\hat{\mu}_{H_N^A} - \E[\hat{\mu}_{H_N^A}])(\diff \lambda)} \geq \delta\right) \\
    &\leq 8\left(\frac{8\abs{\mc{K}}}{\delta}+1\right)\exp\left( -\frac{4(\sqrt{k_{\textup{max}}+\delta/8} - \sqrt{k_{\textup{max}}})^2}{d_{\textup{max}}N^{2\gamma} \delta} \left( \frac{\delta}{4(8\abs{\mc{K}}/\delta+1)}\frac{N}{N+M} - \delta_0(N+M) \right)^2 (N+M)^2 \right).
\end{align*}

Now we approximate a general $f \in \mc{F}_{\textup{Lip}}$ by a compactly supported Lipschitz function $f^{L}$ for some large threshold $L$. Since
\[
    \frac{1}{N} \tr(H_N^A) = \frac{1}{NM} \sum_{i=1}^N \sum_{k=1}^M A_{ki}^2d_k \leq d_{\textup{max}}N^{2\gamma},
\]
for $L$ to be chosen we have $\int_{\abs{\lambda} > L} \hat{\mu}_{H_N^A}(\diff \lambda) \leq \frac{d_{\textup{max}}N^{2\gamma}}{L}$. If we define 
\[
    f^{L}(x) = \begin{cases} f(x) & \text{if } 0 \leq x \leq L, \\ f(L)-\text{sign}(f(L))(x-L) & \text{if } L \leq x \leq L + \abs{f(L)} \\ f(0) + \text{sign}(f(0))x & \text{if } -\abs{f(0)} \leq x \leq 0 \\ 0 & \text{otherwise} \end{cases}
\]
then
\[
    \abs{ \int f(\lambda) (\hat{\mu}_{H_N^A} - \E[\hat{\mu}_{H_N^A}])(\diff \lambda)} \leq \abs{\int f^{L}(\lambda) (\hat{\mu}_{H_N^A} - \E[\hat{\mu}_{H_N^A}])(\diff \lambda)} + \frac{2d_{\textup{max}}N^{2\gamma}}{L}. 
\] 
If we take $L = \frac{4d_{\textup{max}}N^{2\gamma}}{\epsilon}$ with $\epsilon$ from the statement of the lemma, then since $f^{L} \in \mc{F}_{\textup{Lip},\mc{K}}$ with, crudely, $\mc{K} = [-1,2L]$ we thus have
\begin{align*}
    &\P(d_{\textup{BL}}(\hat{\mu}_{H^A_N},\E[\hat{\mu}_{H^A_N}]) > \epsilon) = \P\left( \sup_{f \in \mc{F}_{\textup{Lip}}} \abs{\int f(\lambda)(\hat{\mu}_{H_N^A} - \E[\hat{\mu}_{H_N^A}])(\diff \lambda)} \geq \epsilon\right) \\
    &\leq \P\left( \sup_{f \in \mc{F}_{\textup{Lip}, \mc{K}}} \abs{\int f(\lambda)(\hat{\mu}_{H_N^A} - \E[\hat{\mu}_{H_N^A}])(\diff \lambda)} \geq \frac{\epsilon}{2} \right) \\
    &\leq 8\left(\frac{16\abs{\mc{K}}}{\epsilon}+1\right)\exp\left( -\frac{8(\sqrt{k_{\textup{max}}+\epsilon/16} - \sqrt{k_{\textup{max}}})^2}{d_{\textup{max}}N^{2\gamma} \epsilon} \left( \frac{\epsilon}{8(8\abs{\mc{K}}/\epsilon+1)}\frac{N}{N+M} - \delta_0(N+M) \right)^2 (N+M)^2 \right).
\end{align*}
This suffices by taking $\gamma$ sufficiently small.
\end{proof}


\subsection{Proof of the weak LDP upper bound}
\label{subsec:wk_ldp_ub}

\begin{defn}
Consider the following deterministic set of $M \times N$ real matrices:
\[
    \mc{A}_{x,\delta,\epsilon}^L = \left\{Z : \text{ with } H_N = H_N(Z) = \frac{1}{M}Z^T \Gamma Z, \quad \abs{\lambda_{\textup{max}}(H_N) - x} < \delta, \quad d_{\textup{BL}}(\hat{\mu}_{H_N}, \sigma) < \epsilon, \text{ and } \|H_N\| \leq L\right\}.
\]
\end{defn}

\begin{lem}
\label{lem:quenched}
For every $x \geq r(\sigma)$ and $0 \leq \theta < \theta_{\textup{max}}$, and every large enough $L$, we have
\begin{align*}
    &\lim_{\epsilon \downarrow 0} \limsup_{\delta \downarrow 0} \limsup_{N \to \infty} \sup_{Z \in \mc{A}^L_{x,\delta,\epsilon}} \abs{\frac{1}{N} \log \E_e[e^{N\frac{\theta}{2}\ip{e,H_Ne}}] - J\left(\sigma,\frac{\theta}{2},x\right)} \\
    &= \lim_{\delta \downarrow 0} \limsup_{\epsilon \downarrow 0} \limsup_{N \to \infty} \sup_{Z \in \mc{A}^L_{x,\delta,\epsilon}} \abs{\frac{1}{N} \log \E_e[e^{N\frac{\theta}{2}\ip{e,H_Ne}}] - J\left(\sigma,\frac{\theta}{2},x\right)} = 0.
\end{align*}
\end{lem}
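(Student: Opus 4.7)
\textbf{Proof plan for Lemma \ref{lem:quenched}.} The plan is to reduce the claim to the Guionnet--Ma\"\i da asymptotics for the real spherical integral, and then to upgrade a statement about convergence along a fixed sequence to the uniform statement required, via a standard subsequence-and-contradiction argument. Throughout, I use the observation that the conditions defining $\mc{A}^L_{x,\delta,\epsilon}$ are exactly ``$\hat\mu_{H_N}$ is close to $\sigma$, the largest eigenvalue $\lambda_{\textup{max}}(H_N)$ is close to $x$, and $\|H_N\|$ is uniformly bounded'', which is precisely the hypothesis in which one controls the spherical integral.

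First, I would invoke the following classical fact (Guionnet--Ma\"\i da, in the form used repeatedly in the spherical-integral lineage cited in the introduction): if $H_N^{(k)}$ is any sequence of $N_k \times N_k$ real symmetric matrices with $\|H_N^{(k)}\| \leq L$, whose empirical spectral measures converge weakly to a compactly supported measure $\mu$ with $r(\mu) \leq r$ and whose largest eigenvalues converge to some $\lambda \geq r(\mu)$, then
\[
	\lim_{k \to \infty} \frac{1}{N_k} \log \E_e\!\left[ e^{N_k \frac{\theta}{2} \ip{e, H_N^{(k)} e}} \right] = J\!\left(\mu, \tfrac{\theta}{2}, \lambda\right)
\]
for every $0 \leq \theta < \theta_{\textup{max}}$, where $J$ is the function of Definition \ref{def:j}. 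This is valid in our parameter range because $\theta_{\textup{max}}$ stays strictly below the blow-up threshold of the spherical integral (the integral in \eqref{eqn:annealed} is finite by Lemma \ref{lem:annealed}).

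Next, I would deduce the uniform statement by contradiction. Suppose one of the two iterated limits is positive. Then I can extract sequences $\delta_k \downarrow 0$, $\epsilon_k \downarrow 0$ (in the order specified by whichever version fails), $N_k \to \infty$, and matrices $Z^{(k)} \in \mc{A}^L_{x,\delta_k,\epsilon_k}$ such that, with $H_N^{(k)} = \frac{1}{M_k} (Z^{(k)})^T \Gamma Z^{(k)}$,
\[
	\abs{ \tfrac{1}{N_k} \log \E_e\!\left[ e^{N_k \frac{\theta}{2} \ip{e, H_N^{(k)} e}}\right] - J\!\left(\sigma, \tfrac{\theta}{2}, x\right) } \geq \eta
\]
for some $\eta > 0$. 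But by definition of $\mc{A}^L_{x,\delta_k,\epsilon_k}$ we have $\|H_N^{(k)}\| \leq L$, $\hat\mu_{H_N^{(k)}} \to \sigma$ weakly, and $\lambda_{\textup{max}}(H_N^{(k)}) \to x$. Applying the Guionnet--Ma\"\i da asymptotics to this specific sequence yields $\frac{1}{N_k} \log \E_e[\cdot] \to J(\sigma, \theta/2, x)$, contradicting the displayed inequality. Since this argument is symmetric in the two iterated-limit orders, both equalities in the lemma follow.

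The only subtlety --- and what I expect to be the main obstacle --- is checking that the Guionnet--Ma\"\i da asymptotics really do apply with the required robustness: namely, the limit $J(\sigma, \theta/2, x)$ must be continuous in the pair $(\hat\mu_{H_N^{(k)}}, \lambda_{\textup{max}}(H_N^{(k)}))$ at $(\sigma, x)$ in the topology of weak convergence plus convergence of the edge. Inspecting Definition \ref{def:j} in the two regimes $G_\sigma(x) \leq \theta$ and $G_\sigma(x) > \theta$, both the $v$-formula and the logarithmic integral are continuous in this sense, the potential issue being the inversion $G_\sigma^{-1}(\theta)$ in the second regime; but $\theta < \theta_{\textup{max}}$ keeps us uniformly in the interior of the range of $G_\sigma$, and the bound $\|H_N^{(k)}\|\leq L$ allows one to replace weak convergence by Wasserstein convergence on the compact support, which suffices to pass logarithmic singularities to the limit. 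Once continuity is in hand, the contradiction argument above closes the proof.
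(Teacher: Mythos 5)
Your proposal is correct and rests on the same two ingredients as the paper's proof, but packages them differently. The paper directly quotes a \emph{quantitative, uniform-in-$Z$} form of the spherical-integral asymptotics from \cite{GuiHus2022} (their Theorem 6.2), which says that for $N \geq N_0(t,L)$, uniformly over $\|H_N\| \leq L$, the normalized log spherical integral is within $t$ of $J(\hat\mu_{H_N},\theta/2,\lambda_{\max}(H_N))$; it then invokes continuity of $J(\cdot,\theta/2,x)$ in the measure (their Theorem 21) and combines the two with the quantifiers in the right order. You instead invoke the \emph{sequential} Guionnet--Ma\"ida convergence and recover uniformity via a subsequence-and-contradiction argument. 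The two formulations are equivalent by compactness, and you correctly identify the real crux: that $J$ be continuous in $(\mu,\lambda)$ at $(\sigma,x)$ in the weak topology (given a uniform support bound $L$), which is exactly what \cite[Theorem 21]{GuiHus2022} supplies for the $\mu$-dependence, with the $\lambda$-dependence being elementary. The paper's packaging is shorter because the uniform statement is already available off the shelf.

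One small inaccuracy in your reasoning: you justify restricting to $0 \le \theta < \theta_{\textup{max}}$ by saying this keeps us ``below the blow-up threshold of the spherical integral.'' The \emph{quenched} spherical integral $\E_e[e^{N\frac{\theta}{2}\ip{e,H_N e}}]$ is finite for every $\theta$ and every fixed $Z$ with $\|H_N\|\le L$ (it is an expectation of a bounded function); it is the \emph{annealed} quantity $\E_{e,H_N}[\cdots]$ in Lemma~\ref{lem:annealed} whose limit $F(\rho,\theta)$ diverges as $\theta\uparrow\theta_{\textup{max}}$. The restriction $\theta<\theta_{\textup{max}}$ matters elsewhere in the argument (for the annealed estimate and for the tilted measures to make sense), not for the quenched asymptotics invoked here, which hold for all $\theta\ge 0$. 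This is cosmetic and does not affect the validity of the proof.
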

\begin{proof}
This is an easy consequence of (stronger results from) \cite{GuiHus2022}. Fix (small) $t > 0$ and (large) $L > 0$; their Theorem 6.2 shows that there exists $N_0(t,L)$ such that for $N \geq N_0(t,L)$ we have
\[
    \sup_{Z \in \mc{A}_{x,\delta,\epsilon}^L} \abs{\frac{1}{N} \log \E[e^{N\frac{\theta}{2}\ip{e,H_Ne}}] - J\left(\hat{\mu}_{H_N},\frac{\theta}{2},x\right)} \leq t.
\]
On the other hand, their Theorem 21 shows that $J(\mu,\frac{\theta}{2},x)$ is a continuous function of $\mu$ in the set of probability measures compactly supported on $[-L,L]$; thus for $\epsilon \leq \epsilon_0(t)$ we have
\[
    \sup_{Z \in \mc{A}_{x,\delta,\epsilon}^L} \abs{ J\left(\hat{\mu}_{H_N},\frac{\theta}{2},x\right) - J\left(\sigma,\frac{\theta}{2},x\right)} \leq t.
\]
We combine these two estimates with the quantifiers in the right order, then take $t \downarrow 0$. 
\end{proof}

\begin{lem}
\label{lem:ub_good_event}
For each $x \geq r(\sigma)$, $0 \leq \theta < \theta_{\textup{max}}$, and $L$ large enough, we have
\[
    \limsup_{\delta \downarrow 0} \limsup_{\epsilon \downarrow 0} \limsup_{N \to \infty} \frac{1}{N} \log \P^\theta(\mc{A}_{x,\delta,\epsilon}^L) \leq -(\widetilde{I_\sigma}(x) - I_\sigma(x,\theta)).
\]
\end{lem}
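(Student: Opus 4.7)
The plan is a two-tilt argument. Writing $A_\theta(Z) \defeq \E_e[\exp(N\tfrac{\theta}{2}\ip{e,H_Ne})]$ for the quenched spherical integral and $B_\theta \defeq \E[A_\theta(Z)]$ for its annealed counterpart, I would start from the identity $\P^\theta(\mc{A}^L_{x,\delta,\epsilon}) = B_\theta^{-1}\E[A_\theta(Z)\mathbf{1}_{\mc{A}^L_{x,\delta,\epsilon}}]$ that follows directly from Definition~\ref{def:tilt}. On the event $\mc{A}^L_{x,\delta,\epsilon}$, Lemma~\ref{lem:quenched} provides a uniform upper bound $A_\theta(Z) \leq \exp(NJ(\sigma,\theta/2,x) + o(N))$ in the iterated-limit order matching the statement's target, and Lemma~\ref{lem:annealed} gives $B_\theta = \exp(NF(\rho,\theta) + o(N))$. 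Since $I_\sigma(x,\theta) = J(\sigma,\theta/2,x) - F(\rho,\theta)$, this first step produces the intermediate estimate
\[
	\P^\theta(\mc{A}^L_{x,\delta,\epsilon}) \leq \exp\bigl(NI_\sigma(x,\theta) + o(N)\bigr)\, \P(\mc{A}^L_{x,\delta,\epsilon}).
\]

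Next I would absorb the remaining factor $\P(\mc{A}^L_{x,\delta,\epsilon})$ by applying the Radon--Nikodym identity in reverse, at a second tilt parameter $\theta^* \defeq \overline{G}_\sigma(x)$. The crucial observation is that, in the present regime $x_c(\rho) = +\infty$, Lemma~\ref{lem:overline_g} ensures $\theta^* \in [0,\theta_{\textup{max}})$, so this second tilt is legitimate; and the computation in the proof of Lemma~\ref{lem:ratesimp} identifies $\theta^*$ as the maximizer of $I_\sigma(x,\cdot)$, giving $I_\sigma(x,\theta^*) = \widetilde{I_\sigma}(x)$. Using Lemma~\ref{lem:quenched} again, this time as a \emph{lower} bound on $A_{\theta^*}(Z)$ over $\mc{A}^L_{x,\delta,\epsilon}$, and trivially bounding $\P^{\theta^*}(\mc{A}^L_{x,\delta,\epsilon}) \leq 1$, the reverse identity gives $\P(\mc{A}^L_{x,\delta,\epsilon}) \leq \exp(-N\widetilde{I_\sigma}(x) + o(N))$. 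Chaining this with the previous display recovers the target bound $-(\widetilde{I_\sigma}(x) - I_\sigma(x,\theta))$.

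The edge cases will be easy: if $x \notin D$, then $\hat{\mu}_{H_N} \approx \sigma$ (supported in $D$) is inconsistent with $\lambda_{\textup{max}}(H_N) \approx x$, so $\mc{A}^L_{x,\delta,\epsilon}$ is eventually empty and both sides equal $-\infty$; the requirement ``$L$ large enough'' is just $L > \max(|x|,|r(\sigma)|)+1$, ensuring the operator-norm cutoff is not active. The main conceptual point I expect is the choice of the second tilt itself: the naïve bound $\P(\mc{A}) \leq 1$ would only give $\P^\theta(\mc{A}) \leq \exp(NI_\sigma(x,\theta) + o(N))$, which is much too weak, and one really must tilt at the maximizer $\theta^* = \overline{G}_\sigma(x)$ to produce the cancellation $I_\sigma(x,\theta^*) = \widetilde{I_\sigma}(x)$ in the exponent. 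This trick is available specifically under the hypothesis $x_c(\rho) = +\infty$ of this section, which guarantees $\theta^* < \theta_{\textup{max}}$; handling the complementary case $x_c(\rho) < +\infty$ (where $\theta^*$ would saturate $\theta_{\textup{max}}$) is exactly why the companion Proposition~\ref{prop:main_finite} needs an entirely separate approximation argument.
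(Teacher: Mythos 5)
Your core argument is essentially the same as the paper's, just presented in two steps rather than one. The paper inserts $\frac{A_{\theta'}(Z)}{A_{\theta'}(Z)}$ inside the expectation defining $\P^\theta(\mc{A}_{x,\delta,\epsilon}^L)$ with $\theta'$ left abstract, deduces the bound $I_\sigma(x,\theta)-I_\sigma(x,\theta')$ for every $\theta'<\theta_{\textup{max}}$, and then takes the infimum over $\theta'$ at the very end. Your two displays, once chained, reproduce exactly that inequality with $\theta'$ fixed to the optimizer $\theta^*=\overline{G}_\sigma(x)$; the two approaches are algebraically identical. The paper's choice to leave $\theta'$ abstract buys exactly the flexibility you end up needing, though, and your edge-case handling is where you run into trouble.

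Specifically, your claim that $\sigma$ is supported in $D$ is false: $D$ starts at $r(\sigma)$ and extends to the right, whereas $\sigma$ is supported in $[\ell(\sigma),r(\sigma)]$, so $D\cap\supp(\sigma)=\{r(\sigma)\}$. And the consequence you draw (``$\mc{A}_{x,\delta,\epsilon}^L$ is eventually empty'') is also not correct in general: the only case with $x\geq r(\sigma)$ but $x\notin D$ is $r(\rho)\leq 0$ with $x\geq 0$ (Gaussian only, since under Assumption~\ref{assn:ssg} nondegeneracy forces $r(\rho)>0$). For $x>0$ the event is indeed eventually empty because $H_N\leq 0$ asymptotically, but for $x=0$ the event $\{-\delta<\lambda_{\textup{max}}(H_N)\leq 0\}$ is nonempty; it is merely super-exponentially rare. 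The clean way to cover this boundary point is precisely what the paper does: for $x\notin D$ one still has $\sup_{0\leq\theta'<\theta_{\textup{max}}}I_\sigma(x,\theta')=+\infty$ under the nondegeneracy assumption (one checks $I_\sigma(x,\theta')\to+\infty$ as $\theta'\to\infty$), so the infimum over $\theta'$ of $I_\sigma(x,\theta)-I_\sigma(x,\theta')$ already yields $-\infty$. In your formulation this amounts to sending $\theta^*\uparrow\theta_{\textup{max}}=+\infty$ along a sequence instead of fixing a single $\theta^*$, which is exactly what fixing $\theta^*=\overline{G}_\sigma(x)$ cannot do at the boundary. So: main argument correct and matching the paper; edge-case justification needs to be replaced.
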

\begin{proof}
For any $0 \leq \theta' < \theta_{\textup{max}}$,
\begin{align*}
    \P^\theta(\mc{A}_{x,\delta,\epsilon}^L) &= \frac{1}{\E_{e,H_N}[e^{N\frac{\theta}{2}\ip{e,H_Ne}}]} \E_{H_N}\left[\mathds{1}_{H_N \in \mc{A}_{x,\delta,\epsilon}^L} \E_e[e^{N\frac{\theta}{2}\ip{e,H_Ne}}] \frac{\E_e[e^{N\frac{\theta'}{2}\ip{e,H_Ne}}]}{\E_e[e^{N\frac{\theta'}{2}\ip{e,H_Ne}}]} \right] \\
    &\leq \frac{\E_{e,H_N}[e^{N\frac{\theta'}{2}\ip{e,H_Ne}}]}{\E_{e,H_N}[e^{N\frac{\theta}{2}\ip{e,H_Ne}}]} \left( \sup_{Z \in \mc{A}_{x,\delta,\epsilon}^L} \E_e[e^{N\frac{\theta}{2}\ip{e,H_Ne}}] \right) \left( \sup_{Z \in \mc{A}_{x,\delta,\epsilon}^L} \frac{1}{\E_e[e^{N\frac{\theta'}{2}\ip{e,H_Ne}}]}\right)
\end{align*}
By Lemmas \ref{lem:annealed} and \ref{lem:quenched}, this shows
\[
    \limsup_{\delta \downarrow 0} \limsup_{\epsilon \downarrow 0} \limsup_{N \to \infty} \frac{1}{N} \log \P^{\theta}(\mc{A}_{x,\delta,\epsilon}^L) \leq I_\sigma(x,\theta) - I_\sigma(x,\theta').
\]
Taking the infimum over $0 \leq \theta' < \theta_{\textup{max}}$ on the right-hand side completes the proof.
\end{proof}

\begin{lem}
\label{lem:unlikely_adding_theta}
Suppose $A_{N,M}$ is a doubly-indexed sequence of events with $\lim_{M \to \infty} \lim_{N \to \infty} \frac{1}{N} \log \P(A_{N,M}) = -\infty$. Then for every $\theta < \theta_{\textup{max}}$ we have
\[
    \lim_{M \to \infty} \lim_{N \to \infty} \frac{1}{N} \log \P^\theta(A_{N,M}) = -\infty.
\]
In particular (by taking $A_{N,M}$ independent of $M$), if $\lim_{N \to \infty} \frac{1}{N} \log \P(A_N) = -\infty$, then $\lim_{N \to \infty} \frac{1}{N} \log \P^\theta(A_N) = -\infty$.
\end{lem}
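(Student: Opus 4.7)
The plan is a standard change-of-measure argument via Hölder's inequality: we write
\[
    \P^\theta(A_{N,M}) = \E\left[\mathbf{1}_{A_{N,M}} L_N\right], \qquad L_N \defeq \frac{\diff \P^\theta}{\diff \P}(Z) = \frac{\E_e[e^{N\frac{\theta}{2}\ip{e,H_Ne}}]}{\E_{e,H_N}[e^{N\frac{\theta}{2}\ip{e,H_Ne}}]},
\]
and then bound $\E[\mathbf{1}_{A_{N,M}} L_N] \leq \P(A_{N,M})^{1/q}\E[L_N^p]^{1/p}$ by Hölder, where $1/p+1/q=1$ and $p>1$ is chosen close to $1$. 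If we can verify that $\E[L_N^p]$ grows at most exponentially in $N$ with some finite rate $C=C(p,\theta)$, then taking $\frac{1}{N}\log$ gives
\[
    \frac{1}{N}\log \P^\theta(A_{N,M}) \leq \frac{1}{q}\cdot \frac{1}{N}\log \P(A_{N,M}) + C,
\]
and sending first $N\to\infty$ and then $M\to\infty$ inside the hypothesis $\frac{1}{N}\log\P(A_{N,M})\to -\infty$ yields the conclusion.

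The key step is therefore controlling $\E[L_N^p]$. For this I would use Jensen's inequality inside the expectation over $e$: since $p\geq 1$ and $e^{N\frac{\theta}{2}\ip{e,H_Ne}}\geq 0$,
\[
    \left(\E_e[e^{N\frac{\theta}{2}\ip{e,H_Ne}}]\right)^p \leq \E_e[e^{Np\frac{\theta}{2}\ip{e,H_Ne}}],
\]
so that, noting that the denominator of $L_N$ is deterministic,
\[
    \E[L_N^p] \leq \frac{\E_{e,H_N}[e^{Np\frac{\theta}{2}\ip{e,H_Ne}}]}{\left(\E_{e,H_N}[e^{N\frac{\theta}{2}\ip{e,H_Ne}}]\right)^p}.
\]
Here is the one technical constraint: to apply the annealed asymptotics of Lemma \ref{lem:annealed} to the numerator, we need $p\theta<\theta_{\textup{max}}$. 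Given $\theta<\theta_{\textup{max}}$, we can simply pick $p>1$ small enough that $p\theta<\theta_{\textup{max}}$ (the corresponding $q = p/(p-1)>1$ is then finite, which is all we need). Lemma \ref{lem:annealed} then applies to both numerator and denominator and gives
\[
    \lim_{N\to\infty}\frac{1}{N}\log \E[L_N^p] \leq F(\rho,p\theta) - p\,F(\rho,\theta) \eqdef C,
\]
a finite constant depending only on $p,\theta,\rho,\alpha$, independent of $M$ and of the specific events $A_{N,M}$.

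Plugging this into the Hölder bound, for every $M$,
\[
    \limsup_{N\to\infty}\frac{1}{N}\log \P^\theta(A_{N,M}) \leq \frac{1}{q}\limsup_{N\to\infty}\frac{1}{N}\log \P(A_{N,M}) + C,
\]
and taking $M\to\infty$ on the right-hand side sends the first term to $-\infty$ by hypothesis, proving the lemma. The only real obstacle is the constraint $p\theta<\theta_{\textup{max}}$, and this is precisely why the hypothesis $\theta<\theta_{\textup{max}}$ (strict inequality) enters: it leaves room to enlarge $\theta$ by a small factor while remaining in the regime where the annealed spherical integral has a finite exponential rate. No other subtlety appears; the remainder of the argument is just unwinding the Hölder bound.
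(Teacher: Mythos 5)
Your proposal is correct and uses essentially the same approach as the paper: both arguments apply Hölder's inequality to separate $\P(A_{N,M})$ from the Radon--Nikodym factor, and Jensen's inequality to push the exponent $p>1$ inside the expectation over $e$, then invoke Lemma~\ref{lem:annealed} at the slightly larger parameter $p\theta < \theta_{\textup{max}}$. The only cosmetic difference is that the paper first drops the normalizing denominator $\E_{e,H_N}[e^{N\frac{\theta}{2}\langle e,H_Ne\rangle}]$ using the elementary bound $\E_{H_N}[e^{N\frac{\theta}{2}\langle e,H_Ne\rangle}] \ge 1$ (valid for every fixed $e$), whereas you carry it along and also control it via Lemma~\ref{lem:annealed}; both are fine.
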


\begin{proof}[Proof of Proposition \ref{prop:wkldpub}]
If $x < r(\sigma)$, then for small enough $\delta$ we have $\{\abs{\lambda_{\max{}}(H_N) - x} \leq \delta\} \subset \{d_{\textup{BL}}(\hat{\mu}_{H_N},\sigma) > \epsilon\}$ for some $\epsilon = \epsilon(\delta)$; this suffices by Lemma \ref{lem:unlikely_adding_theta} and  Proposition \ref{prop:sub_Gaussian_concentration}. In the remainder we assume $x \geq r(\sigma)$.

For large $L$, we have
\[
    \P^{\theta}(\abs{\lambda_{\textup{max}}(H_N)-x} \leq \delta) \leq \P^\theta(\mc{A}_{x,\delta,\epsilon}^L) + \P^\theta(d_{\textup{BL}}(\hat{\mu}_{H_N},\sigma) > \epsilon) + \P^\theta(\|H_N\| > L).
\]
Then we take the normalized log of both sides; by taking $N \to +\infty$, and applying again Lemma \ref{lem:unlikely_adding_theta} and  Proposition \ref{prop:sub_Gaussian_concentration}, we find
\[
    \limsup_{N \to \infty} \frac{1}{N} \log \P^\theta(\abs{\lambda_{\textup{max}}(H_N) - x} \leq \delta) \leq \max\left\{ \limsup_{N \to \infty} \frac{1}{N} \log \P^\theta(\mc{A}_{x,\delta,\epsilon}^L), \limsup_{N \to \infty} \frac{1}{N} \log \P^\theta(\|H_N\| > L) \right\}.
\]
Then we take $\epsilon \downarrow 0$, then $\delta \downarrow 0$, and apply Lemma \ref{lem:ub_good_event} to get
\[
    \limsup_{\delta \downarrow 0} \limsup_{N \to \infty} \frac{1}{N} \log \P^\theta(\abs{\lambda_{\textup{max}}(H_N) - x} \leq \delta) \leq \max\left\{ -(\widetilde{I_\sigma}(x)-I_\sigma(x,\theta)),\limsup_{N \to \infty} \frac{1}{N} \log \P^\theta(\|H_N\| > L)\right\}.
\]
By taking $L \to \infty$ and applying Lemmas \ref{lem:unlikely_adding_theta} and \ref{lem:exponential_tightness}, we finish the proof.
\end{proof}

\begin{proof}[Proof of Lemma \ref{lem:unlikely_adding_theta}]
For every unit vector $e$, the proof of Lemma \ref{lem:annealed} shows 
\[
    \E_{H_N}[e^{N\frac{\theta}{2}\ip{e,H_Ne}}] = \prod_{\mu=1}^M \E[e^{\frac{N}{M}\frac{\theta}{2}d_\mu\ip{z_\mu,e}^2}] \geq 1,
\]
so that, if $A$ is an event and $\epsilon > 0$, we have
\[
    \P^\theta(A) \leq \E_{H_N}[\mathbf{1}_A \E_e[e^{N\frac{\theta}{2}\ip{e,H_Ne}}]] \leq \P(A)^{\frac{\epsilon}{1+\epsilon}} \E_{e,H_N}[e^{N\frac{(1+\epsilon)\theta}{2}\ip{e,H_Ne}}]^{\frac{1}{1+\epsilon}}
\]
If $\epsilon$ is so small that $(1+\epsilon)\theta < \theta_{\textup{max}}$, we apply Lemma \ref{lem:annealed} to finish the proof.
\end{proof}


\subsection{Proof of the weak LDP lower bound}
\label{subsec:wk_ldp_lb}

\begin{prop}
\label{prop:thetax}
Assume $(\rho,\alpha)$ is nondegenerate, and let $D^\circ$ be the interior of the set $D$ defined in \eqref{eqn:domain_d}. If $r(\rho) > 0$, assume additionally that $G_\rho(r(\rho)) = +\infty$. Then for every $x \in D^\circ$ there exists a unique $\theta_x$ with $0 \leq \theta_x < \theta_{\textup{max}}$ such that
\[
    \widetilde{I_\sigma}(x) = \sup_{0 \leq \theta < \theta_{\textup{max}}} I_\sigma(x,\theta) = I_\sigma(x,\theta_x).
\]
The map $x \mapsto \theta_x$ is injective.
\end{prop}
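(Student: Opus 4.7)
\emph{Plan.} The candidate maximizer is $\theta_x := \overline{G}_\sigma(x)$; the argument is essentially a refinement of the derivative computation already present in the proof of Lemma~\ref{lem:ratesimp}. First, I would recompute, via the same derivatives of $J$ and $F$ used there and matching the two pieces at $\theta = G_\sigma(x)$ via the Dyson equation~\eqref{eqn:dyson},
\[
    \frac{\partial}{\partial \theta} I_\sigma(x,\theta) = \begin{cases} 0 & \text{if } 0 \leq \theta \leq G_\sigma(x), \\ (x - H_\rho(\theta))/2 & \text{if } G_\sigma(x) \leq \theta < \theta_{\textup{max}}. \end{cases}
\]

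Next, I would determine the sign of $x - H_\rho(\theta)$ on the non-trivial branch. By Lemma~\ref{lem:overline_g}(1), the level set $\{w \in (0, \theta_{\textup{max}}) : H_\rho(w) = x\}$ equals $\{G_\sigma(x), \overline{G}_\sigma(x)\}$. Combined with the continuity of $H_\rho$ and its boundary values---$H_\rho(0^+) = +\infty$, and at the right endpoint either $\lim_{\theta \uparrow \theta_{\textup{max}}} H_\rho(\theta) = x_c(\rho) = +\infty$ (when $r(\rho) > 0$, via the hypothesis $G_\rho(r(\rho)) = +\infty$) or a direct computation giving $H_\rho(\theta) \sim (1 - \alpha(1-\rho(\{0\})))/\theta \to 0^-$ as $\theta \to +\infty = \theta_{\textup{max}}$ (when $r(\rho) \leq 0$, where nondegeneracy supplies $\alpha(1-\rho(\{0\})) > 1$ via Corollary~\ref{cor:r_sigma_0})---this forces $H_\rho(\theta) < x$ on $(G_\sigma(x), \overline{G}_\sigma(x))$ and $H_\rho(\theta) > x$ on $(\overline{G}_\sigma(x), \theta_{\textup{max}})$. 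The only subtlety is in the second regime, where I would note that $D^\circ \subset (-\infty,0)$ gives $x < 0 \leq \lim_{\theta \to +\infty} H_\rho(\theta)$, so $H_\rho$ cannot cross $x$ a third time and must remain above it past $\overline{G}_\sigma(x)$.

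Combining these, $I_\sigma(x,\cdot)$ is identically zero on $[0, G_\sigma(x)]$, strictly increasing on $[G_\sigma(x), \overline{G}_\sigma(x)]$, and strictly decreasing on $[\overline{G}_\sigma(x), \theta_{\textup{max}})$. Since $x \in D^\circ$ forces $G_\sigma(x) > 0$, the supremum is strictly positive and uniquely attained at $\theta_x = \overline{G}_\sigma(x) \in (0, \theta_{\textup{max}})$. Injectivity of $x \mapsto \theta_x$ then follows immediately from the strict monotonicity of $\overline{G}_\sigma$ on $D$ established in Lemma~\ref{lem:overline_g}(4), which applies in both subcases ($r(\rho) > 0$ with $x_c(\rho) = +\infty$, and $r(\rho) \leq 0$) permitted by the hypotheses.

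\emph{Main obstacle.} There is no substantive difficulty: every ingredient is either a routine derivative computation or is already packaged in Lemma~\ref{lem:overline_g}. The only real care needed is the uniform treatment of the two regimes, and in particular verifying in the case $r(\rho) \leq 0$ (where $\theta_{\textup{max}} = +\infty$ and $H_\rho$ does not blow up at the right endpoint) that $H_\rho$ stays strictly above $x$ throughout $(\overline{G}_\sigma(x), +\infty)$, so that the maximum at $\theta_x$ is genuinely strict and uniqueness holds.
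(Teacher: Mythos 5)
Your proposal is correct and takes essentially the same route as the paper, whose proof of this proposition is just a one-line citation to the derivative analysis in Lemma~\ref{lem:ratesimp} (showing $I_\sigma(x,\cdot)$ is flat on $[0,G_\sigma(x)]$, strictly increasing on $[G_\sigma(x),\overline{G}_\sigma(x)]$, strictly decreasing thereafter, so $\theta_x=\overline{G}_\sigma(x)$), together with the strict monotonicity of $\overline{G}_\sigma$ from Lemma~\ref{lem:overline_g}. Your version simply makes explicit the sign analysis of $x-H_\rho(\theta)$ and the case $r(\rho)\leq0$ that the paper treats implicitly.
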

\begin{proof}
	This is a consequence of the proof of Lemma \ref{lem:ratesimp}. In particular $\theta_x = \overline{G}_{\sigma}(x)$. 
	\end{proof}

\begin{lem}
\label{lem:a_is_likely}
Let $x \geq r(\sigma)$, and let $\theta_x$ be as defined in Proposition \ref{prop:thetax}. Then for all $L$ sufficiently large and $\delta, \epsilon$ sufficiently small (depending on $x$), we have 
\[
    \lim_{N \to \infty} \frac{1}{N}\log \P^{\theta_x}(\mc{A}_{x,\delta,\epsilon}^L) = 0.
\]
\end{lem}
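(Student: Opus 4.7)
The plan is to show $\P^{\theta_x}\bigl((\mc{A}_{x,\delta,\epsilon}^L)^c\bigr)$ decays exponentially; combined with the trivial upper bound $\P^{\theta_x}(\mc{A}_{x,\delta,\epsilon}^L) \leq 1$, this yields the lemma. Decompose
\[
(\mc{A}_{x,\delta,\epsilon}^L)^c \subset \{|\lambda_{\textup{max}}(H_N)-x| \geq \delta\} \cup \{d_{\textup{BL}}(\hat{\mu}_{H_N},\sigma) \geq \epsilon\} \cup \{\|H_N\|>L\}
\]
and handle each piece under $\P^{\theta_x}$ separately. For the last two events, combine the super-exponential $\P$-decay from Proposition~\ref{prop:sub_Gaussian_concentration} and Lemma~\ref{lem:exponential_tightness} with Lemma~\ref{lem:unlikely_adding_theta}, which transfers such decay from $\P$ to $\P^{\theta_x}$; this transfer uses $\theta_x < \theta_{\textup{max}}$, which Proposition~\ref{prop:thetax} guarantees. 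The operator-norm term is the reason one requires $L$ to be taken sufficiently large in the statement.

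The substantive piece is the eigenvalue localization event. On the intersection with the two just-controlled events, we have $\lambda_{\textup{max}}(H_N) \leq L$ from $\|H_N\| \leq L$, and $\lambda_{\textup{max}}(H_N) \geq r(\sigma) - \eta(\epsilon)$ with $\eta(\epsilon) \downarrow 0$ from $d_{\textup{BL}}(\hat{\mu}_{H_N},\sigma) < \epsilon$ (since $\sigma$ has positive mass arbitrarily near $r(\sigma)$, any weak approximation of $\sigma$ must also put mass there). So it suffices to control $\{\lambda_{\textup{max}}(H_N) \in K\}$ with $K := [r(\sigma)-\eta(\epsilon), L] \setminus (x-\delta, x+\delta)$ compact. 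Cover $K$ by finitely many small intervals centered at points $y_1,\dots,y_k$; the weak LDP upper bound Proposition~\ref{prop:wkldpub} applied under $\P^{\theta_x}$ yields at each center the rate $\widetilde{I_\sigma}(y_i) - I_\sigma(y_i,\theta_x)$. By Proposition~\ref{prop:thetax} and its identification $\theta_y = \overline{G}_\sigma(y)$, for any $y \neq x$ in $D^\circ$ we have $\theta_y \neq \theta_x$, and the uniqueness of the maximizer forces $I_\sigma(y,\theta_x) < I_\sigma(y,\theta_y) = \widetilde{I_\sigma}(y)$, so each rate is strictly positive.

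The main obstacle is producing a \emph{uniform} positive lower bound on $\widetilde{I_\sigma}(y) - I_\sigma(y,\theta_x)$ across $K$. Continuity in $y$ of both $\widetilde{I_\sigma}$ (via Lemma~\ref{lem:ratesimp}, which realizes it as an integral of $\overline{G}_\sigma - G_\sigma$) and $I_\sigma(\cdot,\theta_x)$, together with compactness of $K$ and the strict pointwise positivity above, deliver the uniform bound away from $x$. Some care is needed at the left endpoint $y=r(\sigma)$, where Proposition~\ref{prop:thetax} technically addresses only the interior $D^\circ$; however, the strict monotonicity of $\overline{G}_\sigma$ on $D$ from Lemma~\ref{lem:overline_g} gives $\theta_x = \overline{G}_\sigma(x) > \overline{G}_\sigma(r(\sigma))$, so $\theta_x$ is not the optimizer of $I_\sigma(r(\sigma), \cdot)$ either, and hence $I_\sigma(r(\sigma),\theta_x) < \widetilde{I_\sigma}(r(\sigma)) = 0$, keeping the rate strictly positive on the boundary and hence, by continuity, uniformly on all of $K$.
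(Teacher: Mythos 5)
Your proposal is correct and follows essentially the same route as the paper's proof: decompose the complement of $\mc{A}_{x,\delta,\epsilon}^L$ into the three events, control the concentration and operator-norm events by transferring the super-exponential $\P$-decay to $\P^{\theta_x}$ via Lemma~\ref{lem:unlikely_adding_theta}, and control the eigenvalue-deviation event via the weak LDP upper bound Proposition~\ref{prop:wkldpub} under $\P^{\theta_x}$, whose rate $J_x(y)=\widetilde{I_\sigma}(y)-I_\sigma(y,\theta_x)$ is strictly positive for $y\neq x$. You are somewhat more explicit than the paper in two places: you carry out the compactness-cover argument by hand where the paper invokes the standard upgrade from weak LDP upper bound plus exponential tightness to a full upper bound, and you treat the boundary point $y=r(\sigma)$ separately, noting that Proposition~\ref{prop:thetax} only addresses $D^\circ$ and that the optimizer of $I_\sigma(r(\sigma),\cdot)$ is not unique (it is constant on $[0,\theta_c]$); the paper's remark that ``the supremum is achieved uniquely at $\theta_y$'' glosses over this at $y=r(\sigma)$, but your argument via strict monotonicity of $\overline{G}_\sigma$, giving $\theta_x>\theta_c$ and hence $I_\sigma(r(\sigma),\theta_x)<0$, patches it cleanly.
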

\begin{proof}
We claim that actually $\P^{\theta_x}(\mc{A}_{x,\delta,\epsilon}^L)$ tends to one. As shown in the proof of Proposition \ref{prop:wkldpub}, for all $0 \leq \theta < \theta_{\textup{max}}$, the quantities $\P^{\theta}(d_{\textup{BL}}(\hat{\mu}_{H_N},\sigma) \geq \epsilon)$ and $\P^{\theta}(\|H_N\| \geq L)$ tend to zero (actually exponentially quickly), for $L$ large enough and all $\epsilon > 0$. Thus it suffices to show
\begin{equation}
\label{eqn:likelyunderthetax}
    \P^{\theta_x}(\abs{\lambda_{\textup{max}}(H_N) - x} \geq \delta) = \oo(1).
\end{equation}
But Proposition \ref{prop:wkldpub} gives a weak LDP upper bound for the variable $\lambda_{\textup{max}}(H_N)$ under the measures $\P^{\theta_x}$, which is exponentially tight, with rate function $J_x(y)$ that is infinite for $y < r(\sigma)$ and otherwise equal to $J_x(y) = \widetilde{I_\sigma}(y) - I_\sigma(y,\theta_x)$. Proposition \ref{prop:thetax} shows that $J_x$ is nonnegative and vanishes uniquely at $x$; indeed, if $r(\sigma) \leq y \neq x$, then $J_x(y) = \sup_{0 \leq \theta < \theta_{\textup{max}}} I(y,\theta) - I(x,\theta_x)$, and the supremum is achieved uniquely at $\theta_y$, which is different from $\theta_x$ since $y \neq x$. Combined with exponential tightness of $\P^{\theta_x}$ (see Lemmas \ref{lem:exponential_tightness} and \ref{lem:unlikely_adding_theta}), this gives \eqref{eqn:likelyunderthetax}.
\end{proof}

\begin{proof}[Proof of Proposition \ref{prop:wkldplb}]
We have 
\begin{align*}
    \P(\abs{\lambda_{\textup{max}}(H_N) - x} \leq \delta) &\geq \P(\mc{A}_{x,\delta,\epsilon}^L) \geq \frac{\E[\mathbf{1}_{\mc{A}_{x,\delta,\epsilon}^L} \E_e[e^{N\frac{\theta_x}{2}\ip{e,H_Ne}}]}{\E[e^{N\frac{\theta_x}{2}\ip{e,H_Ne}}]} \E[e^{N\frac{\theta_x}{2}\ip{e,H_Ne}}] \left(\inf_{Z \in \mc{A}_{x,\delta,\epsilon}^L} \frac{1}{\E_e[e^{N\frac{\theta_x}{2}\ip{e,H_Ne}}]}\right). \\
    &= \P^{\theta_x}(\mc{A}_{x,\delta,\epsilon}^L)\E[e^{N\frac{\theta_x}{2}\ip{e,H_Ne}}] \left(\inf_{Z \in \mc{A}_{x,\delta,\epsilon}^L} \frac{1}{\E_e[e^{N\frac{\theta_x}{2}\ip{e,H_Ne}}]}\right).
\end{align*}
From Lemmas \ref{lem:annealed}, \ref{lem:quenched}, and \ref{lem:a_is_likely}, we have
\[
    \liminf_{\delta \downarrow 0} \liminf_{N \to \infty} \frac{1}{N} \log \P(\abs{\lambda_{\textup{max}}(H_N) - x} \leq \delta) \geq F(\rho,\theta_x) - J\left(\sigma,\frac{\theta_x}{2},x\right) = -I_\sigma(x,\theta_x) \geq -\widetilde{I_\sigma}(x).
\]
\end{proof}


\subsection{Degenerate cases}
\label{subsec:r_sigma_0}

\begin{proof}[Proof of Lemma \ref{lem:r_sigma_0}]
Let $\rho$ be a compactly supported measure on $\R$ such that $r(\rho) \leq 0$. Theorem \ref{thm:sigma} already shows that $r(\sigma) \leq 0$; as in the proof of that theorem, we consider a sequence $(\Gamma_M)_{M=1}^\infty$ chosen without outliers, and the matrices $H_N$ defined using these $\Gamma_M$'s. There are two cases:
	\begin{enumerate} 
		\item If $\alpha \leq 1$, since  $H_N \geq - K Z^T Z $ where $K = \max_M ( - \lambda_{\min}( \Gamma_M))$. The empirical measure of $ Z^T Z$ converges toward the Mar\v{c}enko-Pastur distribution $\text{MP}_{\alpha}$ and therefore the cumulative distribution function of $\sigma$ is everywhere smaller than the distribution function of $(-K) * \text{MP}_{\alpha}$, where $(-K) * \text{MP}_{\alpha}$ is the push-forward of $\text{MP}_{\alpha}$ by the multiplication by $(-K)$. Since $r(- K * \text{MP}_{\alpha}) = - K \ell( \text{MP}_{\alpha})=0$, we have that $r(\sigma) \geq 0$.
		\item If $\alpha > 1$, we can find a $\delta > 1$ such that $\alpha - \delta > 1$. If we call $\delta_M = \lfloor \delta M \rfloor$, we let $\Gamma'_M$ be the submatrix of $\Gamma_M$ with the first $M - \delta_M$ rows and columns, and $\Gamma''_M$ be the submatrix of $\Gamma_M$ with the last $\delta_M$ rows and columns. We also let $Z'$ be the submatrix of $Z$ with the first $M - \delta_N$ rows, and $Z''$ be the submatrix of $Z$ with the last $\delta_N$ rows. We have $H_N = H'_N + H''_N$ with $H_N'= Z'^T \Gamma_M' Z'$ and $H_N''= Z''^T \Gamma_M'' Z''$. Since $\rho( \{ 0 \}) =0$, we have that $\limsup_N \lambda_{\max}( \Gamma'_M) =c < 0$, and since $H''_N \leq 0$, we have for $N$ large enough $H_N \leq H'_N \leq c Z'^T Z'$. So $r(\sigma) \leq c \ell( MP_{\alpha - \delta}) < 0 $. 
	\end{enumerate}
\end{proof}

\begin{proof}[Proof of Proposition \ref{prop:degenerate}]
The same proof as in the nondegenerate case shows that $\lambda_{\textup{max}}$ cannot push into the bulk at this speed, which handles small-ball probabilities $\P(\lambda_{\textup{max}} \approx x)$ for $x < 0$. 

If $\Gamma$ is negative semidefinite, then the rest of the argument is trivial, since $H_N \leq 0$ and thus $\lambda_{\textup{max}}(H_N) \leq 0$ deterministically.

So suppose that $\Gamma$ has a handful of positive eigenvalues at finite $N$, meaning that $H_N$ is not necessarily negative semidefinite. Define 
$\Gamma^{\textup{nsd}} \defeq \diag(d_1^{(\textup{nsd})}, \ldots, d_M^{(\textup{nsd})})$, where $d_i^{(\textup{nsd})} = \min(d_i,0)$, set $\epsilon_N = \|\Gamma - \Gamma^{(\textup{nsd})}\|$ which tends to zero by Assumption \ref{assn:no_outliers}, and set $H_N^{(\textup{nsd})} \defeq \frac{1}{M} Z^T \Gamma^{(\textup{nsd})} Z$, which we couple with $H_N$ by using the same noise $Z$ to define both. We note that $\|H_N - H_N^{(\textup{nsd})}\| \leq \frac{\epsilon_N}{M} \|Z\|^2$ and thus, for every $\delta > 0$,
\[
	\lim_{N \to \infty} \P(|\lambda_{\textup{max}}(H_N^{(\textup{nsd})}) - \lambda_{\textup{max}}(H_N)| > \delta) \leq \lim_{N \to \infty} \P(\sqrt{M}^{-1}\|Z\| > \delta/\epsilon_N) = -\infty
\]
(the details of this are given in the proof of Lemma \ref{lem:Approx} below). This means that the sequences $(\lambda_{\textup{max}}(H_N))_{N=1}^\infty$ and $(\lambda_{\textup{max}}(H_N^{(\textup{nsd})}))_{N=1}^\infty$ are \emph{exponentially equivalent}; since the latter sequence satisfies the desired LDP by the above argument, it is classical (see, e.g., \cite[Theorem 4.2.13]{DemZei2010}) that the former does, as well.
\end{proof}


\subsection{Second branch of the Stieltjes transform}
\label{subsec:overline_g}

\begin{proof}[Proof of Lemma \ref{lem:overline_g}]
If $x_c(\rho)$ is finite, then
\begin{align*}
	x_c(\rho) &= r(\rho)^2G_\rho(r(\rho)) + \left(\frac{1}{\alpha}-1\right) r(\rho) = \frac{1}{\theta_{\textup{max}}} + r(\rho) \left( \int_\R \frac{r(\rho)-(r(\rho)-u)}{r(\rho)-u} \rho(\diff u) \right) \\
	&= \frac{1}{\theta_{\textup{max}}} + \int_\R \frac{\alpha u}{\alpha - \frac{\alpha}{r(\rho)} u} \rho(\diff u) = H_\rho(\theta_{\textup{max}}).
\end{align*}
The claim $x_c(\rho) \geq r(\sigma)$ will be shown along the course of the proof. We will eventually need three cases. Common to them is the computation of
\[
	f_\rho(\theta) \defeq \theta^2H'_\rho(\theta) = -1+\alpha \int_\R \frac{u^2\theta^2}{\left(\alpha-u\theta\right)^2} \rho(\diff u).
\]
Notice that $\lim_{\theta \downarrow 0} f_\rho(\theta) = -1$. We claim $f_\rho$ is strictly increasing for $\theta \in (0,\theta_{\textup{max}})$. Indeed, it is (a constant plus) an average over $u$ of the functions $f_{u,\rho}(\theta) \defeq \frac{u^2\theta^2}{(\alpha - u\theta)^2}$; the function $f_{0,\rho}$ is constant, and the functions $f_{u,\rho}$ are strictly increasing for each $u \neq 0$, since their derivatives $\frac{2(\frac{\alpha}{u})\theta(\frac{\alpha}{u}-\theta)}{(\frac{\alpha}{u}-\theta)^4}$ have the same sign as $\frac{\alpha}{u}(\frac{\alpha}{u}-\theta)$, and the sign of the latter can be checked by hand depending on the sign of $u$ (in the case $u > 0$, this relies on $\theta < \theta_{\textup{max}} = \frac{\alpha}{r(\rho)}$). 

The three cases are:
\begin{enumerate}
		\begin{figure}[h]
			\includegraphics[scale=0.8]{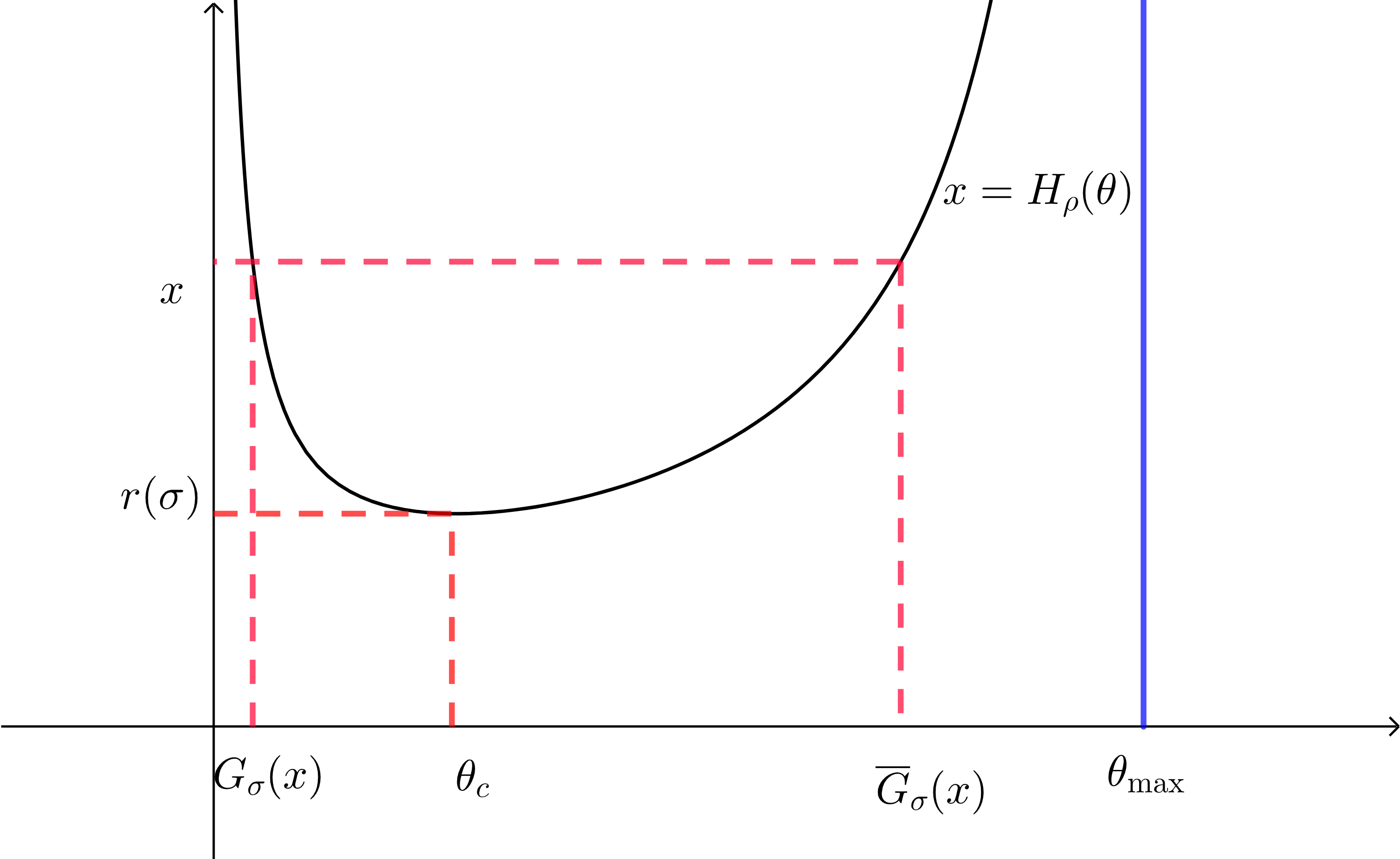}	
			\centering
			\caption{Graph of the function $H_{\rho}$ when $r(\rho)>0$ and $G_{\rho}(r(\rho))= + \infty$ with representations of $r(\sigma),\theta_c,G_{\sigma}(x),\overline{G}_{\sigma}(x)$.}
			\label{fig:case1}
		\end{figure}
	\item \textbf{Case 1 ($r(\rho) > 0$ and $G_{\rho}(r(\rho)) = +\infty$), shown in Figure~\ref{fig:case1}:} It is easy to see that $H_{\rho}( \theta_{\max}) = + \infty$. Thus $f_\rho(\theta)$ is positive for some $\theta$; since it is also strictly increasing and tends to $-1$ at zero, there exists a unique $\theta_c \in (0,\theta_{\textup{max}})$ where it vanishes, i.e., there exists a unique $\theta_c \in (0,\theta_{\textup{max}})$ such that $H_{\rho}$ is decreasing on $(0,\theta_c)$ and increasing on $(\theta_c,\theta_{\textup{max}})$.
	Using the uniqueness of the analytic continuation, one can argue that $H_{\rho}( \theta_c) = r( \sigma)$ and $G_{\sigma}( r(\sigma))= \theta_c$. Furthermore, one sees that the equation $H_{\rho}(y) = x$, considered as a function of $y \in (0, \theta_{\max})$ parametrized by $x \in \R$,
	\begin{enumerate}
		\item has no solution if $x < r( \sigma)$. 
		\item has one solution if $x = r(\sigma)$. That solution is $\theta_c$, and we set $\overline{G}_\sigma(r(\sigma)) \defeq \theta_c$.
		\item has two solutions $y_1$ and $y_2$ such that $ 0 < y_1 < \theta_c < y_2 < \theta_{\max}$ if $ x > r( \sigma)$. Furthermore, due to the Dyson equation \eqref{eqn:dyson}, we clearly have $y_1 = G_{\sigma}( x)$. We write $\overline{G}_\sigma(x)$ for the second solution $y_2$. In particular, $\overline{G}_{\sigma}$ defined this way on $[ r(\sigma), + \infty)$ is analytic increasing and $\lim_{x \to \infty} \overline{G}_{\sigma}(x) = \theta_{\max}$. 
		\end{enumerate}
	\begin{figure}[ht!]
			\includegraphics[scale=4]{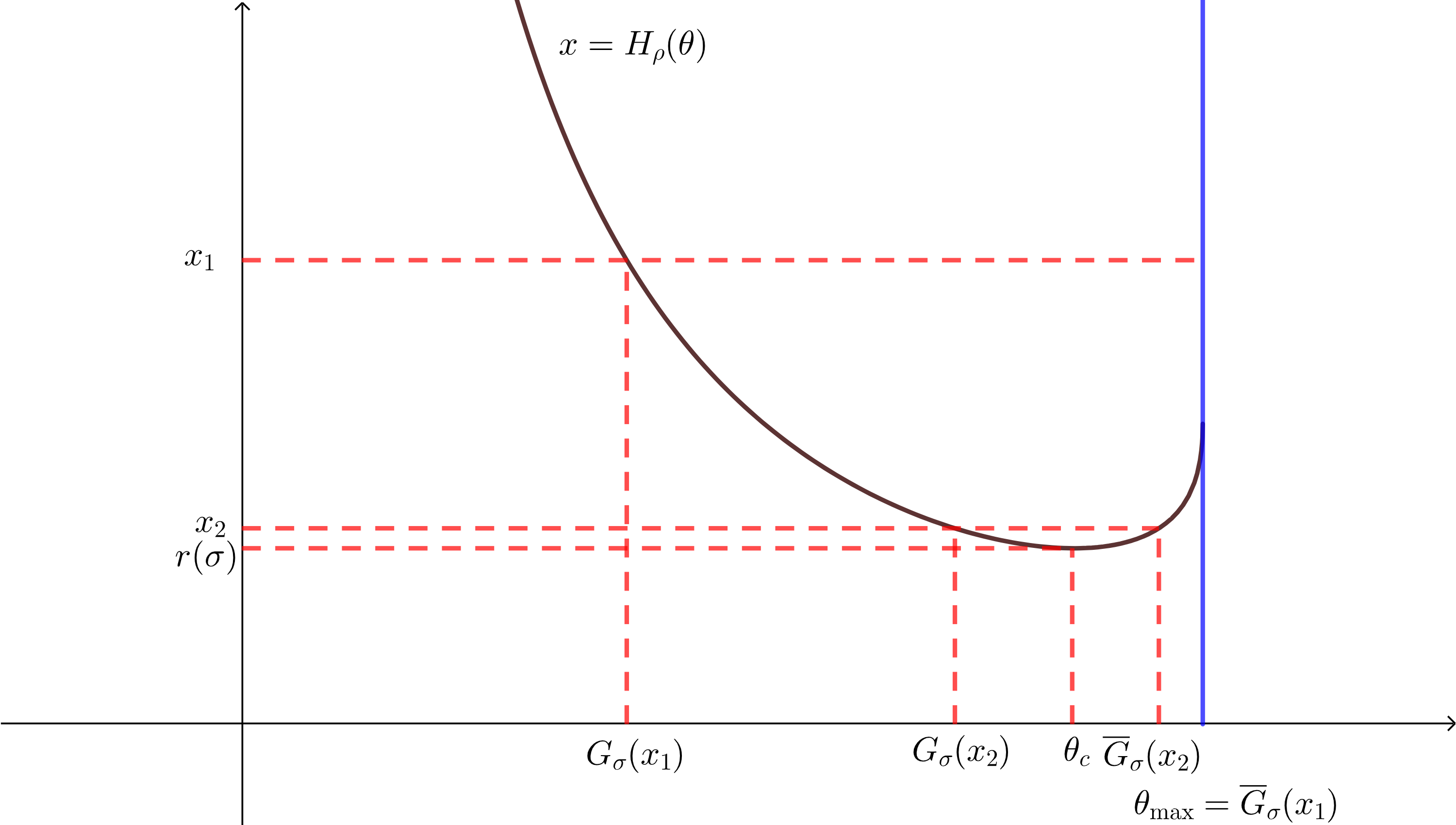}	
			\centering
			\caption{Graph of the function $H_{\rho}$ when $r(\rho)>0$ and $G_{\rho}(r(\rho))< + \infty$ with representations of $r(\sigma),\theta_c,G_{\sigma}(x),\overline{G}_{\sigma}(x)$ with $x_1 > x_c$ and $ x_2 < x_c$ .}
			\label{fig:case2}
	\end{figure}
	\item \textbf{Case 2 ($r(\rho) > 0$ and $G_\rho(r(\rho)) < + \infty$), shown in Figure~\ref{fig:case2}:} Once again, $H_{\rho}$ is decreasing on $(0, \theta_c)$ and increasing $(\theta_c , \theta_{\textup{max}})$, but $\theta_c = \theta_{\max}$ if and only if $x_c = r( \sigma)$. As before, the equation $H_{\rho}(y) = x$, considered as a function of $y \in (0, \theta_{\max})$ parametrized by $x \in \R$,
		\begin{enumerate}
			\item has no solution if $x < r( \sigma)$. 
			\item has one solution if $x = r(\sigma)$. That solution is $\theta_c$, and we set $\overline{G}_\sigma(r(\sigma)) \defeq \theta_c$.
			\item has two solutions $y_1$ and $y_2$ such that $ 0 < y_1 < \theta_c < y_2 < \theta_{\max}$ if $r( \sigma) < x \leq x_c$. Once again, we have $y_1= G_{\sigma}( x)$ and  we will set $\overline{G}_{\sigma} \defeq y_2$. 
			\item has one solution $y$ such that $0 < y < \theta_c$ if $x > x_c(\rho)$. However, in this case we will define $\overline{G}_{\sigma}(x) \defeq \theta_{\max}$. 
			\end{enumerate} 
	\begin{figure}[h!]
	\includegraphics[scale=2]{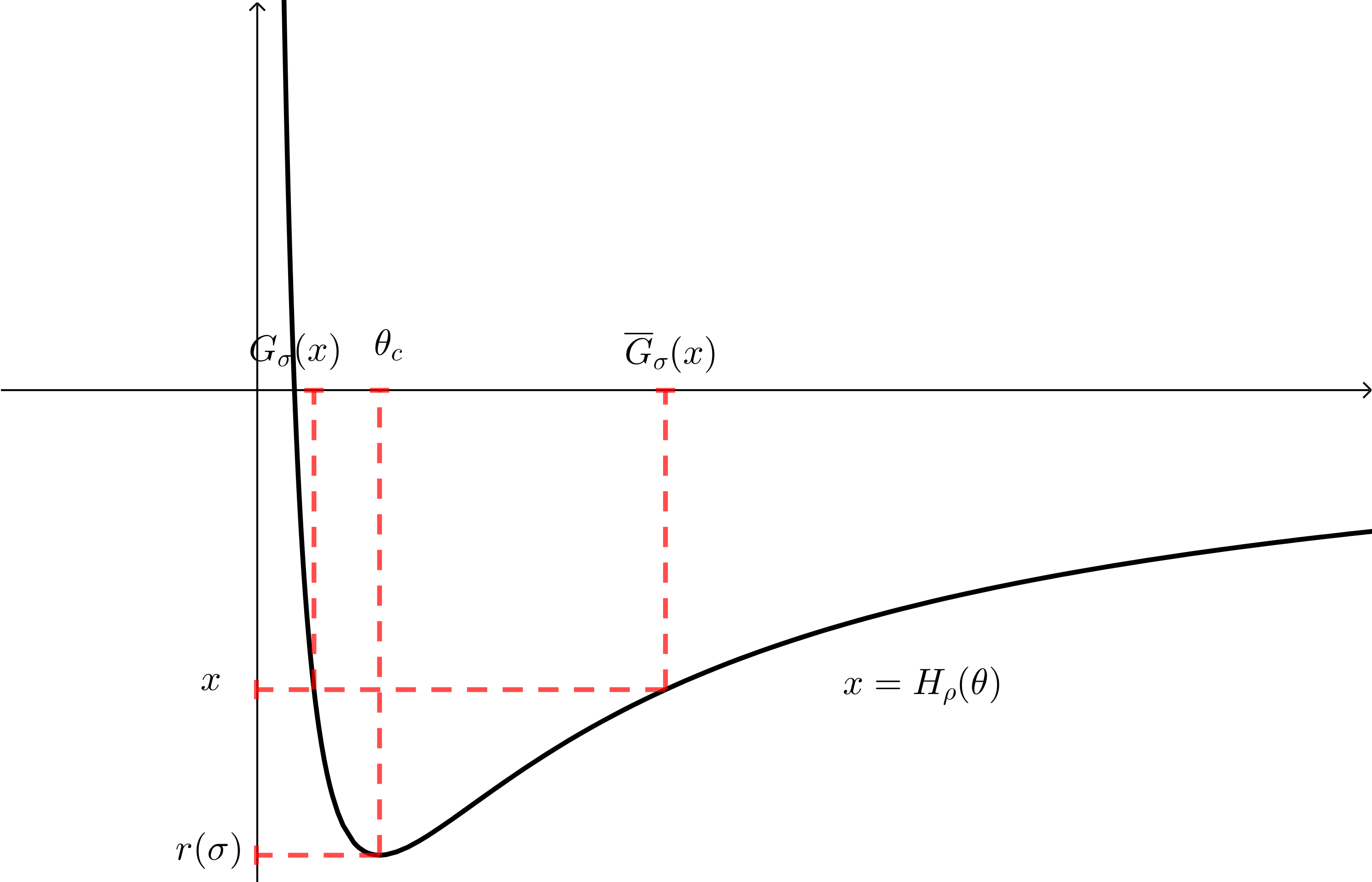}	
	\centering
	\caption{Graph of the function $H_{\rho}$ when $r(\rho) <0$ and $\alpha > 1$ with representations of $r(\sigma),\theta_c,G_{\sigma}(x),\overline{G}_{\sigma}(x)$.}
	\label{fig:case3}
	\end{figure}
	\item \textbf{Case 3 ($r(\rho) \leq 0$):} Using Remark \ref{rem:r_sigma_0} and Lemma \ref{lem:r_sigma_0}, we need only consider the case where $\rho( \{0\}) =0$ and $\alpha > 1$ (see Figure~\ref{fig:case3}). Since $f_\rho$ is strictly increasing, either $H'_\rho(\theta)$ is negative for all $\theta$, or there exists $\theta_c \in (0,\theta_{\textup{max}})$ such that $H'_\rho(\theta)$ is negative for $\theta \in (0,\theta_c)$ and positive for $\theta \in (\theta_c,\theta_{\textup{max}})$. But since
		\begin{equation}
		\label{eqn:h_rho_asymptotics}
			H_{\rho}( \theta) = (1- \alpha) \theta^{-1} + \oo_{\theta \to +\infty} ( \theta^{-1})
		\end{equation}
		and $\alpha > 1$, we must be in the latter case, and indeed must have $H_{\rho}( \theta_c) < 0$ and $H_{\rho} < 0$ on $(\theta_c, +\infty)$. Then the equation $H_{\rho}(y) = x$, considered as a function of $y \in (0, + \infty)$ parametrized by $x \in \R$,
	\begin{enumerate}
		\item has one solution for $x \geq 0$. This solution is equal to $G_{\sigma}(x)$. 
		\item has two solutions $y_1, y_2$ for $0 > x > r( \sigma)$, with $ 0 < y_1 < \theta_c < y_2$. We have that $y_1 = G_{\sigma}(x)$ and we denote the second solution $y_2$ by $\overline{G}_{\sigma}(x)$. Once again $\overline{G}_{\sigma}$ is analytic increasing between $r( \sigma)$ and $0$, and $\lim_{x \uparrow 0} \overline{G}_\sigma = +\infty$.
	\item has one solution for $x = r(\sigma)$, namely $y= \theta_c$.
	\item has no solutions for $x < r( \sigma)$. 
	\end{enumerate} 
\end{enumerate}
\end{proof}


\section{Proof for finite \texorpdfstring{$x_c$}{xc}}
\label{SecReg}

In this section, we prove Proposition \ref{prop:main_finite}. We remark that, from the definition \eqref{eqn:def_xc_rho}, we can only have $x_c(\rho) < +\infty$ if $r(\rho) > 0$.

First, from its definition and Lemma \ref{lem:overline_g}, we see that the rate function has the form $I_\sigma(x) = \frac{1}{2} \int_{r(\sigma)}^x g(y) \diff y$, where $g$ is strictly increasing, positive for arbitrarily small arguments, and $\lim_{x \to +\infty} \frac{g(x)}{\theta_{\textup{max}}} = 1$; this proves the claimed properties of $I_\sigma$.

Now we fix once and for all some $\rho$ with $x_c(\rho) < +\infty$, and try to prove the associated LDP, assuming that we know the LDP for every model with $x_c = +\infty$. The proof goes by approximation. Precisely, we are going to discretize the right edge of $\rho$ by replacing the $d_i$'s greater than $r(\rho) - \epsilon$ by $r(\rho)$. 

\begin{defn} 
	For $\epsilon > 0$, we define $\Gamma^{(\epsilon)} \defeq \diag(d^{(\epsilon)}_1,...,d^{(\epsilon)}_M)$ where $d_i^{(\epsilon)} =d_i$ if $d_i \leq r(\rho) - \epsilon$ and $d_i^{(\epsilon)} = r(\rho)$ for $d_i >  r(\rho) - \epsilon$. 
	The same way, we define $H_N^{(\epsilon)}$ as: 
	\[ H_N^{(\epsilon)} \defeq \frac{1}{M} Z^{T} \Gamma^{(\epsilon)} Z. \]
	It will be important later that we couple $H^{(\epsilon)}_N$ with $H_N$, by using the same noise $Z$ to define both.
	We define also $\rho^{(\epsilon)}$ to be the probability measure on $\R$ given by
	\begin{equation}
	\label{eqn:rho_epsilon}
	\rho^{(\epsilon)}(A) \defeq \rho(A \cap ] - \infty, r(\rho) - \epsilon]) + \rho( ] r(\rho) - \epsilon, r(\rho)]) \delta_{r(\rho)}
	\end{equation}
	for Borel $A$.
\end{defn}
Let us remark that 
\[ \lim_{M \to \infty}\frac{1}{M} \sum_{i=1}^M \delta_{d^{(\epsilon)}_i} = \rho^{(\epsilon)} \]
as long as $\rho$ does not have an atom at $r(\rho) -\epsilon$. Since a probability measure can have at most countably many atoms, we can take $\epsilon \to 0$ along some $\rho$-dependent sequence avoiding such atoms, which we will do implicitly in the rest of the proof. 

Then, if we assume we have avoided such atoms, the empirical measure of $H_N^{(\epsilon)}$ converges toward a measure $\sigma^{(\epsilon)}$ characterized by the fact that its Stieltjes transform is the inverse function of $H_{\rho^{(\epsilon)}}$. 

Since $\rho^{(\epsilon)}$ has an atom at its right endpoint, we have $G_{\rho^{(\epsilon)}}(r(\rho^{(\epsilon)})) = + \infty$ and therefore we have that $\lambda_{\max}(H_N^{(\epsilon)})$ satisfies a large deviations principle with rate function $I^{(\epsilon)}$ defined as 

\[ I^{(\epsilon)}(x) =\begin{cases}  \frac{1}{2}  \int_{r(\sigma^{(\epsilon)})}^x \Big( \overline{G_{\sigma^{(\epsilon)}}}(t) - G_{\sigma^{(\epsilon)}}(t) \Big) \diff t  & \text{if } x \geq r(\sigma^{(\epsilon)} ), \\
	+ \infty & \text{otherwise. }  \end{cases}
\]

To prove our result we will need the following three lemmas.

\begin{lem}
\label{lem:CvRatef}
The function $\epsilon \mapsto r(\sigma^{(\epsilon)})$ is non-decreasing, and
	\begin{equation}
	\label{eqn:r_sigma_epsilon}
	\lim_{\epsilon \to 0} r(\sigma^{(\epsilon)}) = r(\sigma).
	\end{equation}
Furthermore, the functions $I^{(\epsilon)}$ converge uniformly on all compact subsets of $(r(\sigma), + \infty)$ toward $I$ as $\epsilon \to 0$. 
\end{lem}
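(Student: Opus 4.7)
The plan is to base the whole proof on a single monotonicity inequality for $H_{\rho^{(\epsilon)}}$. Since we are in Case~2 of Lemma~\ref{lem:overline_g} for $\rho$ (that is, $r(\rho)>0$ and $G_\rho(r(\rho))<+\infty$), the map $u \mapsto \frac{\alpha u}{\alpha - \theta u}$ is strictly increasing in $u$ on $\supp(\rho) \subset (-\infty, r(\rho)]$ for every $\theta \in (0,\theta_{\textup{max}})$, because $\alpha/\theta > r(\rho)$. For $0 \leq \epsilon_1 < \epsilon_2$ the signed measure $\rho^{(\epsilon_2)} - \rho^{(\epsilon_1)}$ transports $\rho$-mass from $(r(\rho)-\epsilon_2,\,r(\rho)-\epsilon_1]$ to the point $r(\rho)$, so integrating the increasing function above against it (with the convention $\rho^{(0)} \defeq \rho$) gives
\[
    H_\rho(\theta) \leq H_{\rho^{(\epsilon_1)}}(\theta) \leq H_{\rho^{(\epsilon_2)}}(\theta) \qquad \text{for all } \theta \in (0,\theta_{\textup{max}}).
\]
Since $r(\sigma^{(\epsilon)}) = \min_{\theta \in (0,\theta_{\textup{max}})} H_{\rho^{(\epsilon)}}(\theta)$ by Case~1 of Lemma~\ref{lem:overline_g} applied to $\rho^{(\epsilon)}$, taking minima at once yields both the monotonicity of $\epsilon \mapsto r(\sigma^{(\epsilon)})$ and the lower bound $r(\sigma^{(\epsilon)}) \geq r(\sigma)$.

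For the limit \eqref{eqn:r_sigma_epsilon}, I combine this lower bound with the upper bound $r(\sigma^{(\epsilon)}) \leq H_{\rho^{(\epsilon)}}(\theta)$, valid for each fixed $\theta \in (0,\theta_{\textup{max}})$. Pointwise convergence $H_{\rho^{(\epsilon)}}(\theta) \to H_\rho(\theta)$ (dominated convergence applied to \eqref{eqn:mp}, using that the integrand is bounded on $\supp(\rho)$ for each fixed $\theta < \theta_{\textup{max}}$) yields $\limsup_{\epsilon \to 0} r(\sigma^{(\epsilon)}) \leq H_\rho(\theta)$. Infimizing over $\theta$ produces $\limsup_{\epsilon \to 0} r(\sigma^{(\epsilon)}) \leq \inf_{(0,\theta_{\textup{max}})} H_\rho = r(\sigma)$, which handles at once the two subcases of Case~2 of Lemma~\ref{lem:overline_g}: the infimum is attained at $\theta_c$ when $\theta_c < \theta_{\textup{max}}$, and equals $r(\sigma) = x_c(\rho)$ as $\theta \uparrow \theta_{\textup{max}}$ when $\theta_c = \theta_{\textup{max}}$.

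For uniform convergence of $I^{(\epsilon)}$ to $I$ on a compact set $K \subset (r(\sigma),+\infty)$, the strategy is to obtain uniform convergence of the integrands $\overline{G}_{\sigma^{(\epsilon)}} - G_{\sigma^{(\epsilon)}}$ on $K$ and then integrate. By an argument analogous to the one above, the critical points satisfy $\theta_c^{(\epsilon)} \to \theta_c$ (where $\theta_c$ may equal $\theta_{\textup{max}}$). Combined with locally uniform convergence $H_{\rho^{(\epsilon)}} \to H_\rho$ on $(0,\theta_{\textup{max}})$ and the branch identifications of Lemma~\ref{lem:overline_g}, a standard inverse-function argument gives pointwise convergence $G_{\sigma^{(\epsilon)}}(x) \to G_\sigma(x)$ and $\overline{G}_{\sigma^{(\epsilon)}}(x) \to \overline{G}_\sigma(x)$ for every $x > r(\sigma)$ with $x < x_c(\rho)$. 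Because $G_{\sigma^{(\epsilon)}}$ is nonincreasing, $\overline{G}_{\sigma^{(\epsilon)}}$ is nondecreasing, and their respective pointwise limits are continuous on $K$, pointwise convergence upgrades to uniform convergence on $K$ by the standard fact that monotone functions converging pointwise to a continuous monotone limit do so uniformly on compact sets. Together with $r(\sigma^{(\epsilon)}) \to r(\sigma)$ and the boundedness of the integrand on $K$, this produces uniform convergence of $I^{(\epsilon)}$ to $I$ on $K$.

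The main obstacle will be the pointwise convergence of $\overline{G}_{\sigma^{(\epsilon)}}$ in the regime $x \geq x_c(\rho)$ (which is nontrivial only when $x_c(\rho) < +\infty$): there $\overline{G}_\sigma(x) = \theta_{\textup{max}}$ by definition, whereas $\overline{G}_{\sigma^{(\epsilon)}}(x)$ lies strictly inside $[\theta_c^{(\epsilon)}, \theta_{\textup{max}})$. I would rule out any subsequential limit $\theta^\ast = \lim_n \overline{G}_{\sigma^{(\epsilon_n)}}(x) < \theta_{\textup{max}}$ by contradiction: uniform convergence of $H_{\rho^{(\epsilon_n)}}$ near $\theta^\ast$ forces $H_\rho(\theta^\ast) = x \geq x_c(\rho)$, whereas $\theta^\ast \geq \theta_c$ (using $\theta_c^{(\epsilon)} \to \theta_c$) and $H_\rho < x_c(\rho)$ on $[\theta_c, \theta_{\textup{max}})$, a contradiction. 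Hence $\overline{G}_{\sigma^{(\epsilon)}}(x) \to \theta_{\textup{max}} = \overline{G}_\sigma(x)$, closing the argument.
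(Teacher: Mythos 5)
Your proof is correct, and the first half (monotonicity of $\epsilon\mapsto r(\sigma^{(\epsilon)})$ and the limit \eqref{eqn:r_sigma_epsilon}) uses essentially the same mechanism as the paper: the inequality $H_{\rho}\le H_{\rho^{(\epsilon_1)}}\le H_{\rho^{(\epsilon_2)}}$ coming from integrating an increasing function against the mass transport that defines $\rho^{(\epsilon)}$. Where you genuinely diverge is the uniform convergence of $I^{(\epsilon)}$ to $I$. You work at the level of the \emph{integrand}, proving pointwise convergence of $G_{\sigma^{(\epsilon)}}$ and $\overline{G}_{\sigma^{(\epsilon)}}$ (with a separate contradiction argument for the delicate regime $x\ge x_c(\rho)$, which relies on $\theta_c^{(\epsilon)}\to\theta_c$, itself asserted but not proved), then upgrade to uniform convergence by a Dini/P\'olya-type theorem for monotone functions, then integrate, with a small additional step (dominated convergence or an $\eta$-splitting near $r(\sigma)$) needed to control the moving lower endpoint $r(\sigma^{(\epsilon)})\to r(\sigma)$ that your writeup glosses over. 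The paper instead applies Fubini to rewrite both $I^{(\epsilon)}(x)$ and $I(x)$ as $\frac{1}{2}\int_0^x\int_0^{\theta_{\textup{max}}}\mathds{1}_{H_{\rho^{(\epsilon)}}(u)\le t}\,\diff u\,\diff t$ (resp.\ with $H_\rho$), so that the same inequality $H_\rho\le H_{\rho^{(\epsilon)}}$ makes the difference sets $D_x^{(\epsilon)}=\{(t,u): H_\rho(u)<t\le H_{\rho^{(\epsilon)}}(u)\}\cap R_x$ nested with empty intersection and finite measure, whence $\Leb(D_b^{(\epsilon)})\to 0$ gives the uniform bound directly. This sidesteps all branch-tracking, the $\theta_c^{(\epsilon)}\to\theta_c$ lemma, and the case analysis around $x_c(\rho)$, and is worth noting as the cleaner route; your approach buys nothing extra here and has a couple of steps ($\theta_c^{(\epsilon)}\to\theta_c$; the integral over $[r(\sigma^{(\epsilon)}),\min K]$) that should be spelled out before the argument is airtight.
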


\begin{lem}\label{lem:Approx}
For every $K > 0$, if the ratio $\frac{\eta}{\epsilon}$ is large enough depending on $K$, then 
\[
	\limsup_{N \to \infty} \frac{1}{N} \log \P[\|H_N - H_N^{(\epsilon)}\| \geq \eta] \leq -K
\]
where we recall $\|\cdot\|$ is the operator norm (or spectral radius in this case). 
\end{lem}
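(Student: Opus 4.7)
The plan is to reduce the claim to exponential tightness of the largest eigenvalue of the standard Wishart matrix $M^{-1}Z^T Z$, which has already been established under either Assumption \ref{assn:ssg} or Assumption \ref{assn:g}. The key structural observation is that $H_N$ and $H_N^{(\epsilon)}$ are built from the same noise $Z$, so
\[
  H_N - H_N^{(\epsilon)} = \frac{1}{M} Z^T (\Gamma - \Gamma^{(\epsilon)}) Z,
\]
and $\Gamma - \Gamma^{(\epsilon)}$ is a diagonal matrix whose entries are $0$ whenever $d_i \leq r(\rho) - \epsilon$ and equal to $d_i - r(\rho)$ otherwise.

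First I would bound the operator norm of this diagonal matrix. For those indices $i$ with $d_i \in (r(\rho)-\epsilon, r(\rho)]$, we trivially have $|d_i - r(\rho)| \leq \epsilon$. For indices $i$ with $d_i > r(\rho)$, Assumption \ref{assn:no_outliers} gives $\lambda_{\textup{max}}(\Gamma) \to r(\rho)$, so $d_i - r(\rho) = \oo(1)$ uniformly in $i$ as $N \to \infty$. Combining these two regimes, there exists $N_0$ such that for all $N \geq N_0$ one has $\|\Gamma - \Gamma^{(\epsilon)}\| \leq 2\epsilon$. Submultiplicativity of the operator norm then gives
\[
  \|H_N - H_N^{(\epsilon)}\| \leq \|\Gamma - \Gamma^{(\epsilon)}\| \cdot \frac{\|Z\|^2}{M} \leq 2\epsilon \cdot \lambda_{\textup{max}}\!\left(\frac{1}{M} Z^T Z\right) \qquad (N \geq N_0).
\]

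Next I would invoke exponential tightness of $\lambda_{\textup{max}}(M^{-1}Z^T Z)$. Under Assumption \ref{assn:g} this is classical, and under Assumption \ref{assn:ssg} it is exactly \cite[Lemma 1.9]{GuiHus2020}, which is the same input already used in the proof of Lemma \ref{lem:exponential_tightness}. Concretely, for every $K > 0$ there exists $C = C(K)$ such that
\[
  \limsup_{N \to \infty} \frac{1}{N} \log \P\!\left[ \lambda_{\textup{max}}\!\left(\frac{1}{M}Z^T Z\right) \geq C \right] \leq -K.
\]
Choosing $\eta/\epsilon \geq 2C(K)$, the previous bound implies $\{\|H_N - H_N^{(\epsilon)}\| \geq \eta\} \subset \{\lambda_{\textup{max}}(M^{-1}Z^T Z) \geq \eta/(2\epsilon)\} \subset \{\lambda_{\textup{max}}(M^{-1}Z^T Z) \geq C(K)\}$ for $N \geq N_0$, and the desired estimate follows.

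There is no real obstacle here: the lemma is essentially an immediate consequence of the resolvent-style bound $\|AXB\| \leq \|A\|\,\|X\|\,\|B\|$ combined with the fact that $\Gamma^{(\epsilon)}$ was designed precisely so that $\Gamma - \Gamma^{(\epsilon)}$ has norm $O(\epsilon)$. The only point requiring mild care is handling the finitely many eigenvalues of $\Gamma$ that may lie strictly above $r(\rho)$, which is dispatched by Assumption \ref{assn:no_outliers}.
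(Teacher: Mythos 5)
Your proposal is correct and takes essentially the same route as the paper: the deterministic submultiplicativity bound $\|H_N - H_N^{(\epsilon)}\| \leq M^{-1}\|Z\|^2\|\Gamma - \Gamma^{(\epsilon)}\|$ followed by exponential tightness of $\lambda_{\textup{max}}(M^{-1}Z^TZ)$ from \cite[Lemma 1.9]{GuiHus2020}. In fact you are slightly more careful than the paper at one point: the paper asserts $\|\Gamma - \Gamma^{(\epsilon)}\| \leq \epsilon$ outright, which strictly speaking need not hold at finite $N$ if some $d_i > r(\rho)$; you correctly invoke Assumption \ref{assn:no_outliers} to get $\|\Gamma - \Gamma^{(\epsilon)}\| \leq 2\epsilon$ for $N$ large, which is all that is needed since only the $\limsup$ in $N$ matters.
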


\begin{lem}
\label{lem:dz}
Define $J : \R \to \R$ by
\begin{equation}
\label{eqn:dz4.2.17}
	J(x) = \sup_{\delta > 0} \liminf_{\epsilon \downarrow 0} \inf_{y \in (x-\delta,x+\delta)} I^{(\epsilon)}(y).
\end{equation}
Then $I = J$. Furthermore, $I$ is a good rate function, and for every closed set $F \subset \R$, we have
\begin{equation}
\label{eqn:dz4.2.18}
	\inf_{y \in F} I(y) \leq \limsup_{\epsilon \downarrow 0} \inf_{y \in F} I^{(\epsilon)}(y).
\end{equation}
\end{lem}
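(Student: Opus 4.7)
The plan is to establish the three claims in sequence, relying on Lemma~\ref{lem:CvRatef} and some soft topology.

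\textbf{Step 1: $I = J$.} I would argue by cases on the position of $x$ relative to $r(\sigma)$. Lemma~\ref{lem:CvRatef} gives that $\epsilon \mapsto r(\sigma^{(\epsilon)})$ is non-decreasing with limit $r(\sigma)$, so $r(\sigma^{(\epsilon)}) \geq r(\sigma)$ for every small $\epsilon > 0$. If $x < r(\sigma)$, then for $\delta$ small the interval $(x-\delta, x+\delta)$ lies strictly below $r(\sigma^{(\epsilon)})$, where $I^{(\epsilon)} \equiv +\infty$; hence the inner infimum is $+\infty$ and $J(x) = +\infty = I(x)$. If $x > r(\sigma)$, I would pick $\delta$ small enough that $[x-\delta, x+\delta]$ is a compact subset of $(r(\sigma), +\infty)$, on which Lemma~\ref{lem:CvRatef} gives uniform convergence $I^{(\epsilon)} \to I$; uniform convergence lets me swap the liminf in $\epsilon$ with the infimum over the small interval, and then sending $\delta \downarrow 0$ together with continuity of $I$ on $(r(\sigma), \infty)$ recovers $I(x)$. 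The boundary case $x = r(\sigma)$ is handled by noting that $I^{(\epsilon)}(r(\sigma^{(\epsilon)})) = 0$ with $r(\sigma^{(\epsilon)}) \to r(\sigma)$, so the inner infimum defining $J$ already vanishes for every fixed $\delta > 0$ once $\epsilon$ is small enough.

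\textbf{Step 2: $I$ is a good rate function.} Lower semicontinuity is immediate from the integral formula and from $I \equiv +\infty$ on $(-\infty, r(\sigma))$. It remains to check that sub-level sets $\{I \leq L\}$ are compact; they are closed (by lsc) and contained in $[r(\sigma), +\infty)$, so it suffices to bound them from above. Since the hypothesis $x_c(\rho) < +\infty$ forces $r(\rho) > 0$, hence $\theta_{\max} < +\infty$, Lemma~\ref{lem:overline_g} yields $\overline{G}_\sigma(x) - G_\sigma(x) \to \theta_{\max}$ as $x \to +\infty$. Integrating this, $I_\sigma(x)/x \to \theta_{\max}/2 > 0$, so sub-level sets are bounded, and the claim follows.

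\textbf{Step 3: the inequality \eqref{eqn:dz4.2.18}.} Let $\ell = \limsup_{\epsilon \downarrow 0} \inf_F I^{(\epsilon)}$. Assuming $\ell < +\infty$ (the case $\ell = +\infty$ being trivial), I extract $\epsilon_n \downarrow 0$ and $y_n \in F$ with $I^{(\epsilon_n)}(y_n) \leq \ell + 1/n$. The decisive point is a uniform-in-$\epsilon$ coercivity for the family $\{I^{(\epsilon)}\}$: since $r(\rho^{(\epsilon)}) = r(\rho)$ for all small $\epsilon$, the associated threshold $\theta_{\max}$ is independent of $\epsilon$, and the same asymptotic argument as in Step~2, applied to each $I^{(\epsilon)}$, produces constants $A, B > 0$ with $I^{(\epsilon)}(y) \geq A|y| - B$ for every $y$ and every sufficiently small $\epsilon$. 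Thus $(y_n)$ is bounded; extracting a convergent subsequence $y_n \to y_\infty \in F$ (using that $F$ is closed), we have for every $\delta > 0$ that eventually $y_n \in (y_\infty - \delta, y_\infty + \delta)$, so
\[
    \liminf_{\epsilon \downarrow 0} \inf_{y \in (y_\infty - \delta, y_\infty + \delta)} I^{(\epsilon)}(y) \leq \ell.
\]
Taking the supremum over $\delta > 0$ yields $J(y_\infty) \leq \ell$, and Step~1 converts this into $I(y_\infty) \leq \ell$. Hence $\inf_F I \leq \ell$, which is the claim.

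\textbf{Main obstacle.} The only point that requires genuine care is the uniform-in-$\epsilon$ coercivity bound $I^{(\epsilon)}(y) \geq A|y| - B$ in Step~3: uniform convergence on compact subsets of $(r(\sigma), \infty)$ from Lemma~\ref{lem:CvRatef} does not a priori control the behaviour at infinity. Fortunately, because the $\epsilon$-deformation leaves $r(\rho)$ (and hence $\theta_{\max}$) unchanged, the explicit formula for $\overline{G}_{\sigma^{(\epsilon)}} - G_{\sigma^{(\epsilon)}}$ in Lemma~\ref{lem:overline_g} should give the linear lower bound with slope close to $\theta_{\max}/2$, uniformly in $\epsilon$.
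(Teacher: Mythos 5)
Your Steps 1 and 2 track the paper's own proof closely (argue $I=J$ by cases on the position of $x$ relative to $r(\sigma)$, using the monotone convergence $r(\sigma^{(\epsilon)}) \downarrow r(\sigma)$ and local uniform convergence from Lemma~\ref{lem:CvRatef}; then observe that linear growth at infinity makes sublevel sets compact). Step~3 is where you genuinely diverge. The paper argues \eqref{eqn:dz4.2.18} by a short case analysis on $\alpha(F) = \inf_F I$: if $\alpha(F) = +\infty$ then $F$ is a closed set bounded strictly to the left of $r(\sigma)$, so $\inf_F I^{(\epsilon)} = +\infty$ for small $\epsilon$; if $\alpha(F) = 0$ there is nothing to prove; and if $0 < \alpha(F) < \infty$ one uses that $I^{(\epsilon)}$ is \emph{non-decreasing} on its domain, so $\inf_F I^{(\epsilon)}$ is attained at the leftmost point $y_F$ of $F \cap [r(\sigma),\infty)$, and then $I^{(\epsilon)}(y_F) \to I(y_F)$ by Lemma~\ref{lem:CvRatef}. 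You instead extract a minimizing sequence $y_n \in F$ with $I^{(\epsilon_n)}(y_n) \leq \ell + 1/n$, compactify it via a uniform-in-$\epsilon$ linear lower bound, pass to a convergent subsequence, and feed the limit back into $J = I$. Your route is more robust (it does not lean on monotonicity of $I^{(\epsilon)}$) but it imports an extra ingredient, the uniform coercivity bound, which you rightly flag as the nontrivial step. That bound is indeed available here: fix any $u_0 \in (0,\theta_{\max})$; since $\theta_{\max}$ is $\epsilon$-independent and $\supp \rho^{(\epsilon)} \subset [\ell(\rho),r(\rho)]$, one has $H_{\rho^{(\epsilon)}}(u_0) \leq u_0^{-1} + \alpha r(\rho)/(\alpha - u_0 r(\rho)) =: C(u_0)$ uniformly in $\epsilon$, so by unimodality of $H_{\rho^{(\epsilon)}}$ the level set $\{H_{\rho^{(\epsilon)}} < t\} = (G_{\sigma^{(\epsilon)}}(t), \overline{G}_{\sigma^{(\epsilon)}}(t))$ contains $u_0$ whenever $t > C(u_0)$, giving $\overline{G}_{\sigma^{(\epsilon)}}(t) > u_0$; combined with $G_{\sigma^{(\epsilon)}}(t) \leq (t - r(\sigma^{(\epsilon_0)}))^{-1}$ this yields $(I^{(\epsilon)})'(t) \geq u_0/4$ for $t$ large, uniformly in small $\epsilon$. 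So both approaches work; the paper's is shorter because it exploits monotonicity, yours trades that for a coercivity estimate and would generalize more easily to situations where $I^{(\epsilon)}$ is not monotone.
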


Let us assume these three lemmas momentarily, and prove that they imply the large deviation principle. 

\begin{proof}[Proof of Proposition \ref{prop:main_finite}]
This will be an immediate consequence of Theorem 4.2.16 of \cite{DemZei2010}, which explains how to recover an LDP for $\lambda_{\textup{max}}(H_N)$ from LDPs for $\lambda_{\textup{max}}(H_N^{(\epsilon)})$ in the $\epsilon \downarrow 0$ limit.\footnote{
Translating the notation: Their $m$ is our $\epsilon^{-1}$, and their $\epsilon$ is our $N^{-1}$. Thus their $\widetilde{\mu_\epsilon}$ is the law of $\lambda_{\textup{max}}(H_{\epsilon^{-1}})$, and their $\mu_{\epsilon,m}$ is the law of $\lambda_{\textup{max}}(H_{\epsilon^{-1}}^{(m^{-1})})$.} The condition they define as ``exponentially good approximations,'' translated into our notation, reads
\[
	\lim_{\epsilon \downarrow 0} \limsup_{N \to \infty} \frac{1}{N} \log \P\left(\abs{\lambda_{\textup{max}}(H_N^{(\epsilon)}) - \lambda_{\textup{max}}(H_N)} > \delta\right) = -\infty,
\]
which follows from Lemma \ref{lem:Approx}. We checked the remaining conditions of their result in Lemma \ref{lem:dz} above.
\end{proof}

 \begin{proof}[Proof of Lemma \ref{lem:CvRatef}]
	By construction, $r(\rho^{(\epsilon)}) = r(\rho)$, so that $\theta_{\textup{max}}$ is independent of $\epsilon$. Thus
	\[
		r(\sigma^{(\epsilon)}) = \min_{0 < \theta < \theta_{\textup{max}}} H_{\rho^{(\epsilon)}}(\theta).
	\]
 	Since $\rho^{(\epsilon)}$ converges toward $\rho$, and thus $H_{\rho^{(\epsilon)}}$ converges to $H_{\rho}$ uniformly on all compact subsets of $(0, \theta_{\max})$, this implies
 	
 	\[ \limsup_{\epsilon \to 0} r(\sigma^{(\epsilon)}) \leq r(\sigma). \]
	On the other hand, we know more about this convergence: We claim that, for each $\theta \in (0,\theta_{\textup{max}})$, 
	\begin{equation}
	\label{eqn:H_vs_Heps}
		H_\rho(\theta) \leq H_{\rho^{(\epsilon)}}(\theta)
	\end{equation}
	and that the function $\epsilon \mapsto H_{\rho^{(\epsilon)}}(\theta)$ is actually non-decreasing for $\epsilon \in (0,r(\rho))$. (Notice this implies that $\epsilon \mapsto r(\sigma^{(\epsilon)})$ is non-decreasing.) Indeed, we can write
	\[
		H_{\rho^{(\epsilon)}}(\theta) = \frac{1}{\theta} + \int_{-\infty}^{r(\rho)-\epsilon} \frac{\alpha u}{\alpha-\theta u} \rho(\diff u) + \frac{\alpha r(\rho)}{\alpha - \theta r(\rho)} \rho((r(\rho)-\epsilon,r(\rho)]) = \frac{1}{\theta} + \int_{\R} f_{\alpha,\theta}^{(\epsilon)}(u) \rho(\diff u)
	\]
	where $f_{\alpha,\theta}^{(\epsilon)} : \supp(\rho) \to \R$ is defined by
	\[
		f_{\alpha,x}^{(\epsilon)}(u) = \begin{cases} \frac{\alpha u}{\alpha-\theta u} & \text{if } u < r(\rho) - \epsilon, \\ \frac{\alpha r(\rho)}{\alpha - \theta r(\rho)} & \text{if } u \geq r(\rho) - \epsilon. \end{cases}
	\]
	Since $u \mapsto \frac{\alpha u}{\alpha - \theta u}$ is strictly increasing on the support of $\rho$ and positive for $u > 0$, the map $\epsilon \mapsto f_{\alpha,\theta}^{(\epsilon)}(u)$ is, for each $u \in \supp(\rho)$, non-decreasing on the set $\epsilon \in (0,r(\rho))$. This shows that $\epsilon \mapsto H_{\rho^{(\epsilon)}}(\theta)$ is non-decreasing for small enough $\epsilon$ (uniformly in $\theta$), and thus that 
	\[
		\liminf_{\epsilon \downarrow 0} r(\sigma^{(\epsilon)}) \geq r(\sigma),
	\]
	finishing the proof of \eqref{eqn:r_sigma_epsilon}.
 	
 	Now we prove uniform convergence of $I^{(\epsilon)}$ to $I$ on compact sets of $(r(\sigma),+\infty)$. Recall that $\theta_{\textup{max}}$ does not depend on $\epsilon$. If $x > r(\sigma)$, then for $\epsilon$ sufficiently small we have $x > r(\sigma^{(\epsilon)})$ and thus
 	\begin{align*}
 	I^{(\epsilon)}(x) &= \frac{1}{2} \int_{ r(\sigma^{(\epsilon)})}^x \Big( \overline{G}_{\sigma^{(\epsilon)}}(t) - G_{\sigma^{(\epsilon)}}(t) \Big) \diff t = \frac{1}{2} \int_{ r(\sigma^{(\epsilon)})}^x \int_{0}^{\theta_{\max}} \mathds{1}_{ G_{\sigma^{(\epsilon)}}(t) \leq u \leq \overline{G}_{\sigma^{(\epsilon)}}(t)} \diff u \diff t \\
	&=  \frac{1}{2} \int_{ 0 }^x \int_{0}^{\theta_{\max}} \mathds{1}_{ H_{\rho^{(\epsilon)}}(u) \leq t 
 		}  \diff u \diff t.
 	\end{align*} 
	Similarly, 
	\[
		I(x) = \frac{1}{2} \int_0^x \int_0^{\theta_{\textup{max}}} \mathds{1}_{ H_{\rho}(u) \leq t 
 		}  \diff u \diff t.
	\]
Define
 \begin{align*}
 	D^{(\epsilon)} &\defeq \{(t,u) \in (0,+\infty) \times (0, \theta_{\textup{max}}) : H_{\rho^{(\epsilon)}}(u) \geq t > H_{\rho}(u)\}, \\
	R_x &\defeq (0,x) \times (0,\theta_{\textup{max}}), \\
	D^{(\epsilon)}_x &\defeq D^{(\epsilon)} \cap R_x.
\end{align*}
From \eqref{eqn:H_vs_Heps}, we actually have $I^{(\epsilon)}(x) \leq I(x)$ and
\[
	I(x) - I^{(\epsilon)}(x) = \frac{1}{2} \int_0^x \int_0^{\theta_{\textup{max}}} \mathds{1}_{H_\rho(u) < t \leq H_{\rho^{(\epsilon)}}(u)} \diff u \diff t = \frac{1}{2} \Leb(D^{(\epsilon)}_x).
\]
Therefore, if $[a,b] \subset (r(\sigma),+\infty)$ then for all $x \in [a,b]$ and all $\epsilon < \epsilon_0(a)$ we have
 \[ | I^{(\epsilon)}(x) - I(x) |  \leq \Leb( D^{(\epsilon)}_b). \]
 Since $H_{\rho^{(\epsilon)}}$ decreases to $H_{\rho}$, the sets $D_b^{(\epsilon)}$ are nested, and their intersection over all $\epsilon > 0$ is empty. Since their Lebesgue measures are bounded above by $\Leb(R_b) < \infty$, we have $\lim_{\epsilon \downarrow 0} \Leb(D_b^{(\epsilon)}) = 0$, proving the uniform convergence of $I^{(\epsilon)}$ towards $I$ on compact sets of $(r(\sigma),+\infty)$.
 \end{proof} 

\begin{proof}[Proof of Lemma \ref{lem:Approx}]
	Deterministically, we have
	\[ \| H_N - H_N^{(\epsilon)} \| = \frac{1}{M} \| Z^{T} ( \Gamma - \Gamma^{(\epsilon)}) Z\| \leq \frac{1}{M} \|Z\|^2 \| \Gamma - \Gamma^{(\epsilon)} \| \leq \frac{\epsilon \|Z\|^2}{M}. \]
Therefore it is sufficient to prove that for every $K > 0$ there exists $t_K > 0$ such that, for all $t > t_K$, 
\[
	\limsup_{N \to \infty} \frac{1}{N} \log \P[ \sqrt{M}^{-1}\|Z\|\geq t] \leq -K.
\]
This can be deduced from the sub-Gaussian character of the entries of $Z$ and $\lim_N \frac{M}{N} = \alpha$ using for instance the arguments of \cite[Section 2]{GuiHus2020}.
\end{proof}

\begin{proof}[Proof of Lemma \ref{lem:dz}]
Define
\[
	J_\delta(x) = \liminf_{\epsilon \downarrow 0} \inf_{y \in (x-\delta,x+\delta)} I^{(\epsilon)}(y),
\]
which is non-increasing in $\delta$. If $x < r(\sigma)$, then by Lemma \ref{lem:CvRatef} there exists $\delta > 0$ such that $x + \delta < r(\sigma^{(\epsilon)})$ for all sufficiently small $\epsilon$, so $J_\delta(x) = +\infty$, and thus $J(x) = +\infty$. If $x > r(\sigma)$, then there exists $\delta > 0$ with $x - \delta > r(\sigma^{(\epsilon)})$ for all sufficiently small $\epsilon$. Since each $I^{(\epsilon)}$ is non-decreasing, this gives $J_\delta(x) = \liminf_{\epsilon \downarrow 0} I^{(\epsilon)}(x-\delta) = I(x-\delta)$, again by Lemma \ref{lem:CvRatef}, and thus $J(x) = I(x)$. Finally, we let $x = r(\sigma)$. Then for every $\delta > 0$ and all $\epsilon < \epsilon_0(\delta)$, we have $r(\sigma^{(\epsilon)}) \in (x-\delta,x+\delta)$, so that $J_\delta(x) = 0 = J(x)$. This completes the proof that $I = J$, and $I$ is clearly a good rate function, since it is infinite on $(-\infty,r(\sigma))$, vanishes uniquely at $r(\sigma)$, and is strictly increasing.

Now we check \eqref{eqn:dz4.2.18}, splitting into cases according to whether $\alpha(F) = \inf_{y \in F} I(y)$ is infinite or finite. If $\alpha(F) = +\infty$, then necessarily $F \subset (-\infty,r(\sigma))$, with $\sup\{y : y \in F\} < r(\sigma)$. Then Lemma \ref{lem:CvRatef} gives $\inf_{y \in F} I^{(\epsilon)}(y) = +\infty$ for all $\epsilon$ sufficiently small. If $\alpha(F) = 0$, there is nothing to prove. If $0 < \alpha(F) < \infty$, then $F \subset (r(\sigma)+\delta,+\infty)$ for some $\delta > 0$, and with $y_F = \min\{y : y \in F\}$ we have $\alpha(F) = I(y_F)$. Whenever $\epsilon$ is small enough that $y_F > r(\sigma^{(\epsilon)})$, we have  $\inf_{y \in F} I^{(\epsilon)}(y) = I^{(\epsilon)}(y_F)$; as $\epsilon \downarrow 0$ this tends to $I(y_F)$, by Lemma \ref{lem:CvRatef}.
\end{proof}


\section{The complex case}\label{SecComplex}
In this section, we will review the changes needed to adapt the proof of Theorem \ref{thm:main} to Theorem \ref{thm:mainc}.
\begin{itemize}
	
	\item We keep Definition \ref{def:j} and Definition \ref{def:IF}. However we need to modify Definition \ref{def:tilt} by replacing $\theta/2$ by $\theta$: 
	\[
	\frac{\diff \P^\theta}{\diff \P}(Z) = \frac{\E_e[e^{N\theta\ip{e,\frac{1}{M}Z^*\Gamma Ze}}]}{\E_{e,H_N}[e^{N\theta\ip{e,H_Ne}}]}.
	\]
	\item In Propositions \ref{prop:wkldpub} and \ref{prop:wkldplb} we multiply by $2$ both right hand sides:
	\[
	\limsup_{\delta \downarrow 0} \limsup_{N \to \infty} \frac{1}{N} \log \P_N^\theta(\abs{\lambda_{\textup{max}}(H_N) - x} \leq \delta) \begin{cases} \leq - 2(\widetilde{I_\sigma}(x) - I_\sigma(x,\theta)) & \text{if } x \in D, \\ = -\infty & \text{otherwise,} \end{cases}
	\] 
	and
	\[
	\liminf_{\delta \downarrow 0} \liminf_{N \to \infty} \frac{1}{N} \log \P_N(\abs{\lambda_{\textup{max}}(H_N) - x} < \delta) \geq -2 \widetilde{I_\sigma}(x).
	\]

	\item In Lemma \ref{lem:annealed} the equation \eqref{eqn:annealed} is replaced by
	\[
		\lim_{N \to \infty} \frac{1}{N} \log \E_{e,H_N}[e^{N\theta\ip{e,H_Ne}}] = 2 F(\rho,\theta).
	\]
	In the proof of this Lemma, the Hubbard-Stratonovich transformation becomes
	\[
	\E_{H_N}[e^{N\theta \ip{e,H_Ne}}] = \prod_{\mu=1}^M \frac{1}{\pi} \int_{w \in \C} \E\left[e^{2\Re(\overline{w}  \ip{z_\mu,e}) \sqrt{\frac{N}{M}\theta d_\mu}} \right] e^{-|w|^2} \diff w.
	\]
	In the Gaussian case  we have:
	\[ \prod_{j=1}^N \E[e^{2  \Re(\overline{w}\overline{(z_\mu)}_je_j) \sqrt{\frac{N}{M} \theta d_\mu)} }] = e^{|w|^2\frac{N}{M}\theta d_\mu} \]
	and then:
	\[
	\frac{1}{N} \log \E_{e,H_N}[e^{N\theta\ip{e,H_Ne}}] = -\frac{M}{N} \int_\R \log\left(1-\frac{N}{M}\theta t\right) \hat{\mu}_{\Gamma}(\diff t).
	\]
	
	In the sharp sub-Gaussian case we get for any $w \in \C$ and $c \in \R$:
	\[
	\E[\exp(c \Re( \overline{w} \ip{z_\mu,e}))] = \prod_{j=1}^N \E[\exp(c \Re( \overline{w(z_\mu)_j} e_j))] \leq \prod_{j=1}^N \exp\left(\frac{c^2 |w|^2 |e_j|^2}{4}\right) = \exp\left(\frac{c^2 |w|^2}{4}\right),
	\]
	leading to:
	\[
	\frac{1}{N} \log \E_{e,H_N}[e^{N\theta\ip{e,H_Ne}}] \leq -\frac{M}{N} \int_\R \log\left(1-\frac{N}{M}\theta t\right) \hat{\mu}_{\Gamma}(\diff t).
	\]

	Similar modifications happen  for the lower bound. 
	
	\item In Lemma \ref{lem:quenched}, we modify the equations to
	\begin{align*}
		&\lim_{\epsilon \downarrow 0} \limsup_{\delta \downarrow 0} \limsup_{N \to \infty} \sup_{Z \in \mc{A}^L_{x,\delta,\epsilon}} \abs{\frac{1}{N} \log \E_e[e^{N \theta\ip{e,H_Ne}}] - 2 J\left(\sigma,\frac{\theta}{2},x\right)} \\
		&= \lim_{\delta \downarrow 0} \limsup_{\epsilon \downarrow 0} \limsup_{N \to \infty} \sup_{Z \in \mc{A}^L_{x,\delta,\epsilon}} \abs{\frac{1}{N} \log \E_e[e^{N\theta\ip{e,H_Ne}}] - 2 J\left(\sigma,\frac{\theta}{2},x\right)} = 0.
	\end{align*}
	The proof is actually identical, we merely use the complex version (that is, $\beta=2$ of Theorem 6.2 in \cite{GuiHus2022}). One has to careful that the conventions for the function $J$ differ between this paper and \cite{GuiHus2022}. If we denote $J^{GH}$ the function $J$ used in \cite{GuiHus2022},
	\[ J^{GH}(\mu,\theta,x) = 2 J\Big( \mu, \frac{\theta}{2},x \Big). \]
	\item In Lemma \ref{lem:ub_good_event} we once again have to multiply by $2$ the right hand side:
	\[
	\limsup_{\delta \downarrow 0} \limsup_{\epsilon \downarrow 0} \limsup_{N \to \infty} \frac{1}{N} \log \P^\theta(\mc{A}_{x,\delta,\epsilon}^L) \leq -2(\widetilde{I_\sigma}(x) - I_\sigma(x,\theta)).
	\]
	The modifications made to Lemmas \ref{lem:annealed} and \ref{lem:quenched} carry over to the proof which otherwise remains identical.
	\item In Proposition \ref{prop:thetax}, the expression of $\theta_{\max}$ stays the same.
	\item In the proof of Proposition \ref{prop:wkldplb}, once again the modifications made to Lemmas \ref{lem:annealed}, \ref{lem:quenched} carry over and we get 
	\[
	\liminf_{\delta \downarrow 0} \liminf_{N \to \infty} \frac{1}{N} \log \P(\abs{\lambda_{\textup{max}}(H_N) - x} \leq \delta) \geq 2 F(\rho,\theta_x) - 2 J\left(\sigma,\frac{\theta_x}{2},x\right) = - 2 I_\sigma(x,\theta_x) \geq - 2\widetilde{I_\sigma}(x).
	\].
	
\end{itemize}


\setcounter{equation}{0}
\setcounter{thm}{0}
\renewcommand{\theequation}{A.\arabic{equation}}
\renewcommand{\thethm}{A.\arabic{thm}}
\appendix
\setcounter{secnumdepth}{0}
\hypertarget{sec:talagrand}{}
\section[Appendix A \ \ \ Concentration for multidimensional product measures]
{Appendix A \ \ \ Concentration for multidimensional product measures}

This section deals with a straightforward extension of classic results of Talagrand \cite{Tal1996} and Guionnet-Zeitouni \cite{GuiZei2000} on concentration for product measures, in order to consider complex-Hermitian random matrices with real and imaginary parts that are not necessarily independent of one another.

In the 1990s, Talagrand developed a theory of concentration for products of compactly-supported measures, obtaining results of the form ``If $f : [-1,1]^N \to \R$ is Lipschitz and has convex sublevel sets, and $(X_i)_{i=1}^N$ are independent random variables each valued in $[-1,1]$, then the random variable $f(X_1,\ldots,X_N)$ concentrates about its median'' \cite[Theorem 6.6]{Tal1996}. Guionnet and Zeitouni translated his results into random matrices, using them to show results of the form ``If the real-symmetric or complex-Hermitian Wigner matrix $W_N$ has compactly-supported entries, then $\hat{\mu}_{W_N}$ concentrates about its mean $\E[\hat{\mu}_{W_N}]$ in Wasserstein-$1$ distance'' \cite[Corollary 1.4(b)]{GuiZei2000}. However, since Talagrand's result was written for the most digestible case of $f : [-1,1]^N \to \R$, the complex-Hermitian case Guionnet and Zeitouni's result required the entries of $W_N$ to have \emph{independent} real and imaginary parts; then linear statistics of $W_N$ could indeed be nice functions of the $N^2$ independent random variables $(\re W_{ij},\im W_{ij})_{1 \leq i < j \leq N} \cup (W_{ii})_{i=1}^N$. 

We want to prove results about slightly more general Wigner matrices $W_N$, where the real and imaginary parts of each $W_{ij}$ are allowed to be correlated with each other, as long as the entries $W_{ij} \in \C$ remain independent for different upper-triangular values of $i$ and $j$. In order to do this, we need to extend Corollary 1.4(b) of \cite{GuiZei2000}, which in turn requires the following extension of Theorem 6.6 of \cite{Tal1996}. We copy Talagrand's language and most of his notation, so that the reader can more easily compare, but we introduce the $d$-dimensional Euclidean unit balls
\[
	B_d = \{x \in \R^d: \|x\|_2^2 \leq 1\}.
\]

\begin{prop}
\label{prop:talagrand}
Consider a real-valued function $f$ defined on $(B_d)^N$. We assume that, for each real number $a$,
\[
	\text{the set } \{f \leq a\} \text{ is convex.}
\]
Consider a convex set $C \subset (B_d)^N$, consider $\sigma > 0$ and assume that the restriction of $f$ to $C$ has a Lipschitz constant at most $\sigma$; that is, 
\[
	\forall \, x, y \in C, \qquad \abs{f(x) - f(y)} \leq \sigma \|x-y\|,
\]
where $\|x\|$ denotes the norm $\|(x_1, \ldots, x_N)\|^2 = \sum_{i=1}^N \|x_i\|_2^2$.

Consider independent random variables $(X_i)_{i \leq N}$ valued in $B_d$, and consider the random variable
\[
	h = f(X_1, \ldots, X_N).
\]
Then, if $M$ is a median of $h$, we have, for all $t > 0$, that
\begin{equation}
\label{eqn:talagrand}
	\P(\abs{h-M} \geq t) \leq 4c + \frac{4}{1-2c} \exp\left(-\frac{t^2}{16\sigma^2}\right)
\end{equation}
where we assume
\[
	c = \P((X_1, \ldots, X_N) \not\in C) < \frac{1}{2}.
\]
\end{prop}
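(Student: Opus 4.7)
The plan is to follow Talagrand's proof of \cite[Theorem 6.6]{Tal1996} line by line, observing that the only property of $[-1,1]$ actually used is that it is a convex set of Euclidean diameter $2$; the same holds for each ball $B_d$, so the constants remain independent of $d$. Two ingredients drive the argument. First, Talagrand's abstract convex distance inequality, valid for any $N$ independent random variables $(X_i)$ in arbitrary probability spaces and any measurable $A$ in the product:
\[
	\E\left[\exp\left(\frac{d_T(X,A)^2}{4}\right)\right] \leq \frac{1}{\P(X \in A)},
\]
where $d_T(x,A) = \sup_{\alpha \geq 0,\, \|\alpha\|_2 \leq 1} \inf_{y \in A} \sum_{i=1}^N \alpha_i \mathbf{1}_{x_i \neq y_i}$. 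This is entirely dimension-free since it uses nothing about the fibers of the product. Second, the key geometric lemma: for convex $A \subset (B_d)^N$ and any $x \in (B_d)^N$, the Euclidean distance $d_2$ induced by the norm in the statement satisfies $d_2(x,A) \leq 2\, d_T(x,A)$.

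I would prove the lemma by a duality argument identical to Talagrand's: one shows $d_T(x,A)^2 = \min\{\|v\|_2^2 : v \in V(x,A)\}$ where $V(x,A) \subset \R^N$ is the convex hull of $\{(\mathbf{1}_{x_i \neq y_i})_{i=1}^N : y \in A\}$. Then for any convex combination $v = \sum_k \lambda_k \mathbf{1}_{x \neq y^{(k)}}$ with $y^{(k)} \in A$, convexity of $A$ and of $B_d$ give $y := \sum_k \lambda_k y^{(k)} \in A$, and the triangle inequality combined with $\|x_i - y^{(k)}_i\|_2 \leq \mathrm{diam}(B_d) = 2$ yields
\[
	\|x_i - y_i\|_2 \leq \sum_k \lambda_k \|x_i - y^{(k)}_i\|_2 \leq 2\sum_k \lambda_k \mathbf{1}_{x_i \neq y^{(k)}_i} = 2 v_i
\]
for each $i$, so $d_2(x,A)^2 \leq \|x-y\|^2 \leq 4 \|v\|_2^2$. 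Minimizing over $v$ finishes the lemma. This is the only step that touches the fiber structure, and it extends verbatim from $[-1,1]$ to $B_d$ because both have diameter $2$.

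With the two ingredients in hand, the rest is routine. For the upper tail $\P(h \geq M+t)$, apply the convex distance inequality to the convex set $A = \{f \leq M\} \cap C$, which has measure at least $\tfrac{1}{2}-c$. Any $x \in C$ with $f(x) \geq M+t$ satisfies $d_2(x,A) \geq t/\sigma$ by the Lipschitz hypothesis on $C$, hence $d_T(x,A) \geq t/(2\sigma)$ by the lemma; Markov then gives the Gaussian tail with prefactor $2/(1-2c)$, plus an additive $c$ for $X \notin C$. For the lower tail, the same argument on $A' = \{f \leq M-t\} \cap C$ yields $\P(X \in A') \leq \tfrac{2}{1-2c}\exp(-t^2/(16\sigma^2))$, combined with the fact that on $\{X \in C,\, f(X) \geq M\}$ (of probability at least $\tfrac{1}{2}-c$) one has $d_2(X,A') \geq t/\sigma$. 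Summing both tails gives the stated bound.

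The main obstacle, and really the only new content, is recognizing that Talagrand's geometric lemma in the $[-1,1]$ setting is nothing but a statement about products of convex sets of diameter $2$, so the extension to $(B_d)^N$ is essentially free; a subsidiary nuisance is that $\{f \geq a\}$ is not assumed convex, which forces the lower tail to be handled indirectly through the median and produces the extra factor of $4$ in front of $c$ and in the exponential.
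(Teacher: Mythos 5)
Your proposal is correct and is essentially the proof that the paper has in mind; the paper omits the proof precisely because it is, as you say, Talagrand's argument with the single substitution of $B_d$ for $[-1,1]$, and you have correctly identified that the only step touching the fiber structure is the geometric comparison $d_2(x,A)\leq 2\,d_T(x,A)$, whose proof via the duality $d_T(x,A)^2=\min\{\|v\|_2^2:v\in V(x,A)\}$ uses only convexity of the fiber and its diameter bound $\|x_i-y_i^{(k)}\|_2\leq 2$. One small remark on your concluding sentence: your two-tail bookkeeping actually yields $2c+\tfrac{4}{1-2c}\exp(-t^2/16\sigma^2)$, which is slightly sharper than the stated $4c$ prefactor (and of course implies it); the factor $4$ in front of the exponential comes purely from summing the two one-sided estimates, not from any asymmetry between the tails, so the final editorial comment attributing the constants to the non-convexity of $\{f\geq a\}$ is a bit off, though it does not affect the proof.
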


We omit the proof, since it is a very straightforward update of Talagrand's $d = 1$ original. We remark that \eqref{eqn:talagrand} has no dependence on $d$, which may be initially surprising, since we make no assumptions about the correlations between the $d$ entries of each $X_i$. However, the lack of $d$-dependence is essentially because we have chosen to extend Talagrand's $d=1$ compact set $[-1,1]$ to $B_d$, which has Euclidean diameter $2$ for each $d$, rather then, e.g., to replace $[-1,1]$ with $[-1,1]^d$, which has Euclidean diameter $2\sqrt{d}$. (If we instead considered $f : ([-1,1]^d)^N \to \R$ and variables $X_i \in [-1,1]^d$, we would obtain a variant of \eqref{eqn:talagrand} with right-hand side $4c + \frac{4}{1-2c} \exp\left(-\frac{t^2}{16d\sigma^2}\right)$.) 

We suspect that an extension of this form has already appeared in the literature, perhaps more than once, but we have not been able to find it.

By thinking $\C \cong \R^2$ and using the $d = 2$ case of Proposition \ref{prop:talagrand}, we obtain the following extension of Corollary 1.4(b) of \cite{GuiZei2000}. Again we copy their language and notation for ease of comparison. We consider inhomogeneous complex-Hermitian random matrices $X_A$ given by
\[
	X_A = ((X_A)_{ij})_{1 \leq i, j \leq N}, \quad X_A = X^\ast_A, \quad (X_A)_{ij} = \frac{1}{\sqrt{N}} A_{ij} \omega_{ij}
\]
with
\begin{align*}
	\omega &= (\omega^R + \ii \omega^I) = (\omega_{ij})_{1 \leq i, j \leq N} = (\omega_{ij}^R + \sqrt{-1}\omega_{ij}^I)_{1 \leq i, j \leq N}, \quad \omega_{ij} = \overline{\omega_{ji}}, \\
	A &= (A_{ij})_{1 \leq i, j \leq N}, \quad A_{ij} = \overline{A_{ji}}.
\end{align*}
Here $\{\omega_{ij}, 1 \leq i \leq j \leq N\}$ are independent complex random variables with laws $\{P_{ij}, 1 \leq i \leq j \leq N\}$, and the $P_{ij}$'s are probability measures on $\C$, but now with no assumptions on the relationship between their real and imaginary marginals, except the condition that $P_{ii}$ is supported on $\R$ in order to keep $X_A$ Hermitian. Here $A$ is a non-random complex matrix with entries $\{A_{ij}, 1 \leq i \leq j \leq N\}$ uniformly bounded by, say, $a$. (By choosing all the $P_{ij}$'s to be supported on $\R$ and all entries of $A$ real, we can of course obtain results about real-symmetric random matrices.)

\begin{prop}
\label{prop:appendix_gui_zei}
Assume that the $(P_{ij}, i \leq j)$ are uniformly compactly supported, that is that there exists a compact set $K \subset \C$ so that for any $1 \leq i \leq j \leq N$, $P_{ij}(K^c) = 0$. Write
\[
	\|K\| = \max\{\|x\| : x \in K\}.
\]
Fix $\delta_1(N) = 8\|K\|\sqrt{\pi} a/N$ and $M = a\|K\|\sqrt{8}$. For any $\delta > (128(M+\sqrt{\delta})\delta_1(N))^{2/5}$, 
\[
	\P({\rm W}_1(\hat{\mu}_{X_A}, \E[\hat{\mu}_{X_A}]) > \delta) \leq \frac{128(M+\sqrt{\delta})}{\delta^{3/2}} \exp\left(-N^2 \frac{1}{16\|K\|^2a^2} \left[ \frac{\delta^{5/2}}{128(M+\sqrt{\delta})} - \delta_1(N)\right]^2 \right).
\]
\end{prop}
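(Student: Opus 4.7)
The plan is to imitate the proof of \cite[Corollary 1.4(b)]{GuiZei2000}, substituting Proposition \ref{prop:talagrand} with $d=2$ for Talagrand's original $d=1$ result. The natural independent random variables are the complex numbers $\omega_{ij}$ for $1 \leq i \leq j \leq N$ (the diagonal ones, which are real, embed into $B_2$ via $\R \subset \C$); after dividing by $\|K\|$ they all lie in $B_2$. The dimension-free nature of the constant in \eqref{eqn:talagrand} is what makes this substitution work, since we are no longer assuming the real and imaginary parts of $\omega_{ij}$ are independent and therefore cannot unfold each entry into two scalar variables.

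First I would verify that, for $f$ in the Guionnet--Zeitouni class $\mf{C}_1$ of functions with $x \mapsto f(x^2)$ convex and Lipschitz, the map $(\omega_{ij}/\|K\|)_{i \leq j} \mapsto h(\omega) \defeq \int f \diff \hat{\mu}_{X_A}$ is Lipschitz and has convex sublevel sets as a function on $(B_2)^{N(N+1)/2}$. Both properties are handled exactly as in \cite[Lemma 1.2]{GuiZei2000}: convexity of sublevel sets uses only that $\omega \mapsto X_A(\omega)$ is $\C$-affine and that $\mu \mapsto \int f \diff \mu$ composed with the spectral map is convex when $f \in \mf{C}_1$, a fact that is insensitive to the joint law of $(\omega^R_{ij}, \omega^I_{ij})$; the Lipschitz constant is bounded in terms of $\|f\|_{\mf{C}_1}$, $a$, $\|K\|$, and $N$ via the Hoffman--Wielandt inequality together with $\|X_A\|_F \leq a\|K\|\sqrt{N}$. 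Applying Proposition \ref{prop:talagrand} to $h$ with $C = (B_2)^{N(N+1)/2}$ (so $c=0$) yields concentration of $h$ about its median at Gaussian rate of order $\exp(-cN^2 t^2/(\|K\|^2 a^2 \|f\|_{\mf{C}_1}^2))$.

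Next I would convert median to mean via $|M - \E h| \leq \int_0^\infty \P(|h-M| \geq t)\diff t$, which introduces the correction term $\delta_1(N) = 8\|K\|\sqrt{\pi} a / N$ at the scale of the standard deviation. Finally I would upgrade from a single $f \in \mf{C}_1$ to $\sup_{f \in \mc{F}_{\textup{Lip}}}$ by the discretization scheme of Guionnet--Zeitouni (also deployed in the proof of Lemma \ref{lem:guizeiexpansion} above): decompose a generic $1$-Lipschitz $f$ supported in a ball of radius $M = a\|K\|\sqrt{8}$ into a sum of at most $O(\delta^{-1})$ elementary $\mf{C}_1$-functions $g_a$, each with $\|g_a\|_{\mf{C}_1}$ of order $\sqrt{\delta}$, and apply a union bound with the single-function estimate. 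The somewhat mysterious-looking condition $\delta > (128(M+\sqrt{\delta})\delta_1(N))^{2/5}$ is precisely the threshold past which the quantity $\frac{\delta^{5/2}}{128(M+\sqrt{\delta})} - \delta_1(N)$ is positive, so that the resulting exponent is negative.

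The main obstacle is purely arithmetic: one must track how the mesh parameter, the number of pieces, and the per-piece $\mf{C}_1$-norm combine with the exponent of the single-function estimate so that the final constants come out exactly as stated. All substantive estimates are identical to those in \cite{GuiZei2000}; the only genuinely new ingredient is Proposition \ref{prop:talagrand}, whose $d$-independence is what prevents any new dependency from creeping in when the real and imaginary parts of the entries are correlated.
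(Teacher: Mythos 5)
Your proposal is correct and matches the paper's intent exactly: the paper omits the proof, remarking only that it mimics Guionnet--Zeitouni's Corollary 1.4(b), with the single change that Proposition~\ref{prop:talagrand} replaces Talagrand's original theorem once one observes $K/\|K\| \subset B_2$. Your outline (rescaled entries in $B_2$, convexity of sublevel sets and Lipschitz bound via Hoffman--Wielandt, median-to-mean conversion giving $\delta_1(N)$, and the $\mf{C}_1$-discretization to pass to the $\mc{F}_{\textup{Lip}}$-supremum) is precisely the argument the paper expects the reader to reconstruct.
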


We again omit the proof, which just mimics that of Guionnet and Zeitouni; the only observation is that here $\|K\|$ is defined in such a way that $\frac{K}{\|K\|} \subset B_2$, so that we can use Proposition \ref{prop:talagrand} when Guionnet and Zeitouni use \cite[Theorem 6.6]{Tal1996}.


\setcounter{equation}{0}
\setcounter{thm}{0}
\renewcommand{\theequation}{B.\arabic{equation}}
\renewcommand{\thethm}{B.\arabic{thm}}
\appendix
\setcounter{secnumdepth}{0}
\hypertarget{sec:compact_or_log_sobolev}{}
\section[Appendix B \ \ \ The ``compact or log-Sobolev'' assumption]
{Appendix B \ \ \ The ``compact or log-Sobolev'' assumption}

In this section we explain how to remove a certain technical assumption from previous tilting results on top-eigenvalue LDPs in the sub-Gaussian case, which we will call the ``compact or log-Sobolev'' assumption. This assumption appears in various forms throughout the literature: earlier papers tend to literally require some underlying measure to have either compact support or to satisfy the log-Sobolev inequality, whereas later papers tend to require some statement about concentration of the empirical measure which is easy to verify in the compact-or-log-Sobolev case.

Our techniques to remove this assumption use a recent strengthening of the continuity properties of spherical integrals, due to Guionnet and the first author \cite{GuiHus2022}. This works as follows: With $\hat{\mu}_N$ the empirical spectral measure of the matrix one is studying as defined in \eqref{eqn:muhat}, $\mu_\infty$ its deterministic limit, and $d_{\textup{BL}}$ the bounded-Lipschitz distance from \eqref{eqn:distances}, previous results required estimates of the form
\begin{equation}
\label{eqn:intro_N_delta}
	\lim_{N \to \infty} \frac{1}{N} \log \P(d_{\textup{BL}}(\hat{\mu}_N, \mu_\infty) > N^{-\delta}) = -\infty
\end{equation}
for some small $\delta > 0$. Under the better continuity properties, it suffices to show
\begin{equation}
\label{eqn:intro_eps}
	\lim_{N \to \infty} \frac{1}{N} \log \P(d_{\textup{BL}}(\hat{\mu}_N,\mu_\infty) > \epsilon) = -\infty
\end{equation}
for every $\epsilon > 0$. We see two main benefits of \eqref{eqn:intro_eps} over \eqref{eqn:intro_N_delta}: First, we are about to show that \eqref{eqn:intro_eps} is often provable without the compact-or-log-Sobolev assumption, essentially by carefully truncating the matrix entries and using concentration results of Guionnet and Zeitouni, in the style of Talagrand, for compactly supported product measures. Second, one can typically show \eqref{eqn:intro_eps} without relying on local laws, which had been used in previous results to verify \eqref{eqn:intro_N_delta} (see, e.g., \cite{McK2021, Hus2022}). Since local laws are only available in some cases, we think it may be useful for future LDP results that their use can be bypassed.

We demonstrate the ideas in the simplest setting of sharp sub-Gaussian Wigner matrices, showing the following result, which removes the ``compact-or-log-Sobolev'' assumption from the main result of \cite{GuiHus2020}. 

\begin{thm}
Let $\mu$ be a centered probability measure on $\R$ with unit variance, and let $X_N$ be the corresponding Wigner matrix, i.e., $X_N$ is an $N \times N$ real-symmetric random matrix with i.i.d. entries up to symmetry distributed according to $\frac{\mu}{\sqrt{N}}$. If $\mu$ is sharp sub-Gaussian, then $\lambda_{\textup{max}}(X_N)$ satisfies an LDP at speed $N$ with the good rate function
\[
	I(x) = \begin{cases} +\infty & \text{if } x < 2, \\ \frac{1}{2} \int_2^x \sqrt{y^2-4} \diff y & \text{if } x \geq 2. \end{cases}
\]
\end{thm}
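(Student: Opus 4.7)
The plan is to follow the spherical-integral tilting method of \cite{GuiHus2020}, freed from the compact-or-log-Sobolev constraint by invoking the improved continuity of the spherical integral proved in \cite{GuiHus2022}. The earlier framework required polynomial-scale concentration of the empirical spectral measure, \eqref{eqn:intro_N_delta}; now the fixed-scale version \eqref{eqn:intro_eps} suffices, and the latter can be proved under sharp sub-Gaussianity alone by the truncation argument used in Section \ref{sec:infinite_xc}.

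Concretely, for $\theta \geq 0$ I would introduce the tilted measure
\[
	\frac{\diff \P^\theta}{\diff \P}(X_N) = \frac{\E_e[\exp(N\theta \ip{e,X_N e})]}{\E_{e,X_N}[\exp(N\theta \ip{e,X_N e})]},
\]
with $e$ uniform on $\mathbb{S}^{N-1}$, and run through the standard four-step program: (i) Compute the annealed spherical integral by the Hubbard-Stratonovich argument of Lemma \ref{lem:annealed}, with sharp sub-Gaussianity $\int e^{tx}\mu(\diff x) \leq e^{t^2/2}$ giving the entrywise upper bound, and the local Gaussian-type lower bound $\int e^{tx}\mu(\diff x) \geq e^{(1-\delta)t^2/2}$ valid for $\abs{t} \leq \eta(\delta)$, combined with the localization of $e$ on $\{\|e\|_\infty \leq N^{-1/4-\epsilon}\}$, giving the matching lower bound. (ii) Invoke Theorem 6.2 of \cite{GuiHus2022} for the quenched spherical integral, which provides exactly the continuity of the underlying empirical measure at the bounded-Lipschitz scale that we need. (iii) Establish exponential tightness of $\lambda_{\textup{max}}(X_N)$ from the standard sub-Gaussian bound on the operator norm. (iv) Prove the fixed-$\epsilon$ concentration of the empirical measure, described in the next paragraph.

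For the key input
\[
	\lim_{N \to \infty} \frac{1}{N} \log \P(d_{\textup{BL}}(\hat{\mu}_{X_N}, \rho_{\textup{sc}}) > \epsilon) = -\infty \quad \text{for every } \epsilon > 0,
\]
I would mimic Proposition \ref{prop:sub_Gaussian_concentration} and Lemma \ref{lem:guizeiexpansion}. Decompose $\sqrt{N}X_N = Z = A + B$ with $A_{ij} = Z_{ij}\mathds{1}_{|Z_{ij}| \leq N^\gamma}$ for a small $\gamma = \gamma(\epsilon) > 0$, and set $X_N^A = A/\sqrt{N}$ (symmetrized). Singular-value interlacing (as in \cite[Theorem A.44]{BaiSil2010}) gives $d_{\textup{KS}}(\hat{\mu}_{X_N}, \hat{\mu}_{X_N^A}) \leq N^{-1}\rank(B) \leq N^{-1}\sum_{i \leq j} \mathds{1}_{|Z_{ij}| > N^\gamma}$, and the right-hand side is super-exponentially small by Bennett's inequality applied to i.i.d.\ Bernoulli$(p_N)$ indicators with $p_N \leq e^{-cN^{2\gamma}}$; conversion to the bounded-Lipschitz distance on a spectrally-bounded event runs exactly as in \eqref{eqn:truncating}. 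For the concentration of $\hat{\mu}_{X_N^A}$ about its mean, the Guionnet-Zeitouni expansion \cite[Corollary 1.8]{GuiZei2000} decomposes a generic $1$-Lipschitz test function into finitely many elementary convex-sublevel test functions, for each of which Talagrand's concentration (Proposition \ref{prop:talagrand} with $d = 1$, or equivalently the bounded-entry form of Corollary 1.4 of \cite{GuiZei2000}) provides an exponent of order $-N^{2-2\gamma}\epsilon^{\OO(1)}$; a union bound over the polynomial-size net gives the required super-exponential control. Finally $\E[\hat{\mu}_{X_N^A}] \to \rho_{\textup{sc}}$ by the classical Wigner theorem together with the mean-comparison argument of \eqref{eqn:deterministic_between_matrices}.

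With all ingredients in hand, the remaining assembly copies Lemmas \ref{lem:ub_good_event}, \ref{lem:unlikely_adding_theta}, \ref{lem:a_is_likely}, and the proofs of Propositions \ref{prop:wkldpub} and \ref{prop:wkldplb} almost verbatim (with the simpler Wigner geometry replacing the sample-covariance one), producing matching weak upper and lower bounds with a variational rate function $\widetilde{I}(x)$; an elementary Wigner analogue of Lemma \ref{lem:ratesimp}, with $\theta_x = \tfrac{1}{2}(x - \sqrt{x^2-4})$ playing the role of $\overline{G}_\sigma(x)$, then identifies $\widetilde{I}(x)$ with the stated $\frac{1}{2}\int_2^x \sqrt{y^2 - 4}\, \diff y$ on $[2, +\infty)$. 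The main obstacle is the balanced choice of $\gamma = \gamma(\epsilon)$: one needs $\gamma > 0$ for Bennett to defeat the truncation rank, yet $\gamma$ small enough that the Talagrand exponent $N^{2-2\gamma}$ still dominates the $N^\gamma$-scale Lipschitz constant produced by the truncated entries after summation over the finite net of test functions. This is exactly the balance struck in Lemma \ref{lem:guizeiexpansion}, and it transfers essentially without change to the present, slightly simpler, Wigner setting.
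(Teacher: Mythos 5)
Your proposal follows essentially the same route as the paper's: truncate the entries at scale $N^{\gamma-1/2}$, control the induced Kolmogorov--Smirnov gap by a rank bound plus Bennett's inequality, then apply Guionnet--Zeitouni/Talagrand concentration to the truncated matrix, and use the improved continuity of the spherical integral (Theorem~6.2 of \cite{GuiHus2022}) so that the fixed-$\epsilon$ concentration \eqref{eqn:intro_eps} suffices. Three small corrections. First, for a symmetric Wigner matrix the rank-to-$d_{\textup{KS}}$ bound comes from eigenvalue interlacing (\cite[Theorem~A.43]{BaiSil2010}), not the singular-value version A.44, though the resulting estimate is the same. Second, you propose to re-run the $\mf{C}_1$-class netting construction of Lemma~\ref{lem:guizeiexpansion}, but that machinery was built for one-sided (covariance) matrices; for symmetric Wigner matrices with compactly supported entries the Wasserstein-$1$ concentration is already packaged as \cite[Theorem~1.4(a)]{GuiZei2000}, which is what the paper invokes directly, saving the explicit test-function net. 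Third, and more substantively, your quoted optimizer $\theta_x = \tfrac{1}{2}\bigl(x - \sqrt{x^2-4}\bigr)$ is just $G_{\rho_{\textup{sc}}}(x)$, the ordinary branch of the Stieltjes transform, which \emph{decreases} in $x$; the correct optimizing tilt must increase with $x$ and is the half-second-branch $\theta_x = \tfrac{1}{2}\overline{\mf{G}}(x) = \tfrac{1}{4}\bigl(x + \sqrt{x^2-4}\bigr)$ (in the normalization $e^{N\theta\ip{e,X_Ne}}$ you use), which indeed yields $I'(x) = \theta_x - \tfrac12 G_{\rho_{\textup{sc}}}(x) = \tfrac12\sqrt{x^2-4}$. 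As written your $\theta_x$ would give $I'(x) = \tfrac12 G_{\rho_{\textup{sc}}}(x)$, not the stated rate function; this is a sign-of-the-root slip rather than a gap in the method, and would be caught when carrying out the envelope computation.
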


\begin{proof}
We claim that, for every $\epsilon > 0$, we have
\begin{equation}
\label{eqn:w_concentration}
	\lim_{N \to \infty} \frac{1}{N} \log \P(d_{\textup{BL}}(\hat{\mu}_{X_N}, \rho_{\textup{sc}}) > \epsilon) = -\infty.
\end{equation}
Lemma \cite{GuiHus2020} shows something slightly stronger, under the compact-or-log-Sobolev assumption, namely that there exists some small $\kappa > 0$ with $\lim_{N \to \infty} \frac{1}{N} \log \P(d_{\textup{BL}}(\hat{\mu}_{X_N}, \rho_{\textup{sc}}) > N^{-\kappa}) = -\infty$. However, mimicking the arguments in our Sections \ref{subsec:wk_ldp_ub} and \ref{subsec:wk_ldp_lb} shows that \eqref{eqn:w_concentration} suffices.

To prove \eqref{eqn:w_concentration}, we mimic the proof of Proposition \ref{prop:sub_Gaussian_concentration}, decomposing $X = A+B$ with $A_{ij} = X_{ij} \mathds{1}\{\abs{X_{ij}} \leq N^{\gamma-1/2}\}$ for some $\gamma = \gamma(\epsilon) > 0$ to be chosen, and then prove analogues of \eqref{eqn:truncating}, \eqref{eqn:elc}, \eqref{eqn:guizei}, and \eqref{eqn:deterministic}. The proof of \eqref{eqn:truncating} is as in the sample-covariance case, except that we use the classical result $d_{\textup{KS}}(\hat{\mu}_{X_N}, \hat{\mu}_{A_N}) \leq \frac{1}{N} \rank(X_N - A_N)$ \cite[Theorem A.43]{BaiSil2010}, which comes from interlacing of eigenvalues, instead \cite[Theorem A.44]{BaiSil2010} from interlacing of singular values as in the main text. The estimates \eqref{eqn:elc} and \eqref{eqn:deterministic} are just as in the main text, and the estimate \eqref{eqn:guizei} is actually easier in the Wigner case: Define $P = \sqrt{8} N^\gamma$ and $\epsilon_1(N) = 16\sqrt{\pi} N^{\gamma-1}$. Since $\sqrt{N} A_N$ has entries compactly supported in $[-N^\gamma,N^\gamma]$, \cite[Theorem 1.4(a)]{GuiZei2000} gives
\[
	\P({\rm W}_1(\hat{\mu}_{A_N}, \E[\hat{\mu}_{A_N}]) > \epsilon) \leq \frac{128(P+\sqrt{\epsilon})}{\epsilon^{3/2}} \exp\left(-N^2 \frac{1}{64N^{2\gamma}} \left[ \frac{\epsilon^{5/2}}{128(P+\sqrt{\epsilon})} - \epsilon_1(N)\right]^2 \right)
\]
as long as $\epsilon$ satisfies the implicit equation $\epsilon > (128(P+\sqrt{\epsilon})\epsilon_1(N))^{2/5}$, the right-hand side of which is order $N^{(2\gamma-1)(2/5)}$, so that the implicit equation is satisfied for all $N$ large enough. Then the argument of the exponential is order $N^{2-2\gamma+2(-\gamma)}$, and the power in the exponential is at least $1$ for $\gamma$ small enough, which suffices.
\end{proof}

\begin{rem}
\label{rem:new_ssg_example}
	This proof allows the laws of the entries of $X_N$ to be sharp sub-Gaussian without having to be compactly supported or satisfying a log-Sobolev inequality. Let us provide an example of such a law. For this, let us consider $Z$ a standard Gaussian variable, $\mathcal{F} = \sigma( \{ Z \in [ n, n+1[ \} : n \in \Z ) $ and let us define
	\[ \tilde{Z} = \text{sign}(Z) \sqrt{ \E[ Z^2 | \mathcal{F}]} .\]
	The variable $\tilde{Z}$ is centered of variance $1$. It is obviously not compactly supported, and since it is supported on a discrete set of points, it cannot satisfy any log-Sobolev inequality. It remains to see that $\tilde{Z}$ is sharp sub-Gaussian. For this, let us look at its moments. Since the law of $\tilde{Z}$ is symmetric for $k \in \N$, we have:
	\[ \E[ \tilde{Z}^{2k+1}] =0 \]
	and for even moments, applying Jensen's inequality, we have 
	\[ \E[ \tilde{Z}^{2k}] =\E[ \E[ Z^2 | \mathcal{F}]^k] \leq \E[ Z^{2k}] \]
	and $\Var(\tilde{Z}) = \E[\tilde{Z}^2] = \E[\E[Z^2 | \mathcal{F}]] = \E[Z^2] = 1$. 
	So since $Z$ is Gaussian, we have for $t \in \R$
	\[ \E[ \exp (t \tilde{Z}) ] \leq \E[ \exp(t Z)] \leq \exp \Big( \frac{t^2}{2} \Big), \]
	meaning $\tilde{Z}$ is indeed sharp sub-Gaussian.
\end{rem}


\setcounter{equation}{0}
\setcounter{thm}{0}
\renewcommand{\theequation}{C.\arabic{equation}}
\renewcommand{\thethm}{C.\arabic{thm}}
\appendix
\setcounter{secnumdepth}{0}
\hypertarget{sec:wigner}{}
\section[Appendix C \ \ \ Deformed Wigner matrices]
{Appendix C \ \ \ Deformed Wigner matrices}

In this appendix, we use our techniques to improve previous results of the second author for so-called ``deformed Wigner matrices'' \cite{McK2021}. In this model, one considers an $N \times N$ random matrix
\[
	X_N = \frac{W_N}{\sqrt{N}} + D_N,
\]
either real-symmetric or complex-Hermitian, where $W_N$ has i.i.d. entries up to symmetry each distributed according to some centered probability measure $\mu$ (on $\R$ if $\beta = 1$ or on $\C$ if $\beta = 2$), and where the deterministic matrix $D_N$ satisfies the following assumption, which is weaker than the corresponding assumption from \cite{McK2021}.

\begin{assn}
\label{assn:deformed_wigner}
The matrix $D_N$ is real, diagonal, and deterministic, and its empirical measure $\hat{\mu}_{D_N}$ tends weakly as $N \to \infty$ to a compactly supported probability measure $\mu_D$, and there are asymptotically no external outliers, in the sense that
\begin{align*}
	\lambda_{\textup{max}}(D_N) &\to r(\mu_D), \\
	\lambda_{\textup{min}}(D_N) &\to \ell(\mu_D).
\end{align*}
\end{assn}

\begin{rem}
A word on notation: Many of the objects we used in studying the generalized sample-covariance-case have analogues here. We have chosen to overload the notation rather than cluttering it. For example, the generalized-sample-covariance case has a threshold $x_c$, defined in \eqref{eqn:def_xc_rho}. We will need an analogous threshold here, and even though the definition \eqref{eqn:dw_xc} is different, we still call the new version $x_c$ rather than, e.g., $x_c^{(\textup{dw})}$. We have similarly overloaded $H$, the rate function $I$, and so on, with one notable exception: The special function $J(\mu,\theta,\lambda)$ given in Definition \ref{def:j} is exactly the same in both models.
\end{rem}

The typical limiting behavior of the empirical measure for this model is classical \cite{Pas1972, Voi1991}: We have
\[
	\hat{\mu}_{X_N} \to \rho_{\textup{sc}} \boxplus \mu_D \eqdef \mu^{\textup{sc}}_D,
\]
where $\rho_{\textup{sc}}$ is the semicircle law $\rho_{\textup{sc}}(\diff x) = \frac{\sqrt{(4-x^2)_+}}{2\pi} \diff x$, the notation $\boxplus$ denotes the free (additive) convolution of two compactly supported probability measures, and $\mu^{\textup{sc}}_D$ is shorthand. We recall that $\boxplus$ is defined in terms of the Voiculescu $R$-transform, which will be important for us: For a compactly supported probability measure $\mu$, we recall the Stieltjes transform $G_\mu$, which is a decreasing bijection from $(r(\mu),+\infty)$ to $(0,G_\mu(r(\mu)))$. We write its inverse as $K_\mu : (0,G_\mu(r(\mu))) \to (r(\mu),+\infty)$, and its $R$-transform as $R_\mu(y) = K_\mu(y) - \frac{1}{y}$, which linearizes free convolution in the sense that $R_{\mu \boxplus \nu} = R_\mu + R_\nu$.

The following lemma defines functions we need to write the rate function.

\begin{lem}
\label{lem:dw_h}
The function $H : (0,\infty) \to \R$ defined by
\[
	H(y) \defeq \begin{cases} y + K_{\mu_D}(y) & \text{if } 0 \leq y \leq G_{\mu_D}(r(\mu_D)), \\ y+r(\mu_D) & \text{if } y \geq G_{\mu_D}(r(\mu_D)), \end{cases}
\]
has the following properties:
\begin{itemize}
\item $H$ is continuous and convex.
\item $H$ is uniquely minimized at $y_c = G_{\rho_{\mu^{\textup{sc}}_D}}(r(\mu^{\textup{sc}}_D))$, which is in $(0,G_{\mu_D}(r(\mu_D))]$, and $H(y_c) = r(\mu^{\textup{sc}}_D)$.
\item There exists a continuous, strictly increasing function $\overline{\mf{G}} : [r(\mu^{\textup{sc}}_D),+\infty) \to \R$ with the following properties: 
\begin{itemize}
\item If $0 \leq y \leq G_{\mu_D}(r(\mu_D))$, then $H(\overline{\mf{G}}(y)) = y$, and $\{w : H(w) = y\} = \{G_{\mu^{\textup{sc}}_D}(y), \overline{\mf{G}}(y)\}$. 
\item We have $\overline{\mf{G}}(y) > G_{\mu^{\textup{sc}}_D}(y)$ for $y > r(\mu^{\textup{sc}}_D)$, and $\overline{\mf{G}}(r(\mu^{\textup{sc}}_D)) = G_{\mu^{\textup{sc}}_D}(r(\mu^{\textup{sc}}_D))$. 
\end{itemize}
\end{itemize}
\end{lem}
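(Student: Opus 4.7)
My plan begins with the observation that, on the left piece of the definition, $H$ coincides with the $K$-transform of $\mu^{\textup{sc}}_D$. Since $R_{\rho_{\textup{sc}}}(y) = y$ and the $R$-transform linearizes free convolution, I compute
\[
	H(y) = y + K_{\mu_D}(y) = R_{\rho_{\textup{sc}}}(y) + R_{\mu_D}(y) + \frac{1}{y} = R_{\mu^{\textup{sc}}_D}(y) + \frac{1}{y} = K_{\mu^{\textup{sc}}_D}(y)
\]
for all $y$ in the common domain $(0, y_c]$, where $y_c \defeq G_{\mu^{\textup{sc}}_D}(r(\mu^{\textup{sc}}_D))$. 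This identification drives the rest of the analysis.

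For the first bullet, continuity at the knot $y = G_{\mu_D}(r(\mu_D))$ follows from $K_{\mu_D}(G_{\mu_D}(r(\mu_D))) = r(\mu_D)$. For convexity, I differentiate $K_{\mu_D} = G_{\mu_D}^{-1}$ twice to obtain
\[
	K_{\mu_D}''(y) = -\frac{G_{\mu_D}''(K_{\mu_D}(y))}{G_{\mu_D}'(K_{\mu_D}(y))^{3}} > 0,
\]
using $G_{\mu_D}' < 0$ and $G_{\mu_D}'' > 0$ on $(r(\mu_D), +\infty)$; so $H$ is strictly convex on the left piece, while on the right piece it is linear of slope $1$. The left-piece slope $1 + K_{\mu_D}'(y)$ is $\leq 1$ throughout (since $K_{\mu_D}' \leq 0$), so $H'$ is nondecreasing across the knot, giving global convexity. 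The blow-ups $H(y) \to +\infty$ as $y \to 0^+$ (from $K_{\mu_D}(y) \sim 1/y$ near $0$) and as $y \to +\infty$ are immediate.

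For the second bullet, I combine the key identity with the subordination relation $G_{\mu^{\textup{sc}}_D}(z) = G_{\mu_D}(z - G_{\mu^{\textup{sc}}_D}(z))$ (equivalent to $K_{\mu^{\textup{sc}}_D}(y) = y + K_{\mu_D}(y)$). Taking $z \downarrow r(\mu^{\textup{sc}}_D)$ yields $K_{\mu_D}(y_c) = r(\mu^{\textup{sc}}_D) - y_c$, which forces $y_c \leq G_{\mu_D}(r(\mu_D))$ (so that $y_c$ lies in the domain of $K_{\mu_D}$) and gives $H(y_c) = y_c + K_{\mu_D}(y_c) = r(\mu^{\textup{sc}}_D)$. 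Uniqueness and monotonicity then follow because $H'(y) = 1 + K_{\mu_D}'(y)$ is strictly increasing on the left piece (by strict convexity of $K_{\mu_D}$), vanishing only at $y_c$, while $H' \equiv 1 > 0$ on the right piece; thus $H$ strictly decreases on $(0, y_c)$ and strictly increases on $(y_c, +\infty)$.

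For the third bullet, the restriction $H|_{[y_c, +\infty)}$ is a continuous, strictly increasing bijection onto $[r(\mu^{\textup{sc}}_D), +\infty)$, so I define $\overline{\mf{G}}$ to be its inverse, which is automatically continuous and strictly increasing. Then $\overline{\mf{G}}(r(\mu^{\textup{sc}}_D)) = y_c = G_{\mu^{\textup{sc}}_D}(r(\mu^{\textup{sc}}_D))$, and $\overline{\mf{G}}(y) > y_c > G_{\mu^{\textup{sc}}_D}(y)$ for $y > r(\mu^{\textup{sc}}_D)$ since $G_{\mu^{\textup{sc}}_D}$ is strictly decreasing starting from $y_c$. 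For the two-solution statement, strict monotonicity on each side of $y_c$ gives at most two solutions of $H(w) = y$; the key identity $H = K_{\mu^{\textup{sc}}_D}$ on $(0, y_c]$ identifies the left solution as $G_{\mu^{\textup{sc}}_D}(y)$, while the right solution is $\overline{\mf{G}}(y)$ by definition. The main obstacle is conceptual rather than technical: recognizing that $R$-transform additivity plus the subordination equation organize the entire picture; once this is in hand, the analysis reduces to checking that the slopes on the two pieces glue correctly, and that the subordination relation simultaneously pins down the minimizer of $H$ and the Stieltjes transform of $\mu^{\textup{sc}}_D$ at its right edge.
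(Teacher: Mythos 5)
Your approach is genuinely different from the paper's (which outsources the identification of $y_c$ to cited computations in \cite{GuiMai2020} and \cite{McK2021} rather than going through subordination), and most of it is sound: the identity $H=K_{\mu^{\textup{sc}}_D}$ on the common domain, the convexity computation, the gluing of slopes at the knot, and the recovery of $\overline{\mf{G}}$ as the inverse of $H$ restricted to $[y_c,\infty)$ are all correct. However, there is a genuine gap at the single most important point of the second bullet. You establish $H(y_c)=r(\mu^{\textup{sc}}_D)$ from the boundary subordination relation, and then write that $H'(y)=1+K_{\mu_D}'(y)$ is ``vanishing only at $y_c$'' --- but nothing in your argument shows that $H'$ vanishes \emph{at} $y_c$; you only know $H$ is strictly decreasing on $(0,y_c]$ (since $H=K_{\mu^{\textup{sc}}_D}$ is a decreasing inverse there) and strictly convex on $(0,G_{\mu_D}(r(\mu_D))]$. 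This leaves open the possibility that the unique minimizer of $H$ lies strictly to the right of $y_c$, with $H$ dipping below $r(\mu^{\textup{sc}}_D)$ on $(y_c,G_{\mu_D}(r(\mu_D))]$, which would invalidate the second and third bullets.

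The missing ingredient is precisely the edge-criticality condition: $H'(y_c^-)=0$ is equivalent (via $K'_{\mu^{\textup{sc}}_D}(y_c)=1/G'_{\mu^{\textup{sc}}_D}(r(\mu^{\textup{sc}}_D))$) to $G'_{\mu^{\textup{sc}}_D}(r(\mu^{\textup{sc}}_D))=-\infty$, i.e.\ to the square-root vanishing of the density of $\rho_{\textup{sc}}\boxplus\mu_D$ at its right edge; equivalently, it is the statement $G'_{\mu_D}\bigl(r(\mu^{\textup{sc}}_D)-y_c\bigr)=-1$. This is a theorem (Biane's regularity of free convolution with the semicircle law), not a formal consequence of the $R$-transform identity, and the paper cites a computation in Proposition~6.1 of \cite{McK2021} for exactly this. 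You need to either invoke that result or the square-root-edge property directly; without it the claim that $y_c$ is the global minimizer, and hence the construction of $\overline{\mf{G}}$ as inverse of $H|_{[y_c,\infty)}$, is unjustified. A secondary, smaller issue: when you take $z\downarrow r(\mu^{\textup{sc}}_D)$ in the subordination relation you implicitly need $\omega(z)=z-G_{\mu^{\textup{sc}}_D}(z)$ to converge to a point $\geq r(\mu_D)$, which you then use to conclude $y_c\leq G_{\mu_D}(r(\mu_D))$; this is circular as written, and needs the standard fact that the subordination function maps the complement of $\supp(\mu^{\textup{sc}}_D)$ into the complement of $\supp(\mu_D)$.
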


\begin{defn}
Define $I : \R \to [0,+\infty]$ by
\[
	I(x) = \begin{cases} +\infty & \text{if } x < r(\rho_{\textup{sc}} \boxplus \mu_D), \\ \frac{1}{2} \int_{r(\rho_{\textup{sc}} \boxplus \mu_D)}^x (\overline{\mf{G}}(y) - G_{\rho_{\textup{sc}} \boxplus \mu_D}(y)) \diff y & \text{if } x \geq r(\rho_{\textup{sc}} \boxplus \mu_D). \end{cases} 
\]
\end{defn}

\begin{thm}
\label{thm:dw}
Suppose that Assumption \ref{assn:deformed_wigner} holds, and that $\mu$ is sharp sub-Gaussian (in the sense of Definition \ref{defn:ssg} if $\beta = 1$, or in the sense of Definition \ref{defn:ssgc} if $\beta = 2$). Then $\lambda_{\textup{max}}(X_N)$ satisfies a large deviation principle at speed $N$ with the good rate function $I^{(\beta)} = \beta I$. This function is convex, strictly increasing on $[r(\rho_{\textup{sc}} \boxplus \mu_D),+\infty)$ (in particular, vanishes uniquely at $r(\rho_{\textup{sc}} \boxplus \mu_D)$), and 
\[
	\lim_{x \to +\infty} \frac{I^{(\beta)}(x)}{\frac{\beta x^2}{4}} = 1.
\]
If $\mu$ is actually Gaussian, then by rotational invariance we do not need to assume that $D_N$ is diagonal; we can just assume it is symmetric (if $\beta = 1$) or Hermitian (if $\beta = 2$) and satisfies the rest of Assumption \ref{assn:deformed_wigner}.
\end{thm}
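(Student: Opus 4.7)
The plan is to mirror the two-stage template developed for the generalized sample-covariance model. The analogous threshold here is $x_c^{(\textup{dw})} \defeq H(G_{\mu_D}(r(\mu_D)))$, which is finite precisely when $G_{\mu_D}(r(\mu_D)) < \infty$; this is the regime in which the graph of $H$ from Lemma~\ref{lem:dw_h} is eventually the affine function $y \mapsto y + r(\mu_D)$. In the infinite-threshold case the LDP is established for all $x$ by tilting along spherical integrals, and in the finite-threshold case it is obtained by approximating $D_N$ by a matrix whose limiting spectral measure has an atom at its right edge, so that the approximating model falls into the infinite-threshold regime.

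For the infinite-threshold case, I would follow Section~\ref{sec:infinite_xc} essentially verbatim. Define tilted measures with density proportional to $\E_e[\exp(N\tfrac{\theta}{2}\ip{e, X_N e})]$ for $\theta \in [0, G_{\mu_D}(r(\mu_D)))$. Because $X_N = D_N + W_N/\sqrt{N}$ is an additive perturbation, the annealed integral $\frac{1}{N}\log \E_{e, W_N}[e^{N\frac{\theta}{2}\ip{e, X_N e}}]$ separates into a deterministic quadratic form in $D_N$ (controlled by standard spherical-integral asymptotics) and a Wigner piece; the Wigner piece factors over independent upper-triangular entries and is bounded above by the sharp-sub-Gaussian inequality and below by a Hubbard--Stratonovich argument restricted to near-flat $e$, exactly as in the proof of Lemma~\ref{lem:annealed}, producing the extra term $\tfrac{\beta \theta^2}{4}$ in the limit. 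Concentration of $\hat{\mu}_{X_N}$ around $\mu^{\textup{sc}}_D$ is established by the truncation-and-Talagrand scheme of Proposition~\ref{prop:sub_Gaussian_concentration} and Appendix~\hyperlink{sec:compact_or_log_sobolev}{B}, thus bypassing any compact-or-log-Sobolev assumption and also bypassing the local laws used in \cite{McK2021}. The quenched spherical integral is controlled by Theorem~6.2 of \cite{GuiHus2022}, and the annealed-plus-quenched scheme of Sections~\ref{subsec:wk_ldp_ub}--\ref{subsec:wk_ldp_lb} then delivers the weak LDP upper and lower bounds with optimal tilt $\theta_x = \overline{\mf{G}}(x)$, via a simplification of the variational rate function parallel to Lemma~\ref{lem:ratesimp} and Proposition~\ref{prop:thetax}.

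For the finite-threshold case, I would mimic Section~\ref{SecReg}. Set $D_N^{(\epsilon)}$ equal to $D_N$ but with each eigenvalue in $(r(\mu_D)-\epsilon, r(\mu_D)]$ replaced by $r(\mu_D)$; then the associated $\mu_D^{(\epsilon)}$ has an atom at its right edge, so $G_{\mu_D^{(\epsilon)}}(r(\mu_D^{(\epsilon)})) = +\infty$ and the first stage applies to $X_N^{(\epsilon)} = W_N/\sqrt{N} + D_N^{(\epsilon)}$ with some rate $I^{(\epsilon)}$. The required exponential equivalence is now trivial, since $\|X_N - X_N^{(\epsilon)}\| \leq \epsilon$ deterministically---no analogue of Lemma~\ref{lem:Approx} is needed, since no $\|Z\|$-type factor arises in the additive setting. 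The convergence $I^{(\epsilon)} \to I$ uniformly on compact subsets of $(r(\mu^{\textup{sc}}_D),+\infty)$ and the monotonicity $r(\mu^{\textup{sc}}_{D^{(\epsilon)}}) \downarrow r(\mu^{\textup{sc}}_D)$ follow from the analogue of Lemma~\ref{lem:CvRatef}, which in turn rests on the linearization of free convolution by the $R$-transform: since $R_{\mu_D^{(\epsilon)}} = K_{\mu_D^{(\epsilon)}} - 1/\text{id}$ decreases monotonically to $R_{\mu_D}$ on its domain, one obtains pointwise monotone convergence of the relevant $H^{(\epsilon)}$ to $H$, and the Lebesgue-measure-of-nested-sets argument from Lemma~\ref{lem:CvRatef} goes through. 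The LDP then follows from the limit-of-LDPs theorem (Theorem~4.2.16 of \cite{DemZei2010}).

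The main obstacle I foresee is purely bookkeeping: verifying that, with $H$ playing the role of $H_\rho$ and $\mu^{\textup{sc}}_D$ playing the role of $\sigma$, the convexity, monotonicity and uniqueness-of-minimum properties recorded in Lemma~\ref{lem:dw_h} substitute cleanly for those of $\overline{G}_\sigma$ in Lemma~\ref{lem:overline_g}, so that the structural inputs to Sections~\ref{sec:infinite_xc} and~\ref{SecReg} transfer without modification. The Gaussian case with non-diagonal $D_N$ is then immediate from rotation invariance of Gaussian Wigner matrices (one reduces to diagonal $D_N$ by conjugating with the unitary diagonalizer of $D_N$, which does not change the law of $W_N$), and the complex-Hermitian ($\beta = 2$) case is handled by the adjustments of Section~\ref{SecComplex}, which also explains the overall factor of $\beta$ in the rate $I^{(\beta)} = \beta I$. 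The behavior $I^{(\beta)}(x)/(\beta x^2/4) \to 1$ as $x \to +\infty$ follows from the asymptotics $\overline{\mf{G}}(x) = x - r(\mu_D) + o(1)$, which are immediate from the eventual affine form of $H$.
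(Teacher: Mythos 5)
Your proposal follows essentially the same two-stage route as the paper: establish the LDP for $x_c(\mu_D)=+\infty$ via spherical-integral tilting plus the new truncation-and-Talagrand concentration of Appendix~B, then handle finite $x_c(\mu_D)$ by the $D_N^{(\epsilon)}$-approximation, monotone convergence $H^{(\epsilon)}\downarrow H$, and Theorem~4.2.16 of \cite{DemZei2010}; the one structural shortcut the paper takes that you don't mention is that it does not re-derive the tilting argument at all, but instead cites the main result of \cite{McK2021} and only supplies the improved concentration estimate \eqref{eqn:dw_distance} needed to delete the compact-or-log-Sobolev and polynomial-speed assumptions. Two small bookkeeping points to watch: the optimal tilt is $\theta_x=\tfrac{1}{2}\overline{\mf{G}}(x)$ (not $\overline{\mf{G}}(x)$) because $I(x,\theta)$ here is written with $J(\cdot,\theta,\cdot)$ rather than $J(\cdot,\theta/2,\cdot)$, and the nested-sets argument needs a one-line extra step to see that $\Leb(D_b^{(\epsilon)})$ is finite, since in the Wigner case the domain in $u$ is all of $(0,\infty)$ rather than a bounded interval $(0,\theta_{\max})$.
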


\begin{rem}
This improves upon the result of \cite{McK2021}, which showed a weaker version of Theorem \ref{thm:dw} requiring three additional assumptions: First, that either $\mu$ actually be Gaussian measure (i.e., that $\frac{W_N}{\sqrt{N}}$ be GOE/GUE), or that the important threshold
\begin{equation}
\label{eqn:dw_xc}
	x_c = x_c(\mu_D) \defeq \begin{cases} r(\mu_D) + G_{\mu_D}(r(\mu_D)) & \text{if } G_{\mu_D}(r(\mu_D)) < +\infty, \\ +\infty & \text{otherwise} \end{cases} 
\end{equation}
be infinite; second, that $\mu$ be either compactly supported or satisfy the log-Sobolev inequality; third, that the deformation tend to its limit at some mild polynomial speed, $d_{\textup{BL}}(\hat{\mu}_{D_N},\mu_D) \lesssim N^{-\epsilon}$ for some $\epsilon > 0$. However, the rate function was given there in a different form; below we show that the forms are equivalent.

We get rid of the second and third assumptions using the methods of Appendix \hyperlink{sec:compact_or_log_sobolev}{B}. We get rid of the first assumption as in the main text, namely by approximating $x_c < +\infty$ models with a sequence of $x_c = +\infty$ models, then using textbook results about approximating LDPs.
\end{rem}

\begin{defn}
For any $x \geq r(\rho_{\textup{sc}} \boxplus \mu_D)$ and any $\theta \geq 0$, define
\[
	I(x,\theta) \defeq J(\rho_{\textup{sc}} \boxplus \mu_D,\theta,x) - \theta^2 - J(\mu_D,\theta,r(\mu_D)).
\]
Using this, define the rate function $\widetilde{I} : \R \to [0,+\infty]$ by
\[
	\widetilde{I}(x) \defeq \begin{cases} + \infty & \text{if } x < r(\rho_{\textup{sc}} \boxplus \mu_D), \\ \sup_{\theta \geq 0} I(x,\theta) & \text{if } x \geq r(\rho_{\textup{sc}} \boxplus \mu_D). \end{cases}
\]
\end{defn}

\begin{lem}
\label{lem:dw_i_tilde}
If $x_c(\mu_D) = +\infty$, then $I = \widetilde{I}$. 
\end{lem}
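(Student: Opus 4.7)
The plan is to mimic the proof of Lemma \ref{lem:ratesimp}: both $I$ and $\widetilde{I}$ equal $+\infty$ on $(-\infty,r(\mu^{\textup{sc}}_D))$ by definition, so it suffices to show that on $[r(\mu^{\textup{sc}}_D),+\infty)$ they agree at $r(\mu^{\textup{sc}}_D)$ and have the same derivative. The key analytic inputs are a direct computation of $\partial_\theta J(\mu,\theta,\lambda)$ from Definition~\ref{def:j}, and the free-convolution identity $K_{\rho_{\textup{sc}}\boxplus\mu_D}(y)=y+K_{\mu_D}(y)$ coming from $R_{\rho_{\textup{sc}}}(y)=y$ and additivity of $R$-transforms. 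Under the hypothesis $x_c(\mu_D)=+\infty$, which means $G_{\mu_D}(r(\mu_D))=+\infty$, this identity holds for \emph{all} $y\in(0,+\infty)$ and we have $K_{\mu^{\textup{sc}}_D}=H$ on its full domain.

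First I would differentiate $J$ in its $\theta$-slot: from Definition~\ref{def:j}, in the branch $G_\mu(\lambda)\leq 2\theta$ one has $\partial_\theta J(\mu,\theta,\lambda)=\lambda-\tfrac{1}{2\theta}$, while in the other branch $G_\mu(\lambda)>2\theta$ an easy chain-rule cancellation gives $\partial_\theta J(\mu,\theta,\lambda)=K_\mu(2\theta)-\tfrac{1}{2\theta}$. Since $G_{\mu_D}(r(\mu_D))=+\infty$, the term $J(\mu_D,\theta,r(\mu_D))$ always sits in the second branch, so $\partial_\theta J(\mu_D,\theta,r(\mu_D))=K_{\mu_D}(2\theta)-\tfrac{1}{2\theta}$. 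Applying this to $I(x,\theta)$ and using $K_{\mu^{\textup{sc}}_D}(2\theta)=2\theta+K_{\mu_D}(2\theta)=H(2\theta)$ yields
\[
\partial_\theta I(x,\theta)=\begin{cases} 0 & \text{if } 2\theta\leq G_{\mu^{\textup{sc}}_D}(x),\\ x-H(2\theta) & \text{if } 2\theta\geq G_{\mu^{\textup{sc}}_D}(x).\end{cases}
\]
By Lemma~\ref{lem:dw_h}, $H$ is convex and takes the value $x>r(\mu^{\textup{sc}}_D)$ at exactly the two points $G_{\mu^{\textup{sc}}_D}(x)<y_c<\overline{\mf{G}}(x)$; hence $I(x,\cdot)$ is constant on $[0,G_{\mu^{\textup{sc}}_D}(x)/2]$, increasing on $[G_{\mu^{\textup{sc}}_D}(x)/2,\overline{\mf{G}}(x)/2]$, and decreasing thereafter, so the supremum is attained at $\theta_x\defeq\overline{\mf{G}}(x)/2$.

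Next I would apply the envelope identity $\widetilde I{}'(x)=\partial_x I(x,\theta_x)$. A direct computation of $\partial_x J(\mu^{\textup{sc}}_D,\theta,x)$ from Definition~\ref{def:j} in the branch $G_{\mu^{\textup{sc}}_D}(x)\leq 2\theta$ (which holds at $\theta=\theta_x$) gives $\partial_x J=\theta-\tfrac12 G_{\mu^{\textup{sc}}_D}(x)$, so
\[
\widetilde I{}'(x)=\theta_x-\tfrac12 G_{\mu^{\textup{sc}}_D}(x)=\tfrac12\bigl(\overline{\mf{G}}(x)-G_{\mu^{\textup{sc}}_D}(x)\bigr)=I'(x).
\]
Finally, at $x=r(\mu^{\textup{sc}}_D)$ the sign analysis above gives $\partial_\theta I\leq 0$ on the second branch (since $H\geq r(\mu^{\textup{sc}}_D)$) and $\partial_\theta I=0$ on the first, so the supremum equals $\lim_{\theta\downarrow 0}I(r(\mu^{\textup{sc}}_D),\theta)$. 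A short direct calculation (or continuity argument using $G_\mu^{-1}(2\theta)\sim(2\theta)^{-1}$ as $\theta\downarrow 0$) shows $J(\mu,\theta,\lambda)\to 0$ as $\theta\downarrow 0$, whence $\widetilde I(r(\mu^{\textup{sc}}_D))=0=I(r(\mu^{\textup{sc}}_D))$, completing the proof.

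The only mildly delicate step is the chain-rule cancellation producing $\partial_\theta J=K_\mu(2\theta)-\tfrac1{2\theta}$ in the interior branch; the rest is bookkeeping, and the whole argument rests cleanly on the identity $H=K_{\mu^{\textup{sc}}_D}$ valid throughout $(0,+\infty)$ precisely because $x_c(\mu_D)=+\infty$. In the complementary regime $x_c(\mu_D)<+\infty$ this identity fails past $y=G_{\mu_D}(r(\mu_D))$, which is exactly why that case is instead handled by the approximation scheme of Section~\ref{SecReg} rather than by this direct variational computation.
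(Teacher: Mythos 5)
Your argument is correct and takes essentially the same approach as the paper: both show that $I$ and $\widetilde{I}$ agree at $r(\rho_{\textup{sc}} \boxplus \mu_D)$ and have the same derivative on $(r(\rho_{\textup{sc}} \boxplus \mu_D),+\infty)$, identifying the optimizer as $\theta_x = \frac{1}{2}\overline{\mf{G}}(x)$ and using $\partial_x J(\rho_{\textup{sc}}\boxplus\mu_D,\theta,x)|_{\theta=\theta_x}=\theta_x-\tfrac12 G_{\rho_{\textup{sc}}\boxplus\mu_D}(x)$. The only difference is that the paper cites the relevant $\theta$-optimization computations from Proposition 6.1 of \cite{McK2021}, whereas you carry them out directly by mimicking Lemma~\ref{lem:ratesimp}, which makes the argument self-contained.
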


\begin{lem}
\label{lem:dw_infinite}
If Assumption \ref{assn:deformed_wigner} holds, and $\mu$ is sharp sub-Gaussian, and
\[
	x_c(\mu_D) = +\infty, 
\]
then $\lambda_{\textup{max}}(X_N)$ satisfies a large deviation principle at speed $N$ with the good rate function $I$.
\end{lem}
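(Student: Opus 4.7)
The plan is to mirror the proof of Proposition \ref{prop:main_infinite} in Section \ref{sec:infinite_xc}, replacing the sample-covariance ingredients with their deformed-Wigner analogues. By Lemma \ref{lem:dw_i_tilde}, it suffices to prove the LDP with the variational rate function $\widetilde{I}$. Exponential tightness of $\lambda_{\textup{max}}(X_N)$ follows from the sub-Gaussian character of $W_N$ (cf.\ \cite[Lemma 1.9]{GuiHus2020}) together with the deterministic bound $\abs{\lambda_{\textup{max}}(X_N)} \leq \|W_N\|/\sqrt{N} + \max(\abs{r(\mu_D)},\abs{\ell(\mu_D)})+\oo(1)$.

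For each $\theta \geq 0$, I would introduce the tilted measure with density proportional to $\E_e[e^{N\theta\ip{e,X_Ne}}]$, where $e$ is Haar distributed on the unit sphere. Using independence of the entries of $W_N$ and the factorization $e^{N\theta\ip{e,X_Ne}} = e^{\sqrt{N}\theta\ip{e,W_Ne}}\,e^{N\theta\ip{e,D_Ne}}$, the sharp sub-Gaussian bound gives $\E_W[e^{\sqrt{N}\theta\ip{e,W_Ne}}] \leq e^{N\theta^2}$ uniformly in the unit vector $e$; a matching lower bound on an event $\|e\|_\infty \leq N^{-1/4-\epsilon}$ comes from the Hubbard--Stratonovich argument of Section \ref{subsec:annealed}. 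Recognizing the remaining spherical average $\E_e[e^{N\theta\ip{e,D_Ne}}]$ as a spherical integral whose normalized log converges to $J(\mu_D,\theta,r(\mu_D))$ by Theorem 6.2 of \cite{GuiHus2022}, one obtains the annealed limit
\[
\lim_{N\to\infty}\frac{1}{N}\log \E_{e,W_N}[e^{N\theta\ip{e,X_Ne}}] = \theta^2 + J(\mu_D,\theta,r(\mu_D)),
\]
which is exactly the normalization producing $I(x,\theta) = J(\mu_D^{\textup{sc}},\theta,x)-[\theta^2+J(\mu_D,\theta,r(\mu_D))]$ in the tilt-based upper bound.

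The main obstacle, and the step where the techniques of Appendix \hyperlink{sec:compact_or_log_sobolev}{B} are essential, is the empirical measure concentration
\[
\lim_{N\to\infty}\frac{1}{N}\log \P(d_{\textup{BL}}(\hat{\mu}_{X_N},\mu_D^{\textup{sc}})>\epsilon) = -\infty\qquad\text{for every } \epsilon>0.
\]
Without the log-Sobolev or compact-support hypothesis the Herbst argument is unavailable, so I would truncate: write $W_N=A+B$ with $A_{ij}=W_{ij}\mathbf{1}\{\abs{W_{ij}}\leq N^{\gamma}\}$ for a small $\gamma=\gamma(\epsilon)>0$, set $X_N^A = A/\sqrt{N}+D_N$, and on the event $\{\supp(\hat{\mu}_{X_N})\cup\supp(\hat{\mu}_{X_N^A})\subset(-L,L)\}$ use Weyl interlacing \cite[Theorem A.43]{BaiSil2010} to bound $d_{\textup{KS}}(\hat{\mu}_{X_N},\hat{\mu}_{X_N^A}) \leq N^{-1}\rank(B)$, then control $\rank(B)$ by the number of large entries of $W_N$ via Bennett's inequality. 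Since $A$ has bounded entries of size $\OO(N^\gamma)$, our Proposition \ref{prop:appendix_gui_zei} (the multidimensional Guionnet--Zeitouni extension) yields exponential concentration of $\hat{\mu}_{X_N^A}$ around $\E[\hat{\mu}_{X_N^A}]$ at speed $\exp(-cN^{2-2\gamma})$; convergence of $\E[\hat{\mu}_{X_N^A}]$ to $\mu_D^{\textup{sc}}$ is classical (Pastur's theorem for deformed Wigner matrices).

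Granted the annealed limit, the concentration estimate, and the quenched convergence $N^{-1}\log\E_e[e^{N\theta\ip{e,X_Ne}}]\to J(\mu_D^{\textup{sc}},\theta,x)$ from \cite{GuiHus2022} (valid on the good event $\{d_{\textup{BL}}(\hat{\mu}_{X_N},\mu_D^{\textup{sc}})<\epsilon,\abs{\lambda_{\textup{max}}(X_N)-x}<\delta,\|X_N\|\leq L\}$), the weak LDP upper and lower bounds go through as in Sections \ref{subsec:wk_ldp_ub} and \ref{subsec:wk_ldp_lb}. The hypothesis $x_c(\mu_D)=+\infty$ plays the role played by $G_\rho(r(\rho))=+\infty$ in Proposition \ref{prop:thetax}: it guarantees a unique optimizer $\theta_x \in [0,+\infty)$ in the variational problem $\widetilde{I}(x)=\sup_\theta I(x,\theta)$ for every $x\geq r(\mu_D^{\textup{sc}})$, and under $\P^{\theta_x}$ the variable $\lambda_{\textup{max}}(X_N)$ concentrates at $x$, closing the lower bound. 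The complex case $\beta=2$ follows from the adjustments outlined in Section \ref{SecComplex}, which produce the factor-$\beta$ rate $I^{(\beta)}=\beta I$.
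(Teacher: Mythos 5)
Your proposal tracks the paper's proof strategy correctly: tilt by spherical integrals, use the annealed limit $\theta^2 + J(\mu_D,\theta,r(\mu_D))$, and replace the old ``compact-or-log-Sobolev'' concentration with a truncation argument in the spirit of Appendix \hyperlink{sec:compact_or_log_sobolev}{B}, relying on the improved continuity of spherical integrals from \cite{GuiHus2022} so that only a fixed-$\epsilon$ concentration estimate is needed. The paper is much more economical than you are: it simply cites \cite{McK2021} wholesale for the entire tilting machinery (annealed integral, weak LDP bounds, uniqueness of $\theta_x$, complex case) and devotes the proof to the single new ingredient, the estimate $\lim_N N^{-1}\log\P(d_{\textup{BL}}(\hat{\mu}_{X_N},\rho_{\textup{sc}}\boxplus\mu_D)>\epsilon)=-\infty$. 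Your re-derivation of the tilting steps is harmless, but it is extra work the paper avoids.

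There is, however, a gap in the concentration step. You cite Proposition \ref{prop:appendix_gui_zei} to concentrate $\hat{\mu}_{X_N^A}$ where $X_N^A = A/\sqrt N + D_N$. That proposition is stated for matrices of multiplicative form $(X_A)_{ij}=N^{-1/2}A_{ij}\omega_{ij}$, and its bound depends on $\|K\|=\max\{\|x\|:x\in K\}$, not on the \emph{diameter} of the support set $K$. In the additively deformed setting, if one tries to apply it by absorbing $D_N$ into the entry laws, the unscaled matrix $\sqrt N X_N^A = A+\sqrt N D_N$ has diagonal entries supported in intervals centered at $\sqrt N (D_N)_{ii}$ of width $2N^\gamma$, so the uniform compact set $K$ one would have to take has $\|K\|=\OO(\sqrt N)$, not $\OO(N^\gamma)$. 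The resulting exponent degrades from $N^{2-2\gamma}$ to $\OO(N^{1-2\gamma})$, which is not superexponential. Concentration is of course translation-invariant, so the fix is a variant of the Guionnet--Zeitouni result whose constants depend only on the diameter of $K$; the paper explicitly invokes this as Lemma 5.9 of \cite{McK2021}. You should replace the appeal to Proposition \ref{prop:appendix_gui_zei} with that reference (or re-derive the diameter-based variant from Proposition \ref{prop:talagrand}, which is a one-line translation argument).
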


\begin{lem}
\label{lem:dw_finite}
Lemma \ref{lem:dw_infinite} implies Theorem \ref{thm:dw}.
\end{lem}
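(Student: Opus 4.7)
The plan is to follow almost verbatim the approximation strategy used for Proposition \ref{prop:main_finite} in Section \ref{SecReg}, replacing the discretization of $\rho$ at its right edge by an analogous discretization of $\mu_D$. Fix $\mu_D$ with $x_c(\mu_D) < +\infty$, and for each small $\epsilon > 0$ (avoiding the at most countably many atoms of $\mu_D$) let $D_N^{(\epsilon)}$ be the diagonal matrix obtained from $D_N$ by replacing every entry in $(r(\mu_D) - \epsilon, r(\mu_D)]$ with $r(\mu_D)$. Coupling with the same noise $W_N$, define $X_N^{(\epsilon)} \defeq \tfrac{W_N}{\sqrt{N}} + D_N^{(\epsilon)}$, whose deterministic shift has empirical measure
\[
	\mu_D^{(\epsilon)}(A) \defeq \mu_D(A \cap (-\infty, r(\mu_D)-\epsilon]) + \mu_D((r(\mu_D)-\epsilon,r(\mu_D)]) \,\delta_{r(\mu_D)}(A).
\]
Because $\mu_D^{(\epsilon)}$ puts positive mass at its right endpoint, $G_{\mu_D^{(\epsilon)}}(r(\mu_D^{(\epsilon)})) = +\infty$, so $x_c(\mu_D^{(\epsilon)}) = +\infty$, and Lemma \ref{lem:dw_infinite} applies to $X_N^{(\epsilon)}$ with good rate function $I^{(\epsilon)}$ built from $\mu_D^{(\epsilon)}$ as in the definition of $I$.

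Next, I would establish the three analogues of Lemmas \ref{lem:CvRatef}, \ref{lem:Approx}, \ref{lem:dz}. The key point for exponential equivalence (analogue of Lemma \ref{lem:Approx}) is that here it is actually \emph{deterministic}: since $D_N$ and $D_N^{(\epsilon)}$ are diagonal and differ only on the eigenvalues above $r(\mu_D) - \epsilon$, we have $\|X_N - X_N^{(\epsilon)}\| = \|D_N - D_N^{(\epsilon)}\| \leq \epsilon$, hence $|\lambda_{\textup{max}}(X_N) - \lambda_{\textup{max}}(X_N^{(\epsilon)})| \leq \epsilon$, which trivially gives
\[
	\lim_{\epsilon \downarrow 0} \limsup_{N \to \infty} \frac{1}{N} \log \P\bigl(|\lambda_{\textup{max}}(X_N^{(\epsilon)}) - \lambda_{\textup{max}}(X_N)| > \delta\bigr) = -\infty
\]
for every $\delta > 0$. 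So $X_N^{(\epsilon)}$ is an exponentially good approximation of $X_N$ in the sense of \cite[Theorem 4.2.16]{DemZei2010}.

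For the analogue of Lemma \ref{lem:CvRatef}, I would show that $\epsilon \mapsto H^{(\epsilon)}(\theta)$ is non-decreasing (so that $r(\mu^{\textup{sc}}_{D^{(\epsilon)}}) \downarrow r(\mu^{\textup{sc}}_D)$), exactly as in Section \ref{SecReg}: write $H^{(\epsilon)}(\theta) = \theta + K_{\mu_D^{(\epsilon)}}(\theta)$ (for $\theta$ in the relevant range) and check monotonicity of the relevant integrand in the $R$-transform description of $K_{\mu_D^{(\epsilon)}}$. Uniform convergence of $I^{(\epsilon)}$ to $I$ on compact subsets of $(r(\mu^{\textup{sc}}_D),+\infty)$ then follows from the same ``difference of level-set areas'' argument used in the proof of Lemma \ref{lem:CvRatef}: write $I(x) = \tfrac{1}{2} \int_0^x \int_0^\infty \mathbf{1}_{H(u) \leq t} \, du\, dt$ and similarly for $I^{(\epsilon)}$, note that $H^{(\epsilon)} \downarrow H$ pointwise, and apply dominated convergence. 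The analogue of Lemma \ref{lem:dz} (proving $I = J$, where $J$ is built from $I^{(\epsilon)}$ via \eqref{eqn:dz4.2.17}, verifying that $I$ is a good rate function, and checking the upper-semicontinuous estimate \eqref{eqn:dz4.2.18}) is a routine copy of the argument in Section \ref{SecReg}, using the uniform convergence and the monotonicity of $r(\mu^{\textup{sc}}_{D^{(\epsilon)}})$ in $\epsilon$.

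Having verified all hypotheses of \cite[Theorem 4.2.16]{DemZei2010}, we conclude that $\lambda_{\textup{max}}(X_N)$ satisfies the LDP with rate function $I^{(\beta)} = \beta I$ (the factor $\beta$ entering from the complex case as in Theorem \ref{thm:mainc}). The additional claimed properties of $I^{(\beta)}$ (convexity, strict monotonicity, vanishing only at $r(\mu^{\textup{sc}}_D)$, and the asymptotic $I^{(\beta)}(x) / (\beta x^2/4) \to 1$) are elementary consequences of the integral representation $I(x) = \tfrac{1}{2} \int_{r(\mu^{\textup{sc}}_D)}^x (\overline{\mf{G}}(y) - G_{\mu^{\textup{sc}}_D}(y))\, dy$ together with the properties of $\overline{\mf{G}}$ from Lemma \ref{lem:dw_h}, in particular the behavior $\overline{\mf{G}}(y) = y - r(\mu_D) + o(1)$ as $y \to \infty$ (which follows from $H(w) = w + r(\mu_D) + o(1)$ for large $w$). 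The only step with any real content is the monotonicity of $H^{(\epsilon)}$ in $\epsilon$, which I expect to be the main (but routine) obstacle; the rest is a clean transcription of the arguments already developed in Section \ref{SecReg}.
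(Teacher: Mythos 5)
Your proposal follows essentially the same route as the paper's proof: the same discretization of $\mu_D$ at its right edge, the same coupling via the common noise $W_N$, the deterministic bound $\|X_N - X_N^{(\epsilon)}\| \leq \epsilon$ for exponential equivalence, the monotonicity-of-$H^{(\epsilon)}$ argument for the convergence of the right endpoint and of $I^{(\epsilon)}$ to $I$, and Dembo--Zeitouni Theorem 4.2.16 to pass the LDP to the limit. Two small remarks where the paper is a bit more careful than your sketch: first, the monotonicity in $\epsilon$ is most cleanly established via the explicit integral formula for $G_{\mu_D^{(\epsilon)}}$ (the ``integrand'' you want to vary lives in the Stieltjes transform, not directly in the $R$-transform, which has no integral representation of its own), from which monotonicity of $K_{\mu_D^{(\epsilon)}}$ and hence of $H^{(\epsilon)}$ follows by inversion; second, when converting the nested-sets / dominated-convergence step into a uniform bound $\sup_{x\in[a,b]}|I^{(\epsilon)}(x) - I(x)| \leq \Leb(D_b^{(\epsilon)})$, you need to check finiteness of this Lebesgue measure (the domain $(0,\infty)$ is unbounded here, unlike $(0,\theta_{\textup{max}})$ in Section \ref{SecReg}), and you also need that the uniform convergence of $H^{(\epsilon)}$ to $H$ persists past $y = G_{\mu_D}(r(\mu_D))$, where the formulas for the two functions differ; both points are elementary but worth recording.
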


\begin{proof}[Proof of Lemma \ref{lem:dw_h}]
Continuity is easy to check. Since $G_{\mu_D}$ is strictly convex on $(r(\mu_D),+\infty)$, the function $K_{\mu_D}$ is strictly convex on $(0,G_{\mu_D}(r(\mu_D)))$, so that $H$ is convex, indeed strictly on $(0,G_{\mu_D}(r(\mu_D)))$. Since the boundary behavior is $\lim_{y \to 0} H(y) = \lim_{y \to +\infty} H(y) = +\infty$, we have that $H$ is uniquely minimized at some $y_c$, which must be in $(0,G_{\mu_D}(r(\mu_D))]$. On the other hand, let $y_s \defeq G_{\mu^{\textup{sc}}_D}(r(\mu^{\textup{sc}}_D))$. Lemma 6.1 of \cite{GuiMai2020} gives $y_s \leq \min(G_{\rho_{\textup{sc}}}(r(\rho_{\textup{sc}})), G_{\mu_D}(r(\mu_D))$, and a computation in the proof of Proposition 6.1 in \cite{McK2021} shows $G'_{\mu_D}(y_s) = -1$, and thus, via differentiating $y = K_{\mu_D}(G_{\mu_D}(y))$ in $y$ and evaluating at $y_s$, that $H'(y_s) = 0$. Thus $y_s = y_c$. Another computation in the proof of Proposition 6.1 in \cite{McK2021} shows $H(y_s) = r(\mu_D^{\textup{sc}})$. 

Now we study $\overline{\mf{G}}$. Since $R_{\rho_{\textup{sc}}}(y) = y$ in our normalization, we have $H(y) = R_{\rho_{\textup{sc}}}(y) + R_{\mu_D}(y) + \frac{1}{y} = K_{\rho_{\textup{sc}} \boxplus \mu_D}(y)$ for $y < y_c$. Since $H$ is continuous, strictly convex on $(0,b)$ for some $b$, affine increasing on $(b,+\infty)$, and minimized at $y_c \in (0,b]$, it is easy to see that $\#\{w : H(w) = y\}$ is zero for $y < H(y_c)$, one for $y = H(y_c)$, and two for $y > H(y_c)$; that the smaller of these elements is $G_{\rho_{\textup{sc}} \boxplus \mu_D}(y)$; and that, if $\overline{\mf{G}}$ denotes the inverse of $H$ on $(y_c,+\infty)$, then $\overline{\mf{G}}$ has the claimed properties.
\end{proof}

\begin{proof}[Proof of Lemma \ref{lem:dw_i_tilde}]
The proof goes as in the generalized-sample-covariance case, i.e., by showing that $I$ and $\widetilde{I}$ have the same derivative on $(r(\rho_{\textup{sc}} \boxplus \mu_D),+\infty)$, and both vanish at $r(\rho_{\textup{sc}} \boxplus \mu_D)$. We rely on two computations already carried out in Proposition 6.1 of \cite{McK2021}. The first of these shows that $\widetilde{I}$ vanishes at $r(\rho_{\textup{sc}} \boxplus \mu_D)$. The second shows that, for each $x > r(\rho_{\textup{sc}} \boxplus \mu_D)$, we have $\widetilde{I}(x) = I(x,\theta_x)$, where $\theta_x$ is the unique solution to the constrained problem
\[
	H(2\theta_x) = 2\theta_x + K_{\mu_D}(2\theta_x) = x \quad \text{subject to} \quad 2\theta_x \in (G_{\rho_{\textup{sc}} \boxplus \mu_D}(r(\rho_{\textup{sc}} \boxplus \mu_D)), G_{\mu_D}(r(\mu_D))),
\]
which in the new language of branches of Stieltjes transforms we recognize as $\theta_x = \frac{1}{2}\overline{\mf{G}}(x)$;
and that this maximizer is unique, i.e., $I(x,\theta) < \widetilde{I}(x)$ if $\theta \neq \theta_x$. Thus 
\[
	\frac{\diff}{\diff x} \widetilde{I}(x) = \left. \frac{\partial}{\partial x} I(x,\theta) \right|_{\theta = \theta_x} = \left. \frac{\partial}{\partial x} J(\rho_{\textup{sc}} \boxplus \mu_D,\theta,x) \right|_{\theta = \theta_x} = \theta_x - \frac{1}{2}G_{\rho_{\textup{sc}} \boxplus \mu_D}(x) = \frac{1}{2}(\overline{\mf{G}}(x) - G_{\rho_{\textup{sc}} \boxplus \mu_D}),
\]
which completes the proof. 
\end{proof}

\begin{proof}[Proof of Lemma \ref{lem:dw_infinite}]
As stated above, this is the main result of \cite{McK2021}, except that (a) the rate function was written there as $\beta \widetilde{I}$ (which Lemma \ref{lem:dw_i_tilde} shows is irrelevant), and (b) that paper required $\mu$ to be either compactly supported or to satisfy log-Sobolev, and required 
\begin{equation}
\label{eqn:dw_dspeed}
	d_{\textup{BL}}(\hat{\mu}_{D_N}, \mu_D) \leq CN^{-\epsilon}
\end{equation}
for some $C$ and $\epsilon$. Under the additional (b) assumptions, Lemma 5.3 of \cite{McK2021} showed 
\[
	\lim_{N \to \infty} \frac{1}{N} \log \P(d_{\textup{BL}}(\hat{\mu}_{X_N}, \rho_{\textup{sc}} \boxplus \mu_D) > N^{-\kappa}) = -\infty
\]
for $\kappa > 0$ sufficiently small. Obtaining this small polynomial speed (a) required (a very weak consequence) of the local law \cite{ErdKruSch2019}, and (b) was the essential reason for requiring \eqref{eqn:dw_dspeed}. But Appendix \hyperlink{sec:compact_or_log_sobolev}{B} explains that it actually suffices to show
\begin{equation}
\label{eqn:dw_distance}
	\lim_{N \to \infty} \frac{1}{N} \log \P(d_{\textup{BL}}(\hat{\mu}_{X_N}, \rho_{\textup{sc}} \boxplus \mu_D) > \epsilon) = -\infty
\end{equation}
for every $\epsilon > 0$; this allows us to drop the requirement of \eqref{eqn:dw_dspeed}, and to give a proof bypassing the local law. To do this, as in Appendix \hyperlink{sec:compact_or_log_sobolev}{B}, we split $W_N N^{-1/2} = A + B = A_N + B_N$ with $A_{ij} = W_{ij} N^{-1/2} \mathds{1}\{\abs{W_{ij} N^{-1/2}} \leq N^{\gamma - 1/2}\}$ for some $\gamma = \gamma(\epsilon) > 0$ to be chosen, then decompose $X_N = (A + D_N) + (B + D_N)$ and show analogues of \eqref{eqn:truncating}, \eqref{eqn:elc}, \eqref{eqn:guizei}, and \eqref{eqn:deterministic}. The analogues of \eqref{eqn:truncating}, \eqref{eqn:elc}, and \eqref{eqn:deterministic} go through exactly as before. The analogue of \eqref{eqn:guizei} is the estimate
\[
	\lim_{N \to \infty} \frac{1}{N} \log \P(d_{\textup{BL}}(\hat{\mu}_{A + D_N}, \E[\hat{\mu}_{A + D_N}]) > \epsilon) = -\infty.
\]
In Appendix \hyperlink{sec:compact_or_log_sobolev}{B}, when $D_N = 0$, we noted that $\sqrt{N}A$ had entries compactly supported in $[-N^\gamma,N^\gamma]$, and applied results of \cite{GuiZei2000}. When $D_N = 0$, the matrix $\sqrt{N}(A + D_N)$ has entries compactly supported in boxes of size order $N^\gamma$, but whose centers have shifted away from zero. This requires a straightforward modification of the results of \cite{GuiZei2000}, which was already stated as Lemma 5.9 of \cite{McK2021}. This completes the proof of \eqref{eqn:dw_distance}.
\end{proof}

\begin{proof}[Proof of Lemma \ref{lem:dw_finite}]
This proof also goes as in the generalized-sample-covariance case. We use throughout results of Lemma \ref{lem:dw_h}, and only write the case $\beta = 1$ for simplicity.

From its definition and Lemma \ref{lem:dw_h}, we see that the rate function has the form $I(x) = \frac{1}{2} \int_{r(\rho_{\textup{sc}} \boxplus \mu_D)}^x g(y) \diff y$, where $g$ is strictly increasing, positive for arbitrarily small arguments, and $\lim_{x \to +\infty} \frac{g(x)}{x} = 1$; this proves the claimed properties of $I$. 

Fix once and for all some $\mu_D$ with $x_c(\mu_D) < +\infty$, and write $d_i = d_i^{(N)}$ for the diagonal entries of $D_N$, i.e., $D_N = \diag(d_1, \ldots, d_N)$. For $\epsilon > 0$, define $D_N^{(\epsilon)} = \diag(d_1^{(\epsilon)}, \ldots, d_N^{(\epsilon)})$, where $d_i^{(\epsilon)} = d_i$ if $d_i \leq r(\mu_D) - \epsilon$ and $d_i^{(\epsilon)} = r(\mu_D)$ otherwise. Then set
\[
	X_N^{(\epsilon)} = N^{-1/2} W_N + D_N^{(\epsilon)},
\]
coupled with $X_N$ by using the same randomness $W_N$. Set $\mu_D^{(\epsilon)}$ as in \eqref{eqn:rho_epsilon}, and notice that if $\epsilon \to 0$ avoids any atoms present near $r(\mu_D)$, we have that the empirical measure of $X_N^{(\epsilon)}$ tends to $\rho_{\textup{sc}} \boxplus \mu_D^{(\epsilon)}$, with a corresponding function $H^{(\epsilon)}$. By construction, $r(\mu_D^{(\epsilon)}) = r(\mu_D)$, but $G_{\mu_D^{(\epsilon)}}(r(\mu_D^{(\epsilon)})) = +\infty$ for each $\epsilon$, so that $\lambda_{\textup{max}}(X_N^{(\epsilon)})$ satisfies an LDP at speed $N$ with the good rate function $I^{(\epsilon)}$ defined as
\[
	I^{(\epsilon)}(x) = \begin{cases} +\infty & \text{if } x < r(\rho_{\textup{sc}} \boxplus \mu_D^{(\epsilon)}), \\ \frac{1}{2} \int_{r(\rho_{\textup{sc}} \boxplus \mu_D)}^x (\overline{\mf{G}}^{(\epsilon)}(y) - G_{\rho_{\textup{sc}} \boxplus \mu_D^{(\epsilon)}}(y)) \diff y & \text{if } x \geq r(\rho_{\textup{sc}} \boxplus \mu_D^{(\epsilon)}), \end{cases}
\]
where $\overline{\mf{G}}^{(\epsilon)}$ is defined as in Lemma \ref{lem:dw_h} for the measure $\mu_D^{(\epsilon)}$. 

To finish the proof, we just need analogues of Lemmas \ref{lem:CvRatef}, \ref{lem:Approx}, and \ref{lem:dz}. The analogue of Lemma \ref{lem:Approx} is even easier here, since $\|X_N - X_N^{(\epsilon)}\| \leq \epsilon$ deterministically. Lemma \ref{lem:dz} was essentially a consequence of Lemma \ref{lem:CvRatef}, and this remains true here, so we only need the analogue of Lemma \ref{lem:CvRatef}.

Towards this result: Similar arguments as in the main text show that, for each $x > r(\mu_D) = r(\mu_D^{(\epsilon)})$, the function $\epsilon \mapsto G_{\mu_D^{(\epsilon)}}(x)$ is non-decreasing for small $\epsilon > 0$, so that $\epsilon \mapsto K_{\mu_D^{(\epsilon)}}(y)$ is non-decreasing, and thus $\epsilon \mapsto H^{(\epsilon)}(y)$ is non-decreasing. Thus $\epsilon \mapsto r(\rho_{\textup{sc}} \boxplus \mu_D^{(\epsilon)})$ is non-decreasing, and $\liminf_{\epsilon \downarrow 0} r(\rho_{\textup{sc}} \boxplus \mu_D^{(\epsilon)}) \geq r(\rho_{\textup{sc}} \boxplus \mu_D)$. For the other inequality, it is clear that $H^{(\epsilon)}$ converges to $H$ uniformly on all compact subsets of $(0,G_{\mu_D}(r(\mu_D))]$. But $H^{(\epsilon)}(y) - y$ is strictly decreasing in $y$, and tends to $r(\mu_D)$ as $y \to +\infty$; thus $H^{(\epsilon)}$ tends to $H$ uniformly on all compact subsets of $(0,+\infty)$, and hence $\lim_{\epsilon \to 0} r(\rho_{\textup{sc}} \boxplus \mu_D^{(\epsilon)}) = r(\rho_{\textup{sc}} \boxplus \mu_D)$. Finally, for $r > r(\rho_{\textup{sc}} \boxplus \mu_D)$ and $\epsilon$ small enough we have the representations
\[
	I^{(\epsilon)}(x) = \frac{1}{2} \int_0^x \int_0^\infty \mathds{1}_{H^{(\epsilon)}(u) \leq t} \diff u \diff t \quad \text{and} \quad I(x) = \frac{1}{2} \int_0^x \int_0^\infty \mathds{1}_{H(u) \leq t} \diff u \diff t, 
\]
so that again $I^{(\epsilon)}(x) \leq I(x)$ with $\sup_{x \in [a,b]} \abs{I^{(\epsilon)}(x) - I(x)} \leq \Leb(D_b^{(\epsilon)})$, with $D_b^{(\epsilon)}$ redefined appropriately. The sets $D_b^{(\epsilon)}$ are again nested with empty intersection, so their Lebesgue measure tends to zero if it is finite. Before this finiteness was immediate, but here takes a moment's thought: Since $\lim_{y \to +\infty} H(y) = +\infty$, there exists $c$ with $H(y) \geq b$ for all $y \geq c$, and then $D_b^{(\epsilon)} \subset [0,b] \times [0,c]$, so $\Leb(D_b^{(\epsilon)}) < +\infty$. The rest of the proof goes as in the main text.
\end{proof}

\addcontentsline{toc}{section}{References}
\bibliographystyle{alpha-abbrvsort}
\bibliography{ldpbib}

\end{document}